\DeclareMathAlphabet{\mathpzc}{OT1}{pzc}{m}{it}
\numberwithin{equation}{section}
\DeclareMathOperator{\tw}{tw}
   \newcommand{\HMb}{\overline{\mathit{HM}}}
\newtheorem{theorem}{Theorem}[section]
\newtheorem{prop}[theorem]{Proposition}
\newtheorem{lemma}[theorem]{Lemma}
\newtheorem{quest}{Question}
\newtheorem{cor}[theorem]{Corollary}
\newtheorem{theoremint}{Theorem}
\newtheorem{questint}[theoremint]{Question}
\theoremstyle{definition}
\newtheorem{definition}[theorem]{Definition}
\newtheorem*{definitionint}{Definition}
\theoremstyle{remark}
\newtheorem{remark}{Remark}[section]
\newtheorem{example}{Example}[section]
\newtheorem*{remarkint}{Remark}
\def\precube{\scaleobj{0.6}{\Box}}
\def\cube{{\mathrel{\raisebox{1pt}{$\precube$}}}}
\begin{document}

\title{Monopoles, twisted integral homology, and Hirsch algebras}

\author{Francesco Lin}
\address{Department of Mathematics, Columbia University} 
\email{flin@math.columbia.edu}

\author{Mike Miller Eismeier}
\address{Department of Mathematics, Columbia University} 
\email{smm2344@columbia.edu}

\begin{abstract}We provide an explicit computation over the integers of the bar version $\overline{HM}_*$ of the monopole Floer homology of a three-manifold in terms of a new invariant associated to its triple cup product called extended cup homology. This refines previous computations over fields of characteristic zero by Kronheimer and Mrowka, who established a relationship to Atiyah and Segal's twisted de Rham cohomology, and characteristic two by Lidman using surgery techniques in Heegaard Floer theory.

In order to do so, we first develop a general framework to study the homotopical properties of the cohomology of a dga twisted with respect a particular kind of Maurer-Cartan element called a twisting sequence. Then, for dgas equipped with the additional structure of a Hirsch algebra (which consists of certain higher operations that measure the failure of strict commutativity, and related associativity properties), we develop a product on twisting sequences and a theory of rational characteristic classes. These are inspired by Kraines' classical construction of higher Massey products and may be of independent interest. 

We then compute the most important infinite family of such higher operations explicitly for the minimal cubical realization of the torus. Building on the work of Kronheimer and Mrowka, the determination of $\overline{HM}_*$ follows from these computations and certain functoriality properties of the rational characteristic classes.
\end{abstract}

\maketitle

\tableofcontents

\section*{Introduction}

\textbf{Results in monopole Floer homology.}
In \cite{KM}, the authors use Seiberg-Witten theory to define several invariants of three-manifolds, collectively referred to as \textit{monopole Floer homology groups}. Among these, the simplest version assigns to each three-manifold $Y$ equipped with a spin$^c$ structure $\mathfrak{s}$ the invariant $\HMb_*(Y,\mathfrak{s})$, read \textit{HM-bar}.
\par
While this invariant vanishes when $\mathfrak{s}$ is not torsion, it is much more interesting in the case of a torsion spin$^c$ structure. In that case, we have that $\HMb_*(Y,\mathfrak{s})$ is a finitely generated relatively $\mathbb{Z}$-graded module over the ring of Laurent polynomials $\mathbb{Z}[U^{-1},U]$ in a variable $U$ of degree $-2$. In \cite[Chapter $35$]{KM} the authors prove several fundamental results, which we briefly summarize:
\begin{enumerate}[label=(\roman*)]
\item The invariant $\HMb_*(Y,\mathfrak s)$ only depends on the cohomology ring of $Y$, and in particular on the triple cup product
\begin{align*}
\Lambda^3 H^1(Y;\mathbb{Z})&\rightarrow \mathbb{Z}\\
\alpha_1\wedge \alpha_2\wedge \alpha_3&\mapsto \langle \alpha_1\cup \alpha_2\cup \alpha_3,[Y]\rangle,
\end{align*}
which we denote by $\cup_Y^3\in\left(\Lambda^3 H^1(Y;\mathbb{Z})\right)^*$.
\item There exists a spectral sequence $(E^r,d^r)$ converging to $\HMb_*(Y,\mathfrak{s})$ for which
\begin{equation*}
E^3=\Lambda^* H^1(Y;\mathbb{Z})\otimes \mathbb{Z}[U^{-1},U]
\end{equation*}
and the differential $d^3$ is given by
\begin{equation}\label{d3}
d^3(\omega\otimes U^n)= \iota_{\cup_Y^3}\omega\otimes U^{n-1},
\end{equation}
where $\iota_{\cup_Y^3}$ is the contraction with the triple cup product $\cup_Y^3$ sending
$\alpha_1\wedge\dots\wedge\alpha_k$ to
\begin{equation*}
\sum_{i_1<i_2<i_3} (-1)^{i_1+i_2+i_3}\langle \alpha_{i_1}\cup \alpha_{i_2}\cup \alpha_{i_3},[Y]\rangle\cdot \alpha_1 \wedge\dots  \wedge\hat{\alpha}_{i_1} \wedge\dots \wedge\hat{\alpha}_{i_2} \wedge\dots \wedge\hat{\alpha}_{i_3}\wedge\dots\wedge\alpha_k.
\end{equation*}
Up to signs, $\iota_{\cup_Y^3}$ is Poincar\'e dual to cup product with the $3$-form $\cup_Y^3$. The differential $d^3$ has degree $-1$ once we declare the elements of $H^1(Y;\mathbb{Z})$ to have degree $1$.
\item After tensoring with $\mathbb{R}$ (or any field with characteristic zero), the spectral sequence collapses at the $E^4$ page.
\end{enumerate}
Using the last point, the authors were able to conclude that the invariant $\HMb_*(Y,\mathfrak{s})$ is non-vanishing when $\mathfrak{s}$ is torsion (and in fact an infinite rank $\mathbb{Z}$-module). This provided a fundamental step in Taubes' celebrated proof of the Weinstein conjecture in dimension three \cite{Tau}.
\\ 
\par
The main goal of the present paper is to refine the results of \cite{KM} by providing a computation of $\HMb_*(Y,\mathfrak s)$ over $\mathbb{Z}$, or more generally with local coefficients (over any ring). We will refer to the $E^4$ page of the spectral sequence in $(\mathrm{ii})$ as the \textit{cup homology} of $Y$. This was introduced and thoroughly studied in \cite{Mar}, where it is denoted by $HC_*^{\infty}(Y)$; in this notation, Kronheimer and Mrowka's result is that $$\HMb_*(Y,\mathfrak s)\otimes \mathbb{Q}=HC_*^{\infty}(Y)\otimes\mathbb{Q}.$$
Our main theorem gives an isomorphism over $\mathbb{Z}$ from $\HMb_*(Y, \mathfrak s)$ to a more intricate variant of the cup homology groups. To state it, suppose $a \in \Lambda^3(\mathbb Z^n)$ may be written as a sum of basis monomials as $a = \sum_{I = \{i < j < k\}} a_I e^i e^j e^k$; it will be useful to write $e^I = e^i e^j e^k$. Associated to $a$ are odd-degree classes called its \emph{insertion powers}, defined precisely in Definition \ref{def:insertionpower}. We content ourselves with a precise definition of the first here: $$a^{\circ 2} = -\sum_{\substack{I, J \text{ with } |I \cap J| = 1}} a_I a_J e^{I_0} \wedge e^J \wedge e^{I_1}\in\Lambda^5(\mathbb{Z}^n),$$
where $I_0$ and $I_1$ are the (possibly empty) substrings of $I$ appearing before and after the element $I\cap J$ respectively. In words, if the (unordered) pair $I, J$ meets at exactly one point, we insert $J$ where they meet, and otherwise ignore it. One has, for instance, 
\begin{align*}
(e^{123} + e^{245})^{\circ 2}&=-e^{12453}\\
(e^{123} + e^{345} + e^{567})^{\circ 2} &= -e^{12345} -e^{34567}.
\end{align*}
The higher insertion powers $a^{\circ k}\in \Lambda^{2k+1}(\mathbb Z^n)$ essentially iterate this process; we have for example $$(e^{123} + e^{345} + e^{567})^{\circ 3} = +e^{1234567}\in\Lambda^{7}(\mathbb Z^n),$$
the plus sign arising from two canceling minus signs.
\textbf{Choose} a basis for $H^1(Y)=\mathbb{Z}^n$ so that $\cup^3_Y$ is identified with a class $a \in \Lambda^3(\mathbb Z^n)$. Via duality, one then identifies $\iota_{\cup^3_Y}=\iota_{a}$; the latter naturally generalizes to the contractions $\iota_{a^{\circ k}}$ with the classes $a^{\circ k}\in \Lambda^{2k+1}(\mathbb Z^n)$. In this notation, the cup homology $HC_*^\infty(Y)$ is isomorphic to the homology of $\Lambda^*(\mathbb{Z}^n)[U, U^{-1}]$ with respect to the differential given by $x\otimes U^{n}\mapsto \iota_{a}(x)\otimes U^{n-1}$.
\begin{definitionint}
The \emph{extended cup homology} $\overline{HC}_*^{\infty}(Y)$ of $Y$ with respect to the given basis is the homology of $\Lambda^*(\mathbb{Z}^n)[U, U^{-1}]$ with respect to the degree $-1$ differential given by
\begin{equation*}
x\otimes U^{n}\mapsto \iota_{a}(x)\otimes U^{n-1} + \iota_{a^{\circ 2}}(x)\otimes U^{n-2} + \iota_{a^{\circ 3}}( x) \otimes U^{n-3} + \cdots
\end{equation*}
where $a=\cup^3_Y\in \Lambda^3(\mathbb Z^n)$.
\end{definitionint}
\begin{remarkint}
The natural filtration of $\Lambda^*(\mathbb{Z}^n)$ by degree gives rise to a spectral sequence abutting to $\overline{HC}_*^\infty(Y)$ with $E^4$-page $HC_*^\infty(Y)$; the higher differentials can be described in terms of $a$ in a purely algebraic fashion.
\end{remarkint}

Our main result is the following \textit{completely explicit} computation of $\HMb_*$ over the integers.
\begin{theoremint}\label{thmmain}
If $\mathfrak{s}$ is a torsion spin$^c$ structure, the monopole Floer homology group $\HMb_*(Y,\mathfrak s)$ is isomorphic to the extended cup homology $\overline{HC}_*^{\infty}(Y)$ as relatively $\mathbb{Z}$-graded $\mathbb Z[U, U^{-1}]$-modules.
\end{theoremint}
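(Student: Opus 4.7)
The strategy is the three-step approach laid out in the abstract: (a) reformulate $\HMb_*(Y,\mathfrak s)$ algebraically as the homology of $C^*(Y)[U,U^{-1}]$ with differential deformed by a twisting sequence $\tau = \sum_{k\geq 1}\tau_k U^k$ in the Hirsch algebra $C^*(Y)$; (b) use the rational characteristic class theory developed earlier in the paper to reduce the determination of the cohomology class of $\tau$ to the universal case of the $n$-torus; (c) match the explicit Hirsch-algebra computation on the minimal cubical torus with the insertion powers.

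For step (a), I would promote the spectral sequence in (ii) to the spectral sequence associated to a twisted complex, with $\tau_1$ representing the class $a_Y$ dual to the triple cup product (consistent with $d^3 = \iota_{\cup_Y^3}$) and the higher $\tau_k$ encoding the integral corrections that refine the Kronheimer-Mrowka spectral sequence. This reformulation is exactly what the general dga/Hirsch-algebra framework of the paper is built to produce; after Hodge duality, the twisted differential on $\Lambda^* H^1(Y;\mathbb{Z})[U,U^{-1}]$ can be written as cap product with a series $\sum_k [\tau_k]\,U^k$, and the whole problem becomes identifying $[\tau_k]$ with $a_Y^{\circ k}$.

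For step (b), given a basis of $H^1(Y;\mathbb{Z}) \cong \mathbb{Z}^n$, I would pick a classifying map $f\colon Y\to T^n$ realizing this identification on $H^1$. The induced map $f^*\colon C^*(T^n)\to C^*(Y)$ is a morphism of Hirsch algebras, and by the functoriality of the rational characteristic classes, together with the existence/uniqueness results for twisting sequences with prescribed characteristic class, the pullback $f^*\tau_{T^n}$ is gauge-equivalent to $\tau_Y$. Since the insertion powers $a_Y^{\circ k}$ are functorial in precisely the same way (they are defined purely in terms of $a_Y$ and natural under linear maps of the underlying exterior algebra), the theorem reduces to the case $Y = T^n$.

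For step (c), I would substitute the explicit description of the higher Hirsch operations on the minimal cubical torus that the authors advertise having computed. The main obstacle, and the heart of the proof, is to verify that these combinatorial formulas produce exactly the insertion powers $a_{T^n}^{\circ k}$ rather than some other family of natural higher corrections; this is the computation motivating the very definition of insertion powers. A secondary concern is controlling the gauge ambiguity: one needs the full rigidity of the rational characteristic class to pin down the classes $[\tau_k]$ on the nose, independently of the cubical model chosen, so that the cohomological formula extends back to general $Y$ via step (b).
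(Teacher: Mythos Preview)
Your overall architecture --- transfer a twisting sequence along a Hirsch zig-zag to a minimal model, use the characteristic classes $F_n$ to pin it down, and compute the Kraines construction on the minimal cubical torus --- matches the paper's approach. But your step (a) misidentifies the space on which the twisting lives, and this error propagates through (b).

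The twisting sequence is \emph{not} in $C^*(Y)$. The three-manifold $Y$ has cohomology only in degrees $0$--$3$, so a twisting sequence there is essentially trivial. What Kronheimer and Mrowka show is that $\HMb_*(Y,\mathfrak s)$ is isomorphic to the coupled Morse homology of the family of Dirac operators over the torus $\mathbb T=\mathbb T(Y,\mathfrak s)$ of flat spin$^c$ connections, which is a $b_1(Y)$-dimensional torus; this in turn is the twisted cohomology $H^*_{\mathrm{tw}}(C^*_\Delta(\mathbb T);\xi_{KM})$ for a twisting $3$-cycle $\xi_{KM}=(\xi_3,0,0,\dots)$ on $\mathbb T$. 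So the Hirsch algebra in play is $C^*_\Delta(\mathbb T)$, and the target of the zig-zag is $C^*_\cube(T^{b_1}_1)\cong \Lambda^*(\mathbb Z^{b_1})$.

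Consequently your step (b) has the wrong map. There is no classifying map $Y\to T^n$ in the argument, and there is no ``reduction to the case $Y=T^n$''. The vanishing of the higher characteristic classes $F_n(\xi_{KM})$ for $n>1$ comes instead from the fact that $\xi_{KM}$ is pulled back along the classifying map of the Dirac family $\zeta:\mathbb T\to SU(2)\cong S^3$, and $S^3$ has no cohomology in degrees $\ge 5$; naturality of $F_n$ under Hirsch maps then forces $F_n(\xi_{KM})=0$ for $n>1$ while $F_1(\xi_{KM})=[\xi_3]=\cup^3_Y$. Since $\Lambda^*(\mathbb Z^{b_1})$ has torsion-free bounded cohomology, the uniqueness part of the characteristic-class theorem identifies the transferred sequence with $K(\cup^3_Y)$. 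Your step (c) --- identifying $K(a)_{2n+1}$ with the insertion power $a^{\circ n}$ via the explicit $E_{1,p}$ on the minimal torus --- is then exactly the computation the paper performs.
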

\begin{remarkint}
For the reader more familiar with Heegaard Floer homology, our result directly translates to a computation of the group $HF^{\infty}(Y,\mathfrak{s})$ over the integers for a torsion spin$^c$ structure via the isomorphism between the theories (see \cite{KLT}, \cite{CGH} and subsequent papers). In that setting, Lidman showed using surgery techniques that the invariant coincides with the usual cup homology over $\mathbb F_2$ \cite{Lid}.
\end{remarkint}

Our main result is somewhat curious, especially given the fact that there exists an isomorphism to the usual cup homology $HC^\infty$ when the ground ring is $\mathbb F_2$ or $\mathbb Q$. This follows from the aforementioned computations of the invariants with those coefficients due to Lidman and Kronheimer--Mrowka respectively. One can show (using ideas related to the content of the paper) that this implies that the spectral sequence with $E^4=HC^\infty$ and converging to $\overline{HC}^\infty$ has $d_5 = 0$ after tensoring with any field, but for example leaves open the possibility that $d_7$ is nonzero even after tensoring with a field of characteristic $3$. 

To determine whether the non-vanishing of some higher differential was plausible, we computed both $\overline{HC}^\infty$ and $HC^\infty$ for $n \le 12$ and about $1000$ random choices of $a$, where $a$ was given as a sum of $30$ monomials with random coefficients at most $10$. In all cases, the underlying $\mathbb Z$-graded abelian groups were isomorphic. This leads us to the following completely algebraic question.

\begin{questint}\label{conj-cup}
Let $a \in \Lambda^3(\mathbb Z^n)$. Are the extended cup homology $\overline{HC}^\infty$ and the cup homology $HC^\infty$ associated to $a$ isomorphic (possibly noncanonically) as $\mathbb Z[U, U^{-1}]$-modules?
\end{questint}

\begin{remarkint}
Because both cup homology and its extended version have lots of integral torsion, a positive answer to this question is strictly stronger than the vanishing of the differentials in the spectral sequence; the analogue of the latter in Heegaard Floer homology was conjectured to hold by Ozsv\'ath and Szab\'o \cite{OSz}. Also, classical work of Sullivan \cite{Sul} shows that any $a \in \Lambda^3(\mathbb Z^n)$ can be realized as the triple cup product of some three-manifold; therefore the question above is asking whether $\overline{HC}^{\infty}(Y)\cong HC^{\infty}(Y)$ for all $Y$.
\end{remarkint}

\begin{remarkint}
The extended cup complex depends on the choice of basis in an essential way; it is only clear that the resulting homology groups are independent of basis as a result of the isomorphism to $\HMb_*(Y;\mathfrak s)$. As discussed in the final section of this introduction, the choice of basis arises naturally in the course of the proof, yet cup homology itself has no such dependence on basis, nor does $\HMb_*$. In this light, the dependence of the extended cup complex on a choice of basis appears rather strange.
\end{remarkint}

The monopole Floer homology group $\HMb_*(Y, \mathfrak s)$ vanishes for a non-torsion spin$^c$ structure. A more interesting version, denoted by $\HMb_*(Y, \mathfrak s, c_b)$, arises by looking at the equations perturbed by a \textit{balanced non-exact} perturbation $c_b$ \cite[Chapter $30$]{KM}. This invariant is only relatively $\mathbb{Z}/2N\mathbb{Z}$-graded, where
\begin{equation*}
N=\frac{1}{2}\mathrm{gcd}\left\{\langle a\cup{c_1(\mathfrak{s}}),[Y]\rangle\text{ $\lvert$ } a\in H^1(Y;\mathbb{Z})\right\},
\end{equation*}
and corresponds in Heegaard Floer homology to the group $HF^{\infty}(Y,\mathfrak{s})$ under the isomorphism between the theories. In direct analogy to Theorem \ref{thmmain}, we will provide a complete computation of this invariant in terms of suitable `twisted' extended cup homology groups, see Section $6.2$ for the exact statement. In fact, our approach readily generalizes also to compute the invariants twisted by a local coefficient system $\Gamma_0$ in the blown-up configuration space \cite[Section $3.7$]{KM}, for both torsion and non-torsion spin$^c$ structures. 

\begin{remarkint}
The proof of Theorem \ref{thmmain} may either be phrased in terms of homology or cohomology. In our discussion we prefer the latter, so that we may focus our discussion on algebras (as opposed to algebras and their modules). In particular, in the proof of Theorem \ref{thmmain} we will mostly focus on the cohomological version of the invariant $\HMb^*(Y,
\mathfrak{s})$. When we deal with local systems and non-torsion spin$^c$ structures, and are forced to use modules, we return to the homological version.
\end{remarkint}

\vspace{0.3cm}

\textbf{Coupled Morse theory and relations with twisted de Rham cohomology. }In \cite[Chapter $33$]{KM}, Kronheimer and Mrowka introduce the \textit{coupled Morse complex} $CMC^*(M, L)$ for a compact smooth manifold $M$ with a Morse function $f$ and a family of self-adjoint Fredholm operators $L$ over $M$, classified (up to homotopy) by a map $\zeta_L: M \to SU(\infty)$. Its cohomology is a module over $\mathbb{Z}[U, U^{-1}]$. Furthermore, Kronheimer and Mrowka prove (\cite[Section $35.1$]{KM}) that when $L$ is the family of Dirac operators $\{D_B\}$ parameterized by the torus $\mathbb{T} = \mathbb T(Y, \mathfrak s)$ of flat spin$^c$ connections of a torsion spin$^c$ structure $\mathfrak s$, the coupled Morse cohomology $CMH^*(\mathbb T, D_B)$ recovers the Floer cohomology group $\HMb^*(Y,\mathfrak s)$, together with its module structure over $\mathbb{Z}[U, U^{-1}]$.

Our proof will build on the simplicial model constructed in \cite[Section $34.3$]{KM} for the coupled Morse cohomology complex $C^*(M,L)$ of a family of self-adjoint operators $L$ on a smooth manifold $M$ classified (up to homotopy) by a map $$\zeta:M\rightarrow SU(2).$$ The authors show that for a sufficiently fine $\Delta$-complex structure on $M$, there is a simplicial $3$-cocycle $x_3$ for which the chain complex
\begin{equation}\label{twistcomp}
C^*_{\tw}(M;x_3)=C^*_{\Delta}(M)\otimes \mathbb{Z}[T^{-1},T]
\end{equation}
equipped with the differential
\begin{equation}\label{twistdiff}
\sigma \mapsto d\sigma +(x_3\cup \sigma)T,
\end{equation}
is quasi-isomorphic to $CMC^*(M,L)$. Here $T$ is a formal variable of degree $-2$; this quasi-isomorphism sends the action of $U^{-1}$ in coupled Morse cohomology to $T$ in twisted singular homology. The two variables are related in a more complicated way when one allows $\zeta$ to factor through $U(2)$ instead, and the exact relationship remains open; again see Section 6 for more details.
\par
We will refer to this model as the simplicial cohomology of $M$ twisted by the $3$-cocycle $\xi_3$, and denote it by $H^*_{\tw}(M;\xi_3)$. Notice that it is implicit in the construction that $\xi_3^2=0$ at the cochain level, as otherwise (\ref{twistdiff}) does not square to zero. We call such cocycles \emph{twisting 3-cycles}. The class $[\xi_3]\in H^3(M;\mathbb{Z})$ is the pullback via the classifying map $\zeta$ of the generator of $H^3(SU(2);\mathbb{Z})=\mathbb{Z}$. In the case of the family of Dirac operators associated to $(Y,\mathfrak{s})$, the corresponding element in $H^3(\mathbb{T};\mathbb{Z})=\left(\Lambda^3 H^1(Y;\mathbb{Z})\right)^*$ is naturally identified with $\cup_Y^3$, the triple cup product of $Y$, \cite[Lemma $35.1.2$]{KM}.
\par
In \cite{KM} the authors then show that after tensoring with $\mathbb{R}$, this construction is equivalent to a version of Atiyah and Segal's twisted de Rham cohomology \cite{AS}, where the twisting is provided by a closed $3$-form representing $[x_3]$ in de Rham cohomology. The result in (iii) then follows immediately from the formality of the de Rham complex of the torus. Furthermore, this shows that when $L: M \to SU(\infty)$ factors through $SU(2)$, the coupled Morse cohomology with real coefficients $CMH^*(M,L;\mathbb R)$ only depends on the real cohomology class $\zeta_L^*[SU(2)].$
\\
\par
\textbf{Integral twisted cohomology.} By contrast, the analogous story is significantly more complicated over the integers. In the final remark of \cite[Section $34.3$]{KM}, the authors point out that cohomologous twisting $3$-cycles $x_3$ and $x_3'$ might lead to non-isomorphic twisted cohomology groups, but do not provide explicit examples of this phenomenon.
\par
In fact, already in the simplest case in which $[x_3]=0\in H^3(M;\mathbb{Z})$, the twisted cohomology group $H^*_{\tw}(M;x_3)$ might differ from $H^*_{\tw}(M;0)=H^*(M)\otimes \mathbb{Z}[T^{-1},T]$. Indeed, if $d h_2 + x_3 = 0$, $x_3\cup h_2$ defines a cohomology class in $H^5(M;\mathbb{Z})$ as $x_3^2=0$, and one shows that the $E_5$ page of the spectral sequence associated to the grading filtration is equivalent to the chain complex with the same underlying group $H^*(M)\otimes \mathbb{Z}[T^{-1},T]$ and differential
\begin{equation}\label{twistdiff2}
\sigma \mapsto \left([x_3\cup h_2]\cup \sigma\right)T^2.
\end{equation}
In some cases of interest, we will see that $[x_3\cup h_2]$ is a possibly non-vanishing 2-torsion class, so that while the differential $d_3$ of the associated spectral sequence vanishes, as $[x_3]=0$, the differential $d_5$ might be non-zero. We give an explicit example where this occurs in Section \ref{nonformalsu2} below, and prove the following result (providing a concrete example of the possible phenomenon described in \cite[Section $34.3$]{KM}) as a consequence.

\begin{theoremint}\label{cohono}
There exists a closed smooth manifold $M$ equipped with two families of self-adjoint operators $L_i$ ($i=0,1$) with non-isomorphic coupled Morse homologies which are classified by maps $\zeta_i: M\rightarrow SU(2)$ for which $\zeta_0^*[SU(2)]=\zeta_1^*[SU(2)]\in H^3(M;\mathbb{Z})$.
\end{theoremint}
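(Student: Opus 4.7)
By the quasi-isomorphism of~\cite[Section~34.3]{KM}, the coupled Morse cohomology of a family $L$ classified by $\zeta: M \to SU(2)$ is computed by the twisted simplicial complex $C^*_{\tw}(M;\xi_3)$ for a $3$-cocycle $\xi_3$ representing $\zeta^*[SU(2)]$. I would first reduce the theorem to producing a closed smooth manifold $M$ together with two twisting 3-cocycles $\xi_3^0, \xi_3^1$ representing the same class in $H^3(M;\mathbb Z)$ for which the twisted cohomologies $H^*_{\tw}(M;\xi_3^0)$ and $H^*_{\tw}(M;\xi_3^1)$ are not isomorphic. A clean way to set this up is to take both classes to vanish: set $\xi_3^0 \equiv 0$ (realized by a constant map $\zeta_0$) and take $\xi_3^1$ to be cohomologous to zero but non-zero at the cochain level.

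With this choice, $H^*_{\tw}(M;\xi_3^0) \cong H^*(M;\mathbb Z)[T^{-1},T]$. For $\xi_3^1$, I would run the grading-filtration spectral sequence: since $[\xi_3^1]=0$, the $d_3$-differential vanishes, and the next potentially nontrivial differential is the $d_5$ described in~(\ref{twistdiff2}), which is multiplication by $[\xi_3^1 \cup h_2]\cdot T^2$, where $h_2$ is a 2-cochain with $dh_2 + \xi_3^1 = 0$. The problem thus reduces to constructing $(M,\xi_3^1,h_2)$ for which $[\xi_3^1\cup h_2]$ is a non-zero element of $H^5(M;\mathbb Z)$: then the $d_5$-differential is nontrivial, and the two twisted cohomologies necessarily differ in at least one degree.

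The main obstacle is the explicit construction promised in Section~\ref{nonformalsu2}. I would search for $M$ among closed smooth manifolds whose integral cochain algebras are non-formal in the specific way detected by this secondary product--for instance low-dimensional products or mapping tori involving $\mathbb{RP}^n$ or lens spaces--so that $[\xi_3^1\cup h_2]$ is identifiable with a non-vanishing secondary cohomology operation and yields a non-zero (indeed 2-torsion) class in $H^5(M;\mathbb Z)$. A genuinely delicate point is to choose a $\Delta$-complex structure on $M$ fine enough that the \emph{strict} equation $\xi_3^1 \cup \xi_3^1 = 0$ holds at the cochain level, not merely cohomologically, as required for $\xi_3^1$ to be a twisting 3-cocycle and for the classifying map $\zeta_1 : M \to SU(2)$ to be realizable; this subtlety is precisely the one flagged in~\cite[Section~34.3]{KM}. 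Once $(M,\xi_3^1,h_2)$ is in hand and $\zeta_1$ is constructed, the two families $L_i$ classified by $\zeta_0, \zeta_1$ satisfy $\zeta_0^*[SU(2)] = 0 = \zeta_1^*[SU(2)]$ while having distinct coupled Morse cohomologies, completing the proof.
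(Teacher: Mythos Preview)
Your overall strategy is correct and matches the paper's: take $\zeta_0$ constant, take $\zeta_1$ with $[\xi_3^1]=0$, and show that the $d_5$-differential---which is multiplication by $[\xi_3^1 \cup h_2]$---is nonzero. But your proposal stops at ``I would search for $M$ among low-dimensional products or mapping tori involving $\mathbb{RP}^n$ or lens spaces,'' and this is precisely where the content lies. You have not produced an example, and the candidates you list are not the ones that work.

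The paper's construction goes through a different route. First, it identifies the class $[\xi_3 \cup h_2]$ as a \emph{mod $2$ Hopf invariant} $h(\phi)$ of the classifying map $\phi: X \to S^3$, defined via the cup-square of a lift of the fundamental class to $H^3$ of the mapping cone $C_\phi$. Second, it exhibits the $5$-complex $X = \Sigma^3\mathbb{RP}^2$ (not a product or mapping torus) with a map $\phi: X \to S^3$ whose Hopf invariant is nonzero; the argument uses that the $5$-skeleton of $C_\phi$ is $\Sigma\mathbb{CP}^2$, where $\mathrm{Sq}^2$ is nontrivial, and then computes the integral Steenrod square $\overline{\mathrm{Sq}}{}^3 x_\phi = x_\phi^2 \ne 0$. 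Third, since $\Sigma^3\mathbb{RP}^2$ is not a manifold, one takes a regular neighborhood $M_0$ of an embedding in $\mathbb{R}^n$ and then the closed double $DM_0$; the map $\phi$ extends to $\overline\phi$, and naturality of the Hopf invariant under pullback gives $h(\overline\phi)\ne 0$.

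One more point: you flag as ``genuinely delicate'' the requirement that $\xi_3^1 \cup \xi_3^1 = 0$ hold on the nose. In the paper's setup this is automatic: $\xi_3^1$ is the pullback of a simplicial $3$-cocycle on $S^3$, and $C^6_\Delta(S^3) = 0$, so the square vanishes tautologically. The real difficulty is not arranging $\xi_3^2=0$ but finding the example where $[\xi_3 h_2]\ne 0$.
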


This discussion suggests that even if we are only interested in studying the cohomology twisted by a square-zero $3$-cocycle, higher-degree cochains (such as $x_3\cup h_2$) necessarily enter the theory. In fact, the latter provide a more natural and invariant framework to deal with twisted cohomology from the point of view of homotopy theory, and naturally lead to the following definition.
\begin{definitionint}\label{twistintro} Let $A$ be a dg-algebra. A \textbf{twisting sequence} for $A$ is a sequence $x_{\bullet}=(x_3, x_5,\dots)$ of odd degree cochains satisfying the relation
\begin{equation*}
d x_{2n+1}+\sum_{\substack{n=i+j \\ i,j \geq 1}} x_{2i+1}\cup x_{2j+1}=0.
\end{equation*}
The twisting sequence $x_{\bullet}$ defines the differential on $A\llbracket T, T^{-1}]$ given by
\begin{equation*}
\sigma\mapsto d \sigma+\sum_{i\geq 1} (x_{2i+1} \cup \sigma) T^i,
\end{equation*}
and we denote the corresponding twisted cohomology group by $H^*_{\tw}(A;x_{\bullet})$. When $A$ is bounded above in degree, we may use Laurent polynomials $A[T, T^{-1}]$ instead of Laurent series as above.

A $3$-cocycle $x_3$ with $x_3^2=0$ naturally defines a twisting sequence, namely $(x_3,0,0,\dots)$; we will call the latter the twisting sequence associated to $x_3$.
\end{definitionint}
\begin{remarkint}
The notions of twisting sequence and twisted cohomology make sense in much more general contexts, where they are known under the name \emph{twisting cochain}, \emph{twisting element}, or sometimes \emph{Maurer-Cartan element}. These first appeared in \cite{Brown} in a description of the cochain algebra of a fiber bundle as a twisted tensor product, twisted by such a twisting cochain. The above is essentially the special case of a Maurer-Cartan element in $A\llbracket T\rrbracket$ contained in $TA\llbracket T\rrbracket$. One might then denote the set of twisting sequences by $MC_+(A\llbracket T\rrbracket)$; this is rather a mouthful, and we will later use the notation $TS(A)$. 

While some of our algebraic results might hold in more general contexts, we only pursue them in the level of generality relevant to our main application.
\end{remarkint}

\medskip

\textbf{Twisting sequences and Hirsch algebras.} 
Twisting sequences enjoy a notion of homotopy, so that if $f: A \to B$ is a quasi-isomorphism of dg-algebras, it induces a bijection between homotopy classes of twisting sequences (see Proposition \ref{qiso-transfer}); furthermore, homotopic twisting sequences give rise to isomorphic twisted (co)homology groups. Because Kronheimer and Mrowka's result computes that $\HMb^*(Y, \mathfrak s)$ is isomorphic to a twisted cohomology group of the algebra $C^*_\Delta(\mathbb T)$ with respect to an appropriate twisting sequence $\xi_{KM} = (\xi_3, 0, \cdots)$, our goal is to transfer this twisting sequence to the exterior algebra algebra $H^*(\mathbb T)$ by using the fact that cochains on the torus give a \emph{formal} dga: there is a zig-zag of quasi-isomorphisms from $C^*(\mathbb T)$ to $H^*(\mathbb T)$. 

Unfortunately, this transferred twisting sequence is \emph{completely inexplicit}, because the proof that quasi-isomorphisms induce a bijection on homotopy classes of twisting sequence is inexplicit. We need something stronger to determine what twisting sequence $\xi_{KM}$ is transferred to. Observe that if $x_\bullet$ is a twisting sequence, $[x_3]$ gives a cohomology class which is homotopy invariant and natural under pushforward of twisting sequences. Therefore in the above transfer process we can at least recover $[x_3]$. Ideally we would now say that $[x_5], [x_7]$ and so on play the same role in higher degrees. But $x_5$ is not a cycle, so does not define a cohomology class! Indeed we have $dx_5 = -x_3^2$. If we want to construct a homology class using $x_5$, we will need a \emph{canonical} reason that $x_3^2$ is null-homotopic.
\par
This leads us to study \emph{Hirsch algebras} (see \cite{Saneblidze}) in Section 2. These are dgas equipped with an operation $$E_{1,1}: A^p \otimes A^q \to A^{p+q-1}$$ which demonstrates that the product on $A$ is homotopy-commutative (playing the same role as Steenrod's cup-$1$ product on simplicial cochains), together with additional operations $E_{p,q}$ which assert that $E_{1,1}$ is associative up to coherent homotopy and gives a derivation of the cup product up to coherent homotopy. These are relevant because $dE_{1,1}(x_3, x_3) = -2x_3^2$. Much of Section \ref{examplesHirsch} is devoted to giving explicit constructions of Hirsch structures in two cases of interest, simplicial and cubical cochains $C^*_\Delta$ and $C^*_\cube$ of simplicial/cubical sets, using recent work of Medina-Mardones on $E_\infty$ operads \cite{MM1} but attempting to remain as explicit as possible (which is necessary for our later explicit computations on the minimal torus).
\par
In a Hirsch algebra $A$, we will define \textit{rational characteristic classes} for a twisting sequence $x_{\bullet}$ $$F_n(x_\bullet) \in H^{2n+1}(A)_{\mathbb Q}$$ which are homotopy invariants and natural under maps of Hirsch algebras. When $A$ has torsion-free cohomology, these characteristic classes are enough to recover the original twisting sequence up to homotopy (Theorem \ref{charclass-facts}). They are exactly what we need to compute the transferred twisting sequence in $H^*(\mathbb T)$ above.

We construct these characteristic classes via two constructions, valid for arbitrary Hirsch algebras $A$ and possibly of independent interest. The inspiration for both of these is the fact (Proposition \ref{gplike-cocycle}) that elements of $A\llbracket T\rrbracket$ with $dx + x^2 = 0$ correspond bijectively to elements $g(x)$ of the bar construction $BA\llbracket T\rrbracket$ which are both:
\begin{itemize}
    \item \emph{grouplike}, i.e. $\Delta g(x) = g(x) \otimes g(x)$,
    \item \emph{cocycles}, i.e. $d_{BA} g(x) = 0$.
\end{itemize}
The reason a Hirsch algebra structure is relevant is that it gives rise to a product $\mu: BA \otimes BA \to BA$ so that $BA$ becomes a (possibly nonassociative) dg-bialgebra. Then products of grouplike cocycles are again grouplike cocycles, and we have the following result (Corollary \ref{ProdOp}).

\begin{theoremint}\label{hT-prod}
Let $A$ be a Hirsch algebra. Then the set $TS(A)$ of twisting sequences in $A$ has an explicit unital product $\mu: TS(A) \times TS(A) \to TS(A)$, which is natural for Hirsch algebra maps. Writing $ts(A)$ for the set of homotopy classes of twisting sequences, this descends to a product $h\mu: ts(A) \times ts(A) \to ts(A)$. 
\end{theoremint}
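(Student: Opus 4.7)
The plan is to exploit the correspondence from Proposition~\ref{gplike-cocycle} between twisting sequences $x_\bullet \in TS(A)$ and \emph{grouplike cocycles} $g(x) = \sum_{n \ge 0} [x \mid x \mid \cdots \mid x] \in BA\llbracket U \rrbracket$, where $x = \sum_{i \ge 1} x_{2i+1} U^i$. The definition of $\mu$ is then essentially forced: given twisting sequences $x_\bullet, y_\bullet$, set $\mu(x_\bullet, y_\bullet)$ to be the unique twisting sequence whose associated grouplike cocycle equals $\mu_{BA}\bigl(g(x) \otimes g(y)\bigr)$, where $\mu_{BA}\colon BA \otimes BA \to BA$ is the (possibly non-associative) product built from the Hirsch operations $E_{p,q}$ promoted to $BA$ in the preceding subsection.

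The key verification is that $\mu_{BA}(g(x) \otimes g(y))$ is itself a grouplike cocycle in the range of $g$. It is a cocycle because $\mu_{BA}$ is a chain map and $g(x), g(y)$ are cocycles. It is grouplike because the Hirsch algebra axioms make $(BA, \mu_{BA}, \Delta_{BA})$ into a dg-bialgebra: $\Delta_{BA}$ is multiplicative for $\mu_{BA}$, so, using $\Delta g(x) = g(x) \otimes g(x)$ and similarly for $y$,
\begin{equation*}
\Delta_{BA}\, \mu_{BA}\bigl(g(x) \otimes g(y)\bigr) = \mu_{BA}\bigl(g(x) \otimes g(y)\bigr) \otimes \mu_{BA}\bigl(g(x) \otimes g(y)\bigr).
\end{equation*}
Moreover, since $g(x), g(y) \equiv 1 \pmod U$, so is $\mu_{BA}\bigl(g(x) \otimes g(y)\bigr)$, so by Proposition~\ref{gplike-cocycle} it arises from a unique element $x \ast y \in UA\llbracket U\rrbracket$ with $d(x \ast y) + (x \ast y)^2 = 0$, which decomposes into the desired twisting sequence $\mu(x_\bullet, y_\bullet)$.

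Naturality under a Hirsch map $f\colon A \to B$ is automatic: $Bf\colon BA \to BB$ intertwines the bar-products $\mu_{BA}, \mu_{BB}$ (the operations on $BA$ assembling $\mu_{BA}$ are built functorially out of the $E_{p,q}$) and the grouplike-cocycle assignments $g_A, g_B$. Unitality is witnessed by the zero twisting sequence $0_\bullet$, which corresponds to the empty bar word $1 \in BA$: unitality of $\mu_{BA}$ as a bialgebra product yields $\mu(0_\bullet, x_\bullet) = x_\bullet = \mu(x_\bullet, 0_\bullet)$.

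Descending to homotopy classes is where I expect the main difficulty. Assuming the earlier sections organize homotopies of twisting sequences as twisting sequences in a cylinder Hirsch algebra $A \otimes P$ with Hirsch-algebra endpoint evaluations $\varepsilon_0, \varepsilon_1\colon A \otimes P \to A$, the Hirsch-naturality of $\mu$ established in the previous paragraph implies that $\mu$ applied to a pair of homotopies lies in $TS(A \otimes P)$ and has $\varepsilon_i$-images the products of the endpoints. The obstacle is largely bookkeeping: one must verify that the Hirsch structure on $A \otimes P$ makes $\varepsilon_0, \varepsilon_1$ into Hirsch maps, and that the bar-product $\mu_{A \otimes P}$ behaves sensibly under $\varepsilon_i$. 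This amounts to checking the naturality of the construction of $\mu_{BA}$ from the Hirsch operations with respect to the specific cylinder object chosen; granted this, descent to $ts(A) \times ts(A) \to ts(A)$ is immediate.
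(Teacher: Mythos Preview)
Your construction of $\mu$ on $TS(A)$ via grouplike cocycles in $BA\llbracket U\rrbracket$ is exactly the paper's argument: it verifies that products of grouplike cocycles are grouplike cocycles using the bialgebra compatibility, then reads off the resulting twisting sequence via Proposition~\ref{gplike-cocycle}. Naturality and unitality follow just as you describe.

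For the descent to $ts(A)$, you have the right shape of argument but have correctly flagged the real obstacle. The difficulty is precisely that the standard interval algebra $I = C^*_\Delta([0,1])$ is \emph{not} a Hirsch algebra in any evident way, and there is no obvious Hirsch structure on $I \otimes A$ (the tensor product of two Hirsch algebras is not canonically Hirsch). The paper does not fully resolve this over $\mathbb Z$: the remark following Corollary~\ref{ProdOp} appeals to $B_\infty$-interval algebras and a notion of $B_\infty$-tensor product, without details. Where the paper actually \emph{uses} homotopy invariance (in the proof of Theorem~\ref{charclass-facts}), it works rationally and exploits the \emph{commutative} interval algebra $\mathbb Q[t,dt]/(dt)^2$ of Remark~\ref{commutativeintalg}: when $J$ is commutative, $J \otimes A$ carries a canonical Hirsch structure with $E_{p,q}$ extended by the Koszul rule, and the evaluations $\varepsilon_0, \varepsilon_1$ are Hirsch maps. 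So your ``assuming the earlier sections organize homotopies as twisting sequences in a cylinder Hirsch algebra'' is not quite available integrally; the paper's workaround is either to pass to $\mathbb Q$ or to gesture at $B_\infty$ machinery.
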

\begin{remarkint}
It seems likely that $h\mu$ is associative (though the map $\mu: BA \otimes BA \to BA$ need not be, it seems plausible that one should be able to choose a homotopy equivalent model in which it is indeed associative).
\end{remarkint}

Our second construction is inspired by the construction of \cite[Lemma 16]{Kra} and its extension to Hirsch algebras in \cite[Section 3.3]{Saneblidze}.

\begin{theoremint}\label{twistK}
{Let $A$ be a Hirsch algebra.} For each odd cocycle $a \in Z(A)^{2n+1}$, there is a \textbf{canonical} twisting sequence $K(a)$ in $A_{\mathbb Q}$ with $K(a)_{2i+1} = 0$ for $i < n$ and $K(a)_{2n+1} = a$. These twisting sequences are natural for Hirsch algebra maps.
\end{theoremint}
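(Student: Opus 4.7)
The plan is to construct $K(a)$ inductively, following Kraines' approach to higher Massey powers \cite[Lemma 16]{Kra} extended to the Hirsch setting \cite[Section 3.3]{Saneblidze}: each entry $K(a)_{2i+1}$ will be an explicit universal rational expression in iterated Hirsch operations $E_{p,q}$ applied to $a$.

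A twisting sequence $x_\bullet$ is equivalent to an element $x = \sum_{i \geq 1} x_{2i+1} U^i \in A\llbracket U\rrbracket$ satisfying $dx + x \cup x = 0$. Since $|a| = 2n+1$ is odd and every entry of a twisting sequence must have odd degree, the conditions $K(a)_{2n+1} = a$ and $K(a)_{2i+1} = 0$ for $i < n$ force $K(a)_{2i+1} = 0$ unless $n \mid i$. Writing $K(a) = \sum_{k \geq 1} y_k U^{nk}$ with $y_1 = a$ and $|y_k| = 2nk+1$, the Maurer--Cartan equation unpacks into
\begin{equation*}
dy_k \;=\; -\sum_{\substack{i+j = k \\ i,j \geq 1}} y_i \cup y_j, \qquad k \geq 1.
\end{equation*}
The case $k=1$ holds by hypothesis. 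For $k=2$, the defining Hirsch relation yields $dE_{1,1}(a,a) = \pm 2 a^2$ once $|a|$ is odd and $da = 0$; setting $y_2 := \mp \tfrac{1}{2} E_{1,1}(a,a) \in A_{\mathbb Q}^{4n+1}$ gives the first extension, with the denominator forcing the passage to $A_{\mathbb Q}$.

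For the inductive step, assume $y_1, \ldots, y_{k-1}$ have been built as universal rational expressions in the $E_{p,q}$ operations on $a$. The right-hand side $\omega_k := -\sum_{i+j=k} y_i \cup y_j$ is automatically a cocycle, and the higher Hirsch operations supply a canonical primitive $y_k$. The cleanest conceptual framework is the bar-construction picture of Proposition \ref{gplike-cocycle}: twisting sequences correspond to grouplike cocycles in $BA\llbracket U\rrbracket$, and the Hirsch-induced multiplication $\mu: BA \otimes BA \to BA$ appearing in Theorem \ref{hT-prod} packages all the higher Hirsch relations into a single algebraic structure. In this picture, $K(a)$ arises as the \emph{group exponential} of $aU^n$, producing all $y_k$ at once as a coherent family. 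Naturality in Hirsch maps is automatic: since each $y_k$ is a universal rational formula in $E_{p,q}$ trees with leaves $a$, and Hirsch maps commute with the $E_{p,q}$, we have $f_*(K(a)) = K(f(a))$.

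The main obstacle is verifying that the inductive candidate $y_k$ genuinely satisfies $d y_k = \omega_k$: this requires invoking the full higher Hirsch coherence relations (controlling the failure of associativity of $E_{1,1}$, its behavior as a derivation of $\cup$, and the compatibilities between the $E_{p,q}$). The combinatorics is best organized by viewing each $y_k$ as a weighted sum over planar rooted trees with nodes labelled by the $E_{p,q}$ operations and leaves $a$; the denominators — bounded by $k!$ — arise from symmetrization and explain the essential passage to rational coefficients.
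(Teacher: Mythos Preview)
Your proposal assembles the right ingredients --- the bar construction, grouplike cocycles, and the group exponential --- but then treats the exponential as a conceptual wrapper around an inductive verification that you describe as the ``main obstacle.'' In the paper's argument the exponential \emph{is} the proof, and the obstacle you name is never faced. One sets $a_0 = 1$, $a_m = \mu([a], a_{m-1})$ in $BA$, and $\exp(a) = \sum_{m \ge 0} a_m T^{nm}/m!$. Because $d[a] = 0$ and $\mu$ is a chain map, each $a_m$ is a cocycle by a one-line induction; a second short induction using the bialgebra axiom gives $\Delta a_m = \sum_i \binom{m}{i} a_i \otimes a_{m-i}$, whence $\exp(a)$ is grouplike by the standard exponential-of-a-primitive computation. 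Proposition~\ref{gplike-cocycle} then converts ``grouplike cocycle'' into ``twisting sequence'' automatically, and naturality is immediate since only $\mu$ was used.

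The point is that verifying the two conditions \emph{cocycle} and \emph{grouplike} in $BA$ is dramatically easier than verifying the Maurer--Cartan relations $dy_k = -\sum y_i y_j$ in $A$ by direct induction, because the former never unpacks the higher Hirsch coherence relations at all. Your proposed route via trees and direct verification of $dy_k = \omega_k$ would presumably succeed, but it is the hard way around; the exponential trick is not merely a clean packaging but the mechanism that makes the argument short.
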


Putting these together, for any twisting sequence $x_\bullet$ in a Hirsch algebra $A$, we can construct a canonical approximation $K_{(n)}(x_\bullet)$ in $A_{\mathbb Q}$ which agrees with $x_\bullet$ through degree $2n-1$; the $n$'th characteristic class is given by $$F_n(x_\bullet) = [x_{2n+1} - K_{(n)}(x_\bullet)_{2n+1}],$$ and may be understood as the first obstruction to finding a homotopy from $x_\bullet$ to $K_{(n)}(x_\bullet)$.
Let us point out that the theory of characteristic classes is significantly simpler in the case in which $A=C^*_{\Delta}(X)$; this is because all higher operations $E_{p,q}$ with $p\geq 2$ vanish.\footnote{More precisely, our characteristic classes rely heavily on the operations $E_{1,p}$; one may obtain characteristic classes which only depend on the cup-1 product if one sets up the theory with the operations $E_{p,1}$ instead. This is discussed in more detail in Remark \ref{rmk:right-mult}.} On the other hand, we will need the more general machinery because we will work with cubical cochains, for which the analogue of the classical Hirsch formula \cite{Hir} only holds up to homotopy (rather than on the nose as in the simplicial case).
\\
\par
\textbf{Computations on the minimal torus.} 
To apply the machinery of the previous subsection to our problem, we need to find a zig-zag of \emph{Hirsch algebra} quasi-isomorphisms from $C^*_\Delta(\mathbb T)$ to $H^*(\mathbb T)$. The most obvious approach is to use combinations of the classical Eilenberg-Zilber and Alexander-Whitney maps to reduce this to a tensor product of circles, but these maps are \textbf{not} Hirsch algebra maps in an obvious way. 

Instead, we observe that there is a geometric model for $\mathbb T$ whose cochain algebra is isomorphic to $H^*(\mathbb T)$ and carries the structure of a Hirsch algebra. This is not possible simplicially: the minimal triangulation of $T^n$ certainly requires more than one $n$-simplex, and the standard small triangulation has $n!$ of them. Instead, we use a cubical model $T^n_1$, which we call the \emph{minimal torus}. This is a cubical set obtained as a quotient of the standard cube $\Box^n$ by pasting together opposite sides. 

There is a zigzag of comparison maps between $H^*(T^n) = C^*_\cube(T^n_1)$ and the simplicial cochain algebra $C^*_\Delta(\mathbb T)$, all of which are Hirsch algebra quasi-isomorphisms. Choosing such a zig-zag is essentially equivalent to choosing a basis for $H^1(\Bbb T;\Bbb Z)$, which is why the definition of the extended cup complex depends on a choice of basis.. The twisting sequence we are interested in --- $\xi_{KM} = (\xi_3, 0, \cdots)$ --- is pulled back to $C^*_\Delta(\mathbb T)$ from $C^*_\Delta (SU_2)$ by a simplicial map. By naturality, the characteristic classes have $F_n(\xi_{KM}) = 0$ for all $n > 1$, while $F_1 (\xi_{KM}) = [\xi_3] = \cup^3_Y$ is known to be the triple cup product. 

The twisted cohomology of $C^*_\Delta(\mathbb T)$ with respect to $\xi_{KM}$ is isomorphic to the twisted cohomology of $H^*(T^n) = \Lambda^*(\mathbb Z^n)$ with respect to an appropriate transferred twisting sequence $z_* \xi_{KM}$. This transferred twisting sequence must have $F_1(z_* \xi_{KM}) = \cup^3_Y$ and $F_n(z_* \xi_{KM}) = 0$ for $n > 1$. We will see that the canonical the twisting sequence $K(\cup^3_Y)$ associated to $\cup^3_Y$ in Theorem \ref{twistK} has these properties (cf. Theorem \ref{charclass-facts} below); because the torus has bounded and torsion-free cohomology, this must be homotopic to $z_* \xi_{KM}$ (and therefore equal because the notion of homotopy degenerates on any algebra with trivial differential). 
\par
We thus have an isomorphism from $\overline{HM}^*(Y, \mathfrak s)$ to the twisted cohomology of $H^*(T^n)$ with respect to $K(\cup^3_Y)$. What we need to do is \emph{determine this twisting sequence}. We carry this out in Section \ref{mintoruscomp} by giving a complete calculation of the operations $E_{1,p}$ for the minimal torus, where we show that in $H^*(T^n)$, the element $K(a) = (a, a^{\circ 2}, a^{\circ 3}, \dots)$, as in Theorem \ref{thmmain}. This is not a full calculation of the Hirsch algebra structure on the minimal torus --- for instance, the operation $E_{2,2}$ is nonzero. However, these higher operations are irrelevant to our calculations. Finally, in Section \ref{sec:localsys}, we explain why this result also implies an isomorphism with respect to various local systems, and in Section \ref{HMblocsys} we carefully state the version of main theorem in the setting of local systems and non-torsion spin$^c$ structures.

\vspace{0.5cm}
\textbf{Acknowledgements. } The authors would like to thank Andrew Blumberg, Anibal Medina-Mardones, John Morgan and Boyu Zhang for some helpful conversations, as well as Pedro Tamaroff for notational suggestions; they are also grateful to the anonymous referee whose feedback helped greatly improve the manuscript. The authors are especially thankful for the keen eyes of Stefan Behrens and Thomas Kragh, who alerted us to an error in a previous version of this article. The first author was partially supported by NSF Grant DMS-1948820 and the Alfred P. Sloan Foundation.

\bigskip
\section{Twisting sequences for general dg-algebras}\label{twistseq}
The first part of this section deals with definitions as well as functoriality and invariance properties of twisting sequences and the associated twisted cohomology. In the second part, we discuss an obstruction theory for twisting sequences and its consequences.
\subsection{Generalities on twisting sequences}
In this section we discuss the notion of \emph{integral twisted (co)homology}. This is inspired by Atiyah and Segal's twisted de Rham cohomology \cite{AS}. In the de Rham setting, wedge squares of odd-degree forms are automatically zero, and given a closed odd-degree form $\omega$ the map $$(d + m_\omega)(\eta) = d\eta + \omega \wedge \eta$$ is a square-zero operator, with respect to which we can take cohomology. 
\par
Working integrally, we have no such luck, because the cup product does not commute on the nose: in fact, if $x$ is an odd-degree cycle, $x^2$ might even define a non-zero (2-torsion) class in cohomology. To prevent this, we should ask that there is a chain $y$ with $dy + x^2 = 0$, which we then need to incorporate into our twisted differential. We then need additional chains to cancel out the contribution from $xy + yx$ and $y^2$, and so on.

Starting with a degree-3 cycle as our basic twist, this leads us to the definition of twisting sequence for a dg-algebra. First, let us set conventions. In what follows, all dg-algebras are graded over $\Bbb Z$ and the differential has degree $+1$, satisfying $d(ab) = (da)b + (-1)^{|a|} a (db)$.

\begin{definition}
A \textbf{cohomological} dg-module is a $\Bbb Z$-graded chain complex $M$ with differential of degree $+1$, together with an action $A \otimes M \to M$ satisfying $a(bm) = (ab)m$, $|am| = |a| + |m|$, and $d(am) = (da)m + (-1)^{|a|} a (dm)$.

\noindent A \textbf{homological} dg-module is a $\Bbb Z$-graded chain complex $M$ with differential of degree $-1$, together with an action $A \otimes M \to M$ satisfying $b(am) = (ab)m$, $|am| = |m| - |a|$, and $d(am) = (-1)^{|a|}( a(dm) - (da)m).$
\end{definition}

The standard example of a cohomological dg-module is $C^*(X;\Bbb Z)$ as a module over itself with the cup-product action. The standard example of a homological dg-module is $C_*(X;\Bbb Z)$ as a module over $C^*(X;\Bbb Z)$ with the cap-product action. 

Henceforth, we will almost exclusively work with cohomological dg-modules to avoid writing two nearly identical proofs. All of the results below still apply for homological dg-modules, with the same signs. We discuss the distinction in the rare occasions it is important. Furthermore, we will also assume that all dg-algebras and modules are free as $\mathbb{Z}$-modules and have finitely generated cohomology in each dimension.

\begin{definition}
Let $A$ be a dg-algebra. A twisting sequence $x_{\bullet}$ in $A$ is a sequence $$x_{\bullet}=(x_{2n+1}) \in \prod_{n \geq 1} A^{2n+1}(X;\mathbb Z)$$ of odd-degree elements of $A$ so that $$dx_{2n+1} + \sum_{\substack{i + j = n \\ i,j \geq 1}} x_{2i+1} x_{2j+1} = 0 $$ for all $n$. When $x_{2i+1} = 0$ for all $i \neq n$, we say that $x_{2n+1}$ is a twisting $(2n+1)$-cycle, or simply a square-zero cocycle. 

We write the set of twisting sequences in $A$ as $TS(A)$.
\end{definition}

\medskip

This is precisely what is needed to define a (generalized) twisted cohomology group. 

\begin{definition}\label{twistHdef}
Consider a dg-algebra $A$ with twisting sequence $x_{\bullet}$. For a dg module $M$ over $A$, we define the twisted chain complex of $M$ as $$C_{\textup{tw}}^*(M;x_\bullet) = \left(M \otimes \mathbb Z\llbracket T, T^{-1}], d + L_3 T + L_5 T^2 + \cdots\right),$$ where $$L_{2n+1} m = x_{2n+1} \cdot m,$$ the formal variable $T$ has degree $-2$, and $\mathbb Z\llbracket T, T^{-1}]$ denotes the Laurent series in $T$. When $M$ is bounded above in degree, this is identical to $M \otimes \mathbb Z[T, T^{-1}]$, and we may use Laurent polynomials instead.\footnote{This is the version we are interested in when studying $\HMb_*$, in particular when we study local systems.}

We say that the resulting homology group $H_{\textup{tw}}^*(M; x_\bullet)$, considered as a module over $\mathbb Z\llbracket T, T^{-1}]$, is the \textup{twisted cohomology} of $(M; x_\bullet)$. 
\end{definition}

%%I removed this because I think I can make the arguments slightly clearer without making any assumptions on M, and the algebraists in our readership will likely prefer that
%%
%\begin{remark}
%In Section $1$ and $2$ we will work exclusively with $\mathbb{Z}$-graded objects; however, the local coefficient systems in Section 3 necessitate the use of $\mathbb Z/2N\mathbb Z$-graded chain complexes. The finiteness assumptions in the definitions and statements are made so that the proofs apply with minimal change in the latter setting. 
%\end{remark}

\begin{remark}\label{twistnontors}
In proving Theorem \ref{thmmain} we will only need to take $A = M$, and our algebras will be various cubical and simplicial cochain algebras of a space (or simplicial/cubical set) $X$. In this case, we write the twisted cochain complex as $C_{\textup{tw}}^*(X; x_\bullet)$. To study the monopole Floer homology for a torsion spin$^c$ structure $\mathfrak{s}$ with coefficients in a local system $\Gamma_0$ on $\mathcal B^\sigma(Y, \mathfrak s)$, it is natural to choose $M$ to be the simplicial or cubical chains with coefficients in $\Gamma_0$, which we denote as $C_*^\Delta(X; \Gamma_0)$ and $C_*^{\cube}(X; \Gamma_0)$ respectively. Finally, when studying non-torsion spin$^c$ structures, we will replace $M \otimes \mathbb Z[T, T^{-1}]$ by a twisted version in order to take into account monodromies that act as multiplying by $T^n$. 
\end{remark}

\vspace{0.5cm}

It is readily checked that the differential above indeed squares to zero: 

\begin{align*}(d+L_3 T + \cdots)^2 m &= d^2 m + \sum_{n=1}^\infty \left(dL_{2n+1} + L_{2n+1} d + \sum_{i+j=n} L_{2i+1} L_{2j+1}\right)m T^n \\
&= \sum_{n=1}^\infty \left(d(x_{2n+1} m) + x_{2n+1} dm + \sum_{i+j=n} x_{2i+1} x_{2j+1} m \right) T^n \\
&= \sum_{n=1}^\infty \left((dx_{2n+1}) m   + \sum_{i+j = n} x_{2i+1} x_{2j+1} m\right) T^n \\
&= \sum_{n=1}^\infty \left(dx_{2n+1} + \sum_{i+j=n} x_{2i+1} x_{2j+1} \right) m T^n \\&= 0.
\end{align*}

The twisted differential has degree $+1$, so that the twisted cohomology group is naturally $\mathbb{Z}$-graded. Furthermore, it breaks up into a sum of terms $L_{2n+1} T^n$, where $L_{2n+1}$ increases the $M$-degree by $2n+1$. It is thus compatible with the filtration on $M\llbracket T, T^{-1}]$ so that $F_p$ is the set of Laurent series $\sum_{i \ge p} m_i T^i$, and gives rise to a spectral sequence $$H^*(M)\llbracket T, T^{-1}] \implies H^*_{\text{tw}}(M; x_\bullet)$$ whose $d^3$-differential is multiplication by $[x_3]T$. If $$x_3 = \dots = x_{2n-1} = 0$$ as chains, then the first possibly nonzero differential of this spectral sequence is $d^{2n+1}$, which is given by multiplication by $[x_{2n+1}] T^n$.

As before, when $M$ is bounded above in degree, Laurent series simplify here to being Laurent polynomials.

Because the filtration by $T$ is complete, this spectral sequence is strongly convergent, which means that $H^*_{\text{tw}}(M; x_\bullet)$ has associated graded group isomorphic to the $E_\infty$ page of the spectral sequence; furthermore, filtered maps between these complexes which induce an isomorphism on the $E_2$ page induce an isomorphism on homology. 

\par
\bigskip
The fundamental notion for our purposes is the following.

\begin{definition}
Consider a dg-algebra $A$. Two twisting sequences $x_\bullet$ and $y_\bullet$ in $A$ are \textit{homotopic} if there is a sequence $$h_{\bullet}=(h_{2n}) \in \prod_{n \geq 1} A^{2n}$$ so that the identity
\begin{equation*}\label{homotopytwist}
y_{2n+1} - x_{2n+1} + dh_{2n} + \sum_{i+j = n} (y_{2j+1} h_{2i} - h_{2i}x_{2j+1}) = 0
\end{equation*}
holds for all $n\geq 1$. We write $ts(A)$ for the set of homotopy classes of twisting sequences on $A$.
\end{definition}

\begin{remark}
It is not obvious from the definition that being homotopic is an equivalence relation on twisting sequences; this will be shown in the next subsection.
\end{remark}

The relevance of the notion of homotopic twisting sequences is the following.
\begin{lemma}\label{isom}
Consider two homotopic twisting sequences $x_\bullet$ and $y_\bullet$ in a dg-algebra $A$. Then for every dg module $M$ over $A$, the twisted cohomologies $H^*_{\tw}(M, x_{\bullet})$ and $H^*_{\tw}(M, y_{\bullet})$ are isomorphic.
\end{lemma}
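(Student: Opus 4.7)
The plan is to build an explicit chain map $\Phi : C^*_{\tw}(M;x_\bullet) \to C^*_{\tw}(M;y_\bullet)$ out of the homotopy $h_\bullet$ and check it is an isomorphism. The natural candidate, modeled on the way a gauge transformation relates two Maurer--Cartan elements, is left multiplication by
\[
u := 1 + \sum_{n\geq 1} h_{2n}\, T^n \in A\llbracket T\rrbracket.
\]
Since $h_{2n}$ has even degree $2n$ and $T$ has degree $-2$, multiplication by $u$ preserves total degree. When $M$ is bounded above in degree, only finitely many terms of $u$ act nontrivially on any given element of $M[T,T^{-1}]$, so $\Phi$ is well defined in both the bounded and unbounded settings.

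First I would verify that $\Phi$ intertwines the two twisted differentials. Writing $d_x = d + \sum_{i\geq 1} L_{2i+1}T^i$ and similarly for $d_y$, a direct expansion gives
\[
(d_y\Phi - \Phi d_x)(m) = \sum_{k\geq 1}\Big(dh_{2k} + (y_{2k+1}-x_{2k+1}) + \sum_{\substack{i+n=k\\ i,n\geq 1}}(y_{2i+1}h_{2n} - h_{2n}x_{2i+1})\Big)m\, T^k,
\]
using that $h_{2n}$ has even degree so signs are trivial when differentiating $h_{2n}\cdot m$. By the defining relation for $h_\bullet$ each bracketed coefficient vanishes, and $\Phi$ is a chain map.

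Next I would show $\Phi$ is an isomorphism of complexes. Since $u - 1$ lies in the positive $T$-filtration, $u$ is a unit in the $T$-adic completion with inverse $u^{-1} = \sum_{k\geq 0}(1-u)^k$; again, in the bounded-above case only finitely many terms contribute on any element, and the same identity used above (with the roles of $x$ and $y$ exchanged, using $-h_\bullet$ as homotopy) shows $\Phi^{-1} := u^{-1}\cdot$ is also a chain map. This directly yields the desired isomorphism $H^*_{\tw}(M;x_\bullet)\cong H^*_{\tw}(M;y_\bullet)$.

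There is essentially no serious obstacle: once $\Phi$ is written down the two verifications are bookkeeping. The only mild subtlety is the well-definedness over $\mathbb{Z}\llbracket T,T^{-1}]$ (vs.\ $\mathbb{Z}[T,T^{-1}]$) and the construction of $u^{-1}$; if one prefers to avoid inverting $u$ explicitly, one can instead observe that $\Phi$ is the identity modulo the positive $T$-filtration, hence induces the identity on the $E_2$-page $H^*(M)\llbracket T,T^{-1}]$ of the strongly convergent spectral sequences computing both sides, and therefore is a quasi-isomorphism.
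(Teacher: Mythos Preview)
Your proof is correct and essentially identical to the paper's: the paper defines the same map $h^*_{\tw}(m\otimes T^j) = m\otimes T^j + \sum_{n\geq 1} h_{2n}\cdot m\otimes T^{j+n}$, verifies it is a chain map exactly as you do, and then argues it is an isomorphism because it is a filtered map inducing the identity on associated graded (your second argument).

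One small correction: your parenthetical remark that $-h_\bullet$ serves as a homotopy from $y_\bullet$ to $x_\bullet$ is wrong. If you negate $h$ and swap $x$ and $y$ in the homotopy relation, the cross-terms become $x_{2j+1}(-h_{2i}) - (-h_{2i})y_{2j+1}$, which is not the negative of $y_{2j+1}h_{2i} - h_{2i}x_{2j+1}$ unless $A$ is commutative. The paper explicitly notes that symmetry of the homotopy relation is \emph{not} obvious and proves it later by a different argument (using interval algebras). Fortunately this remark is unnecessary: once $\Phi$ is a bijective chain map its set-theoretic inverse is automatically a chain map, and bijectivity follows from $u \equiv 1$ modulo positive $T$-filtration (your spectral-sequence argument, or direct inversion of $u$ in $A\llbracket T\rrbracket$). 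Also note that $u^{-1}$ is not $1 - \sum h_{2n}T^n$, so the ``same identity'' would not apply to it anyway.
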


\begin{proof}
Consider the map $$h^*_{\tw}: C^*_{\tw}(M; x_\bullet) \to C^*_{\tw}(M; y_\bullet)$$ given by $$h^*_{\tw}(m\otimes T^j) = m\otimes T^j + \sum_{n \geq 1} h_{2n}\cdot m\otimes T^{j+n}.$$ It is easily verified that $h^*_{\tw}$ is a chain map if and only if $h_\bullet$ is a homotopy of twisting sequences. Further, it is a filtered \emph{isomorphism}, because it is a filtered map (with respect to the complete filtration described above) whose associated graded map is the identity.
\end{proof}

\bigskip

Of course, the isomorphism between $H^*_{\tw}(M; x_{\bullet})$ and $H^*_{\tw}(M; y_{\bullet})$ might depend in general on the choice of homotopy $h_{\bullet}$. 

If we want to understand functoriality properties for twisted cohomology, we must use twisting sequences themselves (as opposed to twisting sequences considered up to homotopy), as follows.

Suppose one is given a dg-algebra homomorphism $f: A \to B$, and suppose that $M$ and $N$ are dg-modules over $A$ and $B$, respectively, either both homological or both cohomological. The definition of a module map depends on whether 

\begin{enumerate}[label=(\alph*)]
\item Cohomological module maps over $f$ are chain maps $g: M \to N$ with $g(am) = f(a) g(m)$.

\item Homological module maps over $f$ are chain maps $g: N \to M$ with $g(f(a) n) = a g(n)$.
\end{enumerate}

The key examples to have in mind are the following. Suppose we have a commutative diagram of simplicial complexes
\begin{center}
\begin{tikzcd}
    X \arrow{r}{p} \arrow{d} & Y \arrow{d} \\
    K \arrow{r}{q} & L
  \end{tikzcd}
\end{center}
If we consider $A=C^*_{\Delta}(L)$, $B=C^*_{\Delta}(K)$ and $f=q^*$, then the maps
\begin{align*}
p^*:C^*_{\Delta}(Y)\rightarrow C^*_{\Delta}(X)\\
p_*:C_*^{\Delta}(X)\rightarrow C_*^{\Delta}(Y)
\end{align*}
are cohomological and homological module maps, respectively. This language will be especially useful when dealing with local coefficients.
\\
\par
We have the following functoriality properties.

\begin{lemma}\label{typeab}
In the setting above, consider a twisting sequence $x_{\bullet}$ for $A$. Then:
\begin{enumerate}[label=(\alph*)]
\item If $g:M\rightarrow N$ is a cohomological module map, there is an induced map 
$$g_{\tw}: H^*_{\tw}(M; x_\bullet) \to H^*_{\tw}(N; f(x_\bullet)).$$ 
\item If $g:N\rightarrow M$ is a homological module map, there is an induced map $$g_{\tw}: H^*_{\tw}(N; f(x_\bullet)) \to H^*_{\tw}(M; x_\bullet).$$
\end{enumerate}
In both cases, if $g$ is a quasi-isomorphism, then $g_{\tw}$ is an isomorphism.
\end{lemma}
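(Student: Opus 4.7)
The plan is to define $g_{\tw}$ by extending $g$ linearly over $\mathbb Z\llbracket T, T^{-1}]$ (or the Laurent polynomial version when $M$ is bounded above), verify that it is a chain map using the Type (a) or (b) property, and then upgrade it to an isomorphism in the quasi-isomorphism case via the spectral sequence comparison already recorded in the paragraph following Definition \ref{twistHdef}.

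First I would check that $f(x_\bullet) := (f(x_3), f(x_5), \dots)$ is a twisting sequence in $B$. Applying the dg-algebra map $f$ to the defining relation
\[
d x_{2n+1} + \sum_{i+j=n,\, i,j\ge 1} x_{2i+1}\, x_{2j+1} = 0
\]
and using that $f$ commutes with $d$ and preserves products yields the analogous relation for $f(x_{2n+1})$. Thus the target complexes in (a) and (b) are well-defined.

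For Type (a), I set $g_{\tw}(m \otimes T^j) = g(m) \otimes T^j$ and extend $\mathbb Z\llbracket T, T^{-1}]$-linearly. The chain map condition reduces, separately in each power of $T$, to the two identities $g(dm) = d\, g(m)$ (since $g$ is a chain map) and $g(x_{2n+1}\cdot m) = f(x_{2n+1})\cdot g(m)$ (which is exactly the Type (a) condition). The Type (b) case is identical up to reversing the direction of $g$: one sets $g_{\tw}(n \otimes T^j) = g(n) \otimes T^j$ and uses $g(f(x_{2n+1})\cdot n) = x_{2n+1}\cdot g(n)$ to match the twisted differentials.

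Finally, for the quasi-isomorphism statement, I observe that $g_{\tw}$ is a filtered map with respect to the $T$-adic filtration, and its associated graded is simply $g \otimes \operatorname{id}$, which is a quasi-isomorphism by hypothesis on $g$. Since the $T$-adic filtration is complete, the associated spectral sequence converges strongly, and a filtered map inducing an isomorphism on the $E_1$ (equivalently $E_2$) page induces an isomorphism on the abutment. The main obstacle is really only bookkeeping; the essential content — strong convergence of the filtration by $T$ — is already in hand from the earlier discussion, and the Type (a)/(b) conditions were precisely designed to make the twisted differentials intertwine.
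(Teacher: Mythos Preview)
Your proposal is correct and follows essentially the same approach as the paper's proof: define $g_{\tw}$ by extending $g$ over Laurent series, verify the chain map condition directly from the Type (a)/(b) identities, and deduce the quasi-isomorphism statement from the convergence of the $T$-filtration spectral sequence. The paper's version is terser (it does not separately check that $f(x_\bullet)$ is a twisting sequence or spell out the chain map verification), but the argument is the same.
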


\begin{proof}
That such maps induce chain maps on the corresponding twisted chain complexes follows from the given formulas; these chain maps are filtered maps, which induce $g^*: H^*(M) \to H^*(N)$ and $g_*: H^*(N) \to H^*(M)$ on the $E_2$ page of the corresponding spectral sequence. Because the twisted cohomology spectral sequence converges, if $g^*$ (resp $g_*$) is an isomorphism, so is $g_{\text{tw}}$. 
\end{proof}

\begin{remark}
It may be surprising to some that our definition of twisting sequence starts in degree 3 rather than degree 1. In fact, this is essential for several reasons: 
\begin{itemize}
\item If $x_1 \ne 0$, then the $E^1$ page of the spectral sequence abutting to $H^*_{\tw}(M)$ is now $(M, d + L_1)$,and we do not have simple tools to compute its homology. 
\item In the next section we will use inductive arguments to show that quasi-isomorphisms induce bijections on homotopy classes of twisting sequences. If we allowed $x_1 \ne 0$, the base case in these inductive arguments fails, and indeed the claim is no longer true. 
\item The definition of characteristic classes we introduce in Section 4 would also fail to introduce even a $0$'th class $F_0(x_\bullet)$. 
\end{itemize}
\end{remark}

\begin{remark}\label{univ}
There is a space (or rather, a simplicial set) $\mathcal U_T$ so that maps $X \to \mathcal U_T$ of simplicial sets correspond to twisting sequences in $C^*_\Delta(X;\mathbb Z)$, and homotopies between maps correspond to homotopies between twisting sequences. In fact, taking $X = \Delta^n$, this gives a formula for the $n$-cycles of this simplicial set. Similar simplicial models are discussed in \cite[Section 1.5]{BM}.

One may think of twisted singular homology as being a parameterized homology theory over $\mathcal U_T$ in the sense of \cite[Section 20.1]{MS}.
One can show that this space has
\begin{equation}\label{htpy}
\pi_{i} \left(\mathcal U_T \right) =
\begin{cases}
\mathbb{Z}\quad\text{ if $i\geq3$ is odd;}\\
0\quad\text{ otherwise.}
\end{cases}
\end{equation}
Furthermore, one can interpret each relation $dx_{2n+1} + \sum_{i+j = n} x_{2i+1} x_{2j+1} = 0$ as giving the $k$-invariant for the next stage in the Postnikov tower of $ \mathcal U_T$. In particular, the first $k$-invariant is nonzero.

The computation in (\ref{htpy}) suggests a comparison to two spaces: the product $K = \prod_{n \geq 1} K(\mathbb Z, 2n+1)$ and the special unitary group $SU$. However, $\mathcal U_T$ is equivalent to neither: the first $k$-invariant of $\mathcal U_T$ is nonzero and its second $k$-invariant has order $3$, while the $k$-invariants of $K$ are trivial and the second $k$-invariant of $SU$ has order $6$. 
\end{remark} 

\vspace{0.5cm}
\subsection{Obstruction theory for twisting sequences and homotopies}
In what follows, we will need to know that given a quasi-isomorphism $f: A \to B$, the induced map $ts(f): ts(A) \to ts(B)$ on homotopy classes of twisting sequences is a bijection. 

In our argument we will want to extend partially defined twisting sequences, as well as extend partially defined homotopies. Before doing so we should define the \emph{obstruction classes} of partially defined twisting sequences and homotopies, and check that these classes are themselves well-defined up to homotopy.

\begin{definition}
A \emph{twisting $n$-sequence} in $A$ is a sequence $$(x_3, \cdots, x_{2n-1}) \in \prod_{i=1}^{n-1} A^{2i+1}$$ satisfying the relations $dx_{2m+1} + \sum_{i+j = m} x_{2i+1} x_{2j+1} = 0$ for all $m < n$. The \emph{obstruction class} to extending this to a twisting $(n+1)$-sequence is $$o_n(x_\bullet) = \left[\sum_{i+j=n} x_{2i+1} x_{2j+1}\right] \in H^{2n+2}(A).$$
An $n$-\emph{homotopy} $h_\bullet: x_\bullet \to y_\bullet$ between a pair of twisting $n$-sequences $x_\bullet$ and $y_\bullet$ is a sequence $(h_2, \cdots, h_{2n-2})$ with $$dh_{2m} + y_{2m+1} - x_{2m+1} + \sum_{\substack{i+j=m \\ i,j \geq 1}} y_{2i+1} h_{2j} - h_{2j} x_{2i+1} = 0$$ for all $m < n$. If $x_\bullet$ and $y_\bullet$ are extended to twisting $(n+1)$-sequences, the obstruction class to extending $h_\bullet$ to an $(n+1)$-homotopy is $$o_n(h_\bullet) = o_n(h_\bullet; x_\bullet, y_\bullet) = [y_{2n+1} - x_{2n+1} + \sum_{i+j=n} y_{2i+1} h_{2j} - h_{2j} x_{2i+1}] \in H^{2n+1}(A).$$
Finally, an $n$-\emph{modification} between two homotopies $h_\bullet, h'_\bullet: x_\bullet \to y_\bullet$ is a sequence $(z_1, \cdots, z_{2n-3})$ in $A$ with $$dz_{2m-1} + h_{2m} - h'_{2m} + \sum_{i+j=m} z_{2i-1} x_{2j+1} + y_{2j+1} z_{2i-1} = 0,$$ for all $m < n$. 
\end{definition}

The latter two concepts are nearly special cases of the first. An $n$-homotopy $h_\bullet$ between twisting $n$-sequences $x_\bullet$ and $y_\bullet$ is the same data as a twisting $n$-sequence $$\overline h_{2m+1} = e^0 \otimes y_{2m+1} + e^I \otimes h_{2m} + e^1 \otimes x_{2m+1}$$ on $I \otimes A$. Here $I$ is the algebra of simplicial cochains on the $1$-simplex, where we denote the generators by $e^0,e^1$ and $e^I$ respectively. Write $r_0: I \to \mathbb Z[0]$ for the map with $r_0(e^0) = 1$ and $r_0(e^I) = r_0(e^1) = 0$ and similarly for $r_1$; these maps are dg-algebra maps and quasi-isomorphisms, and $r = (r_0, r_1)$ is surjective onto $\mathbb Z^2$. The above construction amounts to saying that an $n$-homotopy $h_\bullet$ between twisting $n$-sequences is equivalent to the data of a twisting $n$-sequence $\overline h_\bullet$ on $I \otimes A$ with $r_0(\overline h_\bullet) = y_\bullet$ and $r_1(\overline h_\bullet) = x_\bullet$.

%Commented out because I don't think I use this

%If one computes the expression $$\sum_{i+j=n} \overline h_{2i+1} \overline h_{2j+1},$$ the result is $$e^I \otimes \left(y_{2n+1} - x_{2n+1} + \sum_{i+j=n} y_{2i+1} h_{2j} - h_{2j} x_{2i+1}\right).$$ The restriction map $r_A: I \otimes A \to \partial I \otimes A$ sends this to zero. Writing $H^*(I \otimes A, \partial I \otimes A)$ to mean the cohomology of $\text{ker}(r)$, the obstruction class of $\overline h_\bullet$ in fact lies in this relative cohomology group; following the isomorphism $$H^{2n+4}(I \otimes A, \partial I \otimes A) \cong H^{2n+3}(A),$$ we see that the obstruction class $o(h_\bullet; x_\bullet, y_\bullet)$ may be understood as a relative obstruction class for $\overline h_\bullet$: it is an obstruction to extending $\overline h_\bullet$ without changing the prescribed values of $r(\overline h_\bullet)$.

Given homotopies $h_\bullet$ and $h'_\bullet$, and passing to the associated twisting sequences $\overline h_\bullet$ and $\overline h'_\bullet$ on $I \otimes A$, the notion of a modification is precisely a \emph{relative} homotopy between these twisting sequences: a homotopy which vanishes on the boundary $\partial I \otimes A$. One may also view $z_\bullet$ as arising from the twisting sequence on $I \otimes I \otimes A$ given by $$\overline z_{2m+1} = e^0_1 e^0_2 y_{2m+1} + e^0_1 e^1_2 y_{2m+1} + e^1_1 e^0_2 x_{2m+1} + e^1_1 e^1_2 x_{2m+1} + e^I_1 e^0_2 h'_{2m} + e^I_1 e^1_2 h_{2m} + e^I_1 e^I_2 z_{2m-1}.$$

Write $i: \mathbb Z[0] \to I$ for the map $i(1) = e^0 + e^1$; we will denote the induced map $A\rightarrow I\times A$ by the same letter. One further useful perspective arising from the explicit formula here is that an $n$-modification between two homotopies $h,h': x_\bullet \to y_\bullet$ is the same data as a homotopy $\hat z_\bullet: i(x)_\bullet \to i(y)_\bullet$ so that $$r_0(\hat z_\bullet) = h'_\bullet: x_\bullet \to y_\bullet$$ and similarly $$r_1(\hat z_\bullet) = h_\bullet : x_\bullet \to y_\bullet.$$ That is, one may understand an $n$-modification as a relative homotopy between two homotopies.  

The definition of twisting $n$-sequence is exactly the same as that of a \emph{defining sequence for the Massey power} $\langle x_3\rangle^n$ as in \cite{Kra}; one says that $\langle x_3\rangle^n$ is the set of all cohomology classes produced as $o_n(x_\bullet)$, where $x_\bullet$ is a twisting $n$-sequence beginning with $x_3$. 

The fact that Massey powers are well-defined and vanish rationally --- and thus the obstruction to extending a twisting sequence is rationally always zero --- is one of the main inspirations for the results of this article. The obstruction class associated to a homotopy and to a modification are relative versions of these constructions. These, plus naturality properties of obstruction classes, give us the following result.

\begin{lemma}
If $x_\bullet$ and $y_\bullet$ are homotopic twisting $n$-sequences, then $o_n(x_\bullet) = o_n(y_\bullet)$. Similarly, if there exists a modification $z_\bullet$ between two $n$-homotopies $h_\bullet, h'_\bullet: x_\bullet \to y_\bullet$, the associated obstruction classes are equal: $$o_n(h_\bullet) = o_n(h'_\bullet).$$
\end{lemma}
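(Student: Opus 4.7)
The plan is to show that each of $o_n(y_\bullet) - o_n(x_\bullet)$ and $o_n(h_\bullet) - o_n(h'_\bullet)$ is an explicit coboundary, by direct algebraic manipulation using the twisting-sequence relations $dy_{2k+1} = -\sum y_{2p+1}y_{2q+1}$ and $dx_{2k+1} = -\sum x_{2p+1}x_{2q+1}$ together with the homotopy (resp.\ modification) relation and the graded Leibniz rule. The first claim also admits a clean conceptual proof via the interval model anticipated in the excerpt: an $n$-homotopy $h_\bullet$ is the same data as a twisting $n$-sequence $\overline h_\bullet$ on $I\otimes A$ with $r_0(\overline h_\bullet)=y_\bullet$ and $r_1(\overline h_\bullet)=x_\bullet$. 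Since $r_0,r_1\colon I\otimes A\to A$ are dg-algebra quasi-isomorphisms inducing the same map on cohomology, and since the obstruction class is evidently natural under dg-algebra maps ($o_n(f(x_\bullet))=f^\ast o_n(x_\bullet)$), one concludes at once that $o_n(y_\bullet) = r_0^\ast o_n(\overline h_\bullet) = r_1^\ast o_n(\overline h_\bullet) = o_n(x_\bullet)$ in $H^{2n+2}(A)$.

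To prove the first claim by direct computation, I begin from the splitting
\[
y_{2i+1} y_{2j+1} - x_{2i+1} x_{2j+1} = y_{2i+1}(y_{2j+1}-x_{2j+1}) + (y_{2i+1}-x_{2i+1})\,x_{2j+1},
\]
substitute the homotopy relation to rewrite each difference $y_{2k+1}-x_{2k+1}$, and then use the Leibniz rule to move the derivations off $y_{2i+1}\,dh_{2j}$ and $(dh_{2i})\,x_{2j+1}$ (the sign is $-$ in the first case because $y$ is odd, and $+$ in the second because $h$ is even). Applying the twisting relations to the resulting $dy$ and $dx$ contributions, the cubic-in-twist terms $yyh$, $hxx$, $yhx$ cancel pairwise between the two sums, yielding
\[
o_n(y_\bullet) - o_n(x_\bullet) = d\!\left(\sum_{i+j=n} y_{2i+1} h_{2j} - h_{2i}\,x_{2j+1}\right).
\]

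For the second claim I repeat the calculation one degree lower. The difference of cocycle representatives is $\sum_{i+j=n}\bigl(y_{2i+1}(h_{2j}-h'_{2j}) - (h_{2i}-h'_{2i})\,x_{2j+1}\bigr)$, into which I substitute the modification relation $h_{2m} - h'_{2m} = -dz_{2m-1} - \sum(z_{2k-1}x_{2l+1} + y_{2l+1}z_{2k-1})$. The same Leibniz-plus-twisting-relation rearrangement (now with $z$ odd, so $d(yz)=(dy)z - y(dz)$ and $d(zx)=(dz)x - z(dx)$) cancels the cubic contributions $yyz$, $yzx$, $zxx$ in pairs and produces
\[
o_n(h_\bullet) - o_n(h'_\bullet) = d\!\left(\sum_{i+j=n} y_{2i+1} z_{2j-1} + z_{2i-1}\,x_{2j+1}\right).
\]

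The main obstacle is purely bookkeeping: one must track Koszul signs and triple-index summations with care so that the residual cubic terms genuinely cancel in pairs and the boundary term is correctly identified. A structural point worth flagging is that, although the interval picture gives a conceptually satisfying proof of the first claim, it is \emph{not} directly available for the second: because $I$ is contractible, every class of the form $[e^I\gamma]\in H^\ast(I\otimes A)$ vanishes, so the equality $o_n(\overline h_\bullet)=o_n(\overline h'_\bullet)$ in $H^{2n+2}(I\otimes A)$ does not by itself recover the exactness of $m_h - m_{h'}$ in $A$. Thus for the second claim one truly needs the direct calculation (or a refinement using the reduced interval relative to its boundary), but the cancellations are in any case guided by the same formal pattern as in the first.
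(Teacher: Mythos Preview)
Your proof is correct. The conceptual argument you give for the first claim is exactly the paper's proof; your direct computation is an alternative that the paper does not spell out, but it is sound.

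Where you and the paper diverge is the second claim. Your explicit coboundary argument works, but your assertion that the interval picture is ``not directly available'' for this case is mistaken. The paper in fact proves the second claim by the same naturality mechanism as the first, via a reformulation you did not consider. Rather than viewing the modification $z_\bullet$ as a relative homotopy between the twisting $n$-sequences $\overline h_\bullet$ and $\overline h'_\bullet$ on $I\otimes A$ (which, as you correctly observe, would only yield an equality in $H^{2n+2}(I\otimes A)$ that is too weak), the paper views $z_\bullet$ as an ordinary $n$-\emph{homotopy} $\hat z_\bullet\colon i(x)_\bullet\to i(y)_\bullet$ in $I\otimes A$, where $i\colon\mathbb Z[0]\to I$ sends $1\mapsto e^0+e^1$, and with $r_0(\hat z_\bullet)=h'_\bullet$ and $r_1(\hat z_\bullet)=h_\bullet$. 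Since the obstruction class $o_n(h;x,y)\in H^{2n+1}(A)$ of an $n$-homotopy is natural under dg-algebra maps in the evident sense, and since $r_0 i=r_1 i=\mathrm{id}$ while $r_0$ and $r_1$ induce the same map on cohomology, one reads off
\[
o_n(h;x,y)=(r_1)_\ast\,o_n(\hat z;i(x),i(y))=(r_0)_\ast\,o_n(\hat z;i(x),i(y))=o_n(h';x,y).
\]
So both claims fall to the same one-line naturality argument; the trick is choosing which object to regard $z_\bullet$ as. Your direct computation buys explicitness (you exhibit the bounding cochain), while the paper's approach buys uniformity and avoids the sign bookkeeping you flagged as the main obstacle.
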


\begin{proof}
It is clear that if $f: A \to B$ is a dg-algebra homomorphism and $x$ is a twisting $n$-sequence in $A$, then $o_n(f(x_\bullet)) = f_* o_n(x_\bullet)$. Now if $x$ and $y$ are $n$-homotopic by a homotopy $h$, consider the twisting $n$-sequence $\overline h$ on $I \otimes A$. By naturality, we have $$o_n(x_\bullet) = (r_1)_* o_n(\overline h_\bullet) = (r_0)_* o_n(\overline h_\bullet) = o_n(y_\bullet),$$ where the middle equality holds because $r_0$ and $r_1$ induce the same map on cohomology.

A similar naturality property holds for the obstruction class of a homotopy; we suppress bullets from notation for legibility. If $f: A \to B$ is a dg-algebra homomorphism and $h: x \to y$ is an $n$-homotopy in $A$, then $f(h): f(x) \to f(y)$ is an $n$-homotopy in $B$ and we have $$f_* o_n(h; x, y) = o_n(f(h); f(x), f(y)).$$ 
As discussed above, a modification $z$ gives a homotopy $\hat z_\bullet$ between $i(x)$ and $i(y)$. Then $$o_n(h; x, y) = (r_1)_* o(\hat z; i(x), i(y)) = (r_0)_* o(\hat z; i(x), i(y)) = o_n(h'; x, y).$$

The outer two equalities hold because $r_0 \hat z = h$ and $r_1 \hat z = h'$, while the maps $r_0 i = r_1 i$ are both equal to the identity. Using once more that $r_0$ and $r_1$ induce the same maps in homology we have $o_n(h; x, y) = o_n(h'; x, y)$ as desired. 
\end{proof}

These in hand, we can finally explain why homotopy classes of twisting sequences are quasi-isomorphism invariants.

\begin{prop}\label{qiso-transfer}
Let $f: A \to B$ be a dg-algebra homomorphism which induces an isomorphism on homology. Then the induced map on homotopy classes of twisting sequence $ts(A) \to ts(B)$ is a bijection. 
\end{prop}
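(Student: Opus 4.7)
The plan is to prove surjectivity and injectivity of $ts(f)$ by parallel inductive obstruction arguments. At each stage I will use the preceding lemma (naturality of obstructions under dg-algebra maps and their invariance under $n$-homotopy, respectively $n$-modification), together with the fact that $f_\ast$ is an isomorphism on cohomology, to simultaneously extend the desired structure and clear the next obstruction by means of a cocycle adjustment.

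For surjectivity, given $y_\bullet \in TS(B)$, I will inductively construct $x_\bullet \in TS(A)$ and a homotopy $h_\bullet : f(x_\bullet) \to y_\bullet$, so that at stage $n$ I have a twisting $n$-sequence $x^{(n)}$ and an $n$-homotopy $h^{(n)}: f(x^{(n)}) \to y^{(n)}$. The base $n=2$ is handled by choosing $x_3 \in Z^3(A)$ with $f_\ast[x_3] = [y_3]$ (using surjectivity of $f_\ast$ in degree $3$) and letting $h_2$ be a primitive for $y_3 - f(x_3)$. For the inductive step, naturality plus homotopy-invariance of the sequence obstruction yield
$$f_\ast\, o_n(x^{(n)}) \;=\; o_n(f(x^{(n)})) \;=\; o_n(y^{(n)}) \;=\; 0,$$
the last equality because $y_\bullet$ is already a full twisting sequence; injectivity of $f_\ast$ on $H^{2n+2}(A)$ then produces a preliminary $\tilde x_{2n+1}$ extending $x^{(n)}$. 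The residual obstruction $o_n(h^{(n)}; f(x^{(n+1)}), y^{(n+1)}) \in H^{2n+1}(B)$ to extending the homotopy need not vanish, but surjectivity of $f_\ast$ in degree $2n+1$ lets me pick $z \in Z^{2n+1}(A)$ with $f_\ast[z]$ equal to this class; replacing $\tilde x_{2n+1}$ by $\tilde x_{2n+1} - z$ still satisfies the twisting-sequence relation (since $z$ is a cocycle) and annihilates the residual class, so $h^{(n)}$ extends to $h^{(n+1)}$.

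For injectivity, given $x_\bullet, x'_\bullet \in TS(A)$ and a homotopy $k_\bullet : f(x_\bullet) \to f(x'_\bullet)$ in $B$, I analogously construct a homotopy $h_\bullet : x_\bullet \to x'_\bullet$ in $A$ and a modification $\zeta_\bullet$ between $f(h_\bullet)$ and $k_\bullet$ in $B$. The inductive step uses modification-invariance of the homotopy obstruction: at stage $n$ one has
$$f_\ast\, o_n(h^{(n)}) \;=\; o_n(f(h^{(n)})) \;=\; o_n(k^{(n)}) \;=\; 0,$$
so injectivity of $f_\ast$ produces a preliminary $\tilde h_{2n}$; the residual obstruction for extending $\zeta_\bullet$ sits in $H^{2n}(B)$ and is killed by adjusting $\tilde h_{2n}$ by a $2n$-cocycle of $A$ whose $f_\ast$-image realizes it, using surjectivity of $f_\ast$ in degree $2n$.

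The main technical obstacle is not conceptual but bookkeeping. One must verify that the representatives of the obstruction classes are genuine cocycles (so the classes are well defined), and, more importantly, that the two-step \emph{extend-then-adjust} maneuver interacts cleanly with the lower-stage relations. The key point is that modifying $\tilde x_{2n+1}$ or $\tilde h_{2n}$ by a cocycle neither disturbs any of the twisting-sequence or homotopy equations at stages below $n$ (since those do not involve this component) nor the stage-$n$ relation itself (since the added element has $d=0$ and appears in no products at this stage); the adjustment only shifts the next-stage obstruction class by the expected amount. Once these verifications are in place, the two inductions assemble into the required lifts and homotopies, giving both surjectivity and injectivity of $ts(f)$.
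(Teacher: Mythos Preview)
Your proposal is correct and follows essentially the same approach as the paper: an inductive obstruction-theoretic argument where naturality and homotopy-invariance of the obstruction classes, combined with injectivity of $f_\ast$, allow extension of the twisting sequence (respectively homotopy), after which a cocycle adjustment using surjectivity of $f_\ast$ clears the obstruction to extending the accompanying homotopy (respectively modification). The paper's write-up is slightly terser on the injectivity half, simply noting that the argument is parallel with modifications replacing homotopies, whereas you spell out the degrees of the relevant obstruction classes.
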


\begin{proof}
We will prove both directions by induction. 

First we show surjectivity. Suppose $b_\bullet = (b_3, \cdots)$ is a twisting sequence in $B$. Our goal is to construct a twisting $n$-sequence $a_\bullet$ in $A$ and an $n$-homotopy $f(a_\bullet) \to b_\bullet$ via induction on $n$; doing so for all $n$ gives us the desired lift-up-to-homotopy.

For the base step, use that $f$ is a quasi-isomorphism to choose $a_3 \in A$ and $h_2 \in B$ with $dh_2 = f(a_3) - b_3$. 

Inductively, suppose we have a twisting $n$-sequence $a_\bullet = (a_3, \cdots, a_{2n-1})$ in $A$ and an $n$-homotopy $(h_2, \cdots, h_{2n-2})$ from $f(a_\bullet)$ to $b_\bullet$. The obstruction to extending $a_\bullet$ is given by $o(a_\bullet)$. We know $$f_* o(a_\bullet) = o(f(a_\bullet)) = o(b_\bullet),$$ because the two twisting $n$-sequences are homotopic. But $b_\bullet$ is a twisting sequence, defined for all $n$; there is no obstruction to extending it. Thus $f_* o(a_\bullet) = 0$, and because $f$ is a quasi-isomorphism $o(a_\bullet) = 0$. Thus we may choose an element $a'_{2n+1} \in A$ extending $a_\bullet$ to a twisting $(n+1)$-sequence. However, after choosing such an extension, it may be the case that the obstruction to extending the homotopy $$o(h_\bullet; a'_\bullet, b_\bullet) = [f(a'_{2n+1}) - b_{2n+1} + \sum_{i+j=n} h_{2j} f(a_{2i+1}) - b_{2i+1} h_{2j}]$$ is nonzero. If so, pick a cocycle $c \in A$ so that $f(c)$ is homologous to this obstruction class, and set $a_{2n+1} = a'_{2n+1} - c$. Because $c$ is a cocycle it is clear this is still a twisting $(n+1)$-sequence, and by the explicit formula for the obstruction class, the obstruction class has $$o(h_\bullet; a_\bullet, b_\bullet) = o(h_\bullet; a'_\bullet, b_\bullet) - [c] = 0,$$ so there is no obstruction to extending the homotopy. Choosing an appropriate $h_{2n}$, this completes the induction.

Injectivity falls to a similar argument: one supposes $x_\bullet$ and $y_\bullet$ are twisting sequences in $A$, that $b_\bullet$ is a homotopy between $f(x_\bullet)$ and $f(y_\bullet)$, and inductively constructs a homotopy $b'_\bullet$ from $x_\bullet$ to $y_\bullet$ and a modification $z_\bullet: f(b'_\bullet) \to b_\bullet$. The only novelty is that we use modifications between homotopies, instead of homotopies between twisting sequences.
\end{proof}

It is important to point out that even though $f$ induces a bijection on homotopy classes of twisting sequences, the bijection is not explicit, and it does not preserve the property of being representable by a twisting $3$-cycle $(x_3, 0, \cdots)$. In Section $2$ we will see an example of a zig-zag of quasi-isomorphisms so that a twisting $3$-cycle $(\zeta_3, 0, \cdots)$ with $[\zeta_3] = 0$ is transferred to a twisting $5$-cycle $(0, \zeta_5', 0, \cdots)$ with $[\zeta_5'] \ne 0$. This is a 2-torsion phenomenon, as $\zeta_5'$ is necessarily a 2-torsion class. In Section 3, our rational characteristic classes --- only valid for Hirsch algebras $A$ --- will be used to control this phenomenon.

\medskip

We conclude this section by reformulating the homotopy relation. As stated, it is not even clear that this relation is either reflexive or transitive: in the former case the issue is that there is no algebra map $I \to I$ which `swaps the endpoints' of the interval, and in the latter case if one has a pair of homotopies $h_1: x \to y$ and $h_2: y \to z$, these define a twisting sequence on the algebra $I_2 \otimes A$ (where $I_2$ is the algebra of cochains on the simplicial interval with two edges), but there is no clear way to induce from this a homotopy from $x$ to $z$.

To remedy these, we show that the notion of `homotopy' may be defined with respect to any algebra which behaves sufficiently well like cochains on the interval, and that this agrees with our original notion of homotopy. This will quickly show that homotopy of twisting sequences is an equivalence relation. It will be important later that, rationally, $I$ may be replaced with a \emph{commutative} algebra. First we describe a suitable class of algebras which can be used in place of $I$; in Remark \ref{commutativeintalg} we show that a commutative example exists over the rationals, which is used in the proof of Theorem \ref{charclass-facts}.

\begin{definition}
An \emph{interval algebra} $J$ is a torsion-free, non-negatively graded, unital dg-algebra equipped with two dg-algebra quasi-isomorphisms $r_0, r_1: J \to \mathbb Z[0]$ so that $$r = (r_0, r_1): J \to \mathbb Z^2$$ is \textit{surjective} and so that the $r_i$'s induce the same map in homology.
\par
A \emph{homomorphism of interval algebras} is a dg-algebra homomorphism $f: J \to J'$ so that $r'f = r$; a quasi-isomorphism of interval algebras is a homomorphism of interval algebras which induces an isomorphism in homology.
\par
Finally, given an interval algebra $J$ and a dg-algebra $A$, a \emph{$J$-homotopy} between twisting sequences $x_\bullet$ and $y_\bullet$ on $A$ is a twisting sequence $\overline h_\bullet$ on $J \otimes A$ so that $r_0(\overline h_\bullet) = y_\bullet$ and $r_1(\overline h_\bullet) = x_\bullet$. 

If $x_\bullet$ and $y_\bullet$ are $J$-homotopic, we write $x_\bullet \sim_J y_\bullet$.
\end{definition}

\begin{example}
The algebra $I = C^*_\Delta(\Delta^1)$ is the standard and simplest inteval algebra; an $I$-homotopy is simply a homotopy between twisting sequences.

The algebra $I_N = C^*_\Delta(\Delta^1_N)$, the simplicial cochain algebra of the simplicial interval with $N$ edges, is an interval algebra when equipped with $r_i$ the restriction maps to $\{i\}$. An $I_N$-homotopy between twisting sequences is a sequence of $N$ composable homotopies $x_\bullet \to x^1_\bullet \cdots \to x^{n-1}_\bullet \to y_\bullet$.

One may also take $I_{sing}$, the singular cochain algebra of the unit interval, or even the singular cochain algebra of any acyclic space equipped with two distinct points. An $I_{sing}$-homotopy is hard to describe in terms of $A$ itself.
\end{example}

Using the same ideas as the last argument, we can now prove that $J$-homotopy is independent of the choice of $J$. We do this in two steps: first we show that the notion of $J$-homotopy is independent of $J$ up to quasi-isomorphism, and then we give a zig-zag of interval algebra quasi-isomorphisms between any two interval algebras.

\begin{lemma}
Let $A$ be a dg-algebra. If $x_\bullet$ and $y_\bullet$ are twisting sequences in $A$, and $f: J \to J'$ is a quasi-isomorphism of interval algebras (so $r_i' f = r_i$), then $x_\bullet \sim_J y_\bullet$ if and only if $x_\bullet \sim_{J'} y_\bullet$.
\end{lemma}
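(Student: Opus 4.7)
The forward direction is immediate: the map $f \otimes 1_A : J \otimes A \to J' \otimes A$ is a dg-algebra homomorphism sending twisting sequences to twisting sequences, and the compatibility $r'_i f = r_i$ ensures $(r'_i \otimes 1)(f \otimes 1) = r_i \otimes 1$, so any $J$-homotopy $\overline h_\bullet$ pushes forward to a $J'$-homotopy $(f \otimes 1)(\overline h_\bullet)$.

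For the reverse direction, I would use a relative obstruction-theoretic argument modeled on the proof of Proposition \ref{qiso-transfer}. Two preliminary observations are key. First, since $J$ and $J'$ are non-negatively graded and torsion-free in each degree, $f \otimes 1_A : J \otimes A \to J' \otimes A$ is again a quasi-isomorphism. Second, writing $K = \ker r$ and $K' = \ker r'$, the five-lemma applied to the map of short exact sequences
\[
\begin{tikzcd}
0 \arrow{r} & K \arrow{r} \arrow{d}{f|_K} & J \arrow{r}{r} \arrow{d}{f} & \mathbb{Z}^2 \arrow{r} \arrow[equal]{d} & 0 \\
0 \arrow{r} & K' \arrow{r} & J' \arrow{r}{r'} & \mathbb{Z}^2 \arrow{r} & 0
\end{tikzcd}
\]
shows that $f|_K : K \to K'$ is a quasi-isomorphism, and hence so is $f|_K \otimes 1_A : K \otimes A \to K' \otimes A$.

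Given a $J'$-homotopy $\overline h'_\bullet$, I would inductively construct both a twisting $n$-sequence $\overline h_\bullet$ on $J \otimes A$ satisfying $r(\overline h_{2k+1}) = (y_{2k+1}, x_{2k+1})$ for every $k < n$, together with a relative $n$-homotopy $k_\bullet$ from $(f \otimes 1)(\overline h_\bullet)$ to $\overline h'_\bullet$ whose terms all lie in the ideal $K' \otimes A$ (equivalently, $r'(k_\bullet) = 0$). Carrying both data along the induction is essential, since it is the extra homotopy $k_\bullet$ that lets obstructions in $J \otimes A$ be compared against the known-vanishing obstructions in $J' \otimes A$. At each step, the obstruction to extending $\overline h_\bullet$ as a twisting sequence lies in $H^*(J \otimes A)$ and vanishes because it pushes forward under the quasi-isomorphism $r_0 \otimes 1_A$ to the corresponding obstruction for $y_\bullet$. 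Having chosen any extension $\overline h^*_{2n+1}$ satisfying the twisting-sequence relation, the discrepancy $\delta = (y_{2n+1}, x_{2n+1}) - (r \otimes 1)(\overline h^*_{2n+1})$ is a cocycle in $\mathbb{Z}^2 \otimes A$; its image under the connecting homomorphism of the short exact sequence $0 \to K \otimes A \to J \otimes A \to \mathbb{Z}^2 \otimes A \to 0$ agrees, via $f|_K \otimes 1_A$, with the analogous connecting-map class in $K' \otimes A$, and the latter vanishes because $(\overline h'_\bullet, k_\bullet)$ furnishes an explicit cocycle lift of $\delta$ to $J' \otimes A$. One then corrects $\overline h^*_{2n+1}$ by a cocycle in $K \otimes A$ to get the desired boundary values, and a further adjustment by a cocycle in $K \otimes A$ (available because $H^*(K \otimes A) \to H^*(K' \otimes A)$ is an isomorphism) ensures that the obstruction to extending $k_\bullet$, which lies in $H^*(K' \otimes A)$, also vanishes, so that the relative homotopy extends simultaneously.

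The main obstacle, in my view, is the careful bookkeeping: one must verify that the two quasi-isomorphisms $r_0 \otimes 1_A$ (used to extend $\overline h_\bullet$) and $f|_K \otimes 1_A$ (used to match boundary conditions and extend $k_\bullet$) fit together so that, at every stage of the induction, the relevant obstruction classes transfer under $f$ to the vanishing obstructions arising from $\overline h'_\bullet$. Once this is organized, the induction proceeds in essentially the same manner as in Proposition \ref{qiso-transfer}.
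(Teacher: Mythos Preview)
Your proposal is correct and follows essentially the same approach as the paper: both directions match, the preliminary observations (that $f \otimes 1_A$ and $f|_K \otimes 1_A$ are quasi-isomorphisms) are identical, and the inductive construction of $\overline h_\bullet$ together with a relative homotopy $k_\bullet$ is precisely the paper's strategy.

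The only organizational difference is that the paper packages your steps ``extend as a twisting sequence, then fix the boundary'' into a single \emph{relative obstruction class} $o(\overline h_\bullet; r) \in H^*(\ker r_A)$: picking any lift $H \in J \otimes A$ of $(y_{2n+1}, x_{2n+1})$, the class $[dH + \sum \overline h_{2i+1}\overline h_{2j+1}]$ lives in $H^*(\ker r_A)$ and is exactly your connecting-map image of $\delta$. One small imprecision in your sketch: the assertion that ``$(\overline h'_\bullet, k_\bullet)$ furnishes an explicit cocycle lift of $\delta$ to $J' \otimes A$'' is not literally correct, since $\overline h'_{2n+1} - f(\overline h^*_{2n+1})$ is not a cocycle. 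What is true (and what the paper uses) is that the relative obstruction class is invariant under relative homotopy, so $f_*\,o(\overline h_\bullet; r) = o(f(\overline h_\bullet); r') = o(\overline h'_\bullet; r') = 0$; this is the bookkeeping you flagged, and once stated this way your induction goes through exactly as in the paper.
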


\begin{proof}
If $x_\bullet \sim_J y_\bullet$ then (by definition) there exists a twisting sequence $\overline h_\bullet$ on $J \otimes A$ so that $r_0(\overline h_\bullet) = y_\bullet$ and $r_1(\overline h_\bullet) = x_\bullet$. Then because $r'f = r$, we see that $f(\overline h_\bullet)$ is a $J'$-homotopy between these twisting sequences, so that $x_\bullet \sim_{J'} y_\bullet$.

The other direction is more difficult. We are in the situation of the following diagram: \[\begin{tikzcd}
	{\overline h_\bullet?} & {J \otimes A} \\
	&&&& {(x_\bullet, y_\bullet)} \\
	{\overline h'_\bullet} & {J' \otimes A} &&& {A^2} \\
	&&&& {}
	\arrow["r_A"', from=1-2, to=3-5]
	\arrow["{r'_A}"', from=3-2, to=3-5]
	\arrow["{f \otimes 1}"', from=1-2, to=3-2]
	\arrow[hook, from=3-1, to=3-2]
	\arrow[dashed, hook, from=1-1, to=1-2]
	\arrow[hook', from=2-5, to=3-5]
\end{tikzcd}\]

We have twisting sequences $x_\bullet$ and $y_\bullet$ on $A$, a twisting sequence $\overline h'_\bullet$ which restricts to these on $J' \otimes A$, and our goal is to inductively construct a twisting sequence $\overline h_\bullet$ on $J \otimes A$ and a \emph{relative homotopy} $z_\bullet: f(\overline h_\bullet) \to \overline h'_\bullet$, as in the injectivity part of Proposition \ref{qiso-transfer}. 

Before we can carry out this induction, we need to mention some algebraic preliminaries. First, the map $f \otimes 1$ is a quasi-isomorphism because $J$ and $J'$ are both torsion-free, hence $\mathbb Z$-flat; this guarantees that $f \otimes 1$ is a quasi-isomorphism as soon as $f$ is, by a spectral sequence argument as in \cite[Part II, Lemma 2.2]{KrizMay} (the key point is that submodules of $\mathbb Z$-flat modules are $\mathbb Z$-flat).

Next, the map $f \otimes 1: \text{ker}(r_A) \to \text{ker}(r'_A)$ is also a quasi-isomorphism. This is because $r_A$ and $r'_A$ are surjective, so induce long exact triangles relating the cohomology of $\text{ker}(r_A), J \otimes A$, and $A^2$. The map $f$ induces a map between these long exact triangles; the map $A^2 \to A^2$ is the identity, and the map $J \otimes A \to J' \otimes A$ is $f \otimes 1$, both of which induce isomorphisms on cohomology. The claim now follows from the five lemma.
\par
We are now prepared to set up our induction. For the base case, arbitarily choose a cycle $\overline h_3 \in J \otimes A$ and a chain $z_2 \in J' \otimes A$ so that $r_A(\overline h_3) = (x_3, y_3)$, and $dz_2 = f(\overline h_3) - \overline h'_3$, while $r'_A(z_2) = 0$. Both of these steps require some justification.

\begin{itemize}
\item Because $f \otimes 1$ is a quasi-isomorphism and $r'_A(\overline h'_3) = (x_3, y_3)$, we see that there exists some cycle $H_3 \in J \otimes A$ so that $r(H_3)$ is homologous to $(x_3, y_3)$; let's say $r_A(H_3) + dw = (x_3, y_3)$. Now because $r_A$ is surjective, there exists some $W \in J \otimes A$ with $r_A(W) = w$. It follows that $\overline h_3 = H_3 + dW$ satifies the desired properties.\\
\item Now, $f(\overline h_3) - \overline h'_3$ is a cycle in $\text{ker}(r'_A)$; because $$f \otimes 1: \text{ker}(r_A) \to \text{ker}(r'_A)$$ is a quasi-isomorphism, we may find a cochain $\ell \in \text{ker}(r_A)$ so that $f(\overline h_3 + \ell) - \overline h'_3$ is a coboundary in $\text{ker}(r'_A)$. Thus replacing $\overline h_3$ with $\overline h_3 + \ell$ we may choose $z_2$ with the desired properties.
\end{itemize}
The induction step is similar. Supposing we have chosen a twisting $n$-sequence $\overline h_\bullet$ on $J \otimes A$ and a relative $n$-homotopy $f(\overline h_\bullet) \xrightarrow{z_\bullet} \overline h'_\bullet$. 

Then the obstruction to extending $\overline h_\bullet$ to a twisting $(n+1)$-sequence (which restricts to the desired twisting sequences via $r_0$ and $r_1$) is a relative cohomology class $o(\overline h_\bullet; r) \in H^*(\text{ker}(r_A))$. There are similar obstructions $o(f(\overline h_\bullet); r')$ and $o(\overline h'_\bullet; r')$ in $H^*(\text{ker}(r'_A))$. The latter obstruction vanishes (because this sequence is extendable), and the former obstruction is equal to the latter obstruction because the two twisting $n$-sequences $f(\overline h_\bullet)$ and $\overline h'_\bullet$ are homotopic relative to $r'$. Because $f$ is a quasi-isomorphism and $$f_* o(\overline h_\bullet; r) = o(f\overline h_\bullet; r') = o(\overline h'_\bullet; r') = 0,$$ we see that $o(\overline h_\bullet; r) = 0$, and we may extend our twisting $n$-sequence $\overline h_\bullet$ to a twisting $(n+1)$-sequence which still has the desired restrictions.

Now we must extend the homotopy. Just as before, the obstruction to doing so may be adjusted arbitrarily by changing the newly-constructed $\overline h_{2n+1}$ by adding a cocycle in $\text{ker}(r_A)$; doing so if necessary, we may extend the homotopy. This completes the induction.
\end{proof}

Now let $J, J'$ be two \emph{arbitrary} interval algebras. Then $J \otimes J'$ is again an interval algebra, with endpoint maps $r_0 \otimes r_0'$ and $r_1 \otimes r_1'$. Furthermore, the maps $i: J \to J \otimes J'$ and $i': J' \to J \otimes J'$, given by $i(x) = x \otimes 1$ and $i'(y) = 1 \otimes y$, are maps of interval algebras and quasi-isomorphisms by the same argument as for $f\otimes 1$ above.

It follows that given any interval algebra, there is a zig-zag of quasi-isomorphisms of interval algebras between the standard interval algebra $I = C^*_\Delta(\Delta^1)$ and $J$. This gives us the following statement.

\begin{cor}
Given two twisting sequences $x_\bullet$ and $y_\bullet$ in $A$, the following are equivalent.
\begin{itemize}
\item $x_\bullet$ and $y_\bullet$ are homotopic.
\item There exists an interval algebra $J$ so that $x_\bullet$ and $y_\bullet$ are $J$-homotopic.
\item For all interval algebras $J$, the twisting sequences $x_\bullet$ and $y_\bullet$ are $J$-homotopic.
\end{itemize}
\end{cor}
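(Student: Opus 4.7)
My plan is to reduce everything to the previous lemma by exhibiting an explicit zig-zag of interval algebra quasi-isomorphisms between any two interval algebras, passing through their tensor product.

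First, I would note that (3) $\Rightarrow$ (2) is immediate, and that by definition statement (1) is identical to the assertion that $x_\bullet$ and $y_\bullet$ are $I$-homotopic, where $I = C^*_\Delta([0,1])$ is the standard interval algebra. It therefore suffices to show that for every interval algebra $J$, being $I$-homotopic is equivalent to being $J$-homotopic; this simultaneously covers (1) $\Rightarrow$ (3) and (2) $\Rightarrow$ (1).

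To this end, consider the zig-zag
\[
I \xrightarrow{i} I \otimes J \xleftarrow{i'} J,
\]
where $i(a) = a \otimes 1$ and $i'(b) = 1 \otimes b$. The paragraph immediately preceding the corollary already carries out the needed checks: $I \otimes J$ is an interval algebra when equipped with the endpoint maps $r_0 \otimes r_0'$ and $r_1 \otimes r_1'$ (torsion-freeness and non-negative grading are preserved under $\otimes$; the joint endpoint map is surjective onto $\mathbb{Z}^2$ because each factor's is; and the two components induce the same map on cohomology because each factor does), while $i$ and $i'$ are interval algebra homomorphisms and quasi-isomorphisms, the latter by the same K\"unneth/flatness argument used in the previous lemma (which only requires $\mathbb{Z}$-flatness of the factors, i.e. torsion-freeness). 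Applying the previous lemma separately to $i$ and to $i'$ then yields
\[
x_\bullet \sim_I y_\bullet \ \Longleftrightarrow\ x_\bullet \sim_{I \otimes J} y_\bullet \ \Longleftrightarrow\ x_\bullet \sim_J y_\bullet,
\]
which is precisely what we need.

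I expect no genuine obstacle at this stage: all the difficult content, namely the obstruction-theoretic promotion of a $J$-homotopy to a $J'$-homotopy along an interval algebra quasi-isomorphism, has already been absorbed into the preceding lemma. The only work left is the mild bookkeeping needed to confirm that $I \otimes J$ satisfies the axioms of an interval algebra and that $i, i'$ are quasi-isomorphisms, both of which are routine.
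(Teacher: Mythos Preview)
Your proof is correct and follows essentially the same approach as the paper: the paragraph immediately preceding the corollary sets up the tensor-product zig-zag $I \to I \otimes J \leftarrow J$ of interval algebra quasi-isomorphisms, and the corollary is then deduced by applying the previous lemma to each leg, exactly as you do. Your identification of (1) with $I$-homotopy and the reduction of all implications to the equivalence $\sim_I \Leftrightarrow \sim_J$ is the intended argument.
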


\begin{cor}
The homotopy relation $x_\bullet \sim y_\bullet$ on twisting sequences is in fact an equivalence relation.
\end{cor}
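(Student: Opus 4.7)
The plan is to exploit the preceding corollary, which shows that the notion of homotopy between twisting sequences is independent of the interval algebra chosen to define it. The rigidity of the original definition (using only $I = C^*_\Delta([0,1])$) is what makes reflexivity, symmetry and transitivity non-obvious, but with the freedom to vary $J$ each property becomes immediate from a well-chosen $J$.

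For reflexivity, I would exhibit the constant $I$-homotopy $\overline h_{2n+1} = 1 \otimes x_{2n+1} \in I \otimes A$: since $1$ is a cocycle in $I$ and $r_0(1) = r_1(1) = 1$, this is a twisting sequence on $I \otimes A$ restricting to $x_\bullet$ at both endpoints. For symmetry, consider the interval algebra $I^{\mathrm{op}}$ with the same underlying dg-algebra as $I$ but with the roles of $r_0$ and $r_1$ interchanged; it is still an interval algebra because the axioms are symmetric in the two endpoint maps. An $I^{\mathrm{op}}$-homotopy from $x_\bullet$ to $y_\bullet$ is, by inspection of the definitions, literally the same data as an $I$-homotopy from $y_\bullet$ to $x_\bullet$, so the corollary forces $x_\bullet \sim y_\bullet$ if and only if $y_\bullet \sim x_\bullet$.

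The transitivity step is the substantive one. I would use $I_2 = C^*_\Delta([0,1]_2)$ with the restriction maps to the two outer vertices of the two-edge simplicial interval. Expanding a twisting sequence $\overline h_\bullet \in I_2 \otimes A$ in the basis $\{e^0, e^1, e^2, e^{01}, e^{12}\}$ of $I_2$, the twisting-sequence relations decouple into a twisting sequence at each of the three vertices together with an $I$-homotopy along each of the two edges. Thus an $I_2$-homotopy from $x_\bullet$ to $z_\bullet$ is precisely the data of an intermediate twisting sequence $a_\bullet$ and a pair of composable $I$-homotopies $x_\bullet \to a_\bullet \to z_\bullet$. Given $x_\bullet \sim y_\bullet$ and $y_\bullet \sim z_\bullet$, pasting the two witnessing homotopies along $y_\bullet$ yields an $I_2$-homotopy from $x_\bullet$ to $z_\bullet$, which the corollary converts back to an ordinary $I$-homotopy. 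The main obstacle is the bookkeeping for the $I_2$ decomposition, but this is a direct unwinding of the cochain product on $I_2$ and poses no essential difficulty.
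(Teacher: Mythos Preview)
Your proof is correct and follows essentially the same strategy as the paper: use the interval-algebra independence corollary, with $I_2$ handling transitivity exactly as the paper does. The only notable variation is your symmetry argument: you swap the endpoint maps to form $I^{\mathrm{op}}$, whereas the paper instead exhibits a single interval algebra (namely $C^*_{\mathrm{sing}}([0,1])$) admitting an endpoint-swapping automorphism induced by $t \mapsto 1-t$; your device is slightly slicker and avoids passing to singular cochains.
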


\begin{proof}
Reflexivity is clear (set $h_{2n} = 0$ for all $n$). 

For symmetry, choose an interval algebra $J$ for which there exists a dg-algebra automorphism $f: J \to J$ so that $r_1 f = r_0$ and $r_0 f = r_1$. Then if $x_\bullet \sim_J y_\bullet$ via a twisting sequence $\overline h_\bullet$ on $J \otimes A$, it follows that $y_\bullet \sim_J x_\bullet$ via the twisting sequence $f(\overline h_\bullet)$. As an example of such a $J$ one may take $C^*_{sing}([0,1])$, with $f$ induced by the continuous map $t \mapsto 1-t$.

For transitivity, observe that if $a_\bullet \xrightarrow{h_\bullet} b_\bullet \xrightarrow{j_\bullet} c_\bullet$ are a pair of homotopies between twisting sequences, then these define a twisting sequence $$\overline h_{2n+1} = e^0 c_{2n+1} + e^{1/2} b_{2n+1} + e^1 a_{2n+1} + e^{[0,1/2]} j_{2n} + e^{[1/2, 1]} h_{2n}$$ on the algebra $$I_2 = C^*_\Delta(\Delta^1_2),$$ where $\Delta^1_2$ denotes the simplicial structure on the unit interval with two edges and three vertices. This is again an interval algebra with $r_0$ given by restriction to $\{0\}$ and $r_1$ given by restriction to $\{1\}$, and $\overline h_\bullet$ has $r_0(\overline h_\bullet) = c_\bullet$ and $r_1(\overline h_\bullet) = a_\bullet$.

Thus if $a_\bullet \sim b_\bullet$ and $b_\bullet \sim c_\bullet$, then $a_\bullet \sim_{I_2} c_\bullet$ (essentially by definition). By the previous corollary it follows that $a_\bullet \sim c_\bullet$ as desired.
\end{proof}

\begin{remark}\label{commutativeintalg}
So far we have implicitly worked with algebras over the ground ring $\mathbb Z$. If one works over the rationals $\mathbb Q$, then in fact there is a \emph{commutative} interval algebra, given by the rational polynomial differential forms on an interval: $\mathbb Q[t,dt]/(dt)^2$ with $d(t^n) = nt^{n-1}$ and $|t| = 1$, with restriction maps given by restricting these differential forms to $\{0\}$ and $\{1\}$ respectively (so that $r_0 (p(t) + q(t) dt) = p(0)$ gives the constant term of the 0-form term, and $r_1(p(t) + q(t) dt) = p(1)$ sums the coefficients of the 0-form term). 

This construction does not work integrally: the algebra $\mathbb Z[t,dt]/(dt)^2$ this not acyclic, and if one takes a divided power algebra $\mathbb Z[t, t^2/2, t^3/6, \cdots, dt]/(dt^2)$ then the second evaluation map lands in the rationals, not the integers. It seems unlikely to the authors that there is a commutative interval algebra over the integers.
\end{remark}

We conclude with a technical lemma which will be useful later.

\begin{lemma}\label{lemma:add-a-cbdry}
Let $A$ be a dg-algebra. Given a twisting sequence $x_\bullet$ in $A$ and a coboundary $dz = a \in A^{2s+1}$, there is a twisting sequence $x'_\bullet$ homotopic to $x_\bullet$ with $x_{2i+1} = x'_{2i+1}$ for $i < s$, while $x_{2s+1} = x'_{2s+1} + a$.
\end{lemma}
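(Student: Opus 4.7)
The plan is to build $x'_\bullet$ and a witness homotopy $h_\bullet\colon x_\bullet \to x'_\bullet$ inductively, using the obstruction theory developed in this section (essentially repeating the argument for injectivity in Proposition \ref{qiso-transfer} in the case $f=\mathrm{id}$).

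First I would set up the base case directly. Put $x'_{2i+1}=x_{2i+1}$ and $h_{2i}=0$ for $i<s$, and define $x'_{2s+1}=x_{2s+1}-a$ and $h_{2s}=z$. A one-line check shows that $(x'_3,\dots,x'_{2s+1})$ is a twisting $(s+1)$-sequence: the Maurer--Cartan relation at level $n<s$ is identical to that of $x_\bullet$, and at level $n=s$ it reads $d(x_{2s+1}-a)+\sum_{i+j=s,\,i,j\ge 1}x_{2i+1}x_{2j+1}=0$, which holds because $da=0$ and $x_\bullet$ itself satisfies the relation. Similarly $(h_2,\dots,h_{2s})$ is an $(s+1)$-homotopy from the truncation of $x_\bullet$ to $x'_\bullet$: all cross terms $x'_{2j+1}h_{2i}-h_{2i}x_{2j+1}$ with $i+j=s$, $i,j\ge 1$ vanish because $h_{2i}=0$ for $i<s$, so the relation collapses to $(x_{2s+1}-a)-x_{2s+1}+dz=0$, which is $dz=a$.

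Next, I would extend by induction on $n>s$, keeping $x'_{2i+1}$ and $h_{2i}$ fixed once constructed. Suppose we have a twisting $n$-sequence $x'_\bullet$ and an $n$-homotopy $h_\bullet\colon x_\bullet\to x'_\bullet$. By the naturality lemma for obstruction classes of twisting $n$-sequences, $o_n(x'_\bullet)=o_n(x_\bullet)$, and the latter vanishes since $x_\bullet$ extends to a full twisting sequence. Choose any $x'_{2n+1}\in A^{2n+1}$ with $dx'_{2n+1}=-\sum_{i+j=n,\,i,j\ge 1}x'_{2i+1}x'_{2j+1}$. The obstruction $o_n(h_\bullet;x_\bullet,x'_\bullet)\in H^{2n+1}(A)$ to extending $h_\bullet$ may now be nonzero, but as in the proof of Proposition \ref{qiso-transfer}, picking any cocycle $c\in A^{2n+1}$ representing this class and replacing $x'_{2n+1}$ by $x'_{2n+1}-c$ preserves the twisting relation (since $c$ is closed) and kills the obstruction. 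Then choose $h_{2n}$ trivializing the resulting coboundary. Iterating yields the desired $x'_\bullet$ and $h_\bullet$.

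No step is really an obstacle here; the main thing to verify cleanly is the base case (which is why the specific choices $h_{2i}=0$ for $i<s$ and $h_{2s}=z$ are essential—any other choice of earlier $h_{2i}$'s would introduce spurious cross terms at level $s$), after which the induction is a direct reuse of the obstruction-theoretic machinery already in place.
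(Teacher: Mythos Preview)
Your proposal is correct and follows essentially the same approach as the paper's proof: set up the base case with $h_{2i}=0$ for $i<s$, $h_{2s}=z$, and $x'_{2s+1}=x_{2s+1}-a$, then extend inductively by invoking the obstruction-theoretic argument from Proposition~\ref{qiso-transfer}. Your write-up is in fact more detailed in verifying the base case than the paper's.
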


\begin{proof}
Construct $x'_\bullet$ and the homotopy by induction. For the base case, we have $x'_\bullet$ in degrees up to $x_{2s+1}$, and may choose $h_{2i} = 0$ for $i < s$ and $h_{2s} = z$ (so that $dh_{2s} = x_{2s+1} - x'_{2s+1}$). 

Inductively, we have an $m$-sequence $x'_\bullet$ with the desired properties and an $m$-homotopy $h_\bullet$ from $x_\bullet$ to $x'_\bullet$, and we want to extend these to $m+1$. The obstruction theory argument is now exactly the same as in Proposition \ref{qiso-transfer}: the obstruction to extending $x'_\bullet$ is identified with the obstruction to extending $x_\bullet$, and hence is zero; choosing $x'_{2m+1}$ the obstruction to extending $h_\bullet$ is possibly nonzero, but may be made zero by adjusting $x'_{2m+1}$ by a cocycle if necessary.
\end{proof}

\medskip

\section{Some higher differentials in $SU(2)$-coupled Morse homology}\label{nonformalsu2}
As a brief aside, in this section we answer a question of Kronheimer and Mrowka by showing that in general, the integral coupled Morse homology for a family $(M,L)$ classified by $\zeta: M\rightarrow SU(2)$ is not determined by the cohomology class $\zeta^*[SU(2)]\in H^3(M;\mathbb{Z})$. 

Explicitly, we show that there exists such a map so that $\zeta^* [SU(2)] = 0$, but the $d^5$ differential on the twisted homology spectral sequence (equivalently, the coupled Morse homology spectral sequence) is nonzero. This will also give us an example of a phenomenon discussed in the previous section: the existence of a twisting sequence $(\zeta_3, 0, 0, \cdots)$ in an algebra $B$ so that $[\zeta_3] = 0$, but so that there exists a quasi-isomorphism $f: A \to B$ so that this twisting sequence is pulled back to one of the form $(0, \zeta'_5, 0, \cdots)$, where $[\zeta'_5]$ is nonzero.

In proving Theorem \ref{cohono}, we will interpret the differential on the $E^5$ page in terms of a mod $2$ Hopf invariant for the map $M\rightarrow SU(2)$. This Hopf invariant is precisely the cohomology class $[\zeta'_5]$ discussed above.

We will mostly work in the simplicial model, and comment on the equivalence with the Morse model at the end of the section. In this section all (co)homology is taken with $\Bbb Z$ coefficients unless specfied otherwise.

\begin{definition}
Let $X$ be a simplicial complex equipped with a simplicial map $$\phi: X \to SU(2)=S^3$$ which has $\phi^* = 0$ on third cohomology; write $C_\phi$ for the mapping cone of $\phi$. Let $x_\phi$ be any class in $H^3(C_\phi)$ so that under the map $i: S^3 \to C_\phi$ we have $$i^*x_\phi = 1 \in \mathbb Z \cong H^3(S^3).$$ The graded-commutativity of cup the product implies that $x_{\phi}^2$ is $2$-torsion. Consider the composite isomorphism $$H^6(C_\phi) \xrightarrow{(p^*)^{-1}} H^6(\Sigma X) \xrightarrow{S} H^5(X)$$
where $p: C_\phi \to \Sigma X$ is the collapse map and $S$ is the suspension isomorphism. We define the \textit{mod $2$ Hopf invariant} $h(\phi)$ to be the 2-torsion class $S(p^* x_\phi^2)\in H^5(X;\mathbb Z)$.
\end{definition}

The Hopf invariant $h(\phi)$ is readily seen to be an invariant of the homotopy class of $\phi$, because $x_\phi^2$ is independent of the choice of lift $x_\phi$. Indeed, if $i^* y_\phi = i^* x_\phi$ then $$y_\phi^2 - x_\phi^2 = (y_\phi - x_\phi)(y_\phi + x_\phi).$$ Now $y_\phi - x_\phi$ represents an element of $H^3(C_\phi, S^3) \cong H^3(\Sigma X)$, and all cup products $$H^3(C_\phi, S^3) \times H^3(C_\phi) \to H^6(C_\phi, S^3)$$ are zero (the usual proof that cup products of a suspension vanish applies), hence the cohomology class $y_\phi^2 - x_\phi^2$ is zero.

\medskip
Write $x_3^0$ for a simplicial cocycle on $S^3$ giving the oriented generator of third cohomology. Because $C^6_\Delta(S^3) = 0$, we tautologically have that $x_3^0$ is a twisting 3-cycle. We write $\phi_3$ for the twisting $3$-cycle $\phi^* x_3$ on $X$.

\begin{lemma}\label{hopf}Let $X$ be a finite-dimensional simplicial complex with a simplicial map $\phi: X \to S^3$ so that the induced map $\phi^*$ is zero on third cohomology. Then the spectral sequence for $H^*_{\textup{tw}}(X;\phi_3)$ has differential $d_3 = 0$, but differential $d_5$ given by $$d_5([a] \otimes 1) = h(\phi) \cup [a] \otimes T^2 \in H^5(X;\mathbb Z) \otimes \mathbb Z[T, T^{-1}],$$so that in particular
$d_5(1) = h(\phi) T^2$.
\end{lemma}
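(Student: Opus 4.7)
The plan is to treat the vanishing of $d_3$ and the formula for $d_5$ separately. For the former, by the discussion at the beginning of Section \ref{twistseq} the $d_3$ differential is cup product with $[\phi_3]\cdot T = \phi^*[x_3^0]\cdot T$, which vanishes by hypothesis.

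For the formula for $d_5$, I would use a standard secondary operation argument. Since $[\phi_3] = 0$, choose $h_2 \in C^2(X; \mathbb Z)$ with $dh_2 = -\phi_3$. Given a cocycle $a \in Z^k(X)$, the naive representative $a\otimes T^j$ has $D(a\otimes T^j) = (\phi_3 \cup a)\otimes T^{j+1}$, which is cohomologically trivial, so we can promote to a representative in higher filtration by adding the correction $(h_2 \cup a)\otimes T^{j+1}$. Setting $\widetilde a = a\otimes T^j + (h_2 \cup a)\otimes T^{j+1}$, a direct computation using $da = 0$ and $d(h_2 \cup a) = -\phi_3 \cup a$ gives $D\widetilde a = (\phi_3 \cup h_2 \cup a)\otimes T^{j+2}$, identifying $d_5([a]\otimes T^j) = [\phi_3 \cup h_2]\cup[a]\otimes T^{j+2}$.

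The main obstacle, and the last step, is the identification $[\phi_3 \cup h_2] = h(\phi) \in H^5(X)$. My plan is to use an explicit simplicial model $C_\phi = S^3 \cup_\phi CX$, where $CX$ denotes the combinatorial cone on $X$ with apex $v$, choosing the triangulation of $S^3$ to have no $6$-simplices so that $(x_3^0)^2 = 0$ at the cochain level. Simplicial cochains on $C_\phi$ decompose naturally as $C^n(C_\phi) = C^n(S^3) \oplus C^{n-1}(X)$, the second summand recording values on cone simplices $[v, \sigma]$, with differential $d(\alpha, \beta) = (d\alpha,\, \phi^*\alpha - d\beta)$. Then $x_\phi = (x_3^0,\, -h_2)$ is a cocycle restricting to the oriented generator on $S^3$. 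Applying the Alexander--Whitney formula to a cone $6$-simplex $[v, \tau_0, \ldots, \tau_5]$ split after vertex $\tau_2$ yields $x_\phi^2 = (0,\, -h_2 \cup \phi_3)$. Since this cocycle vanishes on the $S^3$ component, under the identifications $C^*(C_\phi, S^3) \cong \widetilde{C}^*(\Sigma X) \cong C^{*-1}(X)$ we read off $S((p^*)^{-1}[x_\phi^2]) = -[h_2\cup\phi_3]$. Finally, the graded-commutativity identity $d(h_2\cup h_2) = -\phi_3\cup h_2 - h_2\cup\phi_3$ gives $[h_2\cup\phi_3] = -[\phi_3\cup h_2]$, completing the identification $h(\phi) = [\phi_3 \cup h_2]$.

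The principal difficulties all lie in the third step: one needs to carefully verify the decomposition of $C^*(C_\phi)$ as a complex with the stated differential, identify the suspension isomorphism with the "cone-cochain" part of this splitting, and track signs through the Alexander--Whitney calculation on cone simplices. The choice of triangulation of $S^3$ avoiding $6$-simplices is used purely to make $(x_3^0)^2 = 0$ hold strictly; if needed, one can always refine a given triangulation without changing the Hopf invariant.
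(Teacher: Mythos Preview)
Your proposal is correct and follows essentially the same approach as the paper: compute $d_5$ by lifting $a$ to $a + (h_2 \cup a)T$, then identify the resulting class $[\phi_3 \cup h_2]$ with the Hopf invariant via an explicit cochain model for $C_\phi$.

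Two minor remarks. First, the paper orders the cone point \emph{last} rather than first; with that convention the front face lands in $X$ (identified with $\phi(X) \subset S^3$) and the back face is the cone simplex, so $x_\phi^2$ picks out $\phi_3 \cup h_2$ directly and the extra graded-commutativity step is unnecessary. Your convention is perfectly fine, and since the class is $2$-torsion the sign discrepancy is irrelevant anyway. Second, your concern about arranging $(x_3^0)^2 = 0$ is moot: $S^3$ is three-dimensional, so $C^6_\Delta(S^3) = 0$ for any simplicial structure, and no refinement is needed.
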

\begin{remark} 
One should understand this Hopf invariant $h(\phi)$ as being the second obstruction to null-homotoping a twisting sequence $(\phi_3, 0, 0, \cdots)$ with $[\phi_3] = 0$.
It follows that $h(\phi)$ depends on the map $\phi:X\rightarrow S^3$ only through the homotopy class of the twisting $3$-cycle $\phi_3$ (or, equivalently, the homotopy class of the composite map to the universal space $X \to S^3 \to \mathcal U_T$, see Remark \ref{univ}). 
\begin{comment}
In particular, though we won't prove this here, the map $S^3 \to \mathcal U_T$ factors through $SU$, so it depends on at most the class $\phi^*[SU_2] \in K^1(X)$, where $[SU_2]$ is the K-theory class associated to the inclusion $S^3 \cong SU^2 \hookrightarrow SU \subset U$. 
\end{comment}
\end{remark}

\begin{proof}
The differential $d_3$ is zero by the assumption that $[\phi_3] = 0$. This means that there is a cochain $h_2$ with $dh_2 + \phi_3 = 0$. Following the notation of \cite[Theorem 2.6]{McC}, the class $$[a] \otimes 1 \in H^*(X) \otimes \mathbb Z[T, T^{-1}] = E^{*,*}_5$$ is represented by the cochain $a \otimes 1 + h_2 a \otimes T$. The differential $d + L_3 T$, applied to this chain, gives $\phi_3 h_2 a \otimes T^2$. It follows that $$d_5([a] \otimes 1) = [\phi_3 h_2 a] \otimes T^2.$$

We will identify the cohomology class $[\phi_3 h_2]$ with the Hopf invariant $h(\phi)$. Notice that $C_\phi$ is naturally a simplicial set (though not a simplicial complex if $\phi$ is not injective). For this reason, we will work with the subcomplex $C_*$ of the singular chain complex $C_*^{\mathrm{sing}}(C_\phi)$ consisting of simplices on $S^3$ and linear cones on simplices in $X$, together with an extra vertex to serve as the cone point.\footnote{Equivalently, $C_*$ is the normalized simplicial chain complex $C_*^\Delta(C_\phi)$ on the simplicial mapping cone.} The linear cone is ordered so that the cone point is the last point in the simplex, while the ordering on the earlier vertices coincides with that of $X$. Then, if $\sigma$ is one of the simplices listed above, for every $i$ the front face $\sigma_{[0, i]}$ and the back face $\sigma_{[i, k]}$ are simplices in the list too.

It follows that there is a well-defined cup-product on the dual $C^* = \text{Hom}(C_*, \Bbb Z)$, so that the restriction map $C^*_{\mathrm{sing}}(C_\phi) \to C^*$ is a dg-algebra quasi-isomomorphism. We may thus compute our $x_\phi^2$ in
$$C^* \cong C^{*-1}_\Delta(X) \oplus C^*_\Delta(S^3),$$where the differential is the mapping cone differential  $$\overline d \sigma = \begin{pmatrix} d_X \sigma & (-1)^{|\sigma| + 1}\phi^*\sigma \\ 0 & d_{S^3}\sigma\end{pmatrix}$$ and the cup product is $$(Cx, y) \cdot (Cx', y') = (C(\phi^*y \cup_X x'), y \cup_{S^3} y').$$ We use our chosen cochain with $dh_2 + \phi_3 = 0$, and set $x_\phi = (C(h_2), x^0_3)$; this is a cocycle such that $i^* x_\phi = 1 \in H^3 (S^3)$. Then $x_\phi^2 = (C(\phi_3 h_2), 0)$. The Hopf invariant is obtained by pulling this back under the maps $C^5(X) \to C^6(\Sigma X) \to H^6(C_\phi)$; doing so, we obtain the desired result that $[\phi_3 h_2] = h(\phi)$. 
\end{proof}

Using this, we can provide an example of the phenomenon describe in Theorem \ref{cohono} in the category of simplicial complexes.

\begin{prop}
The 5-dimensional complex $X = \Sigma^3 \mathbb{RP}^2$ has a map $\phi: X \to S^3$ so that $h(\phi) \neq 0$. In particular, as $$H^*(X; \mathbb Z) = \mathbb Z[0] \oplus (\mathbb Z/2)[5],$$ from the spectral sequence for twisted cohomology it follows that $$H^*_{\textup{tw}}(X;\phi_3) \cong \mathbb Z[T,T^{-1}] \not\cong H^*(X;\mathbb Z)[T,T^{-1}]\cong H^*_{\textup{tw}}(X;0),$$ even though $[\phi_3]=0$.
\end{prop}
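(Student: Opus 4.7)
My approach is to construct an explicit map $\phi: X = \Sigma^3 \mathbb{RP}^2 \to S^3$ and verify $h(\phi) \ne 0$ via a Steenrod operation computation on the mapping cone $C_\phi$; the statement about twisted cohomology will then follow from Lemma \ref{hopf} together with a direct spectral sequence computation. For the construction, recall that $\mathbb{RP}^2 = S^1 \cup_2 e^2$, so $X = S^4 \cup_2 e^5$. Since $\pi_4(S^3) \cong \mathbb Z/2$ is generated by $\Sigma \eta$, we have $2 \cdot \Sigma \eta = 0$, so $\Sigma \eta: S^4 \to S^3$ extends over the $5$-cell of $X$ to give $\phi: X \to S^3$. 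The hypothesis $\phi^* = 0$ on $H^3$ is automatic since $H^3(X;\mathbb Z) = 0$, so Lemma \ref{hopf} applies and it suffices to prove $h(\phi) \ne 0$.

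By construction, the $5$-skeleton of $C_\phi$ equals the mapping cone of $\Sigma \eta$, namely $\Sigma \mathbb{CP}^2$; write $\iota: \Sigma \mathbb{CP}^2 \hookrightarrow C_\phi$ for this inclusion and $\bar x_\phi \in H^3(C_\phi; \mathbb Z/2)$ for the mod $2$ reduction of $x_\phi$. Since $x_\phi$ has degree $3$, we have $\bar x_\phi^2 = \operatorname{Sq}^3 \bar x_\phi = \operatorname{Sq}^1 \operatorname{Sq}^2 \bar x_\phi$ by the Adem relation. I would verify each factor is nonzero in turn. First, $\iota^* \bar x_\phi$ generates $H^3(\Sigma \mathbb{CP}^2;\mathbb Z/2)$ and $\operatorname{Sq}^2$ sends it to the generator of $H^5(\Sigma \mathbb{CP}^2;\mathbb Z/2)$, being the suspension of the cup square $\operatorname{Sq}^2: H^2(\mathbb{CP}^2;\mathbb Z/2) \to H^4(\mathbb{CP}^2;\mathbb Z/2)$; hence $\operatorname{Sq}^2 \bar x_\phi \ne 0$ in $H^5(C_\phi;\mathbb Z/2)$. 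Second, the cofiber sequence $S^3 \to C_\phi \to \Sigma X = \Sigma^4 \mathbb{RP}^2$ makes the collapse map $p$ induce isomorphisms on $H^n(-;\mathbb Z/2)$ for $n = 5, 6$; on $\Sigma^4 \mathbb{RP}^2$ the operation $\operatorname{Sq}^1$ between these groups is an isomorphism (it is the suspended Bockstein of the degree $2$ attaching map), so $\operatorname{Sq}^1: H^5(C_\phi;\mathbb Z/2) \to H^6(C_\phi;\mathbb Z/2)$ is an isomorphism, and $\operatorname{Sq}^3 \bar x_\phi \ne 0$.

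To lift back to integers, the same cofiber sequence yields $H^6(C_\phi;\mathbb Z) \cong \mathbb Z/2$ and $H^7(C_\phi;\mathbb Z) = 0$, so by the universal coefficient theorem the mod $2$ reduction is injective on $H^6(C_\phi;\mathbb Z)$. Thus $\bar x_\phi^2 \ne 0$ forces $x_\phi^2 \ne 0$ and so $h(\phi)$ is the nonzero element of $H^5(X;\mathbb Z) \cong \mathbb Z/2$. By Lemma \ref{hopf}, the spectral sequence for $H^*_{\tw}(X;\phi_3)$ has $d^3 = 0$ and $d^5$ given by multiplication by $h(\phi) T^2$. On $E_5 = H^*(X;\mathbb Z) \otimes \mathbb Z[T,T^{-1}]$, writing $\alpha$ for the generator of $H^5(X;\mathbb Z)$, the differential is $\mathbb Z[T,T^{-1}]$-linear with $1 \mapsto \alpha T^2$ and $\alpha \mapsto \alpha^2 T^2 = 0$; a short calculation gives $\ker(d^5) = 2\mathbb Z[T,T^{-1}] \oplus \mathbb Z/2 \cdot \alpha[T,T^{-1}]$ and $\operatorname{im}(d^5) = \mathbb Z/2 \cdot \alpha[T,T^{-1}]$, so $E_6 \cong 2\mathbb Z[T,T^{-1}] \cong \mathbb Z[T,T^{-1}]$. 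Higher differentials vanish for degree reasons and the $E_\infty$ page is torsion-free, so $H^*_{\tw}(X;\phi_3) \cong \mathbb Z[T,T^{-1}]$, which is not isomorphic to $H^*(X;\mathbb Z)[T,T^{-1}]$ because the latter has $\mathbb Z/2$-torsion in degree $5$.

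The hard part will be carefully identifying $\Sigma \mathbb{CP}^2$ as a subcomplex of $C_\phi$ (which requires unwinding the CW structure on $CX$ so that the cone on the $4$-cell of $X$ attaches to $S^3$ via $\phi|_{S^4} = \Sigma \eta$) and verifying that the Steenrod module structure on $H^*(C_\phi;\mathbb Z/2)$ really is inherited from $\Sigma\mathbb{CP}^2$ and from the quotient $\Sigma^4 \mathbb{RP}^2$ as claimed. Once these naturality statements are in place, the Adem relation, universal coefficient theorem, and spectral sequence calculations are routine.
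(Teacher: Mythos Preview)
Your proof is correct and follows essentially the same route as the paper: construct $\phi$ by extending $\Sigma\eta$ over the $5$-cell (the paper does this via a cofiber exact sequence, you via the direct observation $2\cdot\Sigma\eta=0$), identify the $5$-skeleton of $C_\phi$ with $\Sigma\mathbb{CP}^2$ to get $\mathrm{Sq}^2$ nonzero, and use the collapse to $\Sigma^4\mathbb{RP}^2$ for the Bockstein/$\mathrm{Sq}^1$ in degrees $5\to 6$. The only cosmetic difference is that the paper works with the integral Steenrod square $\overline{\mathrm{Sq}}^3=\overline\beta\circ\mathrm{Sq}^2\circ r$ directly, whereas you compute $\mathrm{Sq}^3=\mathrm{Sq}^1\mathrm{Sq}^2$ mod $2$ and then lift via UCT; these are equivalent since reduction of the integral Bockstein is $\mathrm{Sq}^1$.
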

\begin{proof}
Consider the set of homotopy classes $[X, S^3]$, which we write as $\pi^3(X)$. This set has a group structure, coming from the group structure on $S^3=SU(2)$; maps $X \to Y$ induce group homomorphisms between the mapping sets $\pi^3(Y) \to \pi^3(X)$. Whenever $X$ is a suspension this group is abelian.

Thinking of $\mathbb{RP}^2$ as the mapping cone of $z^2: S^1 \to S^1$, we get a long cofibration sequence $$S^1 \to S^1 \to \mathbb{RP}^2 \to S^2 \to S^2 \to \Sigma\mathbb{RP}^2 \to \cdots$$ where the first map is the squaring map of degree 2, and all further maps $S^n \to S^n$ are suspensions of this (so also of degree 2).This gives rise to an exact sequence of mapping sets $$\cdots\rightarrow \pi^3(S^5) \to \pi^3(S^5) \to \pi^3(\Sigma^3 \mathbb{RP}^2) \to \pi^3(S^4) \to \pi^3(S^4) \to \cdots$$
The groups $\pi^3(S^4) = \pi_4(S^3)$ and $\pi^3(S^5) = \pi_5(S^3)$ are both isomorphic to $\mathbb Z/2$. The outer maps are the maps induced by the map $S^n \to S^n$ of degree 2, hence induce multiplication by 2 (so, zero) on the above homotopy groups. We thus get a short exact sequence $$0 \to \mathbb Z/2 \to \pi^3(X) \to \mathbb Z/2 \to 0,$$ so that in particular there exists a $\phi \in \pi^3(X)$ which restricts to the suspension of the Hopf map $\Sigma \eta = f: S^4 \to S^3$.

Now the mapping cone $C_\phi$ has a CW structure with a single cell of each dimension $0, 3, 5, 6$. The 5-cell is attached along $\Sigma \eta = \phi|_{S^4}\in \pi_4(S^3)$; in particular, the 5-skeleton is homotopy equivalent to $\Sigma \mathbb{CP}^2$. By the suspension invariance and naturality of Steenrod squares, it follows that $$\text{Sq}^2: H^3(C_\phi;\mathbb Z/2) \to H^5(C_\phi; \mathbb Z/2)$$ is an isomorphism (see for example \cite[Section $4.L$]{Hat}). 

Further, there is a collapse map $C_\phi \to \Sigma^4 \mathbb{RP}^2$ which is an isomorphism on the cohomology groups of degree $5$ and $6$ (regardless of coefficients); because the integral Bockstein $\overline \beta: H^1(\mathbb{RP}^2;\mathbb Z/2) \to H^2(\mathbb{RP}^2; \mathbb Z)$ is an isomorphism, the same is true for the integral Bockstein $$\overline \beta:H^5(C_\phi;\mathbb Z/2)\rightarrow H^6(C_\phi;\mathbb Z).$$ Thus the integral Steenrod square $$\overline{\text{Sq}}^3 = \overline \beta\circ \text{Sq}^2\circ r:H^3(C_\phi;\mathbb{Z})\rightarrow H^6(C_\phi;\mathbb{Z}),$$ where $r$ is the reduction mod $2$, is given by $$H^3(C_\phi;\mathbb{Z}) \cong \mathbb Z \xrightarrow{\!\!\!\!\!\!\mod 2} \mathbb Z/2 \cong H^6(C_\phi;\mathbb{Z}).$$
Thus if $x_\phi \in H^3(C_\phi)$ has $i^*(x_\phi) = 1 \in \mathbb Z \cong H^3(S^3)$, we have $$x_\phi^2 = \overline{\text{Sq}}^3 x_\phi \neq 0 \in H^6(C_\phi;\mathbb Z),$$ as $x_\phi$ is a degree 3 class and $\overline{\text{Sq}}^{2n+1}(x) = x^2$ for any class $x$ of degree $2n+1$. To see this, write $\overline x$ for the cocycle $x$ taken mod 2. Then by Steenrod's original definition via cup-$i$ products (see for example \cite[Chapter $2$]{MT}), when $x$ is degree $2n+1$, we have $\text{Sq}^{2n} [\overline x] = [\overline x \cup_1 \overline x]$. To calculate the integral Bockstein, observe that we have an integral lift given by $x \cup_1 x$ and that $d(x \cup_1 x) = -2x^2$; the Bockstein is half the boundary of an integral lift, and hence $$\overline{\text{Sq}}^{2n+1} x := \overline \beta \text{Sq}^{2n} \overline x = -x^2 = x^2,$$ as $x^2$ is a 2-torsion cohomology class. Finally, because the mod-2 Hopf invariant is given by the image of $x_\phi^2$ under the inverse of the isomorphisms $$H^5(X) \to H^6(\Sigma X) \to H^6(C_\phi),$$ it follows that $h(\phi) \neq 0$.
\end{proof}

\begin{remark}
The example above is the minimal possible example because a twisting sequence on a 4-dimensional simplicial complex is zero in degrees 5 and above for degree reasons, hence null-homotopic if and only if $[x_3] = 0$. It was found with the observation that a map $X \to S^3$ which is trivial in cohomology factors through the homotopy fiber $\tau_{\geq 4} S^3$ of the map $S^3 \to K(\mathbb Z, 3)$ picking out a generator of its top cohomology. The 5-skeleton of a minimal cell structure on the space $\tau_{\geq 4} S^3$ is precisely $\Sigma^3 \mathbb{RP}^2$.
\end{remark}

To see the connection with twisting sequences, let us point out the following.
\begin{prop}\label{qisoRP3}
There exists a zig-zag of dga quasi-isomorphisms between $\widetilde C^*_\Delta(\Sigma^3 \mathbb{RP}^2)$ and its homology $(\mathbb Z/2)[5]$.
\end{prop}
Now, this zig-zag must transfer $(\zeta_3, 0, \cdots)$ to some $(0, \zeta_5', 0, \cdots)$ (because the only degree which could possibly be nonzero is degree 5). Because the twisted cohomology with respect to this twisting sequence disagrees with the untwisted homology, it follows that $[\zeta_5'] \ne 0$. This is well-defined up to homotopy because in $(\mathbb Z/2)[5]$ there is no differential. We thus have proved the following.
\begin{cor}
There is a zig-zag of dga quasi-isomorphisms for which the homotopy class of some twisting $3$-cycle does not correspond (under the natural bijection) to a class represented by a twisting $3$-cycle.
\end{cor}

\begin{proof}[Proof of Proposition \ref{qisoRP3}] We begin by making some simplifications to the cochain complex of a suspension. In the following, if $A$ is a nonnegatively-graded dg-algebra, write $\Sigma A$ for the algebra with $(\Sigma A)^n = A^{n-1}$, differential, $d_{\Sigma A} = -d_A$, and identically zero product.

For any simplicial set $X$ there exists a dg-algebra quasi-isomorphism $$\Sigma^*: \widetilde C^*_\Delta(\Sigma X) \to \Sigma\widetilde C^*_\Delta(X),$$ where the latter is equipped with trivial product and negative the differential on $C^*_\Delta(X)$. If one prefers to work with unital algebras, there is a corresponding homomorphism $C^*_\Delta(\Sigma X) \to \mathbb Z[0] \oplus \Sigma C^*_\Delta(X)$, with trivial product on the codomain except that $1 \in \mathbb Z[0]$ acts as a unit.

If $X$ is a simplicial set and $I = [0,1]$ the 1-simplex, give $I \times X$ the product simplicial structure and $x \in X_0$ a chosen 0-simplex. Then the quotient $\Sigma X$ of $[0, 1] \times X$, obtained by collapsing $\{0, 1\} \times X \cup I \times \{x\}$ to a point, is again naturally a simplicial set.

Consider the composite map $$\widetilde C^*_\Delta(\Sigma X) \xrightarrow{p^*} C^*_\Delta(I \times X, \partial I \times X \cup I \times \{x\}) \xrightarrow{EZ^*} C^*_\Delta(I, \partial I) \otimes C^*_\Delta(X, x) \cong \Sigma C^*_\Delta(X, x) \cong \widetilde \Sigma C^*_\Delta(X).$$ The first map is induced by the simplicial map of pairs $$p: (I \times X, \partial I \times X \cup I \times \{x\}) \to (\Sigma X, \Sigma x),$$ hence is a dg-algebra map; in fact, it is a dg-algebra isomorphism (not merely quasi-isomorphism). The second is the dual of the Eilenberg-Zilber map (sending $\Delta^i \otimes \Delta^j$ to a triangulation of $\Delta^i \times \Delta^j$). Giving the tensor product the differential $d(a \otimes b) = da \otimes b + (-1)^{|a|} a \otimes db$ and product $$(a \otimes b)(a' \otimes b') = (-1)^{|a'| |b|} aa' \otimes bb',$$ $EZ^*$ is a dg-algebra map and a quasi-isomorphism, as computed in \cite[Assertion (17.6)]{EM}. 

Now $C^*_\Delta(I, \partial I) \cong \mathbb Z[1]$ is a 1-dimension algebra with generator $e^I$ in degree 1 and trivial product. The second-to-last isomorphism simplifies $\mathbb Z[1] \otimes A = \Sigma A$, and the final isomorphism is simply $C^*(X, x) \cong \widetilde C^*(X)$. Every map given above is a dg-algebra quasi-isomorphism, hence the composite is as well.

To prove the result, set $A = \widetilde C^*_\Delta(\Sigma^3 \mathbb{RP}^2)$. By the above discussion, there exists a dg-algebra quasi-isomorphism $\widetilde C^*_\Delta(\Sigma^3 \mathbb{RP}^2) \to \Sigma \widetilde C^*_\Delta(\Sigma^2 \mathbb{RP}^2)$, where the codomain is equipped with trivial product. Now $\Sigma^2 \mathbb{RP}^2$ has reduced homology $\mathbb Z/2$ in degree 3 and zero otherwise. Choose a simplicial generator $x_3$ and an element $x_4$ with $dx_4 = -2x_3$. Shifting up one in degree, write $A_R = \mathbb Z[4] \oplus \mathbb Z[5]$ with differential $dy_5 = 2y_4$ and trivial producut; there is a dg-algebra quasi-isomorphism $A_R \to \Sigma \widetilde C^*_\Delta(\Sigma^2 \mathbb{RP}^2)$, given by sending $y_4$ to $x_3[1]$ and $y_5$ to $x_4[1]$. 

Finally, there is a dg-algebra quasi-isomorphism $A_R \to (\mathbb Z/2)[5]$. We have thus constructed a zig-zag of quasi-isomorphisms between $\widetilde C^*_\Delta(\Sigma^3 \mathbb{RP}^2)$ and its homology $(\mathbb Z/2)[5]$; passing to unital versions of the previous constructions, we have a zig-zag of dg-algebra quasi-isomorphisms from $C^*(\Sigma^3 \mathbb{RP}^2)$ to $\mathbb Z[0] \oplus (\mathbb Z/2)[5] = H^*(\Sigma^3 \mathbb{RP}^2)$.
\end{proof}

\begin{proof}[Proof of Theorem \ref{cohono}]
Going back to coupled Morse theory, we can consider $M_0$ to be a smooth manifold with boundary, homotopy equivalent to $\Sigma^3\mathbb{R}P^2$ (e.g. the regular neighborhood of some embedding $\Sigma^3 \mathbb{RP}P^2 \hookrightarrow \mathbb{R}^n$ for $n \ge 11$); the definitions and constructions in \cite{KM} readily generalize to the case of manifolds with boundary, and provide examples of the desired phenomenon in this setting.
\par
To obtain a closed example, we can simply take the double manifold $DM_0$, using the fact that the map $\phi:M_0\rightarrow SU(2)$ naturally extends to $\overline \phi: DM_0\rightarrow SU(2)$. It follows quickly from the cohomological description of the Hopf invariant above that it is natural under pullback: if $f: X \to S^3$ is zero in cohomology and $g: Y \to X$ is a map, then $h(fg) = g^* h(f)$. Because we have an inclusion $i: \Sigma^3 \mathbb{RP}^2 \to DM_0$ and $\phi = \overline \phi\circ i$, it follows that $i^* h(\overline \phi) = h(\phi)$ is nontrivial, so in particular $h(\overline \phi)$ is nontrivial. It follows from Lemma \ref{hopf} that the $E_5$ page of the twisted cohomology spectral sequence for $(DM_0, \overline \phi)$ has nonzero differential.
\par
Notice that the above examples show that the coupled Morse \emph{cohomology} spectral sequence has a nontrivial differential even though $d_3 = 0$. But the smooth manifold $(DM_0, \overline \phi)$ also provides an example where the homology spectral sequence has a nontrivial higher differential but $d^3 = 0$, because the Poincar\'e duality map $C^*(DM_0) \to C_*(DM_0)$ given by $x \mapsto x \cap [DM_0]$ is a quasi-isomorphism (by Poincar\'e duality) and a module map (by elementary properties of the cap product). It follows that the twisted (co)homology spectral sequences also satisfy Poincar\'e duality, and in particular, $d^5$ is also nonvanishing in the twisted homology spectral sequence.
\end{proof}

\bigskip

\section{Hirsch algebras and higher structure on twisting sequences}\label{sec:hirsch-def}
Given a dga map $f: A \to B$, we will want to understand the induced map $ts(f): ts(A) \to ts(B)$ on homotopy classes of twisting sequences. One traditional way to understand such maps is to produce \emph{characteristic classes}, elements $F_n(x_\bullet)$ in cohomology associated to each homotopy class of twisting sequence which are natural under dga maps.

One of these is easy to produce; because $dx_3 = 0$, we may take $F_1(x_\bullet) = [x_3]$. This is certainly natural for dga maps, as $F_1(f(x_\bullet)) = [f(x_3)] = f_* [x_3]$. In degree 5, this is not so easy; now $dx_5 = -x_3^2$, and to produce some natural \emph{cocycle} we would need a canonical element $e(x_3)$ with $de(x_3) = -x_3^2$. 

Suppose $A$ is \emph{homotopy commutative}; this means that $A$ is equipped with an operator $\cup_1$ which gives a null-homotopy of the graded commutator $[x,y] = xy - (-1)^{|x||y|} yx$. The original such product was Steenrod's product on simplicial cochains \cite{Ste}. Given such an operation, we have $d(x_3 \cup_1 x_3) = 2x_3^2$, and one may take (with rational coefficients) $e(x_3) = -\frac 12 x_3 \cup_1 x_3$.

To extend this to higher degrees one needs a compatibility relation between the cup-1 product and the product called a \emph{Hirsch formula}. A left Hirsch formula holds for simplicial cochains \cite{Hir}, but the corresponding right Hirsch formula fails. On the other hand, neither Hirsch formula holds for cubical cochains. A suitable setting for higher characteristic classes is given by \emph{Hirsch algebras}, which include a cup-1 product and coherent homotopies correcting for the failure of the Hirsch formula.

\subsection{Definitions}
The following definition is from \cite{Saneblidze}.

\begin{definition}\label{def:Hirsch}
A \emph{Hirsch algebra} is an associative differential-graded algebra $A$ equipped with the additional structure of maps $$E_{p,q}: A^{\otimes p} \otimes A^{\otimes q} \to A$$ of degree $1-p-q$ for each $p,q \ge 0$ with $p+q \ge 1$; we demand that $E_{1,0} = E_{0,1} = \textup{Id}$, while $E_{0,p} = E_{p,0} = 0$ for $p > 1$, and we also demand that the following identities hold:
\begin{align*}
dE_{p,q}(a_1,\dots,a_p;b_1,\dots, b_q)&=\sum_{1\leq i\leq p}(-1)^{\epsilon^{a}_{i-1}}E_{p,q}(a_1,\dots,da_i,\dots,a_p;b_1,\dots,b_q)\\
&+\sum_{1\leq j\leq q}(-1)^{\epsilon^{a}_{p}+\epsilon^b_{j-1}}E_{p,q}(a_1,\dots,a_p;b_1,\dots,db_j,\dots,b_q)\\
&+\sum_{1\leq i<p }(-1)^{\epsilon^{a}_{i}}E_{p-1,q}(a_1,\dots,a_ia_{i+1},\dots,a_p;b_1,\dots,b_q)\\
&+\sum_{1\leq j< q}(-1)^{\epsilon^{a}_{p}+\epsilon^b_{j}}E_{p,q-1}(a_1,\dots,a_p;b_1,\dots,b_jb_{j+1},\dots,b_q)\\
+\sum_{\substack{0\leq i\leq p\\ 0\leq j\leq q\\(i,j)\neq(0,0)}}(-1)^{\epsilon_{i,j}}&E_{i,j}(a_1,\dots, a_i;b_1,\dots, b_j)\cdot E_{p-i,q-j}(a_{i+1},\dots, a_p;b_{j+1},\dots, b_q)
\end{align*}
where $\epsilon^x_i=|x_1|+\dots +|x_i|+i$ and $\epsilon_{i,j}=\epsilon^a_i+\epsilon^b_j+(\epsilon^a_i+\epsilon^a_p)\epsilon^b_j+1$, where all appearance of $\epsilon_i^a$ and $\epsilon_j^b$ above refer to the strings $(a_1, \cdots, a_p)$ and $(b_1, \cdots, b_q)$. Here $|a|$ refers to the degree in $A$, not the degree in $A[-1]$.\footnote{If one instead preferred to write this in terms of the degree of $[a]$ as an element of $A[-1]$, given by $|a|_{-1} = |a| - 1$, one could simplify the expression to $\epsilon^a_i = |a_1|_{-1} + \cdots + |a_i|_{-1}$.}
\end{definition}

The two main examples are the simplicial and cubical cochain algebras. We describe explicitly these operations in Section \ref{examplesHirsch} below using the operadic technology of \cite{MM1}; the reader might find enlightening to get familiar with these before looking at the discussion of twisting sequences later in this section.

There are two useful ways of understanding this structure. First, it asserts that the product on $A$ is \textit{homotopy commutative} in a way which is homotopy-coherently associative.  The operation $E_{1,1}$ satisfies
\begin{equation}\label{E11id}
dE_{1,1}(a;b)-E_{1,1}(da;b)+(-1)^{|a|}E_{1,1}(a;db)=(-1)^{|a|}ab-(-1)^{|a|(|b|+1)}ba
\end{equation}
hence it defines a homotopy between the two maps $A \otimes A \to A$ given by $(a,b) \mapsto ab$ and $(a,b) \mapsto (-1)^{|a| |b|} ba$. In the simplicial world, $E_{1,1}$ behaves up to an overall sign (depending on the grading of the entries) as Steenrod's cup-1 product. Steenrod's cup-1 product satisfies a useful additional property, the \emph{left Hirsch formula} \cite{Hir}: $$(ab) \cup_1 c = a(b \cup_1 c) + (-1)^{|b|(|c|+1)} (a \cup_1 c)b.$$ This formula does not hold for $E_{1,1}$ in general, and in particular fails in the setting of cubical cochains, which we will need below. 

However, $E_{2,1}$ is a homotopy between the two sides of the given equation, while $E_{1,2}$ is a homotopy which demonstrates that a similar \emph{right Hirsch formula} (which does not hold in the simplicial setting) at least holds up to homotopy. Very explicitly, we have the homotopy-Hirsch formulas
\begin{align*}
dE_{2,1}(a,b;c)&=E_{2,1}(da,b;c)-(-1)^{|a|}E_{2,1}(a,db;c)+(-1)^{|a|+|b|}E_{2,1}(a,b;dc)\\
&-(-1)^{|a|}E_{1,1}(ab;c)+(-1)^{|a|+|b|(|c|+1)}E_{1,1}(a;c)b+(-1)^{|a|}aE_{1,1}(b;c)
\end{align*}
and
\begin{align*}
dE_{1,2}(a;b,c)&=E_{1,2}(da;b,c)-(-1)^{|a|}E_{1,2}(a;db,c)+(-1)^{|a|+|b|}E_{1,2}(a;b,dc)\\
&+(-1)^{|a|+|b|}E_{1,1}(a;bc)-(-1)^{|a|+|b|}E_{1,1}(a;b)c-(-1)^{|a|(|b|+1)}bE_{1,1}(a;c).
\end{align*}
The operations $E_{p,q}$ encode the higher homotopy-associativity of these operations.
\\
\par
A second more algebraic perspective on Hirsch algebras will be very useful to us below: it is the structure of a dg-bialgebra (to be defined below) on the bar construction $BA$. We shall make this explicit. 

Given a graded abelian group $A$, the \emph{bar construction} is given as $$BA = \bigoplus_{n \ge 0} A[-1]^{\otimes n}.$$ We write a generic element of this space as $[a_1 | \cdots | a_n]$, and write $1 = []$ for the empty string. The bar construction $BA$ is a coalgebra with comultiplication $$\Delta[a_1 | \cdots | a_n] = \sum_{0 \le i \le n} [a_1 | \cdots | a_i] \otimes [a_{i+1} | \cdots | a_n].$$ This is coassociative and counital. It will be sometimes convenient to write $(BA)_n = A[-1]^{\otimes n}$ for the summand corresponding to the tensor product of $n$ copies of $A$.

When $A$ is also given the structure of a differential graded algebra, $BA$ then inherits a differential making it into a dg-coalgebra, given as $$d[a_1 | \cdots | a_n] = -\sum_{1 \le i \le n} (-1)^{\epsilon_{i-1}^a} [a_1 | \cdots | da_i | \cdots | a_n] - \sum_{1 \le i \le n-1} (-1)^{\epsilon_i^a} [a_1 | \cdots | a_i a_{i+1} | \cdots | a_n],$$
where the signs $\epsilon$ are those from Definition \ref{def:Hirsch}, with the same conventions.

In fact, the structure of `a differential on $BA$ giving it the structure of a dg-coalgebra' is equivalent the structure of an $A_\infty$-algebra on $A$; up to an overall sign one may recover the $n$'th structure operation $m_n$ among the $A_\infty$-operations as the component $$A^{\otimes n}[-n] = (BA)_n \to (BA)_1 = A[-1]$$ of $d: BA \to BA$. All of our examples will be dg-algebras, so we restrict to that setting. The crucial observation for us is that a Hirsch algebra takes this one level further. 

\begin{theorem}\label{HirschBA}
Let $A$ be an associative dg-algebra, so that $BA$ carries the structure of a dg-coalgebra. 

Then the data of a Hirsch algebra structure on $A$ is equivalent to the data of a choice of multiplication $\mu: BA \otimes BA \to BA$ making it into a dg-bialgebra \textbf{with not-necessarily-associative product} and for which $[]$ is a unit. 

Given such a multiplication $\mu$, one may recover the operations $E_{p,q}$ as $$E_{p,q}(a_1, \cdots, a_p; b_1, \cdots, b_q) = \mu^1([a_1 | \cdots | a_p], [b_1 | \cdots | b_q]),$$ where $\mu^1$ is the composite $BA \otimes BA \to BA \xrightarrow{p_1} A$, where the final map projects to $(BA)_1$ and follows the degree $-1$ isomorphism $(BA)_1 \cong A$.
\end{theorem}

This is proved, among other places, in \cite[Section 2]{Voronov}. In fact, Voronov discusses a mild generalization: a `$B_\infty$ structure', which precisely corresponds to the structure of a dg-bialgebra on $BA$ extending its natural coalgebra structure. This amounts to saying that $A$ is an $A_\infty$-algebra and carries a set of operations $E_{p,q}$ satisfying a mild modification of those written above for Hirsch algebras. Voronov furthermore determines when this map $\mu$ is associative, but we will not need associativity and so do not discuss it further.

The demand that this is a dg-bialgebra means that $\mu: BA \otimes BA \to BA$ is a chain map, and that the following diagram commutes: 

\[\begin{tikzcd}
	{(BA \otimes BA) \otimes (BA \otimes BA)} && {(BA \otimes BA) \otimes (BA \otimes BA)} \\
	{BA \otimes BA} && {BA \otimes BA} \\
	& BA
	\arrow["{1 \otimes \tau \otimes 1}", from=1-1, to=1-3]
	\arrow["{\Delta \otimes \Delta}", from=2-1, to=1-1]
	\arrow["{\mu \otimes \mu}", from=1-3, to=2-3]
	\arrow["\mu"', from=2-1, to=3-2]
	\arrow["\Delta"', from=3-2, to=2-3]
\end{tikzcd}\]

Here $\tau: BA \otimes BA \to BA \otimes BA$ sends $x \otimes y \mapsto (-1)^{|x||y|} y \otimes x$. 

The fact that this is a dg-bialgebra allows us to compute the component $\mu^k: (BA \otimes BA) \to (BA)_k$ from the compute $\mu^1$, as follows. We define $$\nabla_k: (BA \otimes BA) \to (BA \otimes BA)^{\otimes k}$$inductively as follows. Set $\nabla_1 = \text{Id}$ and let $\nabla_2 = \nabla_{BA \otimes BA}$ be the coproduct given by $$\nabla_{BA \otimes BA} = (1 \otimes \tau \otimes 1)(\Delta \otimes \Delta),$$ where the map $\tau$ is the swap map given by $\tau(x \otimes y) = (-1)^{|x| |y|} y \otimes x$. Then define $$\nabla_{k+1} = (\nabla_2 \otimes 1_{(BA)^{\otimes 2k}}) \circ \nabla_k.$$ 
Then the component $\mu^k: (BA \otimes BA) \to (BA)_k \cong A^{\otimes k}$ is given by the composite $(\mu^1)^{\otimes k} \circ \nabla_k.$ 

\subsection{Twisting sequences in Hirsch algebras}
Now that we have seen the relationship between dg-algebra and Hirsch algebra structures on $A$ to various structures on $BA$, we can exploit that structure to get new results on twisting sequences. Recall that $g\in BA$ is grouplike, if $\Delta g = g \otimes g$.

\begin{prop}\label{gplike-cocycle}
Let $A$ be a dg-algebra, and say a twisting element of $A$ is a degree 1 element with $dx + x^2 = 0$. There is a canonical bijection from the set of twisting elements in $A$ to the set of (degree 0) grouplike cocycles in $BA$ whose $(BA)_0$ component is $1$.

If $A$ is a dg-algebra, twisting sequences in $A$ are twisting elements of $A\llbracket T\rrbracket$ of the form $x_3 T + x_5 T^2 + \cdots$; that is, they are twisting elements which lie in $T A\llbracket T\rrbracket$. Here $|T| = -2$.

Then the above construction gives a bijection between $TS(A)$ and grouplike elements of $B(A\llbracket T\rrbracket)$ lying in $1 + T BA\llbracket T\rrbracket$ whose $(BA)_0$-component is $1$. 
\end{prop}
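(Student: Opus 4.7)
The plan is to exhibit the bijection explicitly and verify both directions by direct calculation. Given a twisting element $x \in A^1$ with $dx + x^2 = 0$, define
$$g(x) = \sum_{n \geq 0} [\,\underbrace{x\,|\,x\,|\,\cdots\,|\,x}_{n}\,] = 1 + [x] + [x|x] + [x|x|x] + \cdots.$$
In the first statement this is a formal sum interpreted in the completion $\widehat{BA} = \prod_n (BA)_n$ (equivalently, as a compatible family $\{g_n \in (BA)_n\}$); in the second statement it converges on the nose, since $x \in T A\llbracket T\rrbracket$ forces $[x^{\otimes n}] \in T^n BA\llbracket T\rrbracket$. The inverse sends a grouplike cocycle $g = \sum g_n$ with $g_0 = 1$ to the element $x \in A^1$ corresponding to $g_1 \in (BA)_1 = A[-1]$.

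Grouplikeness of $g(x)$ is immediate from the comultiplication formula:
$$\Delta g(x) = \sum_n \sum_{i=0}^n [x^{\otimes i}] \otimes [x^{\otimes (n-i)}] = \sum_{i, j \geq 0} [x^{\otimes i}] \otimes [x^{\otimes j}] = g(x) \otimes g(x).$$
For the cocycle property, observe that since $|x|=1$ every sign $(-1)^{\epsilon_i^a}$ in the bar differential equals $+1$, so
$$d[x^{\otimes n}] = -\sum_{i=1}^n [x^{\otimes (i-1)} \,|\, dx \,|\, x^{\otimes (n-i)}] - \sum_{i=1}^{n-1} [x^{\otimes (i-1)} \,|\, x^2 \,|\, x^{\otimes (n-i-1)}].$$
Substituting $dx = -x^2$ turns the first (length-$n$) sum into $+\sum_{i=1}^n [x^{\otimes(i-1)} | x^2 | x^{\otimes(n-i)}]$, which cancels exactly against the length-$n$ merging terms in $d[x^{\otimes(n+1)}]$. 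Summing over $n$ gives a telescoping identity $dg(x) = 0$; conversely, the length-$1$ component of $dg(x)$ is $-[dx+x^2]$, so the cocycle condition is equivalent to the twisting element equation.

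For the inverse direction, suppose $g = \sum g_n$ is a grouplike cocycle with $g_0 = 1$, and let $x \in A^1$ satisfy $g_1 = [x]$. I argue by induction on $n$ that $g_n = [x^{\otimes n}]$: expanding $g_n = \sum_\alpha c_\alpha [a_1^\alpha | \cdots | a_n^\alpha]$ in a basis of $A$ extending $\{x\}$, the $(BA)_1 \otimes (BA)_{n-1}$ component of $\Delta g_n$ equals $\sum_\alpha c_\alpha [a_1^\alpha] \otimes [a_2^\alpha | \cdots | a_n^\alpha]$, which grouplikeness forces to equal $[x] \otimes g_{n-1} = [x] \otimes [x^{\otimes(n-1)}]$. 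Linear independence in the first slot forces every surviving term to have $a_1^\alpha = x$; iterating the argument over all splittings $(BA)_i \otimes (BA)_{n-i}$ pins each slot to $x$ and the coefficient to $1$. Thus $g = g(x)$, and the cocycle calculation above shows $x$ must be a twisting element, establishing the bijection.

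The second statement is the case $A\leadsto A\llbracket T\rrbracket$: a twisting element $x = \sum_{i \geq 1} x_{2i+1} T^i \in T A\llbracket T\rrbracket$ has $|x|=1$, and the expansion of $dx + x^2 = 0$ in powers of $T$ is precisely the defining system
$$dx_{2n+1} + \sum_{\substack{i+j=n \\ i,j \geq 1}} x_{2i+1} x_{2j+1} = 0$$
of a twisting sequence. Since $[x^{\otimes n}] \in T^n BA\llbracket T\rrbracket$, only finitely many terms contribute in each fixed power of $T$, so $g(x)$ lands in $1 + T BA\llbracket T\rrbracket$ and the earlier completion issue disappears. The only substantive technical point is this $T$-adic bookkeeping together with the clean inductive determination of the $g_n$ in the inverse direction; the signs are painless because everything in sight has degree $1$.
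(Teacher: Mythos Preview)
Your proof is correct and follows essentially the same approach as the paper: both exhibit the map $x \mapsto g(x) = \sum_n [x^{\otimes n}]$, verify grouplikeness and the cocycle condition directly, and recover $x$ from $g$ by an induction on the tensor length using the comultiplication. Your telescoping argument for $dg(x)=0$ is more explicit than the paper's, and your remark about the completion $\widehat{BA}$ in the first statement is a genuine clarification the paper glosses over.

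One small quibble: in the inverse direction you write ``expanding $g_n$ in a basis of $A$ extending $\{x\}$''. Over a general ground ring $A$ need not be free, and even if it is, $x$ need not extend to a basis. You don't actually need this: the $(BA)_1 \otimes (BA)_{n-1}$ component of $\Delta$ restricted to $(BA)_n$ is literally the canonical identification $\overline A^{\otimes n} \cong \overline A \otimes \overline A^{\otimes(n-1)}$, so the equation $(\text{cut at }1)(g_n) = [x] \otimes g_{n-1}$ already forces $g_n = [x \,|\, x^{\otimes(n-1)}]$ on the nose, with no basis choice. (The paper's ``fewest elementary tensors'' phrasing at this step is similarly informal.)
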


If one is being careful, one should interpret the expression $B(A\llbracket T\rrbracket)$ as the bar construction in the category of $\mathbb Z\llbracket T\rrbracket$-algebras; it is the sum $$\bigoplus_{n \ge 0} A^{\otimes n}\llbracket T\rrbracket[-1];$$ we allow ourselves to distribute $T$ across tensor summands.

\begin{proof}
For any graded abelian group $A$, there is a canonical bijection between grouplike elements of $BA$ with first term $1$ and elements of $A$. For if $a \in A$, then $$g(a) = 1 + [a] + [a|a] + \cdots$$ is grouplike; on the other hand, suppose $g$ is a grouplike element with lowest term $1$. Inductively assume that $$g = 1 + [a] + \cdots + [a|\cdots|a] + \sum_{i=1}^N [a_1^i | \cdots | a_n^i] + \cdots,$$ where we have written the term with $n$ tensor factors as a sum of as few elementary tensors as possible. Computing $\Delta g$ and comparing its part with $n$ tensor summands to $g \otimes g$, we see that $$[a] \otimes [a| \cdots | a] = \sum_{i=1}^N [a_1^i] \otimes [a_2^i | \cdots | a_n^i].$$ Because the right-hand-side is written as a sum of as few elementary tensors as possible, but the left-hand side is a single elementary tensor, we see that in fact $N = 1$ and $[a_1^i | \cdots | a_n^i] = [a | \cdots | a]$.  By induction, we see that every grouplike element with first term $1$ arises as $g(a)$ for some unique $a$.

Next, we investigate the condition that $dg(a) = 0$. It is straightforward to see that $$dg(a) = -[da] - [a^2] - [da | a] - [a | da] - [a^2 | a] - [a | a^2] + \cdots;$$ from this, we see immediately that if $dg(a) = 0$ we have $da + a^2 = 0$ (because the $(BA)_1$ component of $dg(a)$ is up to sign $da + a^2$), while conversely if $da + a^2 = 0$ one may see explicitly that $dg(a) = 0$. 

It is now easy to see that the condition $$d(x_3 T + x_5 T^2 + \cdots) + (x_3 T + x_5 T^2 + \cdots)^2 = 0$$ gives, for each power of $T$, the conditions $dx_3 = 0, dx_5 + x_3^2 = 0, \cdots$ which define twisting sequences, giving the stated relationship between twisting sequences in $A$ and twisting elements of $A\llbracket T\rrbracket$.

To conclude, observe that the condition that our twisting sequence take the form $x_3 T + x_5 T^2 + \cdots$ corresponds to asking our grouplike cocycle in $BA\llbracket T\rrbracket$ to be of the form $$1 + [x_3] T + ([x_5] + [x_3 | x_3]) T^2 + \cdots$$
as an element in $1 + T BA\llbracket T\rrbracket$.\end{proof}

Proposition \ref{gplike-cocycle} is quite surprising because there is no obvious product operation on twisting sequences. However, given any dg-bialgebra, there \emph{is} a product operation on its grouplike cocycles: if $g, h$ are grouplike cocycles (of degree 0) then $$d\mu(g,h) = \mu(d[g\otimes h]) = \mu(0,0) = 0,$$ while \begin{align*}\Delta \mu(g,h) &= (\mu \otimes \mu)(1 \otimes \tau \otimes 1)(\Delta \otimes \Delta)(g,h) \\
&= (\mu \otimes \mu)(1 \otimes \tau \otimes 1)(g \otimes g \otimes h \otimes h) \\
&= (\mu \otimes \mu)(g \otimes h \otimes g \otimes h) \\
&= \mu(g,h) \otimes \mu(g,h),
\end{align*}
so the product of grouplike elements is also again grouplike. Applying this to the previous proposition and using the explicit formula for the $(BA)_1$ component of $\mu(g(x), g(y))$, we immediately get the following corollary.

\begin{cor}\label{ProdOp}
For any Hirsch algebra $A$, there is a product operation $\mu: TS(A) \times TS(A) \to TS(A)$. This product is natural, and gives rise to a functor from the category of Hirsch algebras to the category of unital magmas.

Explicitly, if $x_\bullet$ and $y_\bullet$ are twisting sequences, suppose $y_{2i+1} = 0$ for all $i < n$. Then $\mu(x_\bullet, y_\bullet)_{2i+1} = x_{2i+1}$ for $i < n$ and $\mu(x_\bullet,y_\bullet)_{2n+1} = x_{2n+1} + y_{2n+1}$. In general, we have $$\mu(x_\bullet,y_\bullet)_{2k+1} = \sum_{\substack{(m,n) > (0,0) \\ i_1, \cdots, i_m, j_1, \cdots, j_n \ge 1 \\ i_1 + \cdots + i_m + j_1 + \cdots + j_n = k}} E_{m,n}(x_{2i_1 + 1}, \cdots, x_{2i_m+1}; y_{2j_1+1}, \cdots, y_{2j_n+1}).$$
In particular, $\mu(0, x_\bullet) = x_\bullet = \mu(x_\bullet, 0).$
\end{cor}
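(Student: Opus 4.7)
The strategy is to exploit the bijection of Proposition \ref{gplike-cocycle} together with the fact (recalled just before the statement) that in any dg-bialgebra, grouplike cocycles are closed under the multiplication. First I would observe that if $A$ is a Hirsch algebra, then so is $A\llbracket T\rrbracket$ (and $A[T,T^{-1}]$), with operations $E_{p,q}$ extended $\mathbb Z\llbracket T\rrbracket$-multilinearly. Consequently $B(A\llbracket T\rrbracket)$ carries a (not necessarily associative) dg-bialgebra structure with product $\mu_B:B(A\llbracket T\rrbracket)\otimes B(A\llbracket T\rrbracket)\to B(A\llbracket T\rrbracket)$.

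Given twisting sequences $x_\bullet,y_\bullet\in TS(A)$, write $g(x_\bullet),g(y_\bullet)\in B(A\llbracket T\rrbracket)$ for the corresponding grouplike cocycles guaranteed by Proposition \ref{gplike-cocycle}; both lie in $1+T\cdot B(A\llbracket T\rrbracket)$, have $(BA)_0$-component equal to $1$, and are cocycles. I would then define $\mu(x_\bullet,y_\bullet)\in TS(A)$ to be the unique twisting sequence so that $g(\mu(x_\bullet,y_\bullet))=\mu_B(g(x_\bullet),g(y_\bullet))$. To justify this definition I must check three things: (i) $\mu_B(g(x_\bullet),g(y_\bullet))$ is a grouplike cocycle — this is the diagrammatic computation recalled just before the statement; (ii) its $(BA)_0$-component is $1$ — this follows from unitality of $\mu_B$ since $\mu_B(1,1)=1$; and (iii) it lies in $1+T\cdot B(A\llbracket T\rrbracket)$, which again follows from unitality together with the fact that both factors are of the form $1+T(\cdots)$, so expanding gives $\mu_B(1,1)+T(\cdots)=1+T(\cdots)$. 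Then Proposition \ref{gplike-cocycle} produces the desired $\mu(x_\bullet,y_\bullet)$.

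To establish the explicit formula, I would read off the $(BA)_1$-component of $\mu_B(g(x_\bullet),g(y_\bullet))$: since $g(z_\bullet)=1+[\tilde z]+[\tilde z|\tilde z]+\cdots$ with $\tilde z=\sum_{i\ge 1}z_{2i+1}T^i$, the $(BA)_1$-component of $g(z_\bullet)$ is precisely $\tilde z$. Extracting the coefficient of $T^k$ recovers $z_{2k+1}$. On the other hand, the component of $\mu_B$ landing in $(BA)_1$, when evaluated on the tensor $[\tilde x|\cdots|\tilde x]_m\otimes[\tilde y|\cdots|\tilde y]_n$, is by construction the Hirsch operation $E_{m,n}(\tilde x,\ldots,\tilde x;\tilde y,\ldots,\tilde y)$; summing over $(m,n)\ne(0,0)$ and expanding in powers of $T$ yields exactly the displayed formula, after using $E_{1,0}=E_{0,1}=\mathrm{Id}$ and $E_{m,0}=E_{0,n}=0$ for $m,n\ge 2$ to identify the contributions $x_{2k+1}$ and $y_{2k+1}$. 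The unitality statement $\mu(x_\bullet,0)=\mu(0,x_\bullet)=x_\bullet$ is immediate from $g(0)=1$ and unitality of $\mu_B$.

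Finally, naturality in Hirsch algebra maps reduces to the observation that a Hirsch map $f:A\to B$ induces a dg-bialgebra map $Bf:BA\to BB$ (this is really the content of the theorem identifying Hirsch structures with bialgebra structures on the bar construction) and $Bf(g(x_\bullet))=g(f_*x_\bullet)$; combined with $(Bf)\mu_B^A=\mu_B^B(Bf\otimes Bf)$ this gives $f_*\mu(x_\bullet,y_\bullet)=\mu(f_*x_\bullet,f_*y_\bullet)$. The main point requiring care is the bookkeeping in step three — tracking the signs in the definition of $\mu_B$ and matching them against the sign conventions in Definition \ref{def:Hirsch} — but since the displayed formula in the statement is sign-free, this amounts to verifying that the chosen conventions make the signs cancel, a routine if tedious check directly from the structure equations of the $E_{p,q}$.
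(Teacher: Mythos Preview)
Your proposal is correct and follows essentially the same approach as the paper: the paper states the corollary as an immediate consequence of Proposition~\ref{gplike-cocycle} together with the observation (spelled out just before the statement) that products of grouplike cocycles in a dg-bialgebra are again grouplike cocycles, and then reads off the explicit formula from the $(BA)_1$-component of $\mu(g(x),g(y))$. You have simply fleshed out the verifications (that the product stays in $1+T\cdot B(A\llbracket T\rrbracket)$ with $(BA)_0$-component $1$, and the naturality statement) that the paper leaves implicit.
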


Recall here that for a Hirsch algebra $A$ the product on $BA$ need not be associative, so this does not promote $TS(A)$ into a monoid. When $A = C^*(X;\mathbb Z)$, the product $\mu$ on $BA$ is indeed associative, so $TS(A)$ has an associative and unital product, but for our purposes below we will need to work with cubical cochains where $\mu$ is certainly not associative. 

\begin{remark}
In fact this product descends to a well-defined product homotopy classes
$$h\mu: ts(A) \times ts(A) \to ts(A)$$
To see why this is true, one needs to prove that there is a $B_\infty$-algebra modeling the unit interval $I$, with two $B_\infty$-maps to $\mathbb Z$ with trivial $B_\infty$-structure, and further that there is a suitable notion of $B_\infty$-tensor product structure on $I \otimes A$. Because a homotopy between twisting sequences is a twisting sequence on $I \otimes A$ extending the two, the claim follows.

Further, it should be true that every $B_\infty$-algebra may be strictified to one with associative product; this would imply that for any $B_\infty$-algebra $A$, the homotopy classes of twisting sequences $ts(A)$ inherit the structure of an associative monoid. Lastly, it should also be true that every element of $ts(A)$ is invertible, by performing a sort of iterated killing construction with the product $\mu$. It takes some effort to give a careful proof of this fact, including a proof that there is a meaningful notion of limit of elements of $ts(A)$ (coming from an appropriate complete Hausdorff filtration). Therefore $ts(A)$ should actually be a group for an arbitrary $B_\infty$-algebra $A$. Because we will not make use of these in our arguments below, we are content to leave these as remarks.
\end{remark}

The last general result we will make use of is the following, which asserts that in a \emph{rational} Hirsch algebra, there is no obstruction to extending a given cocycle $a \in Z(A)^{2n+1}$ to a twisting sequence beginning at $a$. In fact, there is a \emph{canonical} such extension.

\begin{prop}[The Kraines construction]
Let $A$ be a Hirsch $\mathbb Q$-algebra; write $$Z(A)^{\textup{odd}} = \bigsqcup_{n \ge 1} Z(A)^{2n+1}$$ for the set of homogeneous odd-degree cocycles of degree at least 3. Then there is a canonical map $$K: Z(A)^{\textup{odd}} \to TS(A)$$ so that if $a \in Z(A)^{2n+1}$, then $K(a)_{2i+1} = 0$ for $i < n$ and $K(a)_{2n+1} = a$. The function $K$ is natural under Hirsch algebra maps.
\end{prop}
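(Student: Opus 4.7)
The plan is to build $K(a)$ by induction, phrased most cleanly via the bar-construction reformulation from Proposition \ref{gplike-cocycle}: a twisting sequence $K(a)$ with $K(a)_{2i+1} = 0$ for $i < n$ and $K(a)_{2n+1} = a$ corresponds to a grouplike cocycle $g(a) \in B(A_{\mathbb Q}\llbracket T\rrbracket)$ of the form $g(a) = 1 + [aT^n] + O(T^{n+1})$. My goal is to produce this grouplike cocycle canonically by formally exponentiating the primitive cocycle $[aT^n]$ with respect to the product $\mu: BA \otimes BA \to BA$ provided by the Hirsch structure.

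\textbf{Step 1 (base case).} Set $K(a)_{2i+1} = 0$ for $i < n$ and $K(a)_{2n+1} = a$. The twisting sequence relations hold through degree $2n+1$, since every product $x_{2j+1} x_{2k+1}$ with $j,k \ge 1$ and $j+k \le n$ involves a vanishing factor.

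\textbf{Step 2 (inductive step).} Assume $K(a)_{2i+1}$ has been defined for $i < m$ and satisfies the twisting $m$-sequence relations. The obstruction $o_m = \sum_{j+k = m,\ j,k\ge 1} K(a)_{2j+1} K(a)_{2k+1}$ is a cocycle in $A_{\mathbb Q}^{2m+2}$, and I would define $K(a)_{2m+1}$ as an explicit rational combination of iterated $E_{1,k}$'s applied to the previously constructed terms, designed so that $d K(a)_{2m+1} = -o_m$. The base example illustrates the pattern: since $|a|$ is odd, the Hirsch identity (\ref{E11id}) gives $d E_{1,1}(a;a) = -2 a^2$, so $K(a)_{4n+1} = \tfrac12 E_{1,1}(a;a)$ works at $m = 2n$. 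The higher stages use $E_{1,k}$ with combinatorial rational coefficients in the style of Kraines \cite{Kra} and Saneblidze's Section 3.3 \cite{Saneblidze}.

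\textbf{Step 3 (bar construction perspective and verification).} Rather than verify the needed Hirsch identities one degree at a time by brute computation, I would conceptualize $K(a)$ as arising from the $\mu$-exponential
$g(a) = 1 + p + \tfrac{1}{2!}\mu(p,p) + \tfrac{1}{3!}\mu(p,\mu(p,p)) + \cdots$ of the primitive cocycle $p = [aT^n]$, with a fixed bracketing convention in each degree. Because $\mu$ is a coalgebra map on the dg-bialgebra $BA$ and $p$ is primitive and a cocycle, each $\mu$-monomial is grouplike modulo lower filtration and satisfies a Leibniz identity; over $\mathbb Q$, the factorial weights $1/k!$ conspire to make the total sum both grouplike and a cocycle on the nose. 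Applying Proposition \ref{gplike-cocycle} produces the twisting sequence $K(a)$, and unpacking the explicit formula for $\mu$ from Corollary \ref{ProdOp} recovers the $E_{1,k}$-expressions of Step 2.

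\textbf{Step 4 (naturality).} A Hirsch map $f: A \to B$ preserves all operations $E_{p,q}$ and hence intertwines the bar-bialgebra multiplications $\mu_A$ and $\mu_B$. Since $g(a)$ is built from $[a]$ using only $\mu$ and rational scalars, we have $(Bf)(g_A(a)) = g_B(f(a))$, which translates under Proposition \ref{gplike-cocycle} to $f(K(a)) = K(f(a))$.

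The main obstacle is making Step 3 rigorous. Since $\mu$ is in general neither commutative nor associative, the classical identity asserting that $\exp(p)$ of a primitive element is grouplike does not apply verbatim, and one must check that the chosen bracketing convention together with the weights $1/k!$ still yields a grouplike cocycle. Over $\mathbb Q$, this goes through because the failures of commutativity and associativity are controlled by the Hirsch coherence identities with integer coefficients, and can be canonically absorbed at each inductive stage. If the exponential approach becomes too combinatorially delicate, one can instead carry out Step 2 directly: at each $m$, one writes down an explicit $\mathbb Q$-linear combination of iterated $E_{1,k}$'s whose differential equals $-o_m$, the verification reducing to repeated applications of the Hirsch relations to the cocycle relations satisfied by the lower-degree $K(a)_{2j+1}$. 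The need for $\mathbb Q$-coefficients is essential and matches the obstructions (such as the mod-$2$ Hopf invariant in Section \ref{nonformalsu2}) which prevent an analogous integral statement.
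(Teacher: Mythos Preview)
Your Step 3 is exactly the paper's approach: define $a_0 = 1$, $a_m = \mu([a], a_{m-1})$, set $\exp(a) = \sum_{m\ge 0} a_m T^{nm}/m!$, and take $K(a)$ to be the twisting sequence with $g(K(a)) = \exp(a)$. So the strategy is correct.

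The gap is that you flag the verification of ``grouplike cocycle'' as the main obstacle and then retreat to vague remarks about Hirsch coherences absorbing failures of associativity. In fact no associativity or commutativity of $\mu$ is ever needed, and the verification is a clean two-line induction. First, $da_m = 0$: since $d[a] = 0$ and $\mu$ is a chain map, $d\mu([a], a_{m-1}) = \mu(d[a], a_{m-1}) + \mu([a], da_{m-1}) = 0$. Second, and this is the point you are missing, one shows
\[
\Delta a_m = \sum_{i=0}^m \binom{m}{i}\, a_i \otimes a_{m-i}
\]
by induction, using only that $[a]$ is primitive ($\Delta[a] = 1\otimes [a] + [a]\otimes 1$) and the bialgebra compatibility $\Delta\mu = (\mu\otimes\mu)(1\otimes\tau\otimes 1)(\Delta\otimes\Delta)$. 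Expanding $\Delta\mu([a], a_{m-1})$ with the inductive hypothesis and Pascal's identity $\binom{m-1}{i} + \binom{m-1}{i-1} = \binom{m}{i}$ gives the formula immediately. From this, $\Delta\exp(a) = \exp(a)\otimes\exp(a)$ follows by the usual manipulation $\frac{1}{m!}\binom{m}{i} = \frac{1}{i!(m-i)!}$. Your Steps 1, 2, and the fallback plan are then unnecessary: the exponential approach is already rigorous, and the only role of $\mathbb Q$ is to allow division by $m!$.
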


\begin{proof}
We take the perspective of grouplike cocycles in $BA\llbracket T\rrbracket$. The element $a \in A^{2n+1}$ corresponds to an element $[a] T^n$ in $BA\llbracket T\rrbracket$. Define even-degree elements $a_m$ inductively by iterated left-multiplication by $[a]$. Precisely, set $a_0 = 1$ and $a_m = \mu([a], a_{m-1})$ (so in particular $a_1 = [a]$). Then define $$\exp(a) = 1 + \sum_{m=1}^\infty \frac{a_m}{m!} T^{nm}.$$
We suppress $T$ from notation in what follows. To get a sense for these $a_i$, observe that one can explicitly compute the first couple of $a_i$ directly from the definition of the operation $\mu$ (and in particular the components $\mu^k$):
\begin{align*}a_2 &= 2[a|a] + [E_{1,1}(a; a)]\\
a_3 &= 6[a|a|a] + 2[E_{1,1}(a;a) | a] + 2[a | E_{1,1}(a;a)] + 2[E_{1,2}(a;a,a)] + [E_{1,1}(a;E_{1,1}(a;a))].
\end{align*}
Lemma \ref{mult-formula} below gives a formula for $a_m$ in general; for the purposes of this argument, we need not be so explicit.

Let us show that $\exp(a)$ is a grouplike cocycle. To argue this, we will inductively compute $da_m = 0$ and $\Delta a_m = \sum_{i=0}^m \binom{m}{i} a_i \otimes a_{m-i}$. For the first, observe that $d[a] = 0$ and $d\mu(x,y) = \mu(dx,y) + (-1)^{|x|} \mu(x, dy)$, so $d\mu([a], a_{m-1}) = \mu(0, a_{m-1}) \pm \mu([a], 0) = 0$ by induction.

For the second fact, we have $\Delta a_0 = a_0 \otimes a_0$ by definition, and by induction \begin{align*}\Delta \mu([a], a_{m-1}) &= (\mu \otimes \mu)(1 \otimes \tau \otimes 1)(\Delta \otimes \Delta)([a] \otimes a_{m-1}) \\
&= (\mu \otimes \mu)(1 \otimes \tau \otimes 1)\bigg((1 \otimes [a] + [a] \otimes 1) \otimes \left(\sum \binom{m-1}{i} a_i \otimes a_{m-i-1}\right)\bigg)\\
&= (\mu \otimes \mu)\Big(\sum_{i=0}^{m-1} \binom{m-1}{i} (1 \otimes a_i) \otimes ([a] \otimes a_{m-i-1}) + ([a] \otimes a_i) \otimes (1 \otimes a_{m-i-1})\Big) \\
&= \sum_{i=0}^{m-1} \binom{m-1}{i} (a_i \otimes a_{m-i} + a_{i+1} \otimes a_{m-i-1}) \\
&= \sum_{i=0}^{m} \left(\binom{m-1}{i} + \binom{m-1}{i-1}\right) a_i \otimes a_{m-i}.
\end{align*}
Combining this with the recurrence relation $\binom{m-1}{i-1} + \binom{m-1}{i} = \binom{m}{i}$ we see that this reduces to $$\Delta a_m = \sum_{i=0}^m \binom{m}{i} a_i \otimes a_{m-i},$$ completing the desired induction.

This in hand, we see that \begin{align*}\Delta \exp(a) &= \sum_{m=0}^\infty \frac{\Delta(a_m)}{m!} = \sum_{m=0}^\infty \frac{\sum_{i=0}^m \binom{m}{i} a_i \otimes a_{m-i}}{m!} \\
&= \sum_{\substack{i,j \ge 0}} \frac{1}{(i+j)!} \binom{i+j}{i} a_i \otimes a_j \\
&= \sum_{i,j \ge 0} \frac{a_i \otimes a_j}{i! j!} \\
&= \exp(a) \otimes \exp(a),
\end{align*}
so $\exp(a)$ is indeed a grouplike cocycle, as desired. Now we define $K(a)$ to be the twisting with $\exp(a) = g(K(a))$; equivalently, take $K(a)$ to be the $(BA)_1$ component of $\exp(a)$.

Because the only tool used in this construction is the product $\mu$ on $BA$ (given to us by the Hirsch structure on $A$), it is clear that $K$ is natural for maps of Hirsch algebras.
\end{proof}

\begin{remark}
One can show that $K$ descends to a map from odd cohomology to $ts(A)$. However, it does not seem to be a group homomorphism in general on any $H^{2n+1}(A)$; it seems that one needs to assume some further homotopy-commutativity of $A$ (more precisely, the operation $E_{1,1}$ should itself be homotopy-commutative, in a way which is associative up to higher homotopies. This will not be relevant to us and so we do not explore it further.
\end{remark}

\begin{remark}\label{rmk:right-mult}
When $A = C^*_\Delta(X;\mathbb Z)$ for a simplicial set $X$, the definition of $K(a)$ simplifies if one uses \emph{right multiplication} instead of left multiplication. This is because the relevant higher operations $E_{m,1}$ are all zero on $C^*_\Delta(X)$. 

When these higher operations vanish and we define $K(a)$ using right-multiplication, write $E_{1,1}(a,b) = a \cup_1 b$. Then $K(a)$ has the form $K(a)_{2n+1} = a$, $K(a)_{4n+1} = \frac 12 a \cup_1 a$, and $$K(a)_{6n+1} = \frac 16 (a \cup_1 a) \cup_1 a,$$ and so on; all higher terms are given by iterated cup-1 products. All other $K(a)_{2i+1}$ are zero. 

This construction via right-multiplication appears in the proof of \cite[Lemma 16]{Kra}, and this argument led (directly or indirectly) to many of the ideas of this paper. We call the class $K(a)$ the \emph{Kraines construction} on $a$ in reference to this Lemma, though it might rightfully be called the Kraines--Saneblidze construction: the extension to Hirsch algebras described above first appears in \cite[Section 3.3]{Saneblidze}.

In our application, it will be more convenient to use left-bracketed multiplication, which is why we prefer that convention.
\end{remark}

The general formula for the higher terms in $K(a)$ is complicated, because the formula for the multiplication $\mu$ is complicated. It is given as a sum over iterated applications of the operations $p!E_{1,p}$ to copies of $a$, where the term after the semicolon is always $a$; for instance, $$2 E_{1,2}(a; E_{1,1}(a;a), E_{1,1}(a;a))$$ appears in $K(a)_{10n+1}$. It is difficult to make all of this explicit. Instead of proving the above (which is not the most useful way to phrase it), we will extract exactly what will be useful for us later. 

The following lemma is quite tedious and technical. A reader may prefer to skip it for now and return later when we find use for it in studying the minimal torus.

\begin{lemma}\label{mult-formula}
Let $A$ be a Hirsch algebra. Define $\mu_n: A^{\otimes n} \to BA$ recursively, by $\mu_1(a) = [a]$ and $$\mu_n(a_1, \dots, a_n) = \mu([a_1], \mu_{n-1}(a_2, \dots, a_n)).$$ Denote by $$\mu_n^k: A^{\otimes n} \to (BA)_k \cong A^{\otimes k}$$the $k$'th component of this map. \textbf{Suppose $a_1, \cdots, a_n$ are all odd-degree elements} in $A$. Then we have $$\mu_n^k(a_1, \dots, a_n) = \sum_{\substack{\textup{partitions of } \{1, \dots, n\} \\ \textup{into an ordered list of } k \textup{ nonempty sets} \\ I_1, \dots, I_k \subset \{1, \dots, n\}}} [\mu^1(a_{I_1}) \mid \dots \mid \mu^1(a_{I_k})].$$Here if $I_j = \{i_1 < \cdots < i_m\}$ then one interprets $\mu^1(a_{I_k}) = \mu^1_m(a_{i_1}, \cdots, a_{i_m})$ with $\mu^1_m:A^{\otimes m}\rightarrow A$.
\end{lemma}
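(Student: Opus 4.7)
The plan is to induct on $n$. The base case $n=1$ is immediate: $\mu_1(a_1)=[a_1]$ has unique nonzero component $[a_1]$ in $(BA)_1$, matching the unique ordered partition of $\{1\}$ into one nonempty set. The main tool for the inductive step is the coalgebra-map property of $\mu$, iterated to yield
\[
\mu^k(\alpha,\beta)=\sum[\mu^1(\alpha_{(1)},\beta_{(1)})|\cdots|\mu^1(\alpha_{(k)},\beta_{(k)})],
\]
summed over ordered splittings $\Delta^{k-1}\alpha=\sum\alpha_{(1)}\otimes\cdots\otimes\alpha_{(k)}$ and similarly for $\beta$. Because every $a_i$ is odd-degree, each bar element built from them has even bar-degree, so all shuffle signs coming from $\tau$ collapse to $+1$ and may be dropped throughout.

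I apply this with $\alpha=[a_1]$ and $\beta=\mu_{n-1}(a_2,\dots,a_n)$. Splitting $[a_1]$ into $k$ pieces amounts to picking a slot $j\in\{1,\dots,k\}$ that receives $[a_1]$, with $[]$ in the others. Expanding $\beta$ via the inductive hypothesis writes it as a sum, indexed by ordered partitions $(J_1,\dots,J_m)$ of $\{2,\dots,n\}$, of bar elements $[\mu^1(a_{J_1})|\cdots|\mu^1(a_{J_m})]$, whose iterated comultiplication distributes the $m$ entries into $k$ ordered (possibly empty) blocks. Two elementary evaluations are the crux: $\mu^1([a_1],[b_1|\cdots|b_p])=E_{1,p}(a_1;b_1,\dots,b_p)$, and $\mu^1([],\beta')$ equals the length-one part of $\beta'$ (since $\mu([],-)$ is the identity). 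The second identity forces every slot $i\ne j$ to receive exactly one entry of $\beta$, which in turn forces slot $j$ to absorb $p_j=m-k+1$ consecutive entries $\mu^1(a_{J_j}),\dots,\mu^1(a_{J_{j+p_j-1}})$.

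The resulting expression for $\mu_n^k(a_1,\dots,a_n)$ is
\[
\sum_{j,m,(J_\bullet)}[\mu^1(a_{J_1})|\cdots|E_{1,p_j}(a_1;\mu^1(a_{J_j}),\dots,\mu^1(a_{J_{j+p_j-1}}))|\cdots|\mu^1(a_{J_m})].
\]
I then compare this to $\sum_{(I_1,\dots,I_k)}[\mu^1(a_{I_1})|\cdots|\mu^1(a_{I_k})]$ via the bijection that sends an ordered partition $(I_1,\dots,I_k)$ of $\{1,\dots,n\}$ to the data of the slot $j$ with $1\in I_j$, together with an ordered partition of $I_j\setminus\{1\}$ into $p_j$ nonempty blocks which, interleaved with the remaining $I_\ell$'s, assemble $(J_1,\dots,J_m)$ with $m=k+p_j-1$. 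Unfolding the recursive definition $\mu^1(a_{I_j})=\mu^1([a_1],\mu_{|I_j|-1}(a_{i_2},\dots,a_{i_{|I_j|}}))$ and applying the inductive hypothesis to $\mu_{|I_j|-1}$ converts $\mu^1(a_{I_j})$ into precisely the inner $E_{1,p_j}$-expression, completing the match.

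The principal obstacle is the combinatorial bookkeeping: one must recognize that because $[a_1]$ has length one, the only operation that arises from the coalgebra decomposition is $E_{1,p}$, and that this is also precisely what appears when recursively unfolding $\mu^1(a_{I_j})$ with $1=\min I_j$. Once this correspondence is pinned down, the proof closes formally, with the odd-degree hypothesis serving purely to erase shuffle signs.
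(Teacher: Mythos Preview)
Your proof is correct and follows essentially the same route as the paper's: induct on $n$, use the coalgebra-map identity $\mu^k=(\mu^1)^{\otimes k}\circ\nabla^k$, observe that the iterated coproduct of $[a_1]$ places it in a single slot $j$, force every other slot to receive exactly one entry of $\beta$ via $\mu^1([],-)$, and then regroup using the recursive definition of $\mu^1(a_{I_j})$ together with the inductive hypothesis. Your phrasing of the final ``bijection'' is slightly loose---it is really a many-to-one map whose fibers are exactly absorbed by expanding $\mu^1(a_{I_j})$ via the inductive hypothesis---but you make this explicit in your last sentence, so the argument closes.
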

\begin{remark}
In general, if not all $a_i$ are odd-degree elements, there are some additional signs. 
\end{remark}
Notice that, for instance, if $n = 3$, this includes the entire symmetric expression $$[a_1|a_2|a_3] + [a_1 | a_3 | a_2] + [a_2 | a_1 | a_3] + [a_2 | a_3 | a_1] + [a_3 | a_1 | a_2] + [a_3 | a_2 | a_1];$$ this is implicit in the statement `ordered list of $k$ nonempty sets', which means we know the order we list the subsets $(I_1, \dots, I_k)$.

\begin{proof}
The map $\mu_n^k$ is defined (inductively) as the composition
\begin{equation*}
A^{\otimes n}=A\otimes A^{\otimes (n-1)}\xrightarrow{s\otimes \mu_{n-1}} BA\otimes BA \stackrel{\mu^k}{\longrightarrow} (BA)_k=A^{\otimes k}
\end{equation*}
with $s(a)=[a]$ and $\mu^k$ the component of $\mu:BA\otimes BA\rightarrow BA$ landing in $(BA)_k$. Recall from the end of Section 3.1 that the component $\mu^k: (BA \otimes BA) \to (BA)_k \cong A^{\otimes k}$ is given by the composite $(\mu^1)^{\otimes k} \circ \nabla_k,$ where $\nabla_k$ is an iterated composite of the coproduct on $BA \otimes BA$. There is a sign appearing in the definition of $\nabla_k$ via the swap symmetry, but in the case we are interested in all terms which appear are even-degree in $BA$, so the sign in the swap symmetry is completely irrelevant to us. 

Writing $\vec a = [a_1 | \cdots | a_m]$ and $\vec b = [b_1 | \cdots | b_n]$, we have the explicit formula $$\nabla_k(\vec a \otimes \vec b) = \sum_{\substack{0 \le i_1 \le \dots \le i_k \le m \\ 0 \le j_1 \le \dots \le j_k \le n}} (\vec a_{[1, i_1]} \otimes \vec b_{[1, j_1]}) \otimes \cdots \otimes (\vec a_{[i_{k}+1, m]} \otimes \vec b_{[j_{k}+1, n]}).$$ Here we write $\vec a_{[i,j]} = [a_i | a_{i+1} | \cdots | a_j]$, interpreting this as the unit $1 \in (BA)_0$ when $i = j+1$. In the special case of interest, this becomes $$\nabla_k([a_1], [a_2 | \cdots | a_n]) = \sum_{\substack{1 \le i_1 \le \cdots \le i_k \le n \\ 0 \le j \le k}} (1 \otimes \vec a_{[2, i_1]}) \otimes \cdots \otimes ([a_1] \otimes \vec a_{[i_j+1, i_{j+1}]}) \otimes \cdots \otimes (1 \otimes \vec a_{[i_k + 1, n]}).$$ 
That is, we break the second vector into $k$ sequential pieces, and insert $[a_1]$ next to one of them (leaving $1$ next to everything else). 

Using these formulae, let us prove the desired claim by induction on $n$. For $n = 1$ and any $k$, the formula holds tautologically. Suppose the given formula holds for all $\mu^k_m$ for all $m < n$ and all $k$. We will verify the formula for $\mu^k_n$. For sake of space we make two further notational simplifications: if $$I = \{i_1 < \dots < i_k\} \subset \{1, \dots, n\}$$ we write $$\mu^1(I) = \mu^1(a_I) = \mu^1(a_{i_1}, \dots, a_{i_k}).$$ Further, if $I_1, \dots, I_m \subset \{1, \dots, n\}$ is a list of subsets, we write $$\mu^1(I_1 \mid \cdots \mid I_m) = [\mu^1(I_1) \mid \cdots \mid \mu^1(I_m)].$$

Our inductive hypothesis gives that $$\mu_{n-1}(a_2, \cdots, a_n) = \sum_{\substack{\textup{partitions of } \{2, \cdots, n\} \\ \textup{into an ordered list of nonempty sets} \\ I_1, \cdots, I_k \subset \{2, \cdots, n\}}} \mu^1(I_1 \mid \cdots \mid I_k),$$ where here we sum over all $k$. To compute the $k$'th component of $\mu([a_1],  \mu_{n-1}(a_2, \cdots, a_n))$, we must first compute $$\nabla_k\big([a_1] \otimes \mu_{n-1}(a_2,\cdots, a_n)\big).$$ By the descriptions of $\nabla_k$ above and the inductive formula for $\mu_{n-1}$, we see that this is given by the following expression. 
\begin{align*}
    &\nabla_k([a_1] \otimes \mu_{n-1}) = \nabla_k\Bigg({\sum_{\substack{\textup{partitions of } \{2, \cdots, n\} \\ \textup{into an ordered list of nonempty sets} \\ I_1, \cdots, I_m \subset \{2, \cdots, n\}}}} [a_1] \otimes \mu^1(I_1 \mid \cdots \mid I_m)\Bigg)\\
    = &\sum_{\substack{m \ge 1 \\ I_1, \cdots, I_m \text{ as above} \\0 \le i_1 \le \cdots \le i_k \le m \\ 0 \le j \le k}} \left(1 \otimes \mu^1(I_1 \mid \cdots \mid I_{i_1})\right) \otimes \cdots \otimes \left([a_1] \otimes \mu^1(I_{i_j+1} \mid \cdots \mid I_{i_{j+1}})\right) \otimes \cdots \otimes \left(1 \otimes \mu^1(I_{i_k+1} \mid \cdots \mid  I_m)\right).
\end{align*}
Finally, applying $(\mu^1)^{\otimes k}$ does nothing to most of these components. Any component $1 \otimes 1$ is sent to zero (so we may reindex the sum over $0 < i_1 < \cdots < i_k < m$); any component $1 \otimes a$ is sent to $a$. The remaining interesting component is the one with $a_1$. Renaming the sets $I_{i_j+1}, \cdots, I_{i_{j+1}}$ to $J_1, \cdots, J_k$ for convenience, write $J = J_1 \cup \cdots \cup J_k$. Consider $$\sum_{\substack{\text{partitions of } J \text{ into an ordered list of nonempty subsets} \\ J'_1, \cdots, J'_m}} \mu([a_1], \mu_m(J'_1 \mid \cdots \mid J'_m)).$$ By the inductive hypothesis, this is precisely $\mu([a_1], \mu^1(J))$, and by the recursive definition of $\mu$ this is precisely what we call $\mu^1(\{1\} \cup J)$. Setting $$m' = m - (i_{j+1}-1 - i_j)$$ and $$I'_k = \begin{cases} I_k & k \le i_j \\ \{1\} \cup I_{i_j+1} \cup \cdots \cup I_{i_{j+1}} & k = i_j+1 \\ I_{k+i_{j+1} -1 -i_j} & k > i_j+1 \end{cases}$$
we may thus reindex the above sum as $$\sum_{\substack{\text{partitions of } \{1, \cdots, n\} \\ \text{into an ordered list of nonempty subsets} \\ I'_1, \cdots, I'_{m'}}} \mu^1(I'_1 \mid \cdots \mid I'_{m'}).$$ 
This completes the induction.
\end{proof}
\medskip 
\subsection{Characteristic classes of twisting sequences}
By combining the previous sections, we are able to construct characteristic classes of twisting sequences which completely characterize their homotopy classes --- so long as the algebra has torsion-free cohomology. In the case of spaces, these should be intuitively thought as the pullbacks of certain odd degree elements in the rational cohomology of the classifying space $\mathcal{U}_T$ in Remark \ref{htpy}, hence the name.

\begin{theorem}\label{charclass-facts}
Let $A$ be a Hirsch algebra. There are maps $F_n: ts(A) \to H^{2n+1}(A) \otimes \mathbb Q$ which are natural for Hirsch algebra maps $f: A \to B$. These satisfy the following properties:
\begin{enumerate}
\item if $x_{2i+1} = 0$ for all $i < n$, then $F_n(x_\bullet) = [x_{2n+1}]$. 
\item if $a \in A^{2n+1}$ is a cocycle, then $F_n K(a) = [a]$ and $F_m K(a) = 0$ for all $m \ne n$.
\item if $A$ has torsion-free cohomology and $F_i(x_\bullet) = F_i(y_\bullet)$ for all $i \le n$, then $x_\bullet$ is homotopic to a twisting sequence $x'_\bullet$ with $x'_{2i+1} = y_{2i+1}$ for all $i \le n$. If the cohomology of $A$ is torsion-free, supported in bounded degrees, and $F_n(x_\bullet) = F_n(y_\bullet)$ for all $n$, then $x_\bullet$ is homotopic to $y_\bullet$.
\end{enumerate}
\end{theorem}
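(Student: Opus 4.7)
The plan is to define $F_n$ via the canonical approximations $K_{(n)}(x_\bullet)$ sketched in the discussion preceding the theorem, verify the three listed properties together with naturality, and establish homotopy invariance by a rational interval-algebra argument; property (3) will then follow from Lemma \ref{lemma:add-a-cbdry} and the obstruction theory of Section \ref{twistseq}.

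First I would set $K_{(1)}(x_\bullet) = 0$ and inductively define
$$K_{(n+1)}(x_\bullet) := \mu\bigl(K_{(n)}(x_\bullet),\, K(a_n)\bigr), \qquad a_n := x_{2n+1} - K_{(n)}(x_\bullet)_{2n+1},$$
where $\mu$ is the twisting-sequence product of Corollary \ref{ProdOp} and $K$ is the Kraines construction of Theorem \ref{twistK}. A short computation comparing the twisting relations for $x_\bullet$ and $K_{(n)}(x_\bullet)$ in degree $2n$---they differ only in the $(2n+1)$-term, since the lower terms coincide by induction---shows that $a_n$ is a cocycle. The unit property of $\mu$ from Corollary \ref{ProdOp}, combined with $K(a_n)_{2i+1} = 0$ for $i < n$, then gives that $K_{(n+1)}(x_\bullet)$ agrees with $x_\bullet$ through degree $2n+1$. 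Defining $F_n(x_\bullet) := [a_n] \in H^{2n+1}(A)_{\mathbb Q}$, property (1) is immediate (the hypothesis forces $K_{(n)}(x_\bullet) = 0$); property (2) reduces via (1) to the case $m > n$, which one verifies by showing inductively that $K_{(m)}(K(a)) = K(a)$ (so that $a_m = 0$); and naturality is inherited from $\mu$ and $K$.

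To establish homotopy invariance I would use the rational commutative interval algebra $J = \mathbb{Q}[t, dt]/(dt)^2$ of Remark \ref{commutativeintalg}. Tensoring with $A_{\mathbb Q}$, commutativity of $J$ lets us define
$$E_{p,q}^{J \otimes A}(j_1 a_1, \ldots; j'_1 b_1, \ldots) := (j_1 \cdots j_p j'_1 \cdots j'_q) \otimes E_{p,q}^{A}(a_1, \ldots; b_1, \ldots),$$
making $J \otimes A$ into a Hirsch $\mathbb Q$-algebra for which the evaluation maps $r_0, r_1 \colon J \otimes A \to A$ are Hirsch maps inducing equal cohomology maps. Any homotopy $x_\bullet \sim y_\bullet$ corresponds to a twisting sequence $\overline h_\bullet$ on $J \otimes A$ with $r_0(\overline h_\bullet) = y_\bullet$ and $r_1(\overline h_\bullet) = x_\bullet$, and naturality of $F_n$ then yields $F_n(x_\bullet) = r_{1*} F_n(\overline h_\bullet) = r_{0*} F_n(\overline h_\bullet) = F_n(y_\bullet)$.

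Finally, for (3) I proceed by induction on $n$. Assuming after a prior homotopy that $x_{2i+1} = y_{2i+1}$ for all $i \leq n-1$, the fact that $K_{(n)}$ depends only on the lower-degree terms yields $K_{(n)}(x_\bullet) = K_{(n)}(y_\bullet)$, so $F_n(x_\bullet) = F_n(y_\bullet)$ reduces to $[x_{2n+1} - y_{2n+1}] = 0$ in $H^{2n+1}(A)_{\mathbb Q}$. Torsion-freeness upgrades this to an integral equality, and Lemma \ref{lemma:add-a-cbdry} supplies a further homotopic $x'_\bullet$ with $x'_{2i+1} = y_{2i+1}$ for $i \leq n$. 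For the bounded statement, fix $N$ with $H^{2n+1}(A) = 0$ for $n > N$, arrange agreement through $i \leq N$ using the first part, and then extend the trivial $(N+1)$-homotopy one degree at a time: every subsequent obstruction (in the sense of the discussion preceding Proposition \ref{qiso-transfer}) lies in $H^{2n+1}(A)_{\mathbb Q} = 0$. The main obstacle I anticipate is the clean construction of the Hirsch structure on $J \otimes A$; the non-associativity of $\mu$ noted after Corollary \ref{ProdOp} is not an issue here since the definition of $K_{(n+1)}$ fixes a specific left-bracketed parenthesization.
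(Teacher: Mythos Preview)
Your proposal is correct and follows essentially the same approach as the paper. The only cosmetic difference is the order of the arguments in the recursive definition of the Kraines approximation: you set $K_{(n+1)}(x_\bullet) = \mu\bigl(K_{(n)}(x_\bullet),\, K(a_n)\bigr)$, whereas the paper takes $K_{(n+1)}(x_\bullet) = \mu\bigl(K(c_{2n+1}),\, K_{(n)}(x_\bullet)\bigr)$; either convention works, since the explicit formula in Corollary~\ref{ProdOp} shows the low-degree agreement property holds when \emph{either} factor vanishes in low degrees. Your identification of the Hirsch structure on $J \otimes A$ as the delicate point is apt---the paper handles this the same way, noting that Koszul signs from commuting $dt$ past other factors must be inserted into your schematic formula.
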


\begin{proof}
We will outline a construction of $F_n(x)$ given a twisting sequence $x$; this construction will only use the operations given by a Hirsch algebra structure, and so will transparently be natural. It will remain to show that these are homotopy invariants satisfying the given properties.

Given a twisting sequence $x_\bullet$, we will iteratively construct twisting sequences $K_{(n)}(x_\bullet)$ in $A_{\mathbb Q}$ with $K_{(n)}(x_\bullet)_{2i+1} = x_{2i+1}$ for all $i < n$. These will be called the \emph{Kraines approximations} to $x_\bullet$. 

The element $c_{2n+1} = x_{2n+1} - K_{(n)}(x_\bullet)_{2n+1}$ is a cocycle, because $$dc_{2n+1} = -\sum_{\substack{i+j=n \\ i,j \ge 1}} x_{2i+1} x_{2j+1} + \sum_{\substack{i+j=n \\ i,j \ge 1}} K_{(n)}(x_\bullet)_{2i+1}K_{(n)}(x_\bullet)_{2j+1} = 0,$$ because $x_{2i+1} = K_{(n)}(x_\bullet)_{2i+1}$ for $i < n$. We will set $$F_n(x_\bullet) = [c_{2n+1}] = \big[x_{2n+1} - K_{(n)}(x_\bullet)_{2n+1}\big].$$ That is, each characteristic class is the obstruction to finding a homotopy $x_\bullet \to K_{(n)}(x_\bullet)$.  

As for the construction, set $K_{(1)}(x_\bullet) = 0$, so that $F_1(x_\bullet) = [x_3]$. Inductively, using the product operation of Corollary \ref{ProdOp}, set $$K_{(n+1)}(x_\bullet) = \mu(K(c_{2n+1}), K_{(n)}(x_\bullet)).$$ The twisting sequences $K_{(n+1)}(x_\bullet)$ and $K_{(n)}(x_\bullet)$ agree through degree $2n-1$ because $K(c_{2n+1})_{2i+1} = 0$ for $i < n$, but is equal to $$K_{(n)}(x_\bullet)_{2n+1} + c_{2n+1} = x_{2n+1}$$ in degree $2n+1$, the equality following by definition of $c_{2n+1}$. It follows that $K_{(n+1)}(x_\bullet)_{2i+1} = x_{2i+1}$ for $i \le n$. 

Therefore, we have constructed $K_{(n)}(x_\bullet)$ with the desired properties, and thus we have also constructed the characteristic classes of twisting sequences. 

Notice that because the Kraines construction $K(a)$ is natural for Hirsch algebra maps $f: A \to B$, and the product $\mu$ of twisting sequences is natural for Hirsch algebra maps, these characteristic classes are also natural for Hirsch algebra maps $f: A \to B$.\\

To see that these are homotopy invariants, first observe that because this construction factors through $A_{\mathbb Q}$, it suffices to show that they are homotopy invariants for rational Hirsch algebras. Over the rationals, we exhibited a \textit{commutative} interval algebra in Remark \ref{commutativeintalg}; recall that this is $\mathbb{Q}[t, dt]/(dt)^2$, where $|t| = 0$ and $|dt| = 1$ and $d(t^n) = n t^{n-1} dt$. It then suffices to observe that if $A$ is a rational Hirsch algebra, so is $A\otimes \mathbb{Q}[t, dt]/(dt)^2$. Explicitly, \begin{align*}E_{p,q}&(a_1 t^{n_1} (dt)^{m_1}, \cdots, a_p t^{n_p} (dt)^{m_p}; b_1 t^{n'_1} (dt)^{m'_1}, \cdots, b_q t^{n'_q} (dt)^{m'_q}) \\ &= \pm E_{p,q}(a_1, \cdots, a_p; b_1, \cdots, b_q) t^{\sum n + n'} (dt)^{\sum m + m'}.\end{align*} The signs arise from the Koszul sign rule when commuting the single nonzero copy of $dt$ to the righthand side of the expression (if two appear, the whole expression is zero). A straightforward but tedious computation shows that this defines a Hirsch algebra structure on $A[t,dt]/(dt)^2$. Two twisting sequences in $A$ are homotopic if and only if there is a twisting sequence on $A[t,dt]/(dt)^2$ restricting to the two. It follows from naturality that if $x,y$ are homotopic, then $F_n(x) = F_n(y)$ for all $n$.\\

Now we move on to verifying the three claimed properties.

(1) If $x_{2i+1} = 0$ for all $i < n$, then the first stages of this construction are given by multiplication with $K(0) = 0$, which changes nothing; so $K_{(n)}(x_\bullet) = 0$. It follows that $F_n(x) = [x_{2n+1} - 0] = [x_{2n+1}]$, as claimed.

(2) If $a \in A^{2n+1}$ is an odd cocycle, then the first $n$ Kraines approximations have $K_{(n)}K(a) = 0$, so that $F_mK(a) = 0$ for $m < n$ and $F_n K(a) = [a - 0] = [a]$. On the other hand, $$K_{(n+1)}(K(a)) := \mu(K(a), K_{(n)}(K(a)) = \mu(K(a), 0) = K(a).$$ Because $K(a) = K_{(n+1)} K(a)$, this approximation is perfect, and there are no obstructions to finding a homotopy between these. All higher characteristic classes are zero.

(3) We will prove the final claim by induction. It is tautologous for the case $n = 0$. Inductively, suppose that $F_i(x_\bullet) = F_i(y_\bullet)$ for all $i \le n$ and that $x_\bullet$ is homotopic to a twisting sequence $x'_\bullet$ so that $x'_{2i+1} = y_{2i+1}$ for all $i < n$; because the $F_i$ are homotopy invariants, $F_i(x'_\bullet) = F_i(y_\bullet)$ for all $i \le n$ as well. 

Because $F_n(x_\bullet) = [x_{2n+1} + \text{terms with smaller indices}]$, and the lower-degree terms of $x'_\bullet$ and $y_\bullet$ agree, it follows that $$F_n(x'_\bullet) - F_n(y_\bullet) = [x'_{2n+1} - y_{2n+1}]_{\mathbb Q}.$$ But by hypothesis, $F_n(x'_\bullet) = F_n(y_\bullet)$, so $x'_{2n+1} - y_{2n+1}$ represents zero in \textbf{rational cohomology}.  Because $A$ has torsion-free cohomology, the map $$H^*(A) \to H^*(A) \otimes \mathbb Q$$ is injective. It follows that $y_{2n+1} - x'_{2n+1}$ is a coboundary in $A$; say $dh_{2n} + x'_{2n+1} = y_{2n+1}$. Now apply Lemma \ref{lemma:add-a-cbdry} to see that we may find a new twisting sequence $x''$ which agrees with $y$ through degree $2n+1$, so that $x'$ is homotopic to $x''$. This completes the induction.
When $A$ has bounded cohomology, if $x$ is always homotopic to some $x'$ which agrees with $y$ through degree $2n+1$, then $x$ is in fact homotopic to $y$; simply choose $n$ so large that $x'$ agrees with $y$ except possibly in degrees where the cohomology of $A$ is zero. 

Then one may construct a homotopy from $x'$ to $y$ inductively; choose $h_{2i} = 0$ for $i < n$. The obstruction to extending this homotopy at each stage $m$ lies in some cohomology group $H^{2m+2}(A)$, but we are in the range where these are zero, so there is no obstruction to extending the homotopy. Thus $x$ is homotopic to $x'$, which is homotopic to $y$; because homotopy is an equivalence relation, $x$ is homotopic to $y$ as desired.
\end{proof}

\begin{remark}In fact, the final statement should hold without the assumption that $A$ has nonzero cohomology in bounded degrees; we will not use this more general statement.
\end{remark}

The first handful of these characteristic classes can be defined explicitly. For instance, $$F_2(x_\bullet) = [x_5 - \frac 12 x_3 \cup_1 x_3],$$ where here $a \cup_1 b = E_{1,1}(a,b)$, and $$F_3(x_\bullet) = [x_7 - x_5 \cup_1 x_3 + \frac{1}{3} x_3 \cup_1 (x_3 \cup_1 x_3) - \frac{1}{3} E_{1,2}(x_3; x_3, x_3)].$$ The formulas are much more complicated for $F_4$, because one must define this by comparing to $K(x_3, x_5 - \frac{1}{2} x_3 \cup_1 x_3)$, and this twisting sequence has a very complicated formula (albeit one which is natural for Hirsch algebra maps).

\begin{remark}
If one clears denominators on the formulas for $F_n(x_\bullet)$, one obtains integral cohomology classes. A more intricate version of the above argument establishes that these are still homotopy invariants, and in fact the mod 2 Hopf invariant of Section 1.3 is the integral cohomology class defined by $2F_2$. To argue that these are homotopy invariants, one needs to know that our construction extends to twisting sequences in $B_\infty$-algebras; that there is a $B_\infty$-interval algebra over $\mathbb Z$; and that tensor products of $B_\infty$ algebras carry a natural $B_\infty$ structure (defined inexplicitly using an acyclic carrier argument). This is necessary because it is not at all clear how to give the tensor product of two Hirsch algebras the structure of a Hirsch algebra. We avoid this by using the commutative interval algebra $\mathbb Q[t,dt]/(dt)^2$; when $B$ is Hirsch and $A$ is commutative, $A \otimes B$ has a canonical Hirsch algebra structure once more.
\end{remark}
\medskip

\bigskip

\section{Examples of Hirsch algebras}\label{examplesHirsch}

In this section we describe the two key examples of Hirsch algebras we will be interested in, namely simplicial and cubical cochains. While explicit formulas of such structures can be found in the literature (see \cite{KS} for the cubical case and \cite{Baues} for the simplicial case), we will provide a description in terms of the recent framework of \cite{MM1} and \cite{K-MM}. This allows both for explicit computations and for a straightforward comparison between the cubical and simplicial worlds. 

The structure operations $E_{p,q}$ are defined on the cochain algebras of an arbitrary simplicial or cubical set. To define them, one simply needs to define these operations on the basic algebras $C^*_\Delta(\Delta^n)$ and $C^*_\cube(I^n)$ and verify that they restrict appropriately to subsimplices or subcubes; that is, one needs to verify that the Hirsch algebra structures we construct are natural for the various face and degeneracy maps. 

In \cite{K-MM}, the authors construct a great many operations on simplicial and cubical cochains (giving what is called an $E_\infty$-structure on cochain algebras). They are defined as particular composites of two basic input operations, $\Delta$ and $*$; their boundary relations also involve an operation $\epsilon$. 

Not all of these composites are sufficiently natural and so do not define operations on the cochains of an arbitrary simplicial/cubical set (for instance, the join operation $*$ is only well-defined on cochains on $I^n$, not on arbitrary cubical sets). 

However, by representing these operations as particularly simple trivalent graphs, the authors determine when composites of these operations do give well-defined operations on all cubical cochain algebras. This will be the case for the operations $E_{p,q}$ we aim to construct.

\begin{remark}
It is not a priori clear whether the Hirsch algebra structure we describe below for simplicial and cubical complexes is exactly the one appearing in the literature; this will not be important for our purposes. 
\end{remark}

\begin{remark}
The signs in what follows (especially in regards to duals and tensor products) can be rather intricate. The authors found \cite{TLSigns} useful in keeping these straight and understanding the conceptual origin of the various sign conventions.
\end{remark}

\medskip

\subsection{Simplicial and cubical cochains as Hirsch algebras}
To begin with, we recall the definitions of the basic operations in both the simplicial and cubical settings. In both cases, we need to define the basic operations $\Delta$ (which we call the diagonal or coproduct), $*$ (which we call the join or degree 1 product), and $\epsilon$ (which we call the augmentation) and verify some simple relations between them.
\\
\par
\textbf{Simplicial setup.} 
In the case of simplicial chains, $\Delta$ is given by the classical Alexander-Whitney map:
\begin{equation*}
\Delta[v_0,\dots, v_q]=\sum_{i=0}^q[v_0,\dots, v_i]\otimes [v_{i+1},\dots, v_q],
\end{equation*}
the join $*$ is the usual join of simplices given by the Eilenberg-Zilber shuffle product, $$[v_0, \dots, v_i] * [v_{i+1}, \dots, v_p] = \begin{cases} (-1)^{i+|\pi|} [v_{\pi(0)}, \dots, v_{\pi(p)}] & v_i \ne v_j \text{ for } i \ne j \\ 0 & \text{else}\end{cases}$$ where here $\pi$ is the unique permutation so that the above list of vertices is in increasing order, and $(-1)^{|\pi|}$ is the sign of this permutation. Lastly, the augmentation is given by 
\begin{equation*}
\varepsilon[v_0,\dots v_q]=\begin{cases}
1 \text{ if }q=0\\
0 \text{ otherwise.}
\end{cases}
\end{equation*}

\textbf{Cubical setup.} For our purposes we will be interested in cubical chains first introduced by Serre in \cite{Ser} (with slightly different conventions from ours), see \cite{Mas} for an in-depth treatment of the singular version. A cubical set has a set $\Box_k(X)$ of k-cubes in $X$ for all k; these come equipped with a collection of degeneracy operations and a collection of pairs of face operators (front face and back face). The cubical cochains $C^k_{\cube}(X)$ on a cubical set $X$ are the set of functions $\Box_k(X) \to \mathbb Z$ which vanish on degenerate cubes. 

If the cubical set in question is $X_{\cube} = \text{Sing}_{\cube}(X)$ for some topological space $X$, this gives precisely the singular cubical cochain complex defined in Massey's book. Chains in $C_*^{\cube-\text{sing}}(X)$ consists of continuous maps $I^n\rightarrow X$; a chain is degenerate if it factors through a projection to a face $I^n\rightarrow I^{n-1}\rightarrow X$, and the normalized cubical chains are obtained by quotienting the complex of cubicial chains by the degenerate ones. Dually, cubical cochains are functions which assign to each continuous map $I^n \to X$ an integer, and assign zero to any map which factors through a projection $I^n \to I^{n-1}$.  

To give explicit formulas for operations on $C^*_\cube(I^n)$, we will use the following basis for the predual space of chains $C_*^\cube(I^n)$. The basis elements are length $n$ strings in the alphabet $\{0, 1, I\}$; these correspond geometrically to \emph{faces} of the cube $I^n$, where $0$ and $1$ indicate that the given coordinate is fixed, while $I$ indicates that the given coordinate is free. For instance, $0I$ denotes the left vertical edge on the standard unit square, while $I1$ corresponds to the top horizontal edge. The degree of a string is given by the number of $I$'s in it (so $II$ has degree $2$, while $I0I10I$ has degree $3$).

If $x$ is a basis vector for $C_*^\cube(I^n)$, we will write $e^x$ for the dual basis vector in $C^*_\cube(I^n)$; for instance, $e^{0I}$ evaluates to 1 on $0I$ and to zero on all other strings. 

The operation $\Delta$ is the Cartan-Serre diagonal, which we now define. First, on the $1$-cube $I$ we define
\begin{equation*}
\Delta(0)=0\otimes 0\quad\Delta(1)=1\otimes 1\quad \Delta(I)=0\otimes I+I\otimes 1. 
\end{equation*}
In general we write 
\begin{equation*}
\Delta(x_i)=\sum x_i^{(1)}\otimes x_i^{(2)}
\end{equation*}
using Sweedler's notation for coproducts, where we leave the index of summation ambiguous; experience shows this causes no real trouble.

We then define
\begin{equation*}
\Delta(x_1\cdots x_n)=\sum\pm(x_1^{(1)}\cdots x_n^{(1)})\otimes (x_1^{(2)}\cdots x_n^{(2)})
\end{equation*}
where the sign is determined by the Koszul convention: whenever we swap two consecutive elements $a$ and $b$, the sign $(-1)^{|a||b|}$ is introduced. For example, we have on $I^2$ that
\begin{equation*}
\Delta(II)=00\otimes II+I0\otimes 1I- 0I\otimes I1+II\otimes 11.
\end{equation*}
Taking duals, this corresponds to a description of the cup product of two cochains on the square. Explicitly, $$e^{00} \cup e^{II} = e^{I0} \cup e^{1I} = -e^{0I} \cup e^{I1} = e^{II} \cup e^{11} = e^{II},$$ and these are the only products of monomials which evaluate to $e^{II}$. As in the case of simplicial chains, the coproduct $\Delta$ is coassociative; dually, the cup product on cubical cochains makes $C^*_\cube(X)$ into an associative dg-algebra.
\par
The augmentation $\epsilon$ evaluates on a string $x = x_1 \cdots x_n$ as $$\epsilon(x_1 \cdots x_n) = \begin{cases} 0 & x_i = I \text{ for some } i \\ 1 & \text{ otherwise.} \end{cases}$$ That is, it sends all positive-degree strings to zero and sends each zero-degree string to 1.
\par
The join map (which has degree $+1$ on chains) is defined as follows. On the $1$-cube $I$, the only non-zero values are
\begin{equation*}
0\ast 1=I,\quad\quad 1\ast0=-I    
\end{equation*}
In general, given strings $x$ and $y$, we define their join to be
\begin{equation*}
(x_1\dots x_n)*(y_1\dots y_n)=(-1)^{|x|}\sum_{i=1}^n\varepsilon(y_{<i})\varepsilon( x_{>i}) x_{<i}(x_i*y_i)y_{>i}.
\end{equation*}
Here $x_{<i} (x_i * y_i) y_{>i}$ should be understood as concatenation of strings, where as strings we have
\begin{align*}
x_{<i}&=x_1\dots x_{i-1}\\
x_{>i}&=x_{i+1}\dots x_n  
\end{align*} and we interpret $x_{<1} = x_{>n}$ to be the empty string; similarly for $y_{<i}$ and $y_{>i}$. For example, on the square we have $00*1I=II$ while $00*I1$ is zero.
\begin{remark}\label{nonass}
The join operation is \textit{not} associative on the nose. For example
\begin{align*}
(00*10)*11&=I0*11=-II\\
00*(10*11)&=00*1I=II.
\end{align*}
However, as seen in this example, the join is associative up to a global sign. This is sometimes called \emph{anti-associativity} or \emph{graded-associativity}, and it is a phenomenon forced on us by the use of the Koszul sign convention because the join operation has odd degree. 
Precisely, given strings $x,y,z$, we have $$(x * y) * z = (-1)^{|x|+1} x * (y * z).$$ 
In fact, up to the overall sign depending on the degrees of $x$ and $y$, both iterated joins are given by 
\begin{equation*}
\sum_{1 \le i < j \le n}\varepsilon(x_{>i})\varepsilon(y_{<i})\varepsilon(y_{>j})\varepsilon(z_{<j}) x_{<i}(x_i*y_i)y_{i<j}(x_j*y_j)z_{>j}
\end{equation*}
where $y_{i<j}=y_{i+1}\dots y_{j-1}$. This is because the coefficients on all other terms vanish; for instance, in computing $(x\ast y)\ast z$, if the next join happens at position appears at a position $j\leq i$ then $\varepsilon(x\ast y)_{>j}=0$ because this string includes $x_i\ast y_i$, which is either $\pm I$ or 0. In what follows, we will always perform the join operation starting from the left to avoid confusion with signs.
\end{remark}

\medskip

\textbf{The operations $E_{p,q}$. }We will now discuss how to use the operations $\Delta$ and $*$ to construct the Hirsch algebra structure operations $E_{p,q}$. Indeed, we will define these to be dual to certain compositions of $\Delta$ and $*$ on singular chains. 

These are the easiest to describe in a pictorial fashion using the framework of \cite{MM1}. There, the author represents the basic operations $\Delta$, $*$ and $\varepsilon$ as graphs as in Figure \ref{basicops}. Our operations will be obtained by combining the first two; the third one will naturally appear in certain differentials. As a general fact about operads generated by some basic operations, we may visualize composites of these operations as \emph{immersed trivalent graphs with monotonically decreasing edges, considered up to homotopy through immersions which fixes the endpoints and the cyclic ordering of edges around each vertex}. (That is, the product and coproduct are not commutative: one may not flip the order of their inputs.)

\begin{figure}[H]
  \includegraphics[width=0.7\linewidth]{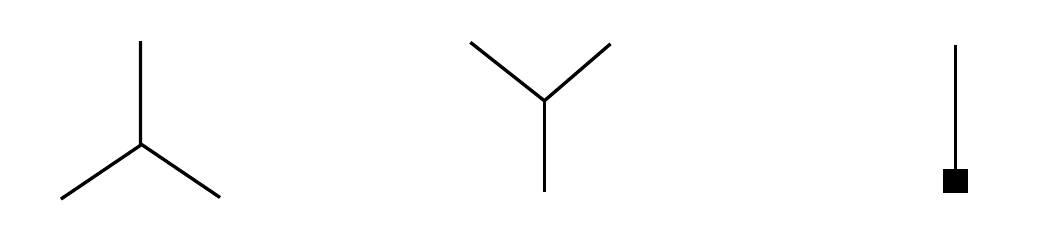}
  \caption{The basic operations $\Delta$, $*$ and $\varepsilon$. We use a square instead of a dot in the case of $\varepsilon$.}
  \label{basicops}
\end{figure}

\begin{remark}
It is important to remark that not all such graphs descend to well defined operations at the level of cochains. Indeed, the join operation is not well defined for general simplicial or cubical sets (for example, the one obtained from the interval $I$ by identifying the endpoints). On the other, the operations associated with graphs with only one incoming always determine a well defined operation on both simplicial and cubical complexes \cite{MM1}, \cite[Theorem $4$]{K-MM}.
\end{remark}

The operations we construct are defined first on chains, and come with a different sign than the one desired in Definition \ref{def:Hirsch} (only depending on the degree of the inputs). To avoid confusion, we will denote these chain operations by $e^{p,q}: C \to C^{\otimes(p+q)}$; before discussing the graphical calculus, we need to discuss the conventions for boundaries of operations and their duals.
\par
The differential of the basic operations $\Delta, *,\varepsilon$ as elements in Hom complexes $\text{Hom}(V, W)$ are described in Figure \ref{diffbasicops}; the only non-trivial computation is the middle one (see \cite{MM1} for the simplicial case and \cite[Lemma $3$]{K-MM} for the cubical case).

\begin{figure}
  \centering
\def\svgwidth{\textwidth}
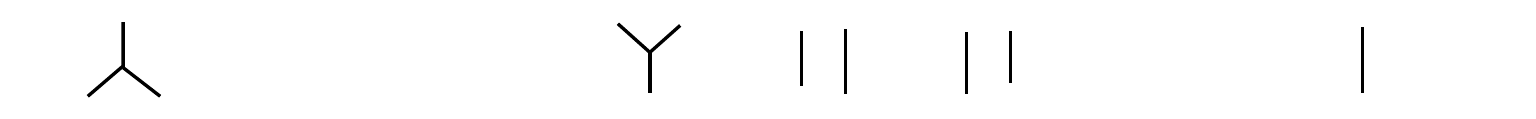
\caption{The differentials (in the Hom complex) of the basic operations.}
\label{diffbasicops}
\end{figure} 

Recall that the differential in $\text{Hom}(V, W)$ is defined to be
\begin{equation}\label{diffhom}
(\partial f)(v)=\partial (f(v))-(-1)^{|f|}f(\partial v).
\end{equation}
In particular, in $V^* = \text{Hom}(V, \Bbb Z)$, the differential is
\begin{equation*}
(\delta \varphi)(v) = -(-1)^{|\varphi|} \varphi(\partial v). 
\end{equation*}
In the Koszul sign convention we also have
\begin{equation}\label{signtens}
(f\otimes g)(a\otimes b)=(-1)^{|g||a|}f(a)g(b).
\end{equation}
Observe that there is a chain map 
\begin{equation}\label{dualmap}
\text{Hom}(V,W) \to \text{Hom}(W^*, V^*), \quad f \mapsto f^* \quad \text{where} \quad (f^* \varphi)(v) = (-1)^{|f||\varphi|} \varphi(fv).
\end{equation}

That this is a chain map amounts to the claim that $\delta f^* = (df)^*$, which is a straightforward calculation. Whenever we have an operation $e^{p,q}: C \to C^{\otimes p} \otimes C^{\otimes q}$, we define $$e_{p,q} = (e^{p,q})^*$$ to be the dual operation, including the sign in (\ref{dualmap}). The fact that passing to the dual map commutes with the differential shows that any relation we find for the boundary of the $e^{p,q}$ operations will translate with no sign change to a relation for the boundary of the $e_{p,q}$ operations.

Next, we should set up the graphical calculus. Some straightforward relations between the basic chain operations $\Delta, \tau, \epsilon, *$ can be found in Figure \ref{relbasicops}. Specifically, the first picture on the left asserts coassociativity of $\Delta$; the second picture is a consequence of the facts that the join of two elements has positive-degree (as the augmentation sends positive-degree terms to zero) and that $*\Delta = 0$; the third picture follows from the simple computation that for positive-degree $x$ we have
\begin{equation*}
\Delta x= x_0\otimes x+x\otimes x_0'+\sum x^{(1)}\otimes x^{(2)}
\end{equation*}
where $\varepsilon(x_0)=\varepsilon(x_0')=1$ (so that in particular $x_0,x_0'$ have degree zero) and each $x^{(1)}, x^{(2)}$ has positive degree.

\begin{figure}[H]
  \centering
\def\svgwidth{0.9\textwidth}
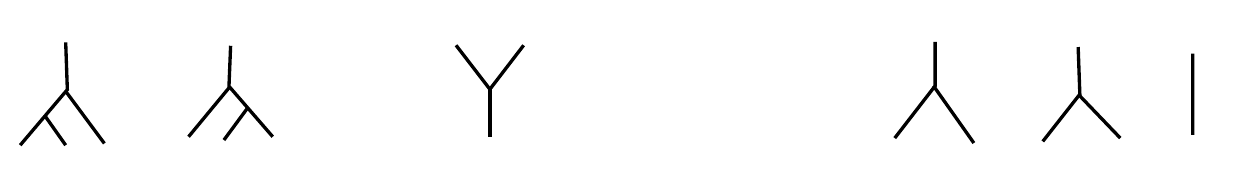
\caption{Some simple relations (in the Hom complex) among the basic operations.}
\label{relbasicops}
\end{figure} 

For convenience we will also depict its iterated $n$-fold composition with a "rake" with $n$ outgoing ends as in Figure \ref{rake}.

\begin{figure}[H]
  \centering
\def\svgwidth{0.9\textwidth}
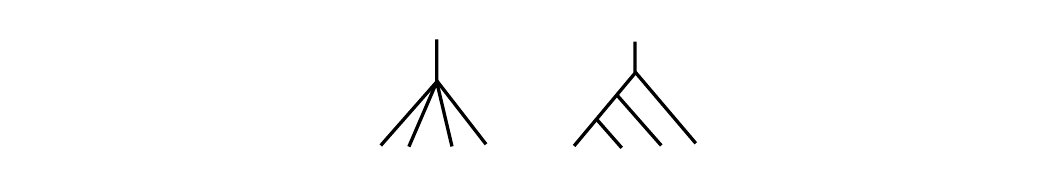
\caption{The definition of the rake; note that the order of splitting is irrelevant in cubical and singular chains, because the coproduct is associative on the nose.}
\label{rake}
\end{figure}

We are now ready to describe the graphs which define the operations $e^{p,q}$. We begin with the operation $e^{1,1}$, whose corresponding graph can be found in Figure \ref{e11}.
\begin{figure}[H]
  \centering
\def\svgwidth{0.9\textwidth}
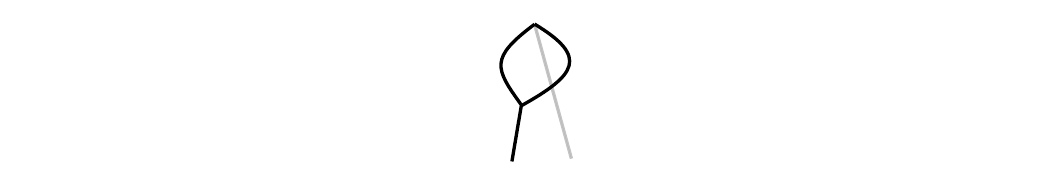
\caption{The operation $e^{1,1}$. This is a graphical encoding of a composite of basic operations and a swap map, $(* \otimes 1)(1 \otimes \tau)(\Delta \otimes 1)\Delta.$ For clarity, we will in general depict in black the strands that will end up in the first $p$ positions and in gray the strands that end up in the last $q$ positions. }
\label{e11}
\end{figure} 
We compute the differential of the operation $e^{1,1}$ as an element in $\text{Hom}(C, C^{\otimes 2})$ in Figure \ref{diffe11}, using the relations of Figure \ref{diffbasicops}. Here recall that (\ref{diffhom}) implies that for a composition of Hom elements we have

\begin{equation}\label{signcomp}
\partial(f\circ g)= (\partial f)\circ g +(-1)^{|f|}f\circ (\partial g).
\end{equation}
To go to the second row, we use the last identity in Figure \ref{relbasicops}. We now dualize to obtain an identity for $e_{1,1}$, which reads
\begin{equation*}
(\partial e_{1,1}) (a\otimes b)= -a\cdot b+(-1)^{|a||b|}b\cdot a.
\end{equation*}
(See Figure \ref{diffe11} below.) Recalling that the Koszul convention for the differential of a tensor product has
\begin{equation*}
\partial(a\otimes b)=\partial a\otimes b+(-1)^{|a|}a\otimes \partial b,
\end{equation*}
this, together with identity (\ref{diffhom}), means
\begin{equation*}
\partial e_{1,1}(a;b)+e_{1,1}(da;b)+(-1)^{|a|}e_{1,1}(a;db)=-a\cdot b+(-1)^{|a||b|}b \cdot a.
\end{equation*}
We then see that if we set
\begin{equation*}
 E_{1,1}(a;b)=(-1)^{|a|+1}e_{1,1}(a;b).
\end{equation*}
the identity (\ref{E11id}) holds. 
\begin{figure}[H]
  \centering
\def\svgwidth{0.9\textwidth}
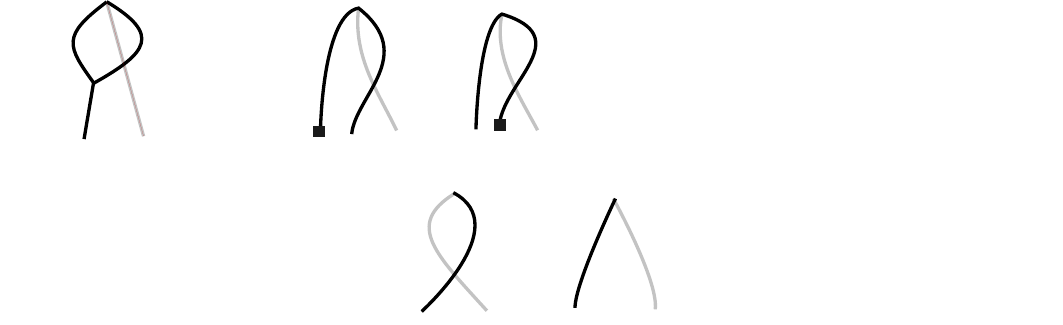
\caption{The differential in the Hom complex of the operation $e^{1,1}$. In the second identity, we use the fact (which directly follows from the definition of $\varepsilon$) that we can drag an edge ending with a square past any other edge.}
\label{diffe11}
\end{figure} 

Let us describe now in detail the operations $e_{1,2}$ and $e_{2,1}$; these will measure the failure of the right and left Hirsch formulas respectively. They are described by the graphs in Figure \ref{e12e21}.
\begin{figure}[H]
  \centering
\def\svgwidth{0.9\textwidth}
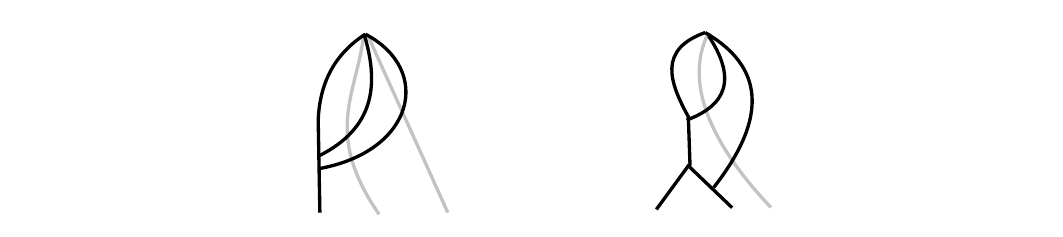
\caption{The operations $e^{1,2}$ and $e^{2,1}$.}
\label{e12e21}
\end{figure} 
The differential of $e^{1,2}$ is described in Figure 4.

\begin{figure}[H]
  \centering
\def\svgwidth{0.9\textwidth}
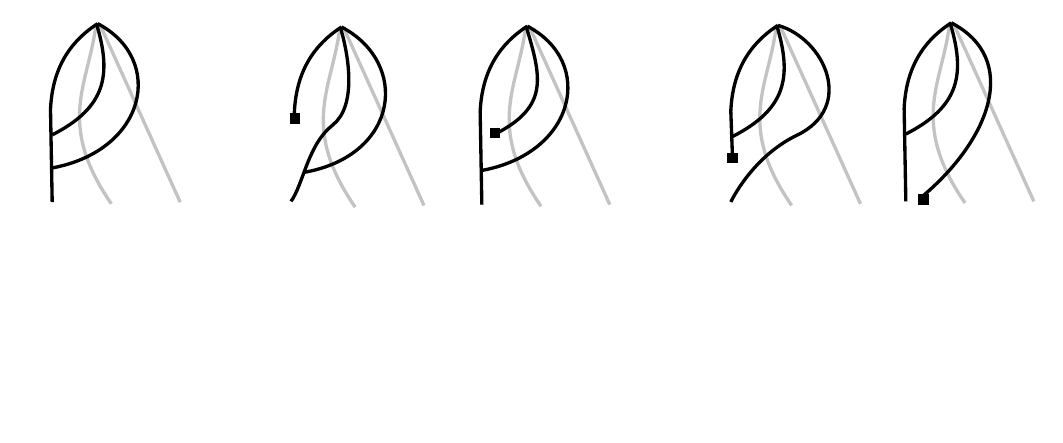
\caption{The differential of the operation $e^{1,2}$. The signs are introduced because of convention (\ref{signcomp}). Here we think of the homological operation, so that the figure is to be read from top to bottom. The convention  (\ref{signcomp}) means that the sign of the differential depends on the degree of the operation to be computed below the place we added a dot, and joins are the only basic operation of odd degree.}
\end{figure} 

The third element in the first row vanishes because of the second relation in Figure \ref{relbasicops}. Applying the last identity of Figure \ref{relbasicops} and an isotopy, we finally reach the final form in the last row. After dualizing, this reads
\begin{equation*}
(\partial e_{1,2})(a;b,c)=-(-1)^{(|a|+1)|b|}b \cdot e_{1,1}(a;c)+e_{1,1}(a;bc)-e_{1,1}(a;b) \cdot c
\end{equation*}
where in the first term the $|a|+1$ appears because $*$ is an operation of degree $1$, see Figure \ref{braiding}.
\begin{figure}[H]
  \centering
\def\svgwidth{0.9\textwidth}
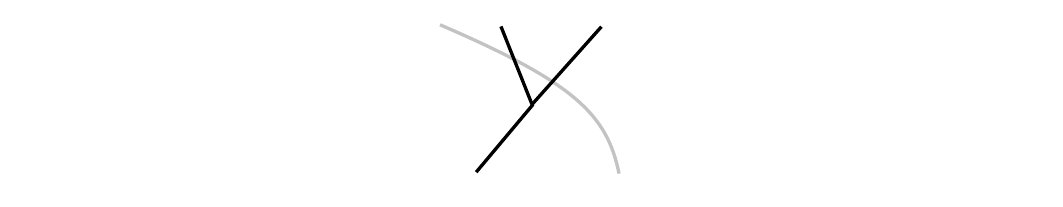
\caption{The braiding introduces a sign of $(-1)^{|a_1||b|+|a_2||b|}=(-1)^{(|a|+1)|b|}$ because the join has odd degree. Moving the gray strand below the join changes this sign by a factor of $(-1)^{|b|}$.}
\label{braiding}
\end{figure} 

Using that $e_{1,2}$ has degree $2$ so that $(\partial e_{1,2})(a \otimes b \otimes c) = \partial e_{1,2}(a,b;c) - e_{1,2}(\partial(a \otimes b \otimes c)),$ we see
\begin{align*}
\partial e_{1,2}(a;b,c)&= e_{1,2}(\partial a;b,c)+(-1)^{|a|}e_{1,2}(a;\partial b,c)+(-1)^{|a|+|b|}e_{1,2}(a;b,\partial c)\\
&-(-1)^{(|a|+1)|b|}b \cdot e_{1,1}(a;c)+e_{1,1}(a;bc)-e_{1,1}(a;b)c
\end{align*}

It follows that $e_{1,2}$ measures the failure of the left Hirsch formula to hold, and that the operation
\begin{equation*}
E_{1,2}(a;b,c)=(-1)^{|b|+1}e_{1,2}(a;b,c)
\end{equation*}
satisfies the identities of Definition \ref{def:Hirsch}.
\\
\par
The differential of $e_{2,1}$ is described in Figure \ref{diffe21}. The first term vanishes because of the middle relation in Figure \ref{relbasicops} (recall that $\Delta$ is coassociative, so that we can drag the bottom $\Delta$ to the top of the diagram). Again after some manipulations we obtain
$$(\partial e_{2,1})(a,b;c)=(-1)^{|a|}a\cdot e_{1,1}(b;c)+(-1)^{|b||c|}e_{1,1}(a;c)\cdot b-e_{1,1}(ab;c).$$
Here, the $(-1)^{|a|}$ in front of the first term on the right hand side comes form the relation (\ref{signtens}), because the join operation $\ast$ has degree $1$.
\par
We see that $e_{2,1}$ measures the failure of the right Hirsch identity, and that it corresponds to
\begin{equation*}
e_{2,1}(a,b;c)=(-1)^{|b|+1}E_{2,1}(a,b;c).
\end{equation*}
\begin{figure}[H]
  \centering
\def\svgwidth{0.9\textwidth}
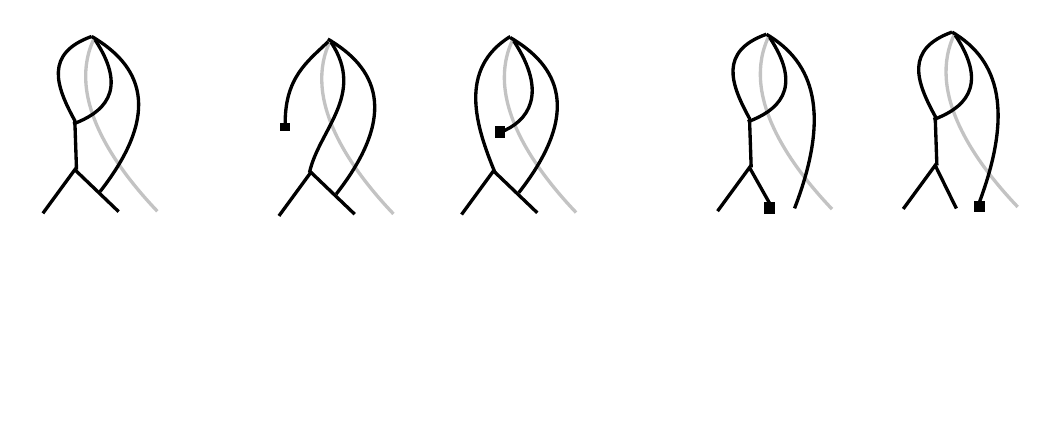
\caption{The differential in the Hom complex of the operation $e^{2,1}$. Here, the first term in the right-hand side of the first row vanishes because by coassociativity we can move the bottom $\Delta$ all the way to the top vertex; we then see that our graph contains a `square' (coming from the two rightmost edges) as in the middle relation of Figure \ref{relbasicops}.}
\label{diffe21}
\end{figure} 

We now describe a recursive way to determine all the (combinations) of graphs corresponding to the operations $e_{p,q}$. Consider an immersed downward-flowing graph $G$ with one incoming end and $p+q$ outgoing ends, corresponding to a composition of the two basic operations $\Delta $ and $*$. We define the upper and lower $(p,q)$-descendants of $G$ to be the graphs described in Figure $6$, which have $p+q+1$ outgoing ends. For each, we add two new strands, one which is fed back into another strand and one which gives rise to a new output. These descendants are called `upper' or `lower' depending on whether the new output strand begins its life above or below $G$. 

\begin{figure}[H]
  \centering
\def\svgwidth{0.9\textwidth}
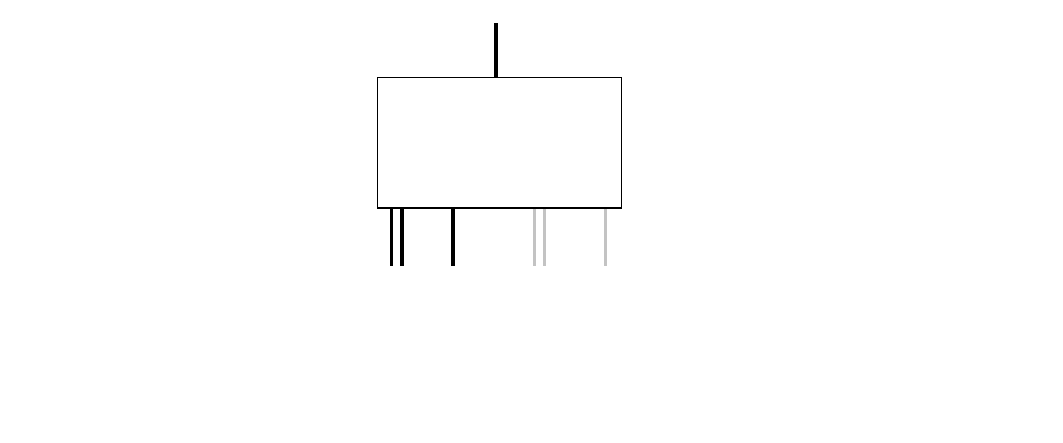
\caption{The lower and upper descendants of a graph $G$ (on the left and right respectively) with $p$ black and $q$ gray outgoing edges.}
\end{figure} 

Notice that the graphs of $e_{2,1}$ and $e_{1,2}$ are respectively the upper and the lower $(1,1)$-descendants of the graph of $e_{1,1}$. In general, we have the following.
\begin{definition}\label{epqdef}
The operation $e_{p,q}$ is defined recursively to correspond to the signed sum over graphs obtained by taking the union of the upper descendants of the graphs in $e_{p,q-1}$ and the lower descendants of the graphs in $e_{p-1,q}$, the latter with an extra sign of $(-1)^{q-1}$. See Figure \ref{descendantse11} for the cases with $p+q\leq4$.
\end{definition}

\begin{figure}[H]
  \centering
\def\svgwidth{0.9\textwidth}
%% Creator: Inkscape 1.2.1 (9c6d41e4, 2022-07-14), www.inkscape.org
%% PDF/EPS/PS + LaTeX output extension by Johan Engelen, 2010
%% Accompanies image file '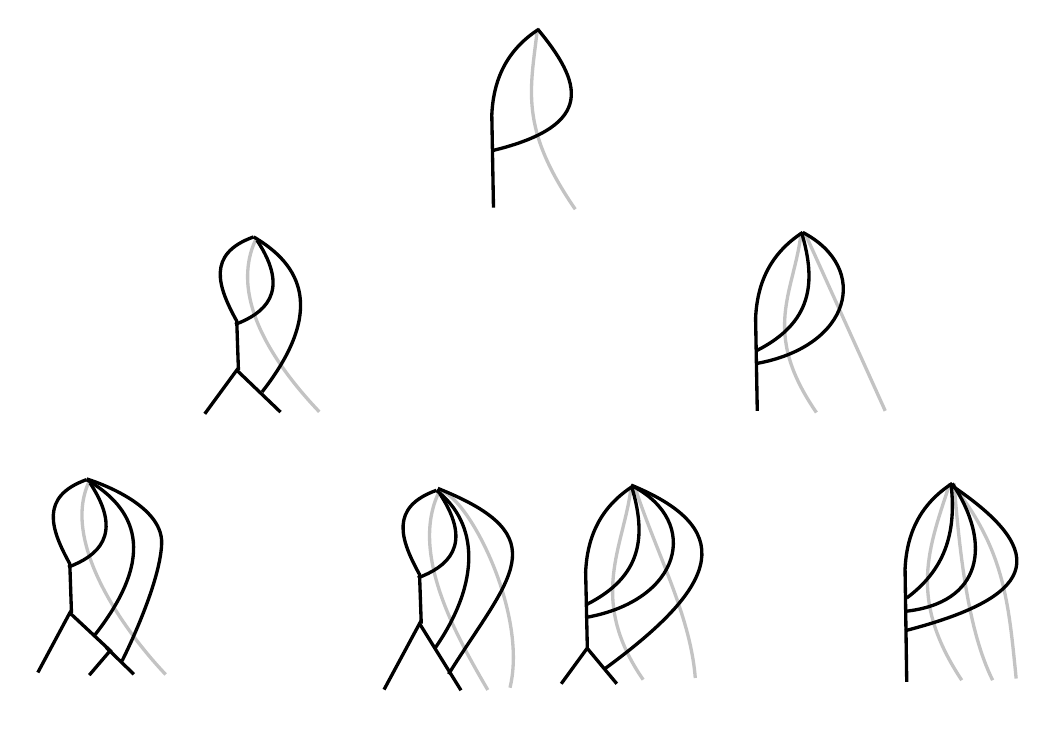' (pdf, eps, ps)
%%
%% To include the image in your LaTeX document, write
%%   \input{<filename>.pdf_tex}
%%  instead of
%%   \includegraphics{<filename>.pdf}
%% To scale the image, write
%%   \def\svgwidth{<desired width>}
%%   \input{<filename>.pdf_tex}
%%  instead of
%%   \includegraphics[width=<desired width>]{<filename>.pdf}
%%
%% Images with a different path to the parent latex file can
%% be accessed with the `import' package (which may need to be
%% installed) using
%%   \usepackage{import}
%% in the preamble, and then including the image with
%%   \import{<path to file>}{<filename>.pdf_tex}
%% Alternatively, one can specify
%%   \graphicspath{{<path to file>/}}
%% 
%% For more information, please see info/svg-inkscape on CTAN:
%%   http://tug.ctan.org/tex-archive/info/svg-inkscape
%%
\begingroup%
  \makeatletter%
  \providecommand\color[2][]{%
    \errmessage{(Inkscape) Color is used for the text in Inkscape, but the package 'color.sty' is not loaded}%
    \renewcommand\color[2][]{}%
  }%
  \providecommand\transparent[1]{%
    \errmessage{(Inkscape) Transparency is used (non-zero) for the text in Inkscape, but the package 'transparent.sty' is not loaded}%
    \renewcommand\transparent[1]{}%
  }%
  \providecommand\rotatebox[2]{#2}%
  \newcommand*\fsize{\dimexpr\f@size pt\relax}%
  \newcommand*\lineheight[1]{\fontsize{\fsize}{#1\fsize}\selectfont}%
  \ifx\svgwidth\undefined%
    \setlength{\unitlength}{510.23622047bp}%
    \ifx\svgscale\undefined%
      \relax%
    \else%
      \setlength{\unitlength}{\unitlength * \real{\svgscale}}%
    \fi%
  \else%
    \setlength{\unitlength}{\svgwidth}%
  \fi%
  \global\let\svgwidth\undefined%
  \global\let\svgscale\undefined%
  \makeatother%
  \begin{picture}(1,0.69444444)%
    \lineheight{1}%
    \setlength\tabcolsep{0pt}%
    \put(0,0){\includegraphics[width=\unitlength,page=1]{descendantse11.pdf}}%
    \put(0.49388889,0.12632813){\color[rgb]{0,0,0}\makebox(0,0)[lt]{\lineheight{0}\smash{\begin{tabular}[t]{l}$-$\end{tabular}}}}%
    \put(0.48620073,0.47201006){\color[rgb]{0,0,0}\makebox(0,0)[lt]{\lineheight{0}\smash{\begin{tabular}[t]{l}$e_{1,1}$\end{tabular}}}}%
    \put(0.7346624,0.26785772){\color[rgb]{0,0,0}\makebox(0,0)[lt]{\lineheight{0}\smash{\begin{tabular}[t]{l}$e_{1,2}$\end{tabular}}}}%
    \put(0.22625208,0.26542568){\color[rgb]{0,0,0}\makebox(0,0)[lt]{\lineheight{0}\smash{\begin{tabular}[t]{l}$e_{2,1}$\end{tabular}}}}%
    \put(0.8781646,0.01676605){\color[rgb]{0,0,0}\makebox(0,0)[lt]{\lineheight{0}\smash{\begin{tabular}[t]{l}$e_{1,3}$\end{tabular}}}}%
    \put(0.47356201,0.02058973){\color[rgb]{0,0,0}\makebox(0,0)[lt]{\lineheight{0}\smash{\begin{tabular}[t]{l}$e_{2,2}$\end{tabular}}}}%
    \put(0.06262416,0.01657389){\color[rgb]{0,0,0}\makebox(0,0)[lt]{\lineheight{0}\smash{\begin{tabular}[t]{l}$e_{3,1}$\end{tabular}}}}%
    \put(0,0){\includegraphics[width=\unitlength,page=2]{descendantse11.pdf}}%
  \end{picture}%
\endgroup%

\caption{The descendant operations on the graphs of $e_{p,q}$ with $p+q\leq4$.}
\label{descendantse11}
\end{figure} 

Up to an overall change of sign (only depending on the degrees of the entries), these new operations satisfy the relations of a Hirsch algebra.

\begin{prop}\label{prop:E-sign}
The operation corresponding to the sum of graphs $e_{p,q}$ satisfies, up to an overall sign change depending on the gradings of the inputs, the relations of the $E_{p,q}$. More precisely, one sets
\begin{equation}\label{signdiff-E}
E_{p,q}(x_1,\dots x_{p+q})=(-1)^{\epsilon_{p,q}^x}\cdot e_{p,q}(x_1,\dots x_{p+q})
\end{equation}
where
\begin{equation*}
\epsilon_{p,q}^x= \frac{(p+q)(p+q-1)}{2}+|x_{p+q-1}|+|x_{p+q-3}|+\dots;
\end{equation*}
here, the quantity $(p+q)(p+q-1)/2$ only depends on $p+q$ modulo $4$, and the sum ends at $|x_1|$ for $p+q$ even and at $|x_2|$ for $p+q$ odd.
\end{prop}

We carry out the proof in the next Section; the techniques needed in the proof will not be relevant elsewhere in the article.
\medskip

\subsection{Proof of the main relation}

Before moving on to the general case, let us discuss very explicitly the case of $e_{1,q}$ (see Figure \ref{e1p} on the left), as it is the case we will be interested in for the actual computations. In this case, the extra complications mentioned below do not arise, and generalizing the computation for $e_{1,1}$ and $e_{1,2}$ above, we obtain
\begin{align*}
(\partial e_{1,q})(a;b_1,\dots, b_p)&=-e_{1,q-1}(a;b_1,\dots, b_{q-1})\cdot b_q\\
&+(-1)^{(|a|+q-1)\cdot |b_1|+q-1}b_1\cdot e_{1,q-1}(a;b_2,\dots, b_{q})\\
&+\sum_{i=1}^{q-1}(-1)^{i+(q-1)}e_{1,q-1}(a; b_1,\dots b_i\cdot b_{i+1},\dots, b_q).
\end{align*}
Counting the $q$ join operations in the graph from the top, the term in the first row corresponds to deleting the right strand in the bottom join while the term in the second row corresponds to deleting the left strand in the top join (see Figure \ref{e1p} in the middle). Here the sign $(-1)^{(|a|+q-1)\cdot |b_1|}$ comes from the analogue of Figure \ref{braiding}, while the extra $(-1)^{q-1}$ comes from the Koszul sign convention for differentials of compositions (\ref{signcomp}). The terms in the last row are obtained by deleting the right strand in the first $q-1$ joins (see Figure \ref{e1p} on the right). It is tedious but straightforward to directly check that the after performing the indicated change of sign we obtain exactly the relations that $E_{1,q}$ satisfies.

\begin{figure}[H]
  \centering
\def\svgwidth{0.9\textwidth}
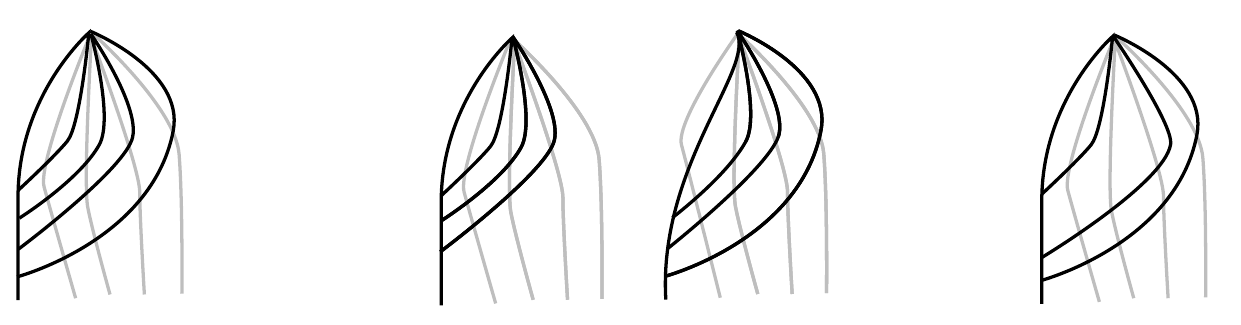
\caption{On the left, the graph corresponding to the operation $e_{1,q}$ for $q=4$. More in general, the operation starts with a rake with $2q+1$ outgoing edges. The edges in odd positions are black, and all merge together. In the middle, the graphs corresponding to the first two rows of the formula for $\partial e_{1,q}$. The terms in the last row are obtained by removing the interior edges; the case $i=2$ is depicted on the left.}
\label{e1p}
\end{figure}

To keep the proof of the general case readable, for the rest of the section we will work without keeping track of signs. These can be recovered in the same way as in the concrete examples we have discussed. In particular, one should be careful of signs introduced when braiding (as in Figure \ref{braiding}), and of signs introduced by (\ref{signtens}) when taking joins between strands which are not the two leftmost (as in the case of the computation for $e_{2,1}$), see also the definition of $f\cdot g$ below.
\begin{remark}
Notice that these signs do not appear in the our case of interest $e_{1,p}$, because always takes the join of the two leftmost strands.
\end{remark}
Because $E_{p,q}$ and $e_{p,q}$ only differ by a sign, we only need to prove the analogous recurrence relation for the differentials of the $e_{p,q}$ as elements of the relevant Hom complexes. To state the relation, it is helpful to name some operations one can carry out on these Hom complexes.
\par
We say a map $$f = f_{p,q}: C \to C^{\otimes p} \otimes C^{\otimes q}$$ is a \emph{$(p,q)$-operation}; the relevant operations for us are represented by trivalent graphs as in the previous section. Given a $(p,q)$-operation $f$, there are a handful of relevant ways of producing new operations (see Figure \ref{newops}): 
\begin{itemize}
\item Suppose $p \ge 1$. For $1 \le i \le p$, we write $fm_i$ for the $(p+1,q)$-operation whose dual is defined by $$(fm_i)^*(x_1, \dots, x_{p+1}; y_1, \dots, y_q) = f^*(x_1, \dots, x_i x_{i+1}, \dots, x_{p+1}; y_1, \dots, y_q);$$ pictorially, this is obtained by adding in the bottom of the picture for $f$ a coproduct on the $i$th black strand.
\item Suppose $q \ge 1$. For $1 \le j \le q$, we write $fn_j$ for the $(p,q+1)$-operation whose dual is defined by $$(fn_j)^*(x_1, \dots, x_p; y_1, \dots, y_{q+1}) = f^*(x_1, \dots, x_p; y_1, \dots, y_j y_{j+1}, \dots y_{q+1});$$ pictorially, this is obtained by adding in the bottom of the picture for $f$ a coproduct on the $j$th grey strand.
\item Suppose $f$ is a $(p,q)$-operation and $g$ is an $(r,s)$-operation for $p,q,r,s \ge 0$. We write $f\cdot g$ for the $(p+r,q+s)$-operation whose dual is defined by\footnote{We spell out here the sign for the interested reader, but will suppress it from now on.} $$(f\cdot g)^*(x_1, \dots, x_{p+r}; y_1, \dots, y_{q+s}) = (-1)^\epsilon f^*(x_1, \dots, x_p; y_1, \dots, y_q) \cdot g^*(x_{p+1}, \dots, x_{p+r}; y_{q+1}, \dots, y_{q+s}),$$ where $$\epsilon = \left(\sum_{i=p+1}^{p+r} |x_{p+i}|\right)\left(\sum_{j=1}^q |y_j|\right) + |g|\left(\sum_{i=1}^p |x_i| + \sum_{j=1}^q |y_j|\right)$$ is the sign of moving $(x_{p+1}, \cdots, x_{p+r})$ across $(y_1, \cdots, y_q)$ together with the Koszul sign from applying $f \otimes g$. Pictorially, this is obtained by placing the pictures of $f$ and $g$ next to each other, and braiding the strands in the bottom.
\end{itemize}

\begin{figure}[H]
  \centering
\def\svgwidth{0.9\textwidth}
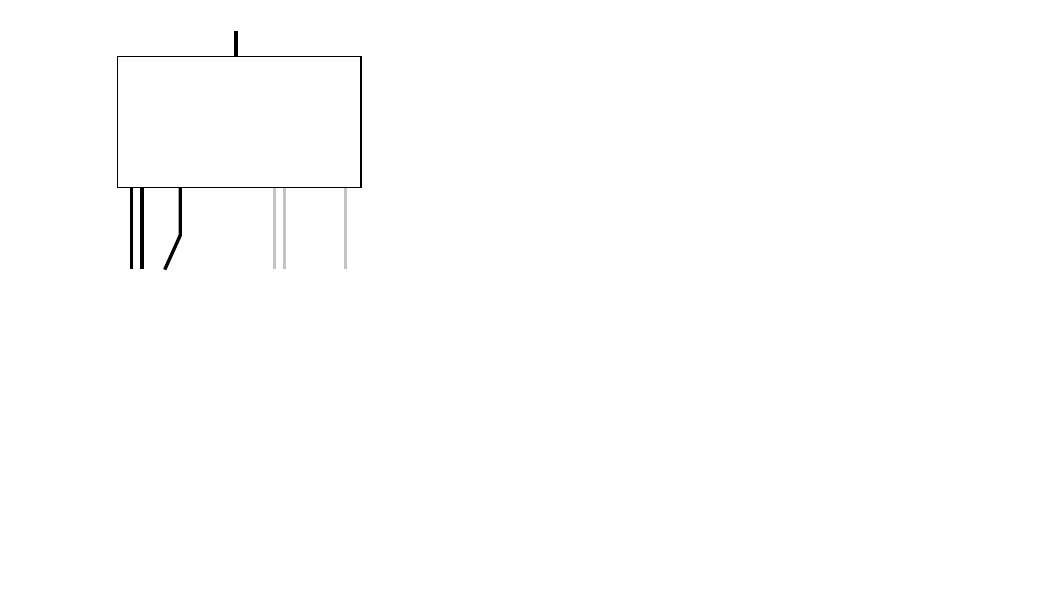
\caption{Some new operations.}
\label{newops}
\end{figure}
Finally, if $f$ is a $(p,q)$-operation for $p \ge 1$, we denote the operations corresponding to the upper and lower descendants by $f^u$ and $f^\ell$; these are respectively $(p,q+1)$- and $(p+1,q)$-operations. For example, we have $e_{1,1} = e_{1,0}^u$. The operation $e_{1,0}^\ell$ is also defined, but it is zero as it contains a bigon. We define $e_{p,q}$ for $(p,q) \ge (1,1)$ via the recursion relation $$e_{p,q} = e_{p,q-1}^u +  e_{p-1,q}^\ell,$$ 
see Figure \ref{Epq} (recall that we are suppressing signs for simplicity).
\begin{figure}[H]
  \centering
\def\svgwidth{0.9\textwidth}
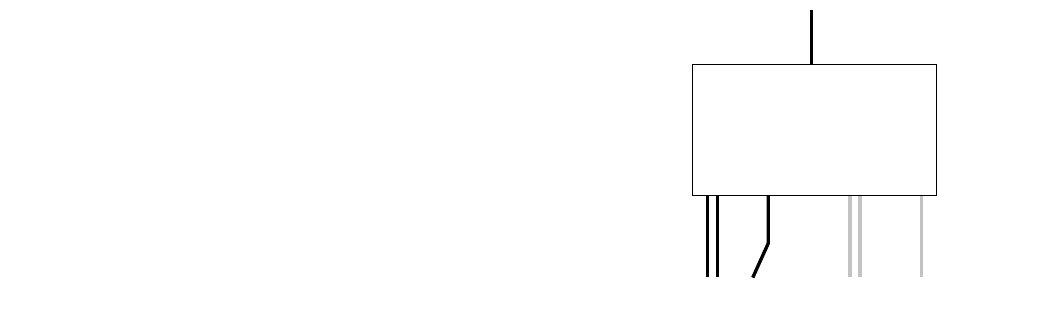
\caption{The recursive definition of $e_{p,q}$ in terms of $e_{p-1,q}$ and $e_{p,q-1}$: the operation $e_{p,q}$ is the sum of the operations corresponding to the depicted graphs. Recall that each of $e_{p-1,q}$ and $e_{p,q-1}$ corresponds to a sum of graphs itself.}
\label{Epq}
\end{figure}

Proposition \ref{prop:E-sign} then follows from the following.
\begin{prop}\label{prop:e-relation}
The boundary of the operation $e_{p,q}$ in \textup{Hom} satisfies the recurrence relation $$de_{p,q} = \sum_{i=1}^{p-1}  e_{p-1,q} m_i + \sum_{j=1}^{q-1}  e_{p,q-1} n_j + \sum_{(p,q) > (i,j) > (0,0)}  e_{p-i,q-j} \cdot e_{i,j}$$
where we suppress signs for simplicity.
\end{prop}

We will prove this by an inductive analysis. To set up the inductive analysis, we will need to understand the behavior of some the different operations with respect to taking upper and lower descendants.

\begin{lemma}\label{lemma:e-op-relations}
We have the following standard relations. 

\begin{enumerate}
    \item $d(e_{p,q-1}^u) = e_{p,q-1} \cdot e_{0,1}+  (de_{p,q-1})^u.$
    \item $d(e_{p-1,q}^\ell) = e_{p-1,q} \cdot e_{1,0}  +e_{p-1,q} m_{p-1} + (de_{p-1,q})^\ell$.
    \item For $1 \le i \le p-2$, we have $e_{p-1,q-1}^u m_i = (e_{p-1,q-1} m_i)^u$ and $e_{p-2,q}^\ell m_i = (e_{p-2,q} m_i)^\ell$.
    \item For $1 \le j \le q-2$, we have $e_{p,q-2}^u n_j = (e_{p, q-2} n_j)^u$. For $1 \le j \le q-1$, we have $e_{p-1,q-1}^\ell n_j = (e_{p-1, q-1} n_j)^\ell$.
    \item For $i > 0$ and $j-1 \ge 0$, we have $(e_{p-i, q-j} \cdot e_{i,j-1})^u =  e_{p-i, q-j} \cdot e_{i,j-1}^u.$
    \item For $i > 1$ and $j \ge 0$, we have $(e_{p-i, q-j} \cdot e_{i-1, j})^\ell =  e_{p-i,q-j} \cdot e_{i-1,j}^\ell.$
\end{enumerate}

We also have the following `edge' cases:

\begin{enumerate}[label=(\alph*)]
\item We have $(e_{p-1, q-1} m_{p-1})^u = (e_{p-1, q-1} \cdot e_{0,1})^\ell$. 
\item We have $(e_{p, q-2} \cdot e_{0,1})^u = e_{p,q-2}^u n_{q-1}.$
\end{enumerate}
\end{lemma}

Let us discuss how these identities can be proved pictorially. Regarding $(1)$ and $(2)$, when taking differentials one either resolves joins inside the box, or the new one introduced by taking the descendant. The former correspond to the descendants of the differential of the operation in the box, i.e. $(de_{p,q-1})^u$ and $(de_{p-1,q})^l$ respectively. The latter are identified with the claimed terms in Figures \ref{Epqdiff} and \ref{Epqdiffsimplified}.

%\ref{Epqdiff}.
\begin{figure}[H]
  \centering
\def\svgwidth{0.9\textwidth}
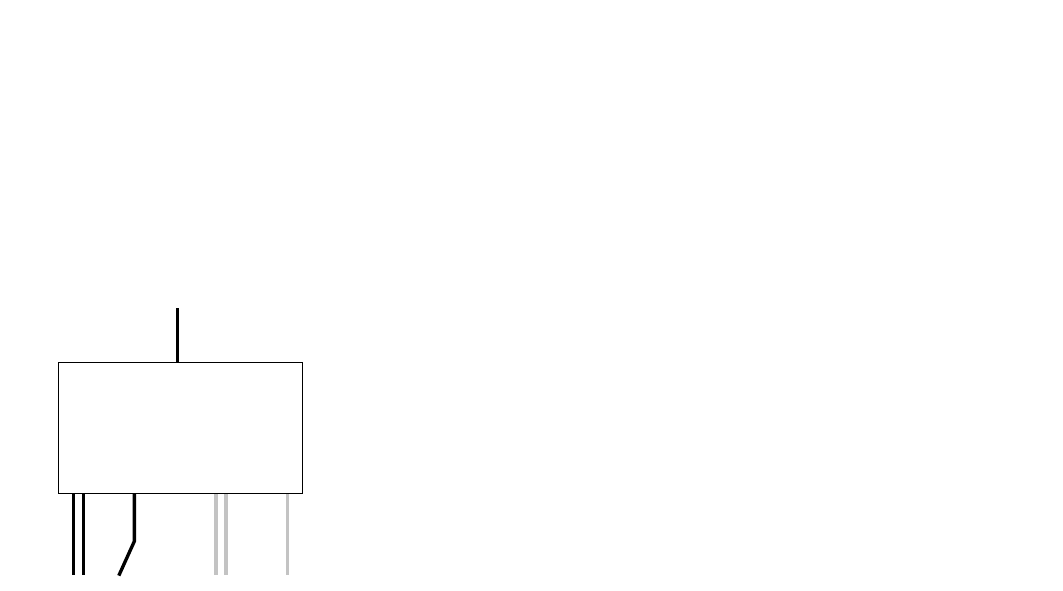
\caption{The terms in the differential of Figure \ref{Epq} obtained by resolving the join outside the box.}
\label{Epqdiff}
\end{figure}
\begin{figure}[H]
  \centering
\def\svgwidth{0.9\textwidth}
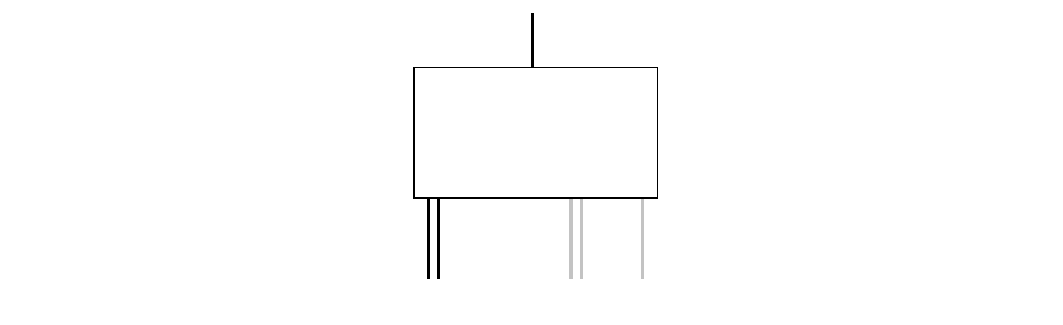
\caption{Simplifying the terms appearing in Figure \ref{Epqdiff}.}
\label{Epqdiffsimplified}
\end{figure}

The identity $(3)$ is proved via Figure \ref{Ep-1q} by moving the coproduct in the box to the bottom of the picture; notice that for the second, when $i = p-2$ one also uses coassociativity of the rake.
\begin{figure}[H]
  \centering
\def\svgwidth{0.9\textwidth}
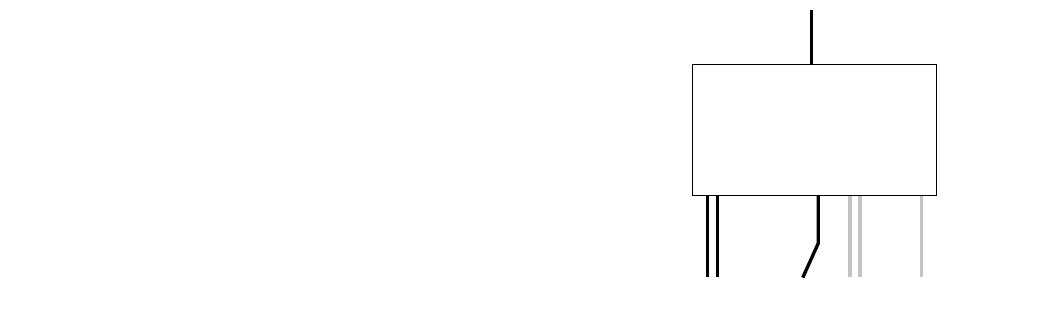
\caption{On the left $(e_{p-1,q-1} m_i)^u$ while on the right $(e_{p-2,q} m_i)^\ell$.}
\label{Ep-1q}
\end{figure}

Identity $(4)$ is proved in the same way via Figure \ref{Epq-1}.

\begin{figure}[H]
  \centering
\def\svgwidth{0.9\textwidth}
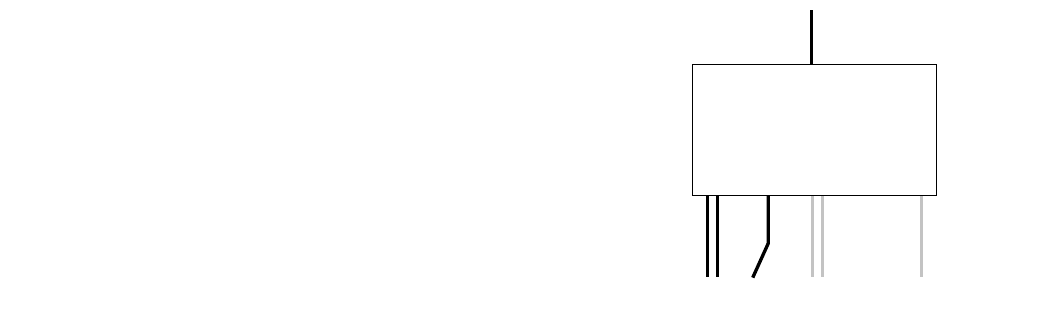
\caption{On the left, $(e_{p, q-2} n_j)^u$ while on the right $(e_{p-1, q-1} n_j)^\ell$. The identity is proved by moving the coproduct in the box to the bottom.}
\label{Epq-1}
\end{figure}

Identity $(5)$ and $(6)$ are proved via Figure \ref{prodtermdiff}.

 \begin{figure}[H]
  \centering
\def\svgwidth{0.9\textwidth}
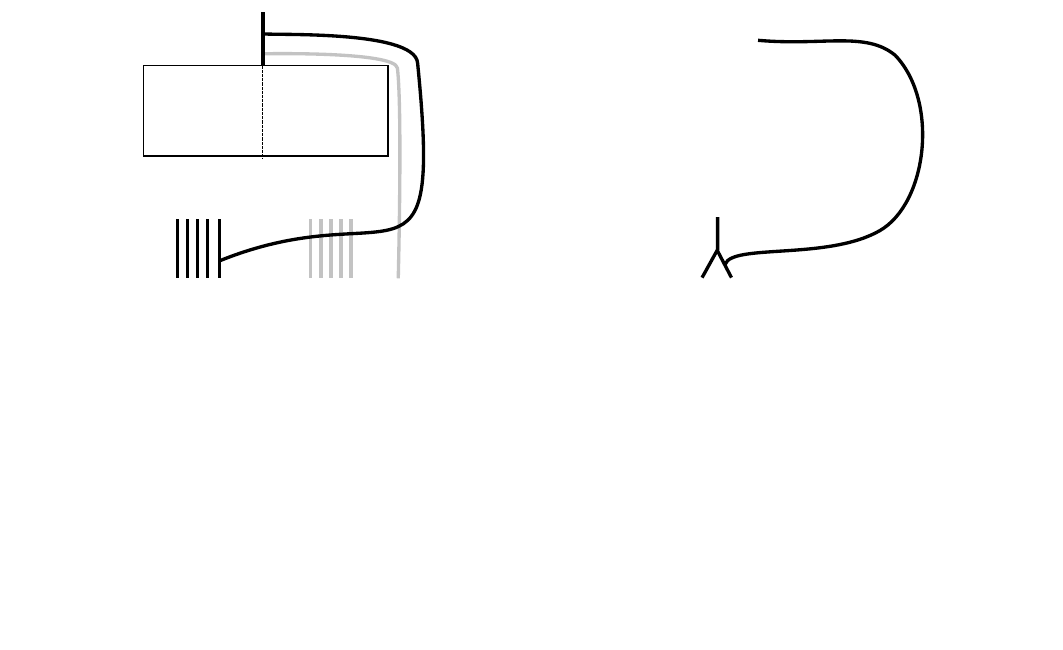
\caption{On the left $(e_{p-i, q-j} \cdot e_{i,j-1})^u=e_{p-i, q-j} \cdot e_{i,j-1}^u$, while on the right $(e_{p-i,q-j} \cdot e_{i-1,j})^\ell=e_{p-i,q-j} \cdot e_{i-1,j}^l.$ The empty boxes represent the braiding as in the bottom of Figure \ref{newops}. The identity is proved by moving the relevant part across the empty box.}
\label{prodtermdiff}
\end{figure}

Finally, the relations $(a)$ and $(b)$ are obtained by inspecting Figures \ref{leftover2} and \ref{leftover1} respectively.

\begin{figure}[H]
  \centering
\def\svgwidth{0.9\textwidth}
%% Creator: Inkscape 1.2.1 (9c6d41e4, 2022-07-14), www.inkscape.org
%% PDF/EPS/PS + LaTeX output extension by Johan Engelen, 2010
%% Accompanies image file '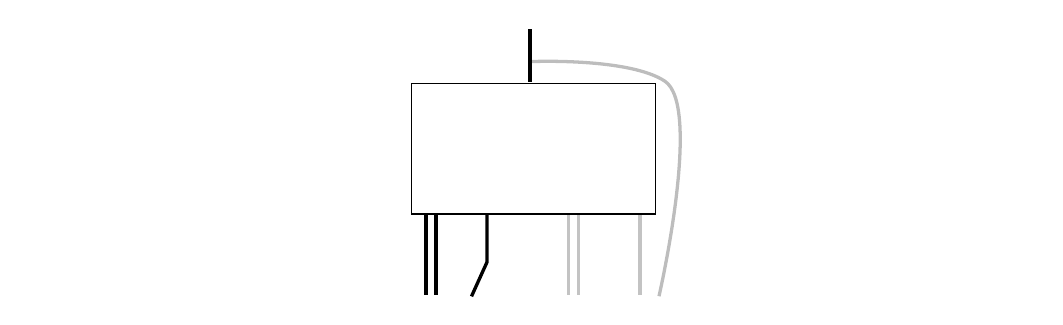' (pdf, eps, ps)
%%
%% To include the image in your LaTeX document, write
%%   \input{<filename>.pdf_tex}
%%  instead of
%%   \includegraphics{<filename>.pdf}
%% To scale the image, write
%%   \def\svgwidth{<desired width>}
%%   \input{<filename>.pdf_tex}
%%  instead of
%%   \includegraphics[width=<desired width>]{<filename>.pdf}
%%
%% Images with a different path to the parent latex file can
%% be accessed with the `import' package (which may need to be
%% installed) using
%%   \usepackage{import}
%% in the preamble, and then including the image with
%%   \import{<path to file>}{<filename>.pdf_tex}
%% Alternatively, one can specify
%%   \graphicspath{{<path to file>/}}
%% 
%% For more information, please see info/svg-inkscape on CTAN:
%%   http://tug.ctan.org/tex-archive/info/svg-inkscape
%%
\begingroup%
  \makeatletter%
  \providecommand\color[2][]{%
    \errmessage{(Inkscape) Color is used for the text in Inkscape, but the package 'color.sty' is not loaded}%
    \renewcommand\color[2][]{}%
  }%
  \providecommand\transparent[1]{%
    \errmessage{(Inkscape) Transparency is used (non-zero) for the text in Inkscape, but the package 'transparent.sty' is not loaded}%
    \renewcommand\transparent[1]{}%
  }%
  \providecommand\rotatebox[2]{#2}%
  \newcommand*\fsize{\dimexpr\f@size pt\relax}%
  \newcommand*\lineheight[1]{\fontsize{\fsize}{#1\fsize}\selectfont}%
  \ifx\svgwidth\undefined%
    \setlength{\unitlength}{510.23622047bp}%
    \ifx\svgscale\undefined%
      \relax%
    \else%
      \setlength{\unitlength}{\unitlength * \real{\svgscale}}%
    \fi%
  \else%
    \setlength{\unitlength}{\svgwidth}%
  \fi%
  \global\let\svgwidth\undefined%
  \global\let\svgscale\undefined%
  \makeatother%
  \begin{picture}(1,0.30555556)%
    \lineheight{1}%
    \setlength\tabcolsep{0pt}%
    \put(0,0){\includegraphics[width=\unitlength,page=1]{leftover2.pdf}}%
    \put(0.49020752,0.15290324){\color[rgb]{0,0,0}\makebox(0,0)[lt]{\lineheight{0}\smash{\begin{tabular}[t]{l}$e_{p-1,q-1}$\end{tabular}}}}%
    \put(0,0){\includegraphics[width=\unitlength,page=2]{leftover2.pdf}}%
    \put(0.43864836,0.06737763){\color[rgb]{0,0,0}\makebox(0,0)[lt]{\lineheight{0}\smash{\begin{tabular}[t]{l}$\cdot$\end{tabular}}}}%
    \put(0.57113606,0.04632293){\color[rgb]{0,0,0}\makebox(0,0)[lt]{\lineheight{0}\smash{\begin{tabular}[t]{l}$\textcolor{gray}{\cdot}$\end{tabular}}}}%
    \put(0,0){\includegraphics[width=\unitlength,page=3]{leftover2.pdf}}%
  \end{picture}%
\endgroup%

\caption{This picture can be interpreted both as $(e_{p-1, q-1} m_{p-1})^u$ and $(e_{p-1, q-1} \cdot e_{0,1})^\ell$.}
\label{leftover2}
\end{figure}  

\begin{figure}[H]
  \centering
\def\svgwidth{0.9\textwidth}
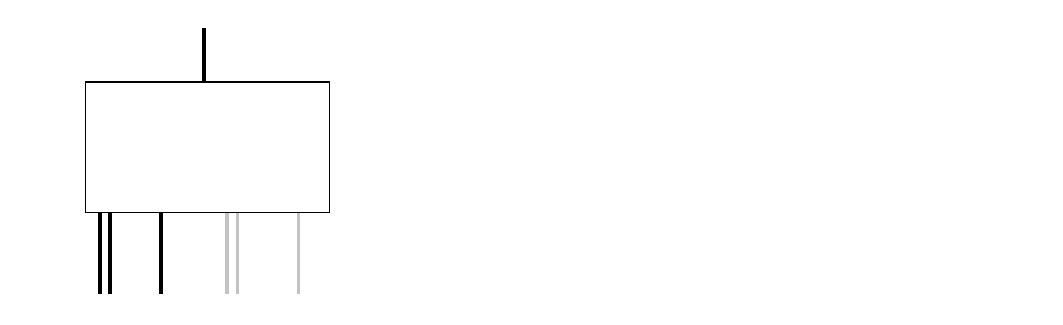
\caption{The pictures of $(e_{p,q-2}\cdot e_{0,1})^u n_{q-1}$ and $e_{p,q-2}^u n_{q-1}.$}
\label{leftover1}
\end{figure}

\begin{proof}[Proof of Proposition \ref{prop:e-relation}]
We prove this by induction on $(p,q)$. The claims for $(p,q) = (1,1), (2,1)$, and $(1,2)$ were shown in the preceding discussion. We will first establish the claim for $e_{p,1}$ and $e_{1,q}$, then $e_{p,q}$ for $p,q \ge 2$. Because it is straightforward to include signs but they obscure the argument, we suppress them.

For $e_{p,1}$ with $p \ge 3$, we have by Lemma \ref{lemma:e-op-relations}(2) that \begin{equation}\label{induct-p}
de_{p,1} = d(e_{p-1,1}^\ell) = e_{p-1,1} \cdot e_{1,0} + e_{p-1,1} m_{p-1} + (de_{p-1,1})^\ell.
\end{equation}
Using the inductive hypothesis, we may write $$(de_{p-1,1})^\ell = \sum_{i=1}^{p-2} (e_{p-2,1} m_i)^\ell + (e_{p-2,1} \cdot e_{1,0})^\ell + (e_{1,0} \cdot e_{p-2,1})^\ell.$$ Applying Lemma \ref{lemma:e-op-relations}(3) and (6) and the assumption $p \ge 3$ we obtain $$\sum_{i=1}^{p-2} e_{p-2,1}^\ell m_i + e_{p-2,1} \cdot e_{1,0}^\ell + e_{1,0} \cdot e_{p-2,1}^\ell.$$ The second term contains a bigon, hence vanishes; the other terms simplify to $\sum_{i=1}^{p-2} e_{p-1,1} m_i + e_{1,0} \cdot e_{p-1,1}$. Substituting this into identity (\ref{induct-p}), we obtain the desired relation for $de_{p,1}$. 

The argument for $e_{1,q}$ when $q \ge 3$ is similar, and in any case was discussed with signs in the previous section. Now suppose $(p,q) \ge (2,2)$, and the boundary relation for $de_{i,j}$ is known for all $(i,j) < (p,q)$ (meaning $i \le p$ and $j \le q$ and at least one of those inequalities is strict).

By Lemma \ref{lemma:e-op-relations}(1)-(2) and the definition $e_{p,q} = e_{p,q-1}^u + e_{p-1,q}^\ell$, we see that we have \begin{align*}
de_{p,q} &= de_{p,q-1}^u + (-1)^{q-1} de_{p-1,q}^\ell \\
        &= e_{p,q-1} \cdot e_{0,1} + e_{p-1,q} \cdot e_{1,0} + (-1)^q e_{p-1,q} m_{p-1} + (de_{p,q-1})^u +  (de_{p-1,q})^\ell.
\end{align*}
Using the inductive hypothesis, recall that $$de_{p,q-1} = \sum_{i=1}^{p-1} e_{p-1,q-1} m_i + \sum_{j=1}^{q-2} e_{p,q-2} n_j + \sum_{(p,q-1) > (i,j-1) > (0,0)} e_{p-i,q-j} \cdot e_{i,j-1}$$ and $$de_{p-1,q} = \sum_{i=1}^{p-2} e_{p-2,q} m_i + \sum_{j=1}^{q-1} e_{p-1,q-1} n_j + \sum_{(p-1,q) > (i-1,j) > (0,0)} e_{p-i,q-j} \cdot e_{i-1,j}.$$
Applying Lemma \ref{lemma:e-op-relations}(3) and (4), the first terms of $(de_{p,q-1})^u + (de_{p-1,q})^\ell$ simplify to $$(e_{p-1,q-1} m_{p-1})^u + \sum_{i=1}^{p-2} \left(e_{p-1,q-1}^u m_i + e_{p-2,q}^\ell m_i\right) = (e_{p-1,q-1} m_{p-1})^u + \sum_{i=1}^{p-2} e_{p-1,q} m_i,$$ while the second terms simplify to $$e_{p-1,q-1}^\ell n_{q-1} + \sum_{j=1}^{q-2} \left(e_{p,q-2}^u n_j + e_{p-1,q-1}^\ell n_j\right) = e_{p-1,q-1}^\ell n_{q-1} + \sum_{j=1}^{q-2} e_{p,q-1} n_j.$$ This gives almost precisely the terms in the first two parts of the desired relation (there is an extraneous $(e_{p-1, q-1} m_{p-1})^u$, a missing $e_{p-1,q} m_{p-1}$, and a missing $e_{p,q-2}^u n_{q-1}$; but notice that $e_{p-1,q} m_{p-1}$ arose elsewhere in the differential of $de_{p,q}$ itself).

Now apply Lemma \ref{lemma:e-op-relations}(5)-(6) to the upper and lower descendants of the last terms. We obtain $$(e_{p, q-2} \cdot e_{0,1})^u + (e_{p-1,q-1} \cdot e_{0,1})^\ell + \sum_{(p,q-1) > (i,j-1) \ge (1,0)} e_{p-i,q-j} \cdot e_{i,j-1}^u + \sum_{(p-1,q) > (i-1,j) \ge (1,0)} e_{p-i,q-j} \cdot e_{i-1,j}^\ell.$$
We investigate the terms contributing to $e_{p-i,q-j} \cdot e_{i,j}$ from this sum. Because whether or not $e_{i,j}$ is defined in terms of one descendant or both depends on whether or not $i,j \ge 2$, we have to do some case analysis.

\begin{itemize}
    \item When $(p,q) > (i,j) \ge (2,2)$, the expression $e_{p-i, q-j} \cdot (e_{i,j-1}^u + e_{i-1,j}^\ell) = e_{p-i,q-j} \cdot e_{i,j}$ appears in the sum above. 
    \item For $(i,j) = (1, j)$ and $j \ge 1$, we have $e_{1,j} = e_{1,j-1}^u$ and only this term appears in the sum above. 
    \item For $(i,j) = (i,1)$ and $i \ge 2$, we have $e_{i,1} = e_{i-1,1}^\ell$, but there are in principle two terms contributing to $e_{p-i,q-1} \cdot e_{i,1}$ in the sum above, which contains $e_{p-i,q-1} \cdot (e_{i,0}^u + e_{i-1,1}^\ell)$. However, because $i \ge 2$, we have $e_{i,0} = 0$ and thus the first term is zero. 
    \item Neither $e_{p-1,q} \cdot e_{1,0}$ nor $e_{p,q-1} \cdot e_{0,1}$ arise from the expressions in the larger sums. 
\end{itemize}
Thus this large sum simplifies to $\sum_{(p,q) > (i,j) \ge (1,1)} e_{p-i,q-j} \cdot e_{i,j}$. Further, recall from the expression for $de_{p,q}$ at the beginning of the argument that $e_{p,q-1} \cdot e_{0,1}$ and $e_{p-1,q} \cdot e_{1,0}$ arise elsewhere in the differential of $de_{p,q}$, accounting for all remaining $(i,j) > (0,0)$. We have shown so far that \begin{align*}
de_{p,q} &= \sum_{i=1}^{p-1} e_{p-1,q} m_i + \sum_{j=1}^{q-2} e_{p,q-1} n_j + \sum_{(p,q) > (i,j) > (0,0)} e_{p-i,q-j} \cdot e_{i,j} \\
&+ (e_{p-1, q-1} m_{p-1})^u + e_{p-1,q-1}^\ell n_{q-1} + (e_{p,q-2} \cdot e_{0,1})^u + (e_{p-1,q-1} \cdot e_{0,1})^\ell.
\end{align*}
Now apply Lemma \ref{lemma:e-op-relations}(a)-(b) to see that the first and last terms cancel out, while the second and third terms sum to give $e_{p,q-1} n_{q-1}$. Thus this gives us the desired relation $$de_{p,q} = \sum_{i=1}^{p-1} e_{p-1,q} m_i + \sum_{j=1}^{q-1} e_{p,q-1} n_j + \sum_{(p,q) > (i,j) > (0,0)} e_{p-i, q-j} \cdot e_{i,j}.$$
This completes the induction.
\end{proof}

\medskip

\subsection{Some explicit computations in the cubical setting}
In this section we give an explicit combinatorial interpretation of the operations $E_{1,p}$ in the case of cubical sets; this will be needed in Section 5.2 when discussing the Kraines construction in the cubical cochains on the torus.

To get first some concrete understanding, let us begin by computing the operations $E_{1,1}$ and $E_{1,2}$ in the case of the square. It will be useful to refer back to Figures \ref{e11} and \ref{e12e21}, which include diagrammatic representations of both of these operations. Recall that we have
\begin{equation*}
\Delta(II)=00|II-0I|I1+I0|1I+II|11;
\end{equation*}
here we write bars between tensor factors instead of $\otimes$ to save space.

Computing the diagonal on the first term again we obtain
\begin{align*}
(\Delta\otimes\mathrm{Id})\circ\Delta(II)&=00|00|II-00|0I|I1-0I|01|I1
+\underline{00|I0|1I}\\&+I0|10|1I+\underline{00|II|11}-0I|I1|11+\underline{I0|1I|11}+II|11|11.
\end{align*}
The underlined terms are the only ones for which the join of the first and third terms is non trivial; the result of the operation associated to the graph of Figure $2$ is then
\begin{equation}\label{e11II}
e^{1,1}(II)=(0I+I1)|II-II|(I0+1I)
\end{equation}
Here the extra minus sign for $II|I0$ is introduced because in order to compute the operation we need first to swap the second and third term; and for $00|I0|1I$ a sign is introduced as both are odd. Recalling now that $e_{1,1}$ is the dual of $e^{1,1}$, and we use exponentials to denote the dual basis, we see from (\ref{dualmap}) that
\begin{align*}
e_{1,1}(e^{0I},e^{II})(II)=&-(e^{0I}\otimes e^{II})(e^{1,1}(II))=\\
&-(e^{0I}\otimes e^{II})(0I\otimes II)=-1
\end{align*}
so that $e_{1,1}(e^{0I},e^{II})=-e^{II}$. Here the minus sign in the first row arises because $|e_{1,1}|\cdot(|e^{0I}|+|e^{II}|)$ is odd, and no sign introduced in the second row by (\ref{signtens}). An analogous computation shows
\begin{align*}
e_{1,1}(e^{0I};e^{II})=e_{1,1}(e^{I1};e^{II})&=-e^{II}\\
e_{1,1}(e^{II};e^{I0})=e_{1,1}(e^{II};e^{1I})&=e^{II}.
\end{align*}

Recalling the sign discrepancy $E_{1,1}(a;b)=(-1)^{|a|+1}e_{1,1}(a;b)$, we obtain that the only non trivial $E_{1,1}$ operations on the square with degree two output are

\begin{align*}
E_{1,1}(e^{0I};e^{II})=E_{1,1}(e^{I1};e^{II})=E_{1,1}(e^{II};e^{I0})=E_{1,1}(e^{II};e^{1I})=-e^{II}.
\end{align*}
Similarly, one can compute the effect of the graph associated to $e^{1,2}$ on $II$; there are many terms appearing after applying $\Delta$ five times, but the only term for which the iterated join of the first, third and fifth element are is non-trivial is
\begin{equation*}
00|I0|10|1I|11.
\end{equation*}
This corresponds to the output
\begin{equation*}
e^{1,2}(II)=-II|I0|1I,
\end{equation*} where we use the computation
\begin{equation*}
(00*10)*11=I0*11=-II,
\end{equation*}
and no extra sign is introduced when permuting the elements. This in turn means $$e_{1,2}(e^{II};e^{I0},e^{1I})=e^{II},$$ where we use that $e_{1,2}$ has even degree and that there is an extra minus sign coming from (\ref{signtens}); because of the sign discrepancy $E_{1,2}(a;b,c)=(-1)^{|b|+1}e_{1,2}(a;b,c)$, we have $E_{1,2}(e^{II};e^{I0},e^{1I})=e^{II}$.
\\
\par
To perform more involved computations it will be necessary to have a concrete understanding of these iterated coproducts in general. In what follows, to describe a basis element of $C_*^\cube(I^n)$ we will pass freely between the geometric language of \emph{faces} of the cube $I^n$ and length $n$ strings in $0,I,1$; a $k$-dimensional face corresponds to a string with $k$ appearances of $I$. Given a face $F$ corresponding to a string $x$, we say $$\text{min}(x) = \text{min}(x_1) \cdots \text{min}(x_n),$$ and similarly for $\text{max}(x)$, where $$\text{min}(0) = \text{max}(0) = 0, \quad \text{min}(1) = \text{max}(1) = 1, \quad \text{min}(I) = 0, \;\; \text{max}(I) = 1.$$ 

For instance, $$\text{min}(0I1I) = 0010, \quad \text{max}(0I1I) = 0111.$$

The following result is a direct consequence of the explicit formula for the coproduct by induction on $k$.
\begin{lemma}\label{itcoprod}
On the $n$-cube, the $k$-fold iterated coproduct of $I^n$ is given by the signed sum of all sequences of faces $F_1|\cdots| F_k$ for which:
\begin{itemize}
\item $\min F_0=0^n$ and $\max F_k=1^n$
\item $\max F_i=\min F_{i+1}$ for $i=1,\dots k-1.$
\end{itemize}
For each $j$, the $j$'th coordinate is equal to $I$ in exactly one of the $F_i$'s. Therefore we may associate a permutation $\pi: \{1, \dots, n\} \to \{1, \dots, n\}$ so that $\pi(1)$ is the position of the first $I$ in $F_1$, through $\pi(|F_1|)$ the final $I$ in $F_1$; then $\pi(|F_1| + 1)$ is the position of the first $I$ in $F_2$, and so on. Then $F_1 | \cdots | F_k$ appears with sign $(-1)^{|\pi|}$, the sign of the above permutation.
\end{lemma}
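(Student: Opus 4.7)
The approach is to reduce everything to a computation on a single copy of $I$, then invoke the graded-multiplicativity of $\Delta$ with respect to the Koszul tensor product on $C_*^\cube(I^n) \otimes C_*^\cube(I^n)$. Since $C_*^\cube(I^n) \cong C_*^\cube(I)^{\otimes n}$ and by construction the Cartan--Serre coproduct is a graded-algebra map with respect to the Koszul product $(a \otimes b)(c \otimes d) = (-1)^{|b||c|} ac \otimes bd$, coassociativity gives, for every $k$,
\[
\Delta^{(k-1)}(x_1 \cdots x_n) \;=\; \prod_{j=1}^{n} \Delta^{(k-1)}(x_j),
\]
where each $\Delta^{(k-1)}(x_j)$ lives in $C_*^\cube(I)^{\otimes k}$ and the product is computed in the corresponding $k$-fold Koszul tensor product.

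A one-step induction on $k$ using $\Delta(I) = 0|I + I|1$ and $\Delta(0) = 0|0$, $\Delta(1) = 1|1$ shows that
\[
\Delta^{(k-1)}(I) \;=\; \sum_{f=1}^{k} \underbrace{0|\cdots|0}_{f-1} \,\big|\, I \,\big|\, \underbrace{1|\cdots|1}_{k-f},
\]
all with coefficient $+1$, while $\Delta^{(k-1)}(0) = 0^k$ and $\Delta^{(k-1)}(1) = 1^k$ are single terms. A generic summand of $\Delta^{(k-1)}(I^n)$ is therefore indexed by a function $f \colon \{1,\ldots,n\} \to \{1,\ldots,k\}$ and produces the sequence $F_1|\cdots|F_k$ whose $j$-th coordinate of $F_i$ equals $I$, $0$, or $1$ according to whether $f(j) = i$, $f(j) > i$, or $f(j) < i$. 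In particular every coordinate is $I$ in exactly one $F_i$, and the three conditions of the lemma hold on the nose: $\min F_1 = 0^n$ and $\max F_k = 1^n$ because $f(j) \in \{1,\ldots,k\}$, while $\max F_i = \min F_{i+1}$ because both sides have $j$-th coordinate equal to $1$ iff $f(j) \leq i$. Conversely, any sequence satisfying these conditions arises from a unique such $f$, since the matching conditions force each coordinate to pass from an initial block of $0$'s to a terminal block of $1$'s through exactly one intermediate $I$ (a direct jump from $0$ to $1$ is forbidden).

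The only real bookkeeping obstacle is to identify the Koszul sign of the above summand with $(-1)^{|\pi|}$. The only odd-degree tensor factors in $\prod_j \Delta^{(k-1)}(x_j)$ come from the $I$'s: at each position $j$ with $x_j = I$, the odd element sits in slot $f(j)$ of $C_*^\cube(I)^{\otimes k}$. Expanding the product left-to-right in $j$, the Koszul rule contributes a factor of $-1$ precisely when one commutes the slot-$f(b)$ odd element (from a later position $b$) past the slot-$f(a)$ odd element (from an earlier $a < b$) with $f(a) > f(b)$. The total sign is therefore $(-1)^{\mathrm{inv}(f)}$, where $\mathrm{inv}(f) = \#\{a<b : f(a)>f(b)\}$. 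Unwinding the definition in the statement, the permutation $\pi$ is precisely the unique stable sort of the word $(f(1),\ldots,f(n))$ obtained by listing first the positions with $f=1$ in increasing order, then those with $f=2$, and so on; a standard identity gives $|\pi| = \mathrm{inv}(f)$, which completes the proof.
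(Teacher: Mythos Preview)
Your proof is correct. The paper does not actually spell out a proof here, only remarking that the statement ``is a direct consequence of the explicit formula for the coproduct by induction on $k$.'' Your argument makes this precise by a slightly different (and arguably cleaner) organization: rather than inducting on $k$, you exploit the very definition of the Cartan--Serre diagonal on $I^n$ as the Koszul tensor product of the one-dimensional diagonals, compute $\Delta^{(k-1)}(I)$ once, and then read off both the indexing set (functions $f\colon\{1,\dots,n\}\to\{1,\dots,k\}$) and the sign $(-1)^{\mathrm{inv}(f)}$ from the Koszul rule. The identification of $\mathrm{inv}(f)$ with $|\pi|$ via the stable-sort permutation is correct and nicely isolates the sign bookkeeping; this coordinate-wise factorization buys you a transparent sign computation that an induction on $k$ would have to carry along at each step.
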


The last statement can also be rephrased as follows: for each face $F_i$, consider the ordered string $I_i$ of the positions at which $I$ appears; then the sign of $F_1|\cdots| F_k$ in the iterated coproduct is the sign of the concatenation of the strings $(I_1,\dots, I_k)$, thought of as a permutation of $\{1,\dots,n\}$. 
\\
\par
Joins can be described in terms of faces in the following way. For a $k$-dimensional face $F$ (corresponding to a string $x_1,...,x_n$ with $k$ entries equal to $I$), we define its \textit{back edges} to be the edges obtained by substituting to the substring of $I$s a string of zeroes, followed by exactly one $I$, followed by ones. For example, the back edges of the cube $I^3$ are $00I$, $0I1$ and $I11$, see Figure \ref{frontback}. Similarly, we define its front edges to be the edges obtained by substituting to the substring of $I$s a string of ones, followed by exactly one $I$, followed by zeroes. For example, the front edges of $III$ are $I00$, $1I0$ and $11I$.

\begin{figure}[H]
  \centering
\def\svgwidth{0.9\textwidth}
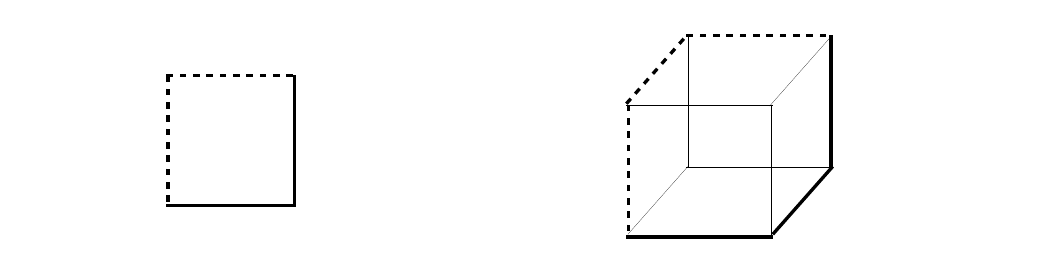
\caption{The front (solid) and back (dashed) edges for the square and the cube.}
\label{frontback}
\end{figure}

We may give a description of the join of two faces as follows (for instance, by induction on $n$).

\begin{lemma}\label{itjoin}
Consider two faces $F_0,F_1$ of $I^n$ for which we have $\max F_0\leq \min F_1$. When this is the case, $F_0 * F_1 = \sum (-1)^{|F_0|} G$, where the sum is taken over every face $G$ of dimension $|F_0|+|F_1|+1$ so that:
\begin{itemize}
\item $\min F_0\leq \min G$,
\item $\max G\leq \max F_1$,
\item If $F$ is the unique face with $\max F_0=\min F$ and $\max F=\min F_1$, then $G \cap F$ is an edge, which is a back edge of $F$ and a front edge of $G$.
\end{itemize}
\end{lemma}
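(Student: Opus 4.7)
The plan is to apply the explicit formula for the join and then verify that its nonzero terms correspond bijectively to the faces $G$ described in the statement. Writing $x = x_1 \cdots x_n$ for $F_0$ and $y = y_1 \cdots y_n$ for $F_1$, the definition gives
\[
F_0 * F_1 = (-1)^{|F_0|} \sum_{i=1}^n \varepsilon(y_{<i})\,\varepsilon(x_{>i})\, x_{<i}(x_i * y_i) y_{>i}.
\]
A term is nonzero precisely when $y_{<i}$ and $x_{>i}$ contain no $I$'s (so that the augmentations are $1$) and $x_i * y_i \ne 0$, which forces $\{x_i, y_i\} = \{0, 1\}$. The hypothesis $\max F_0 \le \min F_1$ rules out $(x_i, y_i) = (1, 0)$, since that would give $1 = \max(F_0)_i > \min(F_1)_i = 0$. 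So each surviving term has $(x_i, y_i) = (0, 1)$, comes with overall sign $(-1)^{|F_0|}$, and is equal to the string $z_i := x_{<i}\, I\, y_{>i}$.

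Next I would check that each $z_i$ satisfies the three conditions. The $\min$ and $\max$ inequalities are immediate from the formula, using that for $j < i$ we have $(z_i)_j = x_j$, for $j > i$ we have $(z_i)_j = y_j$, and $(z_i)_i = I$; the dimension count $|z_i| = |F_0| + |F_1| + 1$ follows because $x_{>i}$ and $y_{<i}$ contribute no $I$'s. For the third condition, I would compute $z_i \cap F$ coordinatewise via the case analysis of $(x_j, y_j)$ forced by $\max F_0 \le \min F_1$; the intersection comes out to be a string with a single $I$ (at position $i$), and the pattern of $0$'s and $1$'s at the remaining $I$-positions of $F$ (respectively of $z_i$) matches exactly the recipe for a back edge of $F$ (respectively a front edge of $z_i$).

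Conversely, I would show that every face $G$ obeying the three conditions arises as a unique $z_i$. The third condition singles out $i$ as the unique coordinate at which $(G \cap F)_i = I$; in particular $F_i = I$, forcing $(x_i, y_i) = (0, 1)$. The back-edge condition for $G \cap F$ inside $F$ then determines $G_j$ at every coordinate where $F_j = I$ (giving $0$ if $j < i$ and $1$ if $j > i$), which agrees with $x_j$ and $y_j$ respectively. At coordinates where $F_j \in \{0, 1\}$, the inequalities $\min F_0 \le \min G$ and $\max G \le \max F_1$ together with the front-edge constraint on $G$ determine $G_j$, and this forces the decomposition $G = x_{<i}\, I\, y_{>i}$.

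The step I expect to be the most delicate is this final matching in the converse direction: verifying that the front-edge condition on $G$, combined with the dimension $|F_0| + |F_1| + 1$, forces each $I$ of $G$ to come either from $x$ at a coordinate $j < i$ or from $y$ at a coordinate $j > i$, with no mixing across $i$. This is precisely the combinatorial content of the augmentations $\varepsilon(y_{<i})$ and $\varepsilon(x_{>i})$ in the join formula, so once this compatibility is checked the bijection $i \leftrightarrow z_i$ with the prescribed sign $(-1)^{|F_0|}$ will follow and complete the proof.
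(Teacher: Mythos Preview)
Your proposal is correct. The paper itself does not give a proof of this lemma beyond the parenthetical hint ``for instance, by induction on $n$'', so your direct verification via the explicit join formula is both valid and more detailed than what appears in the paper. The delicate step you flag at the end is indeed the crux: the dimension hypothesis $|G| = |F_0| + |F_1| + 1$ is exactly what forces every $I$ of $x$ to sit at a coordinate $j < i$ and every $I$ of $y$ at a coordinate $j > i$ (equivalently, $\varepsilon(x_{>i}) = \varepsilon(y_{<i}) = 1$), after which the coordinate-by-coordinate case analysis you outline pins down $G = z_i$ uniquely.
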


We are now prepared to give a completely explicit description of the operations $E_{1,p}$.

\begin{prop}\label{E1p}
Given faces $G_0, G_1,\dots G_p$ of $I^n$, we have that $E_{1,p}(e^{G_0};e^{G_1},\dots,e^{G_p})=\pm e^{II\cdots I}$ if and only if the following hold:
\begin{enumerate}
\item $\max G_{i}\leq\min G_{i+1}$ for $i=1,\dots p-1$;
\item for each $i=1,\dots p$, $G_{0}\cap G_{i}$ is either empty or one dimensional; if it is nonempty, it is an edge which is a front edge for $G_0$ and a back edge of for $G_{i}$.
\item for each $i\geq 1$, there is exactly one position $k_i$ so that both strings $F_0$ and $G_i$ have $k_i$'th entry $I$, and furthermore the sequence $k_1,\dots k_p$ is increasing.
\item If $1\leq i<j$, $G_{i}$ and $G_{j}$ do not have an entry $I$ at the same position. Furthermore for each $k$ there is a face $G_i$ (possibly $i=0$) whose $k$th position is $I$.
\end{enumerate}
\textbf{Provided that the faces $G_0,G_1\dots, G_p$ are odd dimensional}, the sign is given by
\begin{equation*}
(-1)^{p}\cdot\mathrm{sign}(\pi),    
\end{equation*}
where $\pi$ is the permutation of $\{1,\dots, n\}$ obtained by replacing in the string of positions of the $I$'s in $G_0$, the position $k_i$ with the string of positions of the $I$'s in $G_i$, for all $i\geq 1$. For example, if $G_0$ has string of positions $2345$ while $G_1$ and $G_2 $ have strings of positions $267$ and $1489$, then $\mathrm{sign}(\pi)$ is the sign of the permutation $(267314895)$.
\end{prop}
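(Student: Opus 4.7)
The plan is to unpack $e^{1,p}$ (which differs from $E_{1,p}$ only by the global sign \eqref{signdiff}) using its graph and evaluate it on the top-dimensional chain $II\cdots I$, then extract the coefficient of $F_0 \otimes F_1 \otimes \cdots \otimes F_p$. The graph for $e^{1,p}$ factors as the iterated coproduct $\Delta^{(2p+1)}$, a Koszul-signed shuffle that gathers the odd-indexed pieces $G_1, G_3, \ldots, G_{2p+1}$ to the front, and iterated join of these pieces into the first tensor slot with the even-indexed pieces $G_{2i}$ left in the remaining slots. By Lemma \ref{itcoprod} a contributing term must have $G_{2i} = F_i$ for all $i \geq 1$; the compatibility relations $\max G_m = \min G_{m+1}$ together with $\min G_1 = 0^n$ and $\max G_{2p+1} = 1^n$ then uniquely determine each odd piece: $G_1$ has $\max = \min F_1$, $G_{2i+1}$ has $\min = \max F_i$ and $\max = \min F_{i+1}$ for $1 \leq i \leq p-1$, and $G_{2p+1}$ has $\min = \max F_p$. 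The existence of each $G_{2i+1}$ as an actual face is precisely condition (1) of the proposition.

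With the odd pieces determined, the central combinatorial step is to characterize when the iterated join $G_1 * G_3 * \cdots * G_{2p+1}$ contains $F_0$ as a summand. Iterating Lemma \ref{itjoin} (equivalently, generalizing the explicit formula of Remark \ref{nonass} to $p+1$ factors), the nonzero terms are parameterized by join positions $j_1 < \cdots < j_p$ at which $G_{2m-1}(j_m)$ and $G_{2m+1}(j_m)$ take opposite values in $\{0,1\}$, subject to the constraint (from the $\varepsilon$-factors) that each $G_{2m-1}$ has its $I$-positions confined to the slot $(j_{m-1}, j_m)$, with the conventions $j_0 := 0$ and $j_{p+1} := n+1$. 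The resulting face has $I$'s exactly at $\{j_1, \ldots, j_p\}$ together with the $I$-positions of each $G_{2m-1}$ inside its slot. A direct calculation of $G_{2m-1}(k)$ in terms of $F_1(k), \ldots, F_p(k)$ now shows: at a position $k$ where some $F_i$ ($i \geq 1$) has $I$, conditions (1) and (4) force the sequence $G_1(k), G_3(k), \ldots, G_{2p+1}(k)$ to be $0, \ldots, 0, 1, \ldots, 1$ with transition between $G_{2i-1}$ and $G_{2i+1}$, so such a $k$ is necessarily the $i$-th join position, yielding condition (3). At positions $k$ where no $F_i$ ($i \geq 1$) has $I$, the sequence $F_1(k), \ldots, F_p(k)$ is monotonic by (1), exactly one $G_{2m-1}$ has an $I$ at $k$, and the requirements that this $I$ lie in the correct slot and that the output face agree with $F_0$ pointwise are equivalent to condition (2) (the front-edge/back-edge intersection structure) together with the covering clause of (4). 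This gives the equivalence of (1)--(4) with the existence of a unique contributing coproduct--join configuration whose join evaluates to $\pm F_0$.

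Finally, the sign is a product of four contributions: the sign of the unique iterated coproduct term from Lemma \ref{itcoprod}; the Koszul sign of the shuffle moving the even-indexed pieces past the odd-indexed pieces; the sign of the iterated join via Remark \ref{nonass}; and the global factor $(-1)^{\epsilon^x_{1,p}}$ from \eqref{signdiff}. Under the odd-dimensional hypothesis on the $F_i$'s the Koszul signs simplify substantially, and a bookkeeping computation -- most efficiently carried out by induction on $p$ using the recursive presentation of $e^{1,p}$ as the lower descendant of $e^{1,p-1}$ -- identifies the product with the sign of the permutation $\pi$ described in the statement. The main difficulty is not any single step but the simultaneous bookkeeping of positions, slots, and signs; the worked examples of $E_{1,1}$ on the square and explicit small computations for $E_{1,2}$ provide useful sanity checks that the sign conventions are consistent.
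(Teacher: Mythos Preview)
Your proposal is correct and follows essentially the same approach as the paper: both unpack the graph of $e^{1,p}$ as the $(2p{+}1)$-fold iterated coproduct followed by iterated join of the odd-indexed pieces, use Lemmas~\ref{itcoprod} and~\ref{itjoin} to determine when a configuration contributes, and read off conditions (1)--(4) from the resulting constraints. The only minor difference is in the sign argument: you suggest tracking the four sign contributions by induction on $p$ via the lower-descendant recursion, whereas the paper does the case $p=3$ explicitly and leaves the general case to the reader; both are routine and lead to the same conclusion.
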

\begin{remark}
The computation of the operations $E_{p,1}$ leads to a similar result with the role of back and front reversed. This symmetry (up to sign) is rather surprising because the graphs of $E_{p,1}$ and $E_{1,p}$ are very different. Indeed, it does not at all hold in general Hirsch algebras; in simplicial cochains, we have $e_{p,q} = 0$ for all $p > 1$, while the $e_{1,q}$ are in general nonzero.
\end{remark}

The proof is best elucidated by working out some more involved concrete examples on $I^3$ first. Let us focus on the non-trivial products $E_{1,2}(a;b,c)$ with $|a|=2,|b|=1,|c|=2$. One can see that these arise from two terms in the iterated diagonal, namely
\begin{equation*}
000|I00|100|1II|111,
\end{equation*}
for which iterated join is
\begin{equation*}
(000*100)*111=I00*111=-II1-I0I
\end{equation*}
and corresponds after dualizing to the products
\begin{equation*}
e_{1,2}(e^{II1}; e^{I00},e^{1II})=e_{1,2}(e^{I0I}; e^{I00},e^{1II})=-e^{III},
\end{equation*}
hence
\begin{equation*}
E_{1,2}(e^{II1}; e^{I00},e^{1II})=E_{1,2}(e^{I0I}; e^{I00},e^{1II})=-e^{III},
\end{equation*}
and
\begin{equation*}
-000|0I0|010|I1I|111,
\end{equation*}
for which the iterated join is
\begin{equation*}
(000*010)*111=0I0*111=-0II    
\end{equation*}
corresponding to $e_{1,2}(e^{0II}; e^{0I0}, e^{I1I})=e^{III}$, hence
\begin{equation*}
E_{1,2}(e^{0II}; e^{0I0}, e^{I1I})=e^{III}.
\end{equation*}
For the sign computation, notice that $e^{1,2}$ has even degree and no sign is introduced when permuting terms or evaluating as in (\ref{signtens}).
See Figure \ref{productse12} for a visualization of such products in terms of front and back edges.

\begin{figure}
  \centering
\def\svgwidth{0.9\textwidth}
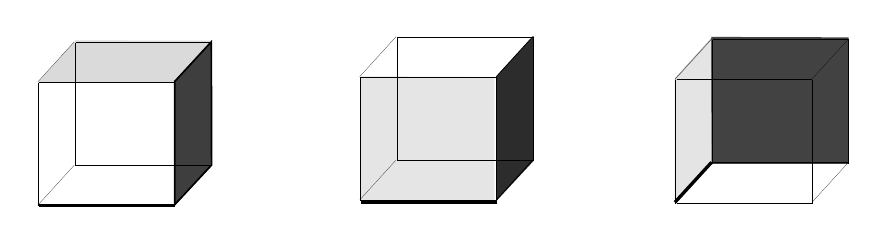
\caption{The non-trivial operations $E_{1,2}(a;b,c)$ with faces $|a|=2$ (the lightly shaded square) and $|b|=1,|c|=2$ (the dark edge and square respectively). When the lightly shaded square and a dark face intersect, the intersection is a front edge for the lightly shaded square and a back edge for the dark face.}
\label{productse12}
\end{figure}

\begin{proof}[Proof of Proposition \ref{E1p}]
Recall that the operation $E_{1,p}$ corresponds up to an overall sign to the graph obtaining by taking a rake with $2p+1$ outputs, and taking joins (in increasing order) of the elements in the odd positions.
\par
We will show that the stated conditions are necessary; sufficiency follows from a similar analysis. Notice that the image of $I^n$ under the $(2p+1)$-fold iterated diagonal is the sum over all sequences of $2p+1$ faces $G_0,F_1,G_1,F_2,G_2,\dots F_p,G_p$ for which
\begin{itemize}
    \item $\max F_{i}=\min G_{i}$ and $\max G_{i}=\min F_{i+i}$
    \item $\min G_0=0^n$
    \item $\max G_p=1^n$.
\end{itemize}
We need then to take (up to signs) the iterated joins of the $G_i$. From this $(1)$ readily follows; notice also that the sequence of the $F_i$ determines the whole sequence of faces. For $(2)$, the main observation is that because $\max G_{i-1}=\min F_{i}$ and $\max F_{i}=\min G_{i}$, by Lemma \ref{itjoin} each face $G$ of the join $G_{i-1}*G_i$ intersects $F_i$ in a face which is in the front for $G$ and in the back for $F_i$. Inductively, taking the iterated join with the following $G_{i+1}$ will either preserve this intersection property, or make the intersection empty (cf. Remark \ref{nonass}). To see that $(3)$ and $(4)$ hold, notice first that for each $k$ there is exactly one of the $G_i$ or $F_i$ whose $k$th element is $I$; the uniqueness of the intersection index $k_i$ follows from the previous discussion, and the fact that the sequence is increasing follows from the fact that the iterated join of the $G_i$ is non-zero.
\par
To see that the sign of the output is the claimed one, we need to be very explicit about all the signs that are introduced. First, suppose $E_{1,p}(e^{G_0}; e^{G_1}, \cdots, e^{G_p}) = \pm e^{II\cdot I}$. Reversing this, we may write $e^{1,p}(I^n)$ as follows. First we take the iterated coproduct $\Delta^{2p+1} I^n$, which gives $$\sum (-1)^{\epsilon_1} [F_0 | G_1 | \cdots | G_p | F_p].$$ Write $I_k$ and $J_k$ for the string of $I$'s in $F_k$ and $G_k$, respectively; then $\epsilon_1$ is the sign of the permutation needed to put $(I_0, J_1, \cdots, J_p, I_p)$ in order (where each $I_k$ and $J_k$ is already listed in its natural order). 

Next we braid the entries to send this to $(-1)^{\epsilon_1 + \epsilon_2} [F_0 | \cdots | F_p | G_1 | \cdots | G_p]$. Because each $G_i$ is assumed odd-degree, this costs a sign of $\epsilon_2 = \sum_{i=1}^p i |F_i|$. 

Next we take the iterated join of the first factors. Because each join costs a sign equal to the degree of the first input and we are iterating this procedure, this produces $(-1)^{\epsilon_1 + \epsilon_2 + \epsilon_3} [G_0 | G_1 | \cdots | G_p]$, where $$\epsilon_3 = |F_0| + (|F_0| + |F_1| + 1) + \cdots + (|F_0| + \cdots + |F_{p-1}| + p-1) = p(p-1)/2 + \sum_{j=0}^{p-1} (p-j)|F_j|.$$ Further, by the assumption that this iterated join is nonzero, we have $I_0 < \{j_1\} < \cdots < \{j_p\} < I_p$, where the string of positions of $G_0$ is $I_0 \cup \{j_1\} \cup \cdots \cup \{j_p\} \cup I_p$ and each $j_i \in J_i$. Thus $(-1)^{\epsilon_1}$ is precisely the term $\text{sign}(\pi)$ appearing in the statement of the Proposition. Next, $$\epsilon_2 + \epsilon_3 = p(p-1)/2 + \sum_{i=0}^p (p|F_i|).$$ By the assumption that $|G_0|$ has odd degree, we know that $|G_0| = p + \sum |F_i|$ is odd, so that $\sum |F_i| \equiv p-1$ mod $2$; thus this expression simplifies to $p(p-1)/2 + p(p-1) \equiv p(p-1)/2$.

We have justified that $e^{1,p}(I^n)$ gives a signed sum over terms $[G_0 | G_1 | \cdots | G_p]$, where the sign is $(-1)^{p(p-1)/2} \text{sign}(\pi)$ when all of the $|G_i|$ are odd. We should now pass to the dual operation. We have \begin{align*}[e_{1,p}(e^{G_0}; e^{G_1}, \cdots, e^{G_p})](I^n) &= (-1)^{\epsilon_4} (e^{G_0} \otimes e^{G_1} \otimes \cdots \otimes e^{G_p})(e^{1,p}(I^n)) \\
&= (-1)^{\epsilon_4+p(p-1)/2}\cdot \text{sign}(\pi) (e^{G_0} \otimes \cdots \otimes e^{G_p})(G_0 \otimes \cdots \otimes G_p).
\end{align*}
Here $\epsilon_4 = p(|G_0| + |G_1| + \cdots + |G_p|) = p(p+1) \equiv 0$ is introduced by the Koszul sign rule for the dual operation. Using the Koszul sign rule for applying tensor products of operators (\ref{signtens}), we find that this simplifies to $$(-1)^{p(p-1)/2 + \epsilon_5} \cdot \text{sign}(\pi) e^{G_0}(G_0) \cdots e^{G_p}(G_p),$$ where (because all of $|G_i|$ are odd) $\epsilon_5 = \sum_{i=1}^p i = p(p+1)/2$. This leaves us with precisely the claimed sign $(-1)^p \cdot \text{sign}(\pi)$.
\end{proof}

\medskip
\subsection{Comparing simplicial and cubical cochains}
We conclude by discussing how our two main examples, simplicial and cubical cochains, are related. This will be important because simplicial techniques are much more flexible geometrically (a simplicial approximation theorem holds, while a cubical approximation theorem is at the very least much more subtle), whereas cubical techniques provide the perfect setting for a minimal model of cochains on the torus.

Consider the cellular collapse map
\begin{align*}
\xi&: I^n\rightarrow \Delta^n\\
(x_1,\dots, x_n)&\mapsto (x_1,x_1x_2,x_1x_2x_3,\dots, x_1x_2\cdots x_n).
\end{align*}
Here we consider the model for $\Delta^n$ given by the elements $(y_1,\dots, y_n)\in I^n$ for which $y_1\geq y_2\geq\dots\geq y_n$. This induces the so called \cite{EMc} Cartan-Serre comparison map
\begin{align*}
\xi_*&:  C^{\Delta-\textrm{sing}}_*(X) \rightarrow  C^{\cube-\textrm{sing}}_*(X)\\
\sigma&\mapsto \sigma\circ \xi.
\end{align*}
We will be interested in its dual map
\begin{equation*}
\xi^*:  C^*_{\cube-\textrm{sing}}(X) \rightarrow  C^*_{\Delta-\textrm{sing}}(X).
\end{equation*}
By \cite[Lemma $12$]{K-MM} or \cite{Ser}, this is a quasi-isomorphism of algebras. They also prove that $\xi^*$ preserves the operations coming from `surjection-like graphs'. We can use this to prove the following statement.

\begin{theorem}\label{comparison}
The dual Cartan-Serre comparison map $\xi^*$ is a quasi-isomorphism of Hirsch algebras, where the Hirsch algebra structure is given by the operations $E_{p,q}$ described in the previous sections.
\end{theorem}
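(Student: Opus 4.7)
The strategy is to reduce the claim to compatibility of $\xi_*$ with the basic input operations $\Delta$ and $*$ (and the augmentation $\varepsilon$), and then invoke the fact that every $E_{p,q}$ is defined by a \textbf{common} trivalent graph built from these basic operations in the same formal way in both the simplicial and cubical settings. Since the graphs defining $E_{p,q}$ each have a single incoming strand, they fall into the class of operations for which [K-MM] establishes natural well-definedness on arbitrary simplicial/cubical sets, and the same mechanism will give naturality under $\xi$. The map $\xi$ is already known to be a quasi-isomorphism of algebras [EMc], [K-MM, Lemma 12], [Ser], so the real content is the compatibility with all the higher structure.

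The plan is as follows. First, I would verify at the chain level that the Cartan--Serre map intertwines the coproducts strictly, i.e.\ $(\xi_*\otimes\xi_*)\Delta_{\cube}=\Delta_{\Delta}\xi_*$. Since coproducts are natural under cubical face and degeneracy maps, it suffices to check this on the fundamental class of $I^n$ for each $n$. One matches the terms of the Serre diagonal of $I^n$ (described in Lemma \ref{itcoprod}) with the Alexander--Whitney splittings of the prism $\xi(I^n)\subset\Delta^n$; this is essentially the original calculation of Eilenberg--MacLane. Second, I would verify the compatibility with the join: $\xi_*(x)*\xi_*(y)=\xi_*(x*y)$. Here one uses that the simplicial join (the Eilenberg--Zilber shuffle product) on simplices in the image of $\xi$ decomposes exactly as dictated by the recursive cubical join formula, after inserting $\varepsilon$-tests on the complementary coordinates. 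The sign conventions match because both operations are defined by the Koszul rule and both sides are given by a single sum of shuffles with the same parity. Compatibility with $\varepsilon$ is immediate since $\xi$ sends vertices to vertices.

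Once these are in hand, the Hirsch operations $E_{p,q}$, which were defined by specific acyclic trivalent graphs assembled out of $\Delta$, $*$ and $\varepsilon$ via the descendant construction, are manifestly preserved by $\xi_*$: every vertex of the graph is either a $\Delta$, a $*$ or an $\varepsilon$, all edges carry natural swap maps, and at each node the intertwining with $\xi_*$ follows from the previous step. By induction on the depth of the graph (equivalently on $p+q$), the composite operation is preserved. Dualizing, $\xi^*$ intertwines $E_{p,q}$ on the nose. Combined with the known fact that $\xi^*$ is a quasi-isomorphism of algebras, this gives the claim.

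The main technical obstacle is the join-compatibility step: the cubical join was defined recursively position-by-position using $\varepsilon$ to detect collapsed coordinates, while the simplicial join is given directly by a shuffle formula. Matching these up under $\xi$ requires careful bookkeeping of shuffle signs, and one has to unwind how the prism $\xi(I^n)$ interacts with the pasting of front/back faces of a cubical factorization. However, this is a direct calculation whose signs are controlled by the Koszul rule, and the resulting compatibility is essentially the content (suitably extended) of the proof of [K-MM, Lemma 12]; we expect no genuinely new difficulty beyond checking that extending $\xi^*$-naturality from the surjection-like subclass handled in [K-MM] to the $E_{p,q}$-graphs goes through formally once $\Delta$ and $*$ are individually intertwined.
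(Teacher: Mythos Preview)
Your overall strategy is right in spirit, but there is a genuine gap at the join-compatibility step. The claim $\xi_*(x*y)=\xi_*(x)*\xi_*(y)$ is \emph{false} in general: the Cartan--Serre collapse map does not preserve the cubical join unconditionally (see \cite[Example~7]{K-MM}). So your inductive argument, which feeds arbitrary inputs into each $*$-vertex of the graph and pushes $\xi_*$ through node by node, breaks down precisely at those vertices.

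What the paper actually uses is a \emph{restricted} compatibility: one has $\xi(F*G)=\xi(F)*\xi(G)$ whenever $\max F\le \min G$ (equivalently, whenever $x\le y$ in the partial order induced by $1<I<0$; this is \cite[Lemma~16]{K-MM}). The crucial additional observation is that in the specific graphs defining the operations $e_{p,q}$, every join that appears is between faces satisfying this constraint. This follows from the combinatorics recorded in Lemmas~\ref{itcoprod} and~\ref{itjoin} together with the fact that the ``black'' subgraph (the part of the graph carrying all the $*$-vertices) is \emph{embedded}, so each join is fed by terms coming from consecutive pieces of an iterated diagonal. Note that the $e_{p,q}$ graphs are in general not surjection-like (e.g.\ $e_{2,1}$), so one cannot simply cite the surjection-like case of \cite{K-MM}; the embeddedness of the black subgraph is the substitute hypothesis that makes the restricted join-compatibility apply. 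Your write-up should replace the blanket join-compatibility claim with this restricted version and add the argument that the $e_{p,q}$ graphs only ever invoke joins of this allowed type.
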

\begin{proof}
This is essentially proved in \cite[Section $5.5$]{K-MM}. There, the authors show that those operations which correspond to surjection-like graphs are preserved by $\xi_*$, and thus their composites do as well. The key point in their argument is that even though the Cartan-Serre collapse map $\xi$ does not preserve joins of faces in general \cite[Section $5.4$]{K-MM}, we at least have that $\xi(x*y)=\xi(x)*\xi(y)$ whenever $x\leq y$, where $\leq$ is the partial order on the faces induced (via tensor product) by $0<I<1$, \cite[Lemma $11$]{K-MM}. In particular, this implies that in our notation
\begin{equation*}
\xi(F*G)=\xi(F)*\xi(G)\text{ if }\max F\leq \min G.
\end{equation*}
Now the graphs which define our operations $e_{p,q}$ are in general not surjection-like, nor are they in general composites of surjection-like graphs; for instance, $e_{2,1}$ is not given by a composite of surjection-like graphs. Nevertheless, these operations only involve taking joins of faces $F,G$ with $\max F\leq \min G$: this follows via the observations about maxima and minima in Lemmas \ref{itcoprod} and \ref{itjoin} from the fact that for our graphs, the black subgraph (which is the only part involving joins) is embedded. 
\end{proof}

\bigskip

\section{Twisting sequences on the torus}

In this section, we specialize our construction to the case of the simplest cubical realization of the torus. Our machinery will then allow to compute certain twisted cohomology groups of the torus in a purely combinatorial way.

\subsection{A Hirsch structure on the exterior algebra}\label{mintoruscomp}

What we would like to assert is that \emph{the torus is formal}. If we have a zig-zag of algebra quasi-isomorphisms from $C^*_\Delta(\mathbb T)$ to its cohomology $\Lambda^* \mathbb Z^b$, for some simplicial torus $\mathbb T$, then we may transfer twisting sequences along this zig-zag. If all the relevant maps are Hirsch algebra maps, we can determine what the given class is transferred to. 

Unfortunately, the most obvious choice of zig-zag (coming from the K\"unneth theorem) consists of dg-algebra quasi-isomorphisms which are not Hirsch algebra maps. It follows from our results below that there \textbf{cannot} be a zigzag of Hirsch algebra maps from $C^*_\Delta(\mathbb T)$ to $\Lambda^*(\mathbb Z^b)$ unless the last term has a non-trivial Hirsch structure (the cup-1 product, for instance, must be nonzero). 

Instead of seeking some ad hoc sequence of Hirsch algebra maps, our philosophy is that there should be a zigzag of Hirsch algebra quasi-isomorphisms between any two models for cochains on the torus. Furthermore, there is a reasonable model for the torus whose cochains have zero differential, so return the exterior algebra itself.

\begin{definition}
We write $T^b_1$ for the cubical set obtained by pasting together opposing faces of the cube $\Box^b$ by the identity map. This cubical set has $\binom{b}{k}$ $k$-cubes, which are in bijection with $k$-element subsets of $\{1, \cdots, b\}$; in particular it has a single vertex and single top face. We call $T^b_1$ the \textit{minimal torus}.
\end{definition}

Notice that the algebra $C^*_\cube(T^b_1)$ is \textbf{canonically} isomorphic, as a dg-algebra, to $\Lambda^*(\mathbb Z^b)$. However, being the normalized cubical cochains on a cubical set, this carries significantly more structure: as discussed in Section \ref{examplesHirsch}, this cochain algebra is naturally a Hirsch algebra. 

In this section, we will describe parts of the Hirsch structure explicitly, as well as define some combinatorial operations which turn out to be related. In the next section we will explain the relationship.

Let $\Lambda^*(\mathbb Z^b)$ be the exterior algebra on $n$ elements $e^1, \cdots, e^b$. Identify a basis for this algebra with expressions of the form $e^I$, where $I \subset \{1, \cdots, b\}$ is a subset; if $I = \{i_1, \cdots, i_k\}$ with $i_1 < \cdots < i_k$, then $$e^I = e^{i_1} \wedge \cdots \wedge e^{i_k}.$$ Under the identification, we have the following.

\begin{prop}\label{E1p-formula}
The Hirsch algebra operation $E_{1,p}$ on $\Lambda^*(\mathbb Z^b)$ is given as follows: the operation $E_{1,p}(e^J; e^{I_1}, \cdots, e^{I_p})$ is nonzero if and only if:
\begin{itemize}
    \item $I_j \cap I_k = \varnothing$ for all $j \ne k$
    \item the intersection $J \cap I_k = \{j_k\}$ is a singleton for all $k$, and $j_1 < \cdots < j_p$.
\end{itemize}
In this case, the output is $\pm e^{J \cup I_1 \cup \cdots \cup I_p}$. To be more precise about the sign, \textbf{assume that $|I_i|$ is odd} for all $i$ and write $J = J_0 < \{j_1\} < J_1 < \cdots < \{j_p\} < J_p$. Then $$E_{1,p}(e^J; e^{I_1}, \cdots, e^{I_p}) = (-1)^p e^{J_0} \wedge e^{I_1} \wedge e^{J_1} \cdots \wedge e^{I_p} \wedge e^{J_p}.$$ That is, one replaces each $j_k$ with the set $I_k$, together with an overall sign of $(-1)^p$.
\end{prop}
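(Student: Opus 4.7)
My plan is to lift the calculation to the standard cube $\Box^b$ via the quotient $q\colon \Box^b \to T^b_1$ identifying opposite faces, which is a morphism of cubical sets. Naturality of the Hirsch structure makes the pullback $q^*\colon C^*_\cube(T^b_1) \to C^*_\cube(\Box^b)$ an injective Hirsch algebra homomorphism with $q^*e^K = \sum_{F:I(F)=K} e^F$, the sum ranging over faces of $\Box^b$ whose $I$-positions form the subset $K$. Using naturality of $E_{1,p}$ under inclusions of sub-cubical sets (applied to the sub-torus associated to $J \cup I_1 \cup \cdots \cup I_p$), I reduce to the case $J \cup I_1 \cup \cdots \cup I_p = \{1, \ldots, b\}$, in which the desired output $\pm e^{J \cup I_1 \cup \cdots \cup I_p}$ becomes the top class $\pm e^{II \cdots I}$ and its coefficient can be read off from $E_{1,p}(q^*e^J; q^*e^{I_1}, \ldots, q^*e^{I_p})$ evaluated on $[II \cdots I]$.

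The heart of the proof is a direct computation using Lemmas \ref{itcoprod} and \ref{itjoin}. Unfolding the graph of $e^{1,p}$, its value on $[II \cdots I]$ is a sum over composable $(2p+1)$-tuples $(G_0, F_1, G_1, \ldots, F_p, G_p)$ whose $I$-sets partition $\{1, \ldots, b\}$, producing the output $((\cdots((G_0 * G_1) * G_2) \cdots) * G_p) \otimes F_1 \otimes \cdots \otimes F_p$. Fixing $I(F_l) = I_l$, the remaining positions comprise $J \setminus \{k_1, \ldots, k_p\}$, and stacking compatibility forces $I(G_m) = J_m$ and uniquely determines each $F_l$ and $G_m$ as a specific face. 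A direct inspection of the $\epsilon$-factors in the join formula shows that each iterated join introduces exactly one new $I$-position $t_m$ strictly greater than $t_{m-1}$, lying in the transition set $I_m$. Thus a nonzero contribution to the top class requires precisely the conditions stated: the $I_l$'s pairwise disjoint, $J \cap I_l = \{k_l\}$ singleton, and $k_1 < \cdots < k_p$. Conversely, under these conditions the analysis singles out a unique contributing tuple, with $F_0$ having non-$I$ values $\phi_0(k) = 0$ for $k \in I_l \setminus \{k_l\}$ with $k < k_l$ and $\phi_0(k) = 1$ for $k > k_l$; this yields a $\pm 1$ coefficient on the top class.

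The remaining task is the sign calculation. Three sign contributions combine: (i) the sign from Lemma \ref{itcoprod}, equal to the sign of the permutation obtained by concatenating the ordered $I$-position strings of $(G_0, F_1, G_1, \ldots, F_p, G_p) = (J_0, I_1, J_1, \ldots, I_p, J_p)$; (ii) the product of the factors $(-1)^{|x|}$ accumulated over the $p$ iterated joins; and (iii) the overall sign $(-1)^{\epsilon^x_{1,p}}$ converting $e_{1,p}$ to $E_{1,p}$ per Definition \ref{def:Hirsch}. Under the odd-dimensionality hypothesis on each $|I_i|$, contributions (ii) and (iii) cancel, leaving precisely (i), which is exactly the sign of the wedge product $e^{J_0} \wedge e^{I_1} \wedge e^{J_1} \wedge \cdots \wedge e^{I_p} \wedge e^{J_p}$. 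I expect the main obstacle to be this sign bookkeeping: the individual signs from the join formula and the swap symmetries in the Cartan-Serre diagonal are delicate and must be carefully combined. I plan to first verify the calculation for $p = 1$ and $p = 2$ by direct expansion (matching the explicit calculations in Section~4.2) and then proceed by induction on $p$, using the recursive definition of $e_{1,p}$ as a lower descendant of $e_{1,p-1}$ to track the propagation of signs.
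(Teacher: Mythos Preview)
Your proposal is correct and follows essentially the same route as the paper. The paper first proves the cube-level result (Proposition~\ref{E1p}) and then deduces Proposition~\ref{E1p-formula} from it by observing that for each admissible sequence of subsets $(J;I_1,\dots,I_p)$ there is exactly one choice of faces $F_0,F_1,\dots,F_p$ of $\Box^b$ with those $I$-positions satisfying the constraints of Proposition~\ref{E1p}; you carry out the same computation but in one pass, lifting via $q^*$ and directly invoking Lemmas~\ref{itcoprod} and~\ref{itjoin}. Your three-part sign analysis (coproduct sign, accumulated join signs, and the $e_{1,p}$-to-$E_{1,p}$ conversion) is exactly the one appearing in the paper's proof of Proposition~\ref{E1p}, and the cancellation under the odd-degree hypothesis is likewise the same.
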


\begin{proof}
This directly follows from the computations for cubes in Proposition \ref{E1p}. For example, the configurations in Figure \ref{productse12} correspond to
$$E_{1,2}(e^{12};e^1,e^{23})=E_{1,2}(e^{13};e^1,e^{23})=-E_{1,2}(e^{23};e^2,e^{13})= - e^{123}.$$
Going from the cube to the minimal torus, one essentially forgets about the entries $0,1$ of faces and only keeps track of the position of the $I$. The result then follows from the observation that for each sequence of subsets $I_i$ and $J$ as above, there is exactly one choice of faces $F_i$ and $F_0$ of the cube for which $I$ is at the right position and which satisfy the relations in Proposition \ref{E1p}. 
\end{proof}

In the operations $E_{1,p}$, the first input plays a very different role from the remaining $p$. On the other hand, it turns out that there is a symmetric operation --- which we will call the insertion product --- which captures the behavior of all of the $E_{1,p}$'s and their composites. This insertion product and its properties are then crucial to our analysis of the Kraines construction $K(a)$ on an element $a \in \Lambda^3(\mathbb Z^b)$, as $K(a)_{2n+1}$ is obtained by summing over all possible iterates of the operations $E_{1,p}$ (with appropriate scalar factors). To define this, we must make a brief diversion into combinatorics of subsets of $\{1, \cdots, n\}$.

\begin{definition}
Let $\{I_1, \dots, I_m\}$ be a collection of subsets of $\{1, \cdots, b\}$. Write the elements of $I_j$ as $\{x_j^1 <  \dots < x_j^{i_j}\}$. We say the \textbf{graph realization} $R(I_1, \cdots, I_m)$ is the following graph. Each $I_j$ is associated to the connected graph with $|I_j|$ vertices (labeled by $\{x_j^1, \dots, x_j^{i_j}\}$)  and $|I_j|-1$ edges (connecting $x_j^k$ and $x_j^{k+1}$); if $I_j \cap I_k$ is nonempty, we identify the corresponding vertices in the associated graphs.
\end{definition}

Notice that in $R(I_1, \cdots, I_m)$ a pair of vertices might be joined by multiple edges. The conditions in the definition of $E_{1,p}$ amount to saying that $R(J,I_1, \dots, I_p)$ is a tree which is obtained by pasting disjoint line segments (corresponding to the $I_k$) at various distinct points of a base line segment (corresponding to $J$), as well as a condition to guarantee that only one ordering of the $I_1, \dots, I_p$'s gives a nonzero output. This leads us to the following definitions.

%Notice that in $R(I_1, \cdots, I_m)$ a pair of vertices might be joined by multiple edges. The conditions in the definition of $E_{1,p}$ amount to saying that $R(J,I_1, \dots, I_p)$ is a tree which looks like a comb with $p$ tines, where $J$ represents the base of the come and the $I_j$ the various tines, as well as a condition to guarantee that only one ordering of the $I_1, \dots, I_p$'s gives a nonzero output. This leads us to the following definitions.

\begin{definition}
Let $\{I_1, \dots, I_m\}$ be an unordered family of subsets $I_j \subset \{1, \dots, b\}$. We say that this family is \textbf{1-regular} if $R(I_1, \cdots, I_m)$ is a tree. \end{definition}

The condition of 1-regularity essentially means that there is \emph{some} reordering of this tuple so that the iterated cup-1 product of $e^{I_1}, \cdots, e^{I_m}$ is nonzero. Whether or not a family of subsets is 1-regular can be determined by a greedy algorithm. At each stage we have chosen $k$ distinct elements from this set so that $I_{\ell_1}, \cdots, I_{\ell_k}$ is 1-regular; we next determine if there is some $I_{\ell_{k+1}}$ which intersects the union of these exactly once. If not, the family is not 1-regular. If so, continue the algorithm. 

\begin{definition}
We define the \textbf{insertion products} to be the symmetric multilinear maps $$j_m: \Lambda^*(\mathbb Z^b)^{\otimes m} \to \Lambda^*(\mathbb Z^b)$$ of degree $1-m$, so that $$j_m(e^{I_1}, \cdots, e^{I_m}) = \begin{cases} \pm e^{I_1 \cup \cdots \cup I_m} & (I_1, \cdots, I_m) \text{ is 1-regular} \\ 0 & \text{else}.\end{cases}$$ 

To be explicit about the sign, one may reorder the $I_j$'s so that this is computed as an iterated $E_{1,1}$ product, with the signs of Proposition \ref{E1p-formula}; the result is independent of the choice of order. 
\end{definition}

\begin{remark}One may also define the sign inductively: suppose $R(I_2, \cdots, I_m)$ consists of $k$ components $$R(I_2, \cdots, I_m) = R(I_{1,1}, \cdots, I_{1,i_1}) \cup \cdots \cup R(I_{k,1}, \cdots, R_{I_{k,i_k}}).$$ The sign on the insertion product of these $k$ terms is already defined, by induction; after reordering them so that their intersections with $I_1$ are increasing, we have $$j_m(e^{I_1}, \cdots, e^{I_m}) = E_{1,k}(e^{I_1};j_{i_1}(e^{I_{1,1}}, \cdots, e^{I_{1,i_1}}), \cdots, j_{i_k}(e^{I_{k,1}}, \cdots, e^{I_{k,i_k}}))$$
using the higher operations in Proposition \ref{E1p-formula}.
\end{remark}

It follows from multilinearity that if $a = \sum_I a_I e^I$, we have $$j_m(a, \cdots, a) = \sum_{\substack{(I_1, \cdots, I_m) \text{ 1-regular} \\ I_j \ne I_k \text{ for all } j \ne k}} a_{I_1} \cdots a_{I_m} j_m(e^{I_1}, \cdots, e^{I_m}).$$ 
Here the sum is over all \textit{ordered sequences}. If $(I_1, \cdots, I_m)$ is 1-regular any permutation of this family of sets is again 1-regular and is distinct from the original family, because each $I_j$ labels a distinct subset of $\{1, \cdots, b\}$. Because $j_m$ is symmetric, it follows that we may rewrite this as $$j_m(a, \dots, a) = m! \sum_{\{I_1, \dots, I_m\} \text{ 1-regular}} a_{I_1} \dots a_{I_m} j_m(e^{I_1}, \dots, e^{I_m}),$$
where the sum runs over \textit{unordered sequences}. In particular, $j_m(a, \dots, a)/m!$ is always defined over the integers. It is worth giving a name to this operation.

\begin{definition}\label{def:insertionpower}
Let $a \in \Lambda^*(\mathbb Z^b)$ be an odd class. We say that the \textbf{insertion powers} of $a$ are $$a^{\circ m} = \frac{j_m(a, \dots, a)}{m!}.$$ Explicitly, if $a = \sum a_I e^I$, we have $$a^{\circ m} = \sum_{\{I_1, \cdots, I_m\} \text{ 1-regular}} a_{I_1} \cdots a_{I_m} j_m(e^{I_1}, \cdots, e^{I_m}).$$
\end{definition}

\medskip 
\subsection{The Kraines construction and the insertion product}
For the duration of this section, we write $\Lambda^*(\mathbb Z^b)$, together with its Hirsch algebra structure, as $\Lambda$, which we constructed in Section \ref{examplesHirsch}. It follows that the bar construction $B\Lambda$ carries a product $\mu: B\Lambda \otimes B\Lambda \to B\Lambda$ (cf. Theorem \ref{HirschBA}).  From this, and the inclusion $\Lambda \cong (B\Lambda)_1 \hookrightarrow B\Lambda$, we defined in Lemma \ref{mult-formula} operations corresponding to \emph{iterated multiplication}. In our case of interest these operations turn out to be symmetric in their inputs, and in fact equal to the insertion product of the previous section. 

For convenience, we state all results in the following theorem for odd classes. They hold for arbitrary elements of $\Lambda$ with suitable signs, but we do not need them. The main result of this section is the following characterization of iterated multiplication in $B\Lambda$. We give an explicit formula for the repeated product, and prove that it is both symmetric and associative.

\begin{theorem}\label{mu-is-insertion}
Define the map $\mu_n: \Lambda^{\otimes n} \to B\Lambda$ by iterated left-multiplication with the operation $\mu$: that is, $\mu_3(a_1, a_2, a_3) = \mu([a_1], \mu([a_2], [a_3]))$.  Then the component $$\mu_n^1: (\Lambda^{\textup{odd}})^{\otimes n} \to \Lambda \cong (B\Lambda)_1$$on odd-degree inputs is precisely the insertion product $j_n$.
\end{theorem}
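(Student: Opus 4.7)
Proof plan. The strategy is to proceed by induction on $n$, using the recursive formula $\mu_n(a_1,\dots,a_n)=\mu([a_1],\mu_{n-1}(a_2,\dots,a_n))$ and the explicit description of the component $\mu^1:BA\otimes BA\to A$ in terms of the Hirsch operations. Namely, for vectors $\vec a=[a_1|\cdots|a_p]$ and $\vec b=[b_1|\cdots|b_q]$ in the bar construction, one has $\mu^1(\vec a,\vec b)=E_{p,q}(\vec a;\vec b)$, which in the special case $\vec a=[a_1]$ reduces to $E_{1,q}(a_1;b_1,\dots,b_q)$. Applied to our iterate, this gives
\begin{equation*}
\mu_n^1(a_1,\dots,a_n)=\sum_{k\ge 1}E_{1,k}\bigl(a_1;\,\text{components of }\mu_{n-1}^k(a_2,\dots,a_n)\bigr).
\end{equation*}
Lemma~\ref{mult-formula} expands $\mu_{n-1}^k(a_2,\dots,a_n)$ as a signed sum over \emph{ordered} partitions $(I_1,\dots,I_k)$ of $\{2,\dots,n\}$ with entries $[\mu^1(a_{I_1})\mid\cdots\mid\mu^1(a_{I_k})]$, and by the inductive hypothesis each $\mu^1(a_{I_j})=j_{|I_j|}(a_{I_j})$.

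The base case $n=1$ is tautological ($\mu_1^1(a)=a=j_1(a)$). For the inductive step, specialize all inputs to basis elements $a_i=e^{S_i}$ with $|S_i|$ odd. Combining the above with Proposition~\ref{E1p-formula}, the summand indexed by an ordered partition $(I_1,\dots,I_k)$ is nonzero precisely when (a) each subfamily $\{S_i\}_{i\in I_j}$ is $1$-regular (so $j_{|I_j|}(e^{S_{I_j}})$ is a nonzero basis vector $\pm e^{\cup_{i\in I_j}S_i}$), and (b) $S_1$ meets each of these unions in exactly one element, with the points of intersection listed in increasing order in $S_1$. Condition (b) rigidifies the ordering of $(I_1,\dots,I_k)$: for each unordered partition satisfying (a) and the single-point intersection property, exactly one ordering survives. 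Therefore
\begin{equation*}
\mu_n^1(e^{S_1},\dots,e^{S_n})=\sum_{\{I_1,\dots,I_k\}}E_{1,k}\bigl(e^{S_1};j_{|I_1|}(e^{S_{I_1}}),\dots,j_{|I_k|}(e^{S_{I_k}})\bigr),
\end{equation*}
where the sum is over unordered partitions of $\{2,\dots,n\}$ into $1$-regular subfamilies each meeting $S_1$ in a single point.

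The remaining step is a combinatorial identification: such a partition exists precisely when $R(S_1,\dots,S_n)$ is a tree, and in that case it is unique—it is the decomposition of $R(S_1,\dots,S_n)\setminus S_1$ into its connected components. This gives a bijection between the indexing set above and $1$-regular families with specified first element $S_1$. Comparing with the recursive description of $j_n$ in the remark following Definition~\ref{def:insertionpower} (which expresses $j_n(e^{S_1},\dots,e^{S_n})$ as exactly the same $E_{1,k}$ applied to the unique such branch decomposition) finishes the identification of the combinatorial content.

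The main obstacle will be tracking signs. On one hand, $\mu_n^1$ carries the signs from Lemma~\ref{mult-formula}, from the structure map $\mu^1=E_{p,q}$, and from Proposition~\ref{E1p-formula} (which inserts each branch $j_{|I_j|}(e^{S_{I_j}})$ into its prescribed slot of $S_1$ with a permutation sign). On the other hand, $j_n$ is defined by an iterated cup-$1$ product whose sign agrees with a certain permutation sign via the inductive prescription. I would verify the match by unpacking both signs as the signature of the permutation that rearranges the linear order $(1,\dots,|S_1\cup\cdots\cup S_n|)$ into the order obtained by reading off elements of $S_1$ while inserting, at each index $S_1$ shares with some branch, the entire ordered list of that branch's indices; the odd-degree hypothesis ensures that no further Koszul signs enter. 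Once signs agree in the basis expansion, the theorem follows by multilinearity.
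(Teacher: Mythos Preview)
Your proposal is correct and follows essentially the same approach as the paper's proof: induction on $n$, invoking Lemma~\ref{mult-formula} to expand $\mu_{n-1}$ as a sum over ordered partitions, applying the inductive hypothesis to identify each $\mu^1(J_i)$ with an insertion product, and then using Proposition~\ref{E1p-formula} to see that exactly one ordered partition (the one by connected components of $R(S_2,\dots,S_n)$, in the unique order dictated by the intersection points with $S_1$) contributes a nonzero term. Your treatment of signs is, if anything, slightly more explicit than the paper's.
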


\begin{proof}
We prove this by induction on $m$. The claim is tautological for $m = 1$, where $\mu_1^1(a) = a$ and $j_1(a) = a$. 

Supposing the claim is true for $\mu_m^1$ for all $m < n$, we will prove the claim for $\mu_n^1$. Because $\mu_n$ is multilinear, it suffices to verify it on basis elements $e^I$ for $I \subset \{1, \cdots, b\}$ a subset. We write $a_j = e^{I_j}$ for $1 \le j \le n$.

The key point is Lemma \ref{mult-formula}. To recall notation, if $J = \{j_1 < \cdots < j_k\} \subset \{2, \cdots, n\}$, we write $$\mu^1(J) = \mu_k^1(a_{j_1}, \cdots, a_{j_k});$$ if $J_1, \cdots, J_k$ is a list of subsets, then we write $\mu^1(J_1 \mid \cdots \mid J_k) = [\mu^1(J_1) \mid \cdots \mid \mu^1(J_k)].$ In this notation, Lemma \ref{mult-formula} tells us that $$\mu_{n-1}(a_2, \cdots, a_n) = \sum_{\substack{J_1, \cdots, J_k \subset \{2, \cdots, n\} \\ J_1 \cup \cdots \cup J_k = \{2, \cdots, n\} \\ J_i \cap J_j = \varnothing \text{ for } i \ne j}} \mu^1(J_1 \mid \cdots \mid J_k).$$ Next, our inductive hypothesis tells us what each expression $\mu^1(J_i)$ is. Each $J_i$ parameterizes a set of monomials $e^{I_k}$; we may compute their insertion product, and we have $\mu^1(J_i) = j(J_i)$ by inductive hypothesis. So we may write the above sum as $$\mu_{n-1}(a_2, \cdots, a_n) = \sum_{\substack{J_1, \cdots, J_k \subset \{2, \cdots, n\} \\ J_1 \cup \cdots \cup J_k = \{2, \cdots, n\} \\ J_i \cap J_j = \varnothing \text{ for } i \ne j}} [j(J_1) \mid \cdots \mid j(J_k)].$$
Next, if $a_1 = e^I$, when we apply $\mu([a_1], \mu_{n-1})$ we obtain $$\mu^1_n(a_1, a_2, \cdots, a_n) = \sum_{\substack{J_1, \cdots, J_k \subset \{2, \cdots, n\} \\ J_1 \cup \cdots \cup J_k = \{2, \cdots, n\} \\ J_i \cap J_j = \varnothing \text{ for } i \ne j}} E_{1,k}(e^{I}; j(J_1),  \cdots,  j(J_k)).$$
Here $j(J_i)$ is the insertion product of the elements $e^{I_k}$ for $k \in J_i$. This is given by $\pm e^{\cup J_i}$, where $\cup J_i = \cup_{k \in J_i} I_k$.

By Proposition \ref{E1p-formula}, if the above expression is nonzero, then the subsets $\cup J_1, \cdots, \cup J_k$ are necessarily disjoint with $I$ meeting each of them exactly once, so the graph realization $$R(I_2, \cdots, I_n) = R(J_1, \cdots, J_k)$$ must be a forest of exactly $k$ trees, and $I$ meets each tree exactly once --- so $R(I, I_2, \cdots, I_n)$ is a tree and the family is 1-regular. Further, $E_{1,p}$ is precisely given by the insertion product. 

Therefore, if a summand in the above expression is nonzero, the family $(I, I_2, \cdots, I_n)$ is 1-regular and that summand is the insertion product. The only way such a summand can be nonzero is if the $J_i$ each correspond to the components of $R(I_2, \cdots, I_n)$, and further the intersections $\{j_i\} = I \cap (\cup J_i)$ have $j_1 < \cdots < j_k$. Hence exactly one such summand is nonzero: the one with partition given as above, and with the unique ordering in which the above inequality holds. Thus the expression evaluates to $j(I, I_2, \cdots, I_n)$, as desired. This completes the induction.
\end{proof}

This immediately gives us an explicit formula for the Kraines construction $K(a)$ on the minimal torus. 

\begin{cor}\label{K-special}
Let $a \in \Lambda^3(\mathbb Z^n)$ be any element of degree 3. Writing $a^{\circ n}$ for the insertion powers of Definition \ref{def:insertionpower}, the Kraines construction on $a$ is the twisting sequence with $K(a)_{2n+1} = a^{\circ n}$ for all $n\geq1$; this is defined as a twisting sequence over $\Lambda$ (without rationalizing).
\end{cor}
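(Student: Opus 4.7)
The plan is to simply unwind the definitions, identifying each ingredient in the Kraines construction with the combinatorial operations already established for the minimal torus. Recall that by the construction in the proof of the Kraines proposition, $K(a)$ is characterized by the identity $\exp(a) = g(K(a))$ in $B\Lambda\llbracket T\rrbracket$, where $\exp(a) = \sum_{m \ge 0} a_m T^{3m}/m!$ with $a_0 = 1$, $a_m = \mu([a], a_{m-1})$, and $g(K(a))$ is the grouplike element associated to the twisting sequence $K(a)$ via Proposition \ref{gplike-cocycle}.

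The first step is to extract $K(a)_{2n+1}$ from the right-hand side. Unwinding the formula for $g(x_\bullet)$ in Proposition \ref{gplike-cocycle}, the coefficient of $T^n$ in the $(B\Lambda)_1$-component of $g(K(a))$ is simply $K(a)_{2n+1}$ (all higher-length tensor contributions to the bar construction come from $(B\Lambda)_k$ for $k \ge 2$ and do not touch the $(B\Lambda)_1$-piece). Matching powers of $T$ on the left-hand side, in degree $T^n$ only the term $a_n T^{3n}/n!$ contributes after setting $n = m$; thus
\begin{equation*}
K(a)_{2n+1} \;=\; \tfrac{1}{n!}\,\mu_n^1(a, a, \dots, a),
\end{equation*}
where $\mu_n^1$ denotes the $(B\Lambda)_1$-component of $\mu_n$ in the notation of Lemma \ref{mult-formula}.

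The second step is where the main work of the preceding subsections pays off. Since $a$ is an odd-degree element, Theorem \ref{mu-is-insertion} applies and identifies $\mu_n^1(a, a, \dots, a)$ with the insertion product $j_n(a, a, \dots, a)$. Combining this with Definition \ref{def:insertionpower}, which defines $a^{\circ n} = j_n(a,\dots,a)/n!$, we obtain
\begin{equation*}
K(a)_{2n+1} \;=\; \tfrac{1}{n!}\, j_n(a, \dots, a) \;=\; a^{\circ n}.
\end{equation*}

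Finally, the integrality statement. A priori the Kraines construction is only defined over $\mathbb{Q}$ because of the factor $1/n!$ in $\exp(a)$. However, the explicit formula for $a^{\circ n}$ in Definition \ref{def:insertionpower} shows that this class already lies in $\Lambda$: indeed, $a^{\circ n}$ is the sum over \emph{unordered} $1$-regular families $\{I_1, \dots, I_n\}$ of monomials $a_{I_1} \cdots a_{I_n} j_n(e^{I_1}, \dots, e^{I_n}) = \pm a_{I_1}\cdots a_{I_n} e^{I_1 \cup \cdots \cup I_n}$, with integer coefficients. The $n!$ in the denominator is exactly cancelled by the fact that $j_n$ is symmetric and that distinct monomials $e^{I_i}$ contribute $n!$ identical terms to the ordered sum. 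Since the relations defining a twisting sequence hold after $\otimes \mathbb{Q}$ and the terms already lie in $\Lambda$, they hold over $\mathbb{Z}$. There are essentially no obstacles to this argument — the work has all been absorbed into Theorem \ref{mu-is-insertion} and the explicit combinatorics of Proposition \ref{E1p-formula}.
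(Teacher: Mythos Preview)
Your argument is correct and follows exactly the approach the paper intends: extract the $(B\Lambda)_1$-component of $\exp(a)$, identify it via Theorem \ref{mu-is-insertion} with $j_n(a,\dots,a)/n!$, and invoke Definition \ref{def:insertionpower}. One small bookkeeping slip: since $a$ has degree $3 = 2\cdot 1 + 1$, the index $n$ in the Kraines proposition is $1$, so $\exp(a) = \sum_{m\ge 0} a_m T^{m}/m!$, not $T^{3m}$; with this correction your matching of $T$-powers is clean and the rest goes through unchanged.
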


A modified statement is true for elements of any odd degree (or, if one is more careful with signs, for arbitrary elements).

\medskip
\subsection{The main computation}
Combining the results of the previous two sections, we are now able to prove the following structure theorem for twisted homology of twisting sequences on the torus. The following phrasing, involving simplicial tori, is intended towards our application in monopole Floer homology; the result may be stated more generally about any reasonable model for cochains on the torus.

\begin{theorem}\label{toruscomp}
Let $\mathbb T$ be any finite simplicial complex whose realization is homeomorphic to the $b$-dimensional torus $T^b$. Then there is a zigzag of Hirsch algebra quasi-isomorphisms from the simplicial cochain (Hirsch) algebra $C^*_\Delta(\mathbb T)$ to the exterior algebra $C^*_\cube(T^b_1) \cong \Lambda^*(\mathbb Z^b)$, with the following property.

Suppose $\xi_\bullet$ is a twisting sequence in $C^*_\Delta(\mathbb T)$ with $F_n(\xi_\bullet) = 0$ for all $n > 1$; write $F_1(\xi_\bullet) = [\xi_3]$ as $a \in \Lambda^3$ for convenience. Then under the induced bijection $$hT(C^*_\Delta \mathbb T) \to hT(\Lambda^* \mathbb Z^n) = \Lambda^3 \oplus \Lambda^5 \oplus \cdots,$$ the element $[\xi_\bullet]$ is sent to the twisting sequence $K(a)$. Explicitly, we have $K(a)_{2n+1}$ given by the $n$-th insertion power $$K(a)_{2n+1} = a^{\circ n} = \frac{1}{n!} j_n(a, \cdots, a)$$ defined in Subsection 5.1.
\end{theorem}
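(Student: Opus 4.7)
The plan is to build a zigzag of Hirsch algebra quasi-isomorphisms between $C^*_\Delta(\mathbb{T})$ and $\Lambda^*(\mathbb{Z}^b) = C^*_\cube(T^b_1)$, transfer the twisting sequence using naturality of characteristic classes, and then recognize the output explicitly via the insertion-product formula for the Kraines construction.

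First, I would construct the zigzag using the natural inclusions $i: \mathbb{T} \hookrightarrow \mathrm{Sing}_\Delta(T^b)$ of the simplicial complex into the singular simplicial set of its realization, and $j: T^b_1 \hookrightarrow \mathrm{Sing}_\cube(T^b)$ of the minimal cubical torus into the singular cubical set, combined with the dual Cartan--Serre map $\xi^*$ of Theorem \ref{comparison}:
\[
C^*_\Delta(\mathbb{T}) \xleftarrow{\;i^*\;} C^*_{\Delta\text{-sing}}(T^b) \xleftarrow{\;\xi^*\;} C^*_{\cube\text{-sing}}(T^b) \xrightarrow{\;j^*\;} C^*_\cube(T^b_1).
\]
Since the operations $E_{p,q}$ are built operadically out of the basic $(\Delta, *, \varepsilon)$ and are therefore natural for (co)simplicial set maps, all three arrows are Hirsch algebra maps; the middle one is a quasi-isomorphism by Theorem \ref{comparison}, and the outer two are quasi-isomorphisms because the inclusions of cellular (co)simplicial sets into their singular counterparts are weak equivalences whose realizations are the identity on $T^b$. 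The main technical hurdle lies here: one must verify carefully that the cellular-to-singular inclusions are genuine (co)simplicial set maps (which requires a choice of vertex ordering on $\mathbb{T}$, and the identification of the $k$-cubes of $T^b_1$ with their characteristic maps $I^k \to T^b$) and that the restricted Hirsch structure agrees with the cellular one.

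With the zigzag in hand, Proposition \ref{qiso-transfer} yields a bijection $ts(C^*_\Delta \mathbb{T}) \xrightarrow{\sim} ts(\Lambda^*(\mathbb{Z}^b))$. Let $y_\bullet$ represent the image of $[\xi_\bullet]$. By naturality of characteristic classes under Hirsch algebra maps (Theorem \ref{charclass-facts}), each $F_n(y_\bullet)$ corresponds to $F_n(\xi_\bullet)$ under the induced cohomology isomorphism $H^*(\mathbb{T}) \cong \Lambda^*(\mathbb{Z}^b)$; thus $F_1(y_\bullet) = a$ while $F_n(y_\bullet) = 0$ for $n > 1$. By Theorem \ref{charclass-facts}(2), the Kraines twisting sequence $K(a)$ has exactly these same characteristic classes, and since $\Lambda^*(\mathbb{Z}^b)$ is torsion-free and supported in bounded degrees, Theorem \ref{charclass-facts}(3) then yields $y_\bullet \sim K(a)$, so that $[\xi_\bullet]$ is sent to the homotopy class of $K(a)$ as required.

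Finally, the explicit formula $K(a)_{2n+1} = a^{\circ n}$ is precisely Corollary \ref{K-special}, which follows from Theorem \ref{mu-is-insertion}: the iterated bar multiplication $\mu_n^1$ agrees with the symmetric insertion product $j_n$ on odd-degree inputs, so $K(a)_{2n+1} = \mu_n^1(a,\ldots,a)/n! = j_n(a,\ldots,a)/n! = a^{\circ n}$. The division by $n!$ is integral because each unordered $1$-regular family of subsets contributes $n!$ identical terms to the sum over ordered tuples, as spelled out in Definition \ref{def:insertionpower}; in particular the Kraines sequence already lives in $ts(\Lambda^*(\mathbb{Z}^b))$ over $\mathbb{Z}$, so there is no loss of integrality in the comparison of Stage 3.
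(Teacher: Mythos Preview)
Your proposal is correct and follows essentially the same approach as the paper: the same zigzag through singular simplicial and cubical cochains via the Cartan--Serre comparison, the same appeal to naturality of the characteristic classes $F_n$ under Hirsch maps, the same use of Theorem \ref{charclass-facts}(2)--(3) to pin down the transferred class as $K(a)$, and the same invocation of Corollary \ref{K-special} for the explicit formula. Your added remarks about vertex orderings and the integrality of $a^{\circ n}$ are sound elaborations but do not change the strategy.
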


\begin{proof}
Fix a homeomorphism $|\mathbb T| \cong T^b$ from the realization of the simplicial torus $\mathbb T$ to the standard $b$-dimensional torus, which also has a canonical homeomorphism to the realization of the minimal torus $T^n_1$ of section 5.1.

The appropriate zigzag of Hirsch algebras takes the form $$C^*_\Delta(\mathbb T) \leftarrow C^*_{\Delta-\text{sing}}(T^b) \leftarrow C^*_{\cube-\text{sing}}(T^b) \to C^*_\cube(T^b_1) = \Lambda^*(\mathbb Z^b).$$ The first map is obtained by restricting domain from singular chains to the simplicial chains; it is represented by a simplicial map $\mathbb T \to \text{Sing}_\Delta |\mathbb T|$ of simplicial sets and hence defines a map on Hirsch algebras. Similarly with the last map. 

The middle map from singular cubical cochains to singular simplicial cochains is the Cartan-Serre comparison map, which is an isomorphism of Hirsch algebras by Theorem \ref{comparison}. Hence this is a zig-zag of Hirsch algebra quasi-isomorphisms.

Because the characteristic classes of twisting sequences on Hirsch algebras are homotopy invariants, which are natural for Hirsch algebra maps, it follows that $F_n(z_* \xi_\bullet) = 0$ for all $n > 1$ and $F_1(z_* \xi_\bullet) = [\xi_3]$. Furthermore, Theorem \ref{charclass-facts} implies that there is exactly one twisting sequence in $\Lambda^* \mathbb Z^b$ with these properties, and it must be $K([\xi_3])$ --- which is therefore defined over the integers (though this follows from our explicit calculation of this element). The given formula for $K(a)$ was determined in Corollary \ref{K-special}.
\end{proof}

Applying Lemmas \ref{isom} and \ref{typeab}, we immediately obtain an isomorphism between the twisted cohomology $H^*_{\text{tw}}(\mathbb T; \xi_\bullet)$ and the (algebraic, and concretely computable) twisted homology of $\Lambda^* \mathbb Z^n$ with respect to the twisting sequence $(a, a^{\circ 2}, a^{\circ 3}, \cdots)$.

\begin{remark}
If one is careful, one observes that in the Lemmas \ref{isom} and \ref{typeab} we referred to a version of the twisted cohomology groups where one uses $\mathbb Z[T^{-1}, T\rrbracket$, completing in the $T$-direction, whereas the statement above refers to an uncompleted version. The key point is that when the chain complexes we apply this to are supported in bounded degrees --- as is the case for $C^*_\Delta(\mathbb T)$ and $\Lambda^*(\mathbb Z^n)$ --- this twisted cohomology is defined via Laurent polynomials, for in a fixed degree $d$ sufficiently high powers of $T$ must have coefficient zero).
\end{remark} 
\vspace{0.5cm}
\subsection{The case of local systems}\label{sec:localsys}
We would now like to extend our calculation of twisted (co)homology of the torus to the setting of local systems. When applying this to the setting of monopole Floer homology of a spin$^c$ three-manifold with $c_1(\mathfrak s)$ non-torsion, such a local system is most naturally \emph{a local system of $\mathbb Z[T,T^{-1}]$-modules}. 

Let $\mathbb T$ be a finite simplicial complex whose realization is homeomorphic to the $b$-dimensional torus, and suppose $\mathbb T$ is equipped with a local coefficient system $A$ of $\mathbb Z[T, T^{-1}]$-modules; this is described by some representation $\rho: \pi_1(\mathbb T) \to \text{Aut}(A)$, where we take automorphisms as a $\mathbb Z[T, T^{-1}]$-module.

Let $\xi_\bullet \in TS(C^*_\Delta \mathbb T)$ be a twisting sequence. We may define the twisted homology with respected to the local system $A$ as the homology of $C^\Delta_*(\mathbb T; A)$ with respect to the differential $$x \mapsto \partial_A x  + (\xi_3 \cap x) T + (\xi_5 \cap x) T^2 + \cdots$$
where $\partial_A$ is the differential with local coefficients (see for example \cite[Chapter $5$]{DK}). As $C^\Delta_*(\mathbb T)$ is finite-dimensional, this is a finite sum. 

\begin{theorem}\label{toruslocthm}
In the situation above, suppose we have $F_n(\xi_\bullet) = 0$ for all $n > 1$, and $[\xi_3] = a_\xi \in \Lambda^3(\mathbb Z^n)$ after choosing a basis of $\pi_1 \mathbb T$ (and thus an isomorphism of $H^* \mathbb T$ to $\Lambda^* \mathbb Z^b)$, then the twisted homology $H_*^{\textup{tw}}(\mathbb T; A, \xi_\bullet)$ is isomorphic to the twisted homology of the local system over the minimal torus $C^*_{\cube}(T^b_1)[T, T^{-1}] \otimes_{\mathbb Z[T]} A$ with respect to the differential $$x \mapsto \partial_A x + (a_\xi \cap x) T + (a_\xi^{\circ 2} \cap x) T^2 + \cdots $$
In this situation, the differential $\partial_A$ is computed explicitly as follows. If $x = e^{i_1} \wedge \cdots \wedge e^{i_k} \otimes a$, then $$\partial_A x = \sum_{1 \le j \le k} e^{i_1} \wedge \cdots \wedge e^{i_{j-1}} \wedge e^{i_{j+1}} \wedge \cdots \wedge e^{i_k} \otimes \rho(e_{i_j})(a).$$ Here $\rho(e_k)$ is $\rho$, applied to the $k$'th basis vector of $\mathbb Z^b$.
\end{theorem}
\begin{proof}
The zigzag of Hirsch algebras $$C^*_\Delta(\mathbb T) \leftarrow C^*_{\Delta-\text{sing}}(T^b) \leftarrow C^*_{\cube-\text{sing}}(T^b) \to C^*_\cube(T^b_1) = \Lambda^*(\mathbb Z^b)$$ has a corresponding form for chains (valued in any local system) $$C_{-*}^\Delta(\mathbb T; A) \to C_{-*}^{\Delta-\text{sing}}(T^b; A) \rightarrow C_{-*}^{\cube-\text{sing}}(T^b; A) \leftarrow C_{-*}^\cube(T^b_1; A) = \Lambda^{-*}(\mathbb Z^b) \otimes A.$$
The negative degrees appear because our conventions use cohomological gradings. To be clear, the grading of $\sigma \otimes a$ is given by $|a| - |\sigma|$.

These maps are all quasi-isomorphisms (they are comparison maps between various types of chains on the torus) and type (b) module maps (cf. Lemma \ref{typeab}). 

We would like to argue as follows: there is a filtration on each of these complexes so that the twisted differential preserves te filtration, and the associated graded of the twisted homology complex is precisely the complex $C_{-*}(\mathbb T; A)$ in its various guises; because the associated graded maps are all quasi-isomorphisms, and the spectral sequence converges, the maps between the various twisted homology groups are also quasi-isomorphisms.

The issue is in giving the correct spectral sequence. One should not filter by $T$-degree in $A$, as this is not complete. The most natural choice is by \emph{chain-degree}. If $\sigma \otimes a\in C_{-p}(\mathbb T; A)$ --- so that $|a| - |\sigma| = p$ --- we say that $\sigma \otimes a$ has chain-degree $-|\sigma|$, and we set $$F_p C_{-*}(\mathbb T; A) = \bigoplus_{k \le p} C_{-k}(\mathbb T) \otimes A.$$ 
Then as long as the models for chains on $\mathbb T$ are \emph{bounded in degree}, this is a complete filtration, and the argument above runs through. However, singular chains are unbounded complexes.

To remedy this, observe the following. Suppose $A$ is a nonnegatively-graded dg-algebra, and $M$ is a (cohomologically graded) bounded above dg-module with cohomology supported in degrees $[-n, 0]$. Then there is an $A$-module $M'$ which is supported in degrees $[-n-1, 0]$ and comes equipped $A$-module quasi-isomorphism $M \to M'$; furthermore, this construction is natural in pairs $(A,M)$ of the above form. One may simply set $$M'_k = \begin{cases} M_k & k \ge -n \\ M_{-n-1}/Z_{-n-1} & k = -n-1 \\ 0 & \text{else.} \end{cases}$$
Applying this natural truncation construction we now have a zigzag $$C_*^\Delta(\mathbb T; A) \to C_*^{\Delta-\text{sing}}(T^b; A)' \to C_*^{\cube-\text{sing}}(T^b; A)' \leftarrow C_*^\cube(T^b_1; A) = \Lambda^*(\mathbb Z^b) \otimes A$$ where each dg-module here has a finite filtration, and all maps are quasi-isomorphisms. Applying the argument above, we have a zigzag of isomorphisms between the relevant twisted homology groups, as desired.
\par
Finally, the determination of $\partial_A$ on $H^* \mathbb T$ follows directly from the definitions (cf. the discussion in Section \ref{HMblocsys} below). 
\end{proof}

\begin{comment}
\begin{remark}
Furthermore, this isomorphism is an isomorphism of $H^*(\mathbb T)$-modules, acting via cap product on the right. To see this, observe as in \cite{PTam} that the cap product on chains gives $C_*(X)$ the structure of a \emph{bimodule} over $C^*(X)$, for any cubical or simplicial set $X$, where the right-hand cap product has a slightly different sign. If $f: X \to Y$ is a map of simplicial or cubical sets, it induces a type (b) map $C_*(X) \to C_*(Y)$ of bimodules. Passing to the twisted differential $\sigma \mapsto d\sigma + \xi_3 \cap \sigma T + \cdots$, notice that because we use the action of $C^*(X)$ on the left to define the twisted differential, the right action of $C^*(X)$ on $C_*(X)$ again defines an action on $C_*^{\text{tw}}(X)$, and the map $f: C^{\text{tw}}_*(X) \to C^{\text{tw}}_*(Y)$ is a map of right modules. 

This, together with the fact that the Cartan-Serre comparison map preserves both left and right cap products, proves the claim that the isomorphisms above are $H^*(\mathbb T)$-module isomorphisms. 
\end{remark}
\end{comment} 

\bigskip

\section{Applications to the monopole Floer homology group $\HMb_*$}\label{hmbformula}
In the first part of this section, we recall precisely what is proved in \cite{KM}. Then will finally prove Theorem \ref{thmmain}. We will conclude by discussing and prove generalizations to the case of non-torsion spin$^c$ structures and local systems.

In Chapters 33 and 34 of \cite{KM}, which we refer to heavily below, the authors investigate the monopole Floer homology groups $\HMb_*(Y,\mathfrak s; \Gamma_0)$ and compare them to \emph{coupled Morse homology groups} of the torus of reducible solutions; we write these groups as $$CMH_*(\mathbb T_{Y,\mathfrak s}, D_B; \Gamma_0).$$ Here $\Gamma_0$ is some local system over the torus $\mathbb T_{Y, \mathfrak s}$, hence an $H^1(Y;\mathbb Z)$-module. The complex $CMC_*(\mathbb T_{Y, \mathfrak s})$ computing this homology group is freely generated by pairs $(x, \lambda_i)$ where $x$ is a critical point of a Morse function on $\mathbb T$, and $\lambda_i$ labels an eigenspace of the operator $D_x$, listed so that $$\cdots < \lambda_0 < \lambda_1 < \lambda_2 < \cdots$$

Both $\HMb$ and $CMH$ carry the intrinsic structure of a module over $\Lambda^*(H_1 Y/\text{tors})[U, U^{-1}]$, both defined in similar ways (counting points in moduli spaces cut down by an appropriate cohomology class, either represented geometrically as a section of a vector bundle or as a \v{C}ech cocycle). Notice that while there is an obvious degree -2 map on $CMC_*(\mathbb T_{Y, \mathfrak s})$, given by sending $(x, \lambda_i)$ to $(x, \lambda_{i-1})$, this is not necessarily the same as the action of $U$, which is defined on \cite[Page 657]{KM}. 

While the proof of \cite[Theorem $35.1.1$]{KM} gives an isomorphism $$r_{Y, \mathfrak s}: \HMb_*(Y,\mathfrak s;\Gamma_0) \cong CMH_*(\mathbb T_{Y, \mathfrak s}, D_B; \Gamma_0)$$ as abelian groups, it is straightforward to see $r_{Y,\mathfrak s}$ is in fact a $\Lambda(H_1 Y/\text{tors})[U, U^{-1}]$-module isomorphism, because the module structures are defined in essentially the same way. This holds regardless of whether or not $\mathfrak s$ is torsion or $\Gamma_0$ is a local system with nontrivial monodromy and the proof is the same. In the non-torsion case, the domain is instead $\HMb_*(Y, \mathfrak s, c_b; \Gamma_0)$, where one uses an appropriate balanced perturbation to the Seiberg-Witten equations.
\par
They then compare these to appropriate twisted simplicial homology groups, by use of Proposition $34.2.1$ in the torsion case and $34.4.1$ in the non-torsion case. While these Propositions are phrased in terms of \v{C}ech representatives, there is no difficulty in phrasing their results simplicially, as in the proof of their Theorem $34.3.1$. 

What they show, precisely, is that when $\mathbb T$ is given a simplicial structure and $D_B$ is homotoped to a simplicial map $L: \mathbb T \to U(2)$, for an appropriate simplicial structure on the latter, then one may choose the Morse function on $\mathbb T$ so that there is a group isomorphism $$\varphi_{\Delta}: CMC_*(\mathbb T, L; \Gamma_0) \cong C_*^\Delta(\mathbb T; \Gamma_0)[T, T^{-1}]$$ given by sending the generator $(x, \lambda_i)$ to $x T^i$, and so that the coupled Morse differential on the domain is sent to the twisted differential $$x\mapsto d_{\Gamma_0} x + (\xi_3 \cap x)T,$$ where $\xi_3$ is a simplicial 3-cocycle on $\mathbb T$ pulled back from the 3-sphere. Here $T$ is a formal variable of degree $2$. This contrasts with the main body of the text above, where we used cohomological grading conventions, so instead $T$ had degree $-2$. Restated, $\varphi_\Delta$ gives an isomorphism of chain complexes from $CMC$ to $C_{\text{tw}}^\Delta(\mathbb T; \xi_{KM})$. 
\\
\par
Now we should discuss how $\varphi_\Delta$ behaves with respect to the additional structure. When $L$ factors through $SU(2)$ --- so $\mathfrak s$ is a torsion spin$^c$ structure --- the argument of Proposition $34.2.4$ shows that $\varphi_\Delta$ has $$\varphi_\Delta(Ux) = T^{-1} \varphi_\Delta(x).$$ 
However, these arguments do not help with the $\Lambda$-module structure. Without further work, all one can say is that for example if $\alpha \in H^1(\mathbb T)$, we have $$\varphi_\Delta(\alpha \cap x T^i) = \alpha \cap \varphi_\Delta(x) T^i + x_{-3} T^{i+1} + x_{-5} T^{i+2} + \cdots$$
where $x$ and $x_{-(2i+1)}$ differ in grading by $2i+1$. Furthermore, when $\mathfrak s$ is non-torsion, the arguments of \cite{KM} only show $$\varphi_\Delta(Ux) = T^{-1} \varphi_\Delta(x) + T y$$ for some appropriate $y=\sum a_i y_i$ where $y_i$ are critical points for which $ \mathrm{gr}_{z_i}(x,y_i)=4$ for some homotopy class of paths $z_i$; see \cite{CPPZ} for more details. (Notice that this is slightly stronger than simply knowing that $U$ is sent to $T^{-1}$ up to higher filtration.)
We summarize the above discussion in the following Proposition. 

\begin{prop}[Kronheimer--Mrowka]\label{varphiKM} Filter $\HMb_*(Y, \mathfrak s,c_b;\Gamma_0)$ by powers of $U$ and filter $H_{\text{tw}}^\Delta(\mathbb T; \xi_{KM})$ by powers of $T^{-1}$. There is a filtered isomorphism of relatively graded groups $$\varphi_{KM} = \varphi_\Delta \circ r_{Y,\mathfrak s} :\HMb_*(Y, \mathfrak s,c_b;\Gamma_0) \cong H_{\textup{tw}}^\Delta(\mathbb T; \xi_{KM}).$$ The associated graded map is a $\Lambda^*(H_1 Y/\textup{tors})$-module isomorphism sending $U$ to $T^{-1}$. When $\mathfrak s$ is torsion, so that we consider the usual group $\HMb_*(Y, \mathfrak s;\Gamma_0)$, the map $\varphi_{KM}$ sends $U$ to $T^{-1}$ even before passing to the associated graded modules.
\end{prop}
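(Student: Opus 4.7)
The plan is to assemble the Proposition from the discussion immediately preceding it, isolating the non-torsion statement as the step that requires genuine additional work. The group isomorphism $r_{Y,\mathfrak s}: \HMb_*(Y, \mathfrak s, c_b; \Gamma_0) \cong CMH_*(\mathbb T, D_B; \Gamma_0)$ is Theorem 35.1.1 of \cite{KM}, and its compatibility with the $\Lambda(H_1 Y/\textup{tors})[U, U^{-1}]$-module structures is automatic from the definitions: both module structures are built by the same geometric recipe, counting zeros of sections cut down by representatives of the given cohomology classes. The identification $\varphi_\Delta: CMC_*(\mathbb T, D_B; \Gamma_0) \to C^\Delta_*(\mathbb T; \Gamma_0)[T, T^{-1}]$ that sends $(x, \lambda_i) \mapsto x T^i$, together with its chain-level behavior vis-\`a-vis $U$ and $T$, is supplied by Propositions 34.2.1 (torsion case) or 34.4.1 (non-torsion case) of \cite{KM}, after choosing a sufficiently fine simplicial structure on $\mathbb T$ and a simplicial representative $L: \mathbb T \to U(2)$ for the family of Dirac operators.

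For the filtered statement, the $U$-filtration on $\HMb$ corresponds under $r_{Y,\mathfrak s}$ to the eigenvalue filtration on $CMC_*$, since $U$ agrees with the tautological shift $(x, \lambda_i) \mapsto (x, \lambda_{i-1})$ modulo strictly lower filtration; under $\varphi_\Delta$ this becomes the $T^{-1}$-filtration on $C^\Delta_*(\mathbb T; \Gamma_0)[T, T^{-1}]$. Passing to associated graded discards exactly the error terms in each identification: the eigenvalue shift becomes strict multiplication by $T^{-1}$, and the chain-level corrections displayed before the Proposition in the formula
\[
\varphi_\Delta(\alpha \cap x T^i) = \alpha \cap \varphi_\Delta(x) T^i + x_{-3} T^{i+1} + x_{-5} T^{i+2} + \cdots
\]
die, leaving a genuine $\Lambda(H_1 Y/\textup{tors})$-module identity. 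This establishes all but the final sentence.

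The main obstacle is the non-torsion sharpening. Here \cite{KM} only delivers $\varphi_\Delta(Ux) = T^{-1}\varphi_\Delta(x) + Ty$, with $y = \sum a_i y_i$ supported on critical points $y_i$ at fixed relative grading $\mathrm{gr}_{z_i}(x, y_i) = 4$. The plan is to upgrade this to an on-the-nose identity on homology, by showing that the correction $Ty$ is exact in the twisted simplicial complex; equivalently, by modifying $\varphi_\Delta$ through a chain homotopy $h$ supported in positive $T$-powers so that $\widetilde\varphi_\Delta = \varphi_\Delta + h$ satisfies $\widetilde\varphi_\Delta(Ux) = T^{-1}\widetilde\varphi_\Delta(x)$ exactly, while inducing the same map as $\varphi_\Delta$ on homology. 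The parenthetical remark preceding the Proposition is the crucial input: the correction is concentrated in a single $T$-power and a single relative grading, so it defines a highly constrained cycle in the hom-complex of chain maps rather than living in general higher filtration. The expected mechanism for killing this cycle is the monodromy structure of the non-torsion twisted complex described in Remark~\ref{twistnontors}, which forces nontrivial relations between positive and negative $T$-powers via the $\mathbb Z/2N\mathbb Z$-periodicity dual to $c_1(\mathfrak s)$, and thereby produces an explicit primitive for $Ty$. The hardest part will be the bookkeeping: tracking the relative-grading-4 shift through the $T$-monodromy twist and verifying that the resulting class is indeed a chain-level boundary, not merely a representative of a vanishing filtered quotient.
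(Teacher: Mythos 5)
Your first two paragraphs track the paper's own treatment: Proposition \ref{varphiKM} carries no separate proof there, being explicitly a summary of the preceding discussion, and that discussion is exactly the assembly you describe --- $r_{Y,\mathfrak s}$ from \cite[Theorem 35.1.1]{KM} together with the observation that the two $\Lambda(H_1Y/\mathrm{tors})[U,U^{-1}]$-module structures are defined by the same counting recipe, the chain-level identification $\varphi_\Delta$ from \cite[Propositions 34.2.1 and 34.4.1]{KM} phrased simplicially, and the filtration bookkeeping that makes the associated graded map a module isomorphism sending $U$ to $T^{-1}$. No complaint about that part.

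The genuine gap is in your final paragraph. In the paper, the sharpened claim that $\varphi_{KM}$ sends $U$ to $T^{-1}$ before passing to associated graded modules is not obtained by improving the identity $\varphi_\Delta(Ux)=T^{-1}\varphi_\Delta(x)+Ty$: it is the case in which $L$ factors through $SU(2)$, i.e.\ $\mathfrak s$ torsion (so that the balanced-perturbation group is the usual group), where the exact chain-level identity $\varphi_\Delta(Ux)=T^{-1}\varphi_\Delta(x)$ is quoted directly from \cite[Proposition 34.2.4]{KM} and there is no correction term to kill. The word ``non-torsion'' in the statement clashes with ``the usual group $\HMb_*(Y,\mathfrak s;\Gamma_0)$'' in the same sentence and with the rest of the paper (compare Theorem \ref{thm:localsys}, Remark \ref{Unontors}, and the introduction, where the $U$--$T$ relationship for maps through $U(2)$ is said to remain open); the intended case is the torsion one. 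Your plan instead sets out to prove that in the genuinely non-torsion case the correction $Ty$ is exact, so that $U$ acts as $T^{-1}$ on the nose. That is precisely what the paper does not claim and does not know how to prove: Remark \ref{Unontors} asserts only that $U$ goes to $T^{-1}$ up to higher filtration there, and determining the $\mathbb Z[U,U^{-1}]$-action in the non-torsion case is posed as an open question in Section \ref{problems}. Moreover your proposed mechanism --- that the $\mathbb Z/2N\mathbb Z$-periodicity of the monodromy forces $Ty$ to bound, yielding a chain homotopy correcting $\varphi_\Delta$ --- is never substantiated: the constraint $\mathrm{gr}_{z_i}(x,y_i)=4$ restricts where the correction lives but gives no reason for it to be a boundary, and no primitive is produced. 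So the last step of your proposal is both a misreading of which case the sharpening addresses and an unproved (and, by the paper's own account, currently unprovable by these means) assertion, whereas for the intended torsion case the statement follows from the cited result of Kronheimer--Mrowka with no additional argument.
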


It is likely that one can give a completely explicit description of how $\varphi_{KM}$ intertwines the two $\Lambda$-module structures on both sides in general, as well as how $U$ relates to $T^{-1}$, see Section \ref{problems}. What follows below builds on Proposition \ref{varphiKM} and is carried out entirely on the twisted homology side.

\medskip

\subsection{Proof of the main theorem}
Let us first recall here the statement for the reader's convenience.

\begin{theorem}[Theorem \ref{thmmain}]
If $\mathfrak{s}$ is a torsion spin$^c$ structure, the monopole Floer homology group $\HMb_*(Y,\mathfrak s)$ is isomorphic to the extended cup homology $\overline{HC}_*^{\infty}(Y)$ as relatively $\mathbb{Z}$-graded $\mathbb Z[U, U^{-1}]$-modules.
\end{theorem}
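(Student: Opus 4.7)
The plan is to assemble the theorem from Proposition \ref{varphiKM} and the algebraic computations of Section 5, via the chain of $\mathbb{Z}[U, U^{-1}]$-module isomorphisms
\[
\HMb_*(Y, \mathfrak s) \;\xrightarrow{\,\varphi_{KM}\,}\; H_*^{\tw}(\mathbb T;\xi_{KM}) \;\cong\; H_*\bigl(\Lambda^*(\mathbb Z^n)[T,T^{-1}],\, d_{K(a_Y)}\bigr) \;\cong\; \overline{HC}^\infty_*(Y),
\]
in which $a_Y = \cup_Y^3$, $U$ acts on the intermediate terms as $T^{-1}$, and each link is furnished directly by a result established above.

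The first isomorphism is Proposition \ref{varphiKM}. After fixing a simplicial structure on $\mathbb T = \mathbb T(Y, \mathfrak s)$ and homotoping the family of Dirac operators to a simplicial map $\zeta_L: \mathbb T \to SU(2)$, Kronheimer and Mrowka produce a $\mathbb{Z}[U, U^{-1}]$-module isomorphism sending $U \mapsto T^{-1}$, where $\xi_{KM} = (\xi_3, 0, 0, \ldots)$ with $\xi_3 = \zeta_L^* x_3$ pulled back from the generator $x_3$ of $H^3(SU(2); \mathbb Z)$. To set up the second isomorphism, I next identify the characteristic classes of $\xi_{KM}$. The tautological twisting sequence $(x_3, 0, 0, \ldots)$ on $C^*_\Delta(SU(2))$ has characteristic classes $F_1 = [x_3]$ and $F_n = 0$ for $n \geq 2$ (the latter forced by the vanishing of $H^{2n+1}(SU(2); \mathbb Z)$). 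Naturality of the characteristic classes in Theorem \ref{charclass-facts}, combined with the identification $\zeta_L^* [x_3] = \cup_Y^3$ from \cite[Lemma $35.1.2$]{KM}, therefore yields
\[
F_1(\xi_{KM}) = a_Y, \qquad F_n(\xi_{KM}) = 0 \text{ for all } n \geq 2.
\]

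With these characteristic classes in hand, Theorem \ref{toruscomp} (in the homological formulation given by Theorem \ref{toruslocthm} applied with trivial local system $A = \mathbb{Z}[T, T^{-1}]$) transfers $\xi_{KM}$ along a zigzag of Hirsch-algebra quasi-isomorphisms from $C^*_\Delta(\mathbb T)$ to $\Lambda^*(\mathbb Z^n) = C^*_\cube(T^n_1)$ to the Kraines construction $K(a_Y)$, whose terms are given explicitly by the insertion powers $K(a_Y)_{2n+1} = a_Y^{\circ n}$ (Corollary \ref{K-special}). Applying Lemmas \ref{isom} and \ref{typeab} successively along the zigzag produces the second isomorphism in the chain above. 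Finally, substituting $U = T^{-1}$ and invoking the Hodge duality on $\Lambda^*(\mathbb Z^n)$ used to phrase the definition of $\overline{HC}^\infty_*(Y)$ in the introduction identifies the resulting complex with the one defining extended cup homology.

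The bulk of the work has already been done in Sections 2 through 5, so what remains at this stage is largely bookkeeping of gradings and module actions. The one point that still requires care is the $\mathbb{Z}[U, U^{-1}]$-equivariance of $\varphi_{KM}$ on the nose in the torsion case — that is, $\varphi_\Delta(Ux) = T^{-1}\varphi_\Delta(x)$ exactly, rather than merely after passing to associated graded — which is exactly the content of \cite[Proposition $34.2.4$]{KM} as recalled in Proposition \ref{varphiKM}. With this equivariance established, tracking the relative $\mathbb{Z}$-grading and the $U$-action through each of the isomorphisms in the chain above completes the proof.
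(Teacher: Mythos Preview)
Your proof is correct and follows essentially the same architecture as the paper's: both identify the characteristic classes of $\xi_{KM}$ by naturality from $SU(2)$ (where $H^{>3}=0$ forces $F_n=0$ for $n\ge 2$), invoke \cite[Lemma 35.1.2]{KM} to get $F_1=\cup_Y^3$, and then apply Theorem \ref{toruscomp}/Corollary \ref{K-special} to transfer to $K(a_Y)=(a_Y,a_Y^{\circ 2},\ldots)$ on the minimal torus.

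The one genuine difference is in the final dualization. You stay on the homological side throughout, invoking Theorem \ref{toruslocthm} with the trivial local system to pass directly from $H^{\tw}_*(\mathbb T;\xi_{KM})$ to the cap-product complex on $\Lambda^*(\mathbb Z^n)[T,T^{-1}]$, and then use Hodge duality on the exterior algebra to match this with the cup-product definition of $\overline{HC}^\infty_*$. The paper instead works cohomologically: it uses Poincar\'e duality in Floer homology, $\HMb_*(Y,\mathfrak s)\cong\HMb^*(-Y,\mathfrak s)$, applies the cohomological Theorem \ref{toruscomp} to $-Y$ (picking up $K(-\cup_Y^3)$ since $\cup^3_{-Y}=-\cup^3_Y$), observes that the sign of $a_Y$ is immaterial for the twisted cohomology, and then identifies cup with cap at the very end. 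Your route is slightly more direct in that it avoids the orientation reversal and the accompanying sign check; the paper's route keeps the exposition aligned with its preference for cohomological conventions stated in the introduction. Both are valid and neither requires anything beyond what has already been established.
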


Here, recall that after choosing an identification $H^1(Y;\mathbb{Z})=\mathbb{Z}^{b_1(Y)}$, the extended cup homology $\overline{HC}_*^{\infty}(Y)$ is the homology of $\Lambda^*(\mathbb{Z}^{b_1})[U, U^{-1}]$ with respect to the differential given by \begin{equation}\label{difffinal}
x\otimes U^{n}\mapsto (a_Y\cap x) U^{n-1} + (a_Y^{\circ 2}\cap x) U^{n-2} + (a_Y^{\circ 3}\cap x) U^{n-3} + \cdots\big.
\end{equation}
where $a_Y\in \Lambda^3\mathbb{Z}^{b_1}$ corresponds to the triple cup product $\cup^3_Y$ of $Y$, and the cap products are to be interpreted via the identification $\mathbb{Z}^{b_1(Y)}=H^*(\mathbb{T})$.

\begin{proof}
We need to show that $$H_{\text{tw}}^\Delta(\mathbb T; \xi_{KM}) \cong \overline{HC}_*^\infty(Y)$$ as $Z[T, T^{-1}]$-modules. Now $\xi_{KM} = (\xi_3, 0, \cdots)$ is a twisting sequence pulled back from a twisting sequence $\xi_{S^3}$ on $SU(2)=S^3$. All the higher characteristic classes $F_n(\xi_{S^3})$ vanish because the cohomology of $S^3$ vanishes in degree $\geq 4$. By functoriality, all characteristic classes $F_{n}(\xi_{KM})\in H^{2n+1}(\mathbb T;\mathbb{Q})$ also vanish, and $F_1(\xi_\bullet)$ corresponds to the pullback of the generator of $H^3(S^3;\mathbb{Z})$ under the classifying map.
\par

By the index theorem for families \cite[Lemma $35.1.2$]{KM}, the classifying map of the such family pulls back the generator of $H^3(S^3;\mathbb{Z})$ to the triple cup product
\begin{equation*}
\cup^3_Y\in \left(\Lambda^3 H^1(Y;\mathbb{Z})\right)^*=\Lambda^3 H^1(\mathbb{T};\mathbb{Z})=H^3(\mathbb{T};\mathbb{Z})
\end{equation*}
Theorem \ref{toruscomp} implies therefore that $H^*_{\text{tw}}(\mathbb T; \xi_{KM})$ is the twisted cohomology $H^*_{\text{tw}}\left(H^*(\mathbb{T});K(\cup_Y^3)\right)$ of $H^*(\mathbb{T})$ (thought of as a dga with trivial differential) with respect to the twisting sequence $K(\cup_Y^3)$. To conclude the homological statement, notice that
\begin{align*}
\HMb_*(Y,\mathfrak s)&\cong \HMb^*(-Y,\mathfrak s)\cong H^*_{\text{tw}}\left(H^*(\mathbb{T});K(-\cup_Y^3)\right)\\
&\cong H^*_{\text{tw}}\left(H^*(\mathbb{T});K(\cup_Y^3)\right)\cong\overline{HC}_*^{\infty}(Y)
\end{align*}
where the first isomorphism is Poincar\'e duality in Floer homology; we have that $\cup_{-Y}^3=-\cup_Y^3$. Now the twisting sequences $K(\cup_Y^3)$ and $K(-\cup_Y^3)$ differ by a sign in degrees $4n+1$, and the corresponding twisted cohomologies are seen to be isomorphic via the map that acts on $H^*(\mathbb{T})$ by multiplying by $-1$ in degrees congruent to $0$ and $3$ mod $4$. The last isomorphism is Hodge duality on $\Lambda^*(\Bbb Z^b)$, where we use that wedging with a form is Hodge dual to contraction with it (i.e. the differential in extended cup homology) up to a sign that does not affect the resulting homology.
\end{proof}

\medskip

\subsection{Local systems and non-torsion spin$^c$ structures}\label{HMblocsys}
Next we discuss the case of a non-torsion spin$^c$ structure $\mathfrak{s}$; the relevant group for our purposes is the one associated to balanced non-exact perturbation $c_b$, which is denoted by $\HMb_*(Y,\mathfrak{s},c_b)$ \cite[Chapter $30$]{KM}. The spin$^c$ structure determines a homomorphism $\varphi_{\mathfrak{s}}$ given by
\begin{align*}
\varphi_{\mathfrak s}&: H^1(Y;\mathbb Z) \to \mathbb Z\\
a& \mapsto \frac 12 \langle a \cup c_1(\mathfrak s), [Y]\rangle
\end{align*}
We know that $\HMb_*(Y,\mathfrak{s},c_b; \Bbb Z)$ can be identified with $$H_*^{\textup{tw}}(\mathbb T; \xi_\bullet, \Gamma_{\mathfrak s}),$$ where one uses a local system on the torus $\mathbb{T} = \mathbb T(Y,\mathfrak s)$ of reducible solutions with fiber $\mathbb{Z}[T,T^{-1}]$ whose monodromy around the loop $a$ in $\mathbb{T}$ is given
multiplication by $T^{\varphi_{\mathfrak{s}}(a)}$. Notice that the monodromy map does not preserve the $\mathbb{Z}$-grading, but only the $\mathbb{Z}/2N \mathbb{Z}$-grading, where $N$ is a generator of the image of $\varphi_{\mathfrak{s}}$. A similar isomorphism holds for $\HMb_*(Y, \mathfrak s, c_b; \Gamma_0)$, where $\Gamma_0$ is some additional local system over $\mathbb T$. As discussed in the opening of this section, this identification is a $U$ and $\Lambda$-module isomorphism on associated graded modules, but more work is necessary to determine how this identification intertwines the module structures in higher filtration. Henceforth we exclusively discuss the twisted homology group.\\
\par
Choose an identification $\mathbb T \cong (S^1)^b$, where $b = b_1(Y)$. The latter space has a canonical cubical decomposition, $T^b_1$, discussed extensively in Section 5; this cubical decomposition has $C^*_\cube(T^b_1) \cong \Lambda^*(\mathbb Z^b)$. We choose such an identification once and for all.
\par
If $\Gamma_0$ is a local system on $\mathbb{T}$, we can define the chain complex with local coefficients $C_*^\cube(T^b_1;\Gamma_0)$ as follows (see for example \cite[Chapter $5$]{DK}). The cubical decomposition of $T^b_1$ induces a cellular decomposition on its universal cover $\tilde{T}^{b_1}$; this is the usual lattice in $\mathbb R^n$. 

The complex $C_*^{\cube}(\tilde{T}^{b_1};\mathbb{Z})$ is naturally a module over the group ring $\mathbb{Z}[\pi_1(\mathbb{T})]$ via the action given by deck transformations. If we interpret the local system as a module $\Gamma_0$ over $\mathbb{Z}[\pi_1(\mathbb{T})] \cong \mathbb Z[\mathbb Z^b]$, we can then define
\begin{equation}\label{loccoeff}
C_*^\cube(T^b_1;\Gamma_0)=C_*^{\cube}(\tilde{T}^{b_1};\mathbb{Z})\otimes_{\mathbb{Z}[\pi_1(\mathbb{T})]}\Gamma_0,
\end{equation}
equipped with the induced differential.
\par
Similarly, consider the local system on $\mathbb T$ with fiber $\mathbb{Z}[T,T^{-1}]$ and monodromy around the loop $a$ given by $\rho(a) \otimes T^{\varphi_{\mathfrak s}(a)}$; we will denote this by $\Gamma^{\mathfrak{s}}$. This induces a local system over $T^b_1$; we use the same notation for this local system. 

The construction above, applied now to the local system $\Gamma_0\otimes \Gamma^{\mathfrak{s}}$, gives rise to a twisted chain complex $C_*^\cube(T^b_1;\Gamma_0\otimes \Gamma^{\mathfrak{s}})$. Very explicity, after unwinding the definition above, we have
\begin{equation}\label{loccoeff1}
C_*^{\cube}(\mathbb{T};\Gamma_0\otimes \Gamma^{\mathfrak{s}})=C_*^{\cube}(\mathbb{T};\mathbb{Z})\otimes \Gamma_0\otimes \mathbb{Z}[T,T^{-1}]
\end{equation}
equipped with the differential $d^1$ given by
\begin{equation}\label{loccoeff2}
d^1(a_{i_1} \wedge \cdots \wedge a_{i_n} \otimes r \otimes T^k) = \sum_{j=1}^n (-1)^j (a_{i_1} \wedge \cdots \widehat a_{i_j} \cdots \wedge a_{i_n}) \otimes \rho(a_{i_j}) r \otimes T^{k + \varphi_{\mathfrak s}(a_{i_j})},
\end{equation}
where
$$\rho: H^1(Y;\mathbb Z)=H_1(\mathbb{T};\mathbb Z) \to \text{Aut}(\Gamma_0)$$
is the monodromy of the local system $\Gamma_0$.
\\
\par
With this in mind, we are ready to define the `twisted' version of extended cup homology which is relevant for our purposes.
\begin{definition}
Suppose $(Y, \mathfrak s)$ is a closed oriented 3-manifold equipped with a (possibly non-torsion) spin$^c$ structure, and let $\Gamma_0$ be a local system on $\mathcal B^\sigma(Y, \mathfrak s)$ with monodromy $\rho$. Denote by $\mathbb{T}$ the $b_1(Y)$-dimensional torus of reducible solutions, with a fixed identification $\mathbb{T}=(S^1)^b$ (and induced cubical decomposition) as above. The \emph{extended cup complex} $C_*^\infty(Y, \mathfrak s; \Gamma_0)$ is the twisted chain complex $$C_*^\cube(T^b_1;\Gamma_0\otimes \Gamma^{\mathfrak{s}})$$ in (\ref{loccoeff1}) in equipped with the differential $d^1 + \bar{d}_{\cup}$, where $d^1$ is given in (\ref{loccoeff2}) and $\bar{d}_{\cup}$ is the differential in (\ref{difffinal}). The \textit{extended cup homology} $\overline{HC}_*^\infty(Y, \mathfrak s; \Gamma_0)$ is, by definition, the homology of this complex.
\end{definition}

Notice that when $\mathfrak{s}$ is torsion we have $\varphi_{\mathfrak s} = 0$. In this case, if we take the trivial local system, we obtain the standard cup homology studied earlier. Furthermore, the invariant associated to a balanced perturbation coincides with the standard one. With this in mind, the following result can be thought as a generalization of Theorem \ref{thmmain}.

\begin{theorem}\label{thm:localsys}
Let $(Y, \mathfrak s)$ be a closed oriented spin$^c$ 3-manifold and $\Gamma_0$ a local system on $\mathcal B^\sigma(Y, \mathfrak s)$. Then $$\HMb_*(Y, \mathfrak s, c_b;\Gamma_0) \cong \overline{HC}_*^\infty(Y, \mathfrak s;\Gamma_0)$$ 
as relatively $\mathbb{Z}/2N \mathbb{Z}$-graded abelian groups, where $N$ is a generator of $\textup{Im}(\varphi_{\mathfrak{s}}) \subset \mathbb Z$. In the torsion case, this map is also an isomorphism of $\mathbb{Z}[T,T^{-1}]$ modules.
\end{theorem}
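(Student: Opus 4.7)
The plan is to assemble, in order, three ingredients already developed: (1) the coupled-Morse-to-twisted-simplicial identification packaged as Proposition \ref{varphiKM}, generalized to local coefficients and non-torsion spin$^c$ structures; (2) the cubical transfer result Theorem \ref{toruslocthm}; and (3) the explicit identification of the transferred twisting sequence via characteristic classes and the Kraines construction, exactly as in the proof of Theorem \ref{thmmain}.

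First I would extend the argument that gives Proposition \ref{varphiKM} so as to simultaneously incorporate the abstract local system $\Gamma_0$ and the local system $\Gamma^{\mathfrak s}$ of $\mathbb Z[T,T^{-1}]$-modules whose monodromy records the class $\varphi_{\mathfrak s}$ coming from the non-exact perturbation $c_b$. The output I want is a relatively $\mathbb Z/2N\mathbb Z$-graded identification
\[
\HMb_*(Y,\mathfrak s,c_b;\Gamma_0)\;\cong\;H^{\textup{tw}}_*\!\bigl(\mathbb T;\,\xi_{KM},\,\Gamma_0\otimes\Gamma^{\mathfrak s}\bigr),
\]
which, in the torsion case (where $\varphi_{\mathfrak s}=0$, $\Gamma^{\mathfrak s}$ is trivial, and the grading is genuinely $\mathbb Z$-valued), is in fact a $\mathbb Z[T,T^{-1}]$-module isomorphism under $U\leftrightarrow T^{-1}$ by Proposition \ref{varphiKM}. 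Concretely, one pulls the simplicial model on $\mathbb T$ back to its universal cover, tensors over $\mathbb Z[\pi_1 \mathbb T]$ with $\Gamma_0\otimes\Gamma^{\mathfrak s}$, and checks that the Kronheimer--Mrowka chain-level identification respects the resulting twisted differential $d^1+L_3 T$.

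Second, I would apply Theorem \ref{toruslocthm} to transfer this twisted simplicial homology to the minimal cubical torus $T^b_1$, whose cochain algebra is $\Lambda^*(\mathbb Z^b)$ with its canonical Hirsch structure. The transferred twisting sequence is completely determined by its rational characteristic classes, by Theorem \ref{charclass-facts}(3). Because $\xi_{KM}$ is pulled back along a simplicial classifying map $\mathbb T\to SU(2)$ and the cohomology of $S^3$ vanishes above degree $3$, naturality forces $F_n(\xi_{KM})=0$ for all $n>1$, while the index theorem for families \cite[Lemma 35.1.2]{KM} identifies $F_1(\xi_{KM})=[\xi_3]$ with the triple cup product $a_Y\in\Lambda^3(\mathbb Z^b)$. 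By the uniqueness clause of Theorem \ref{charclass-facts}(3), the transferred twisting sequence is exactly the Kraines construction $K(a_Y)$, and by Corollary \ref{K-special} this equals $(a_Y,\,a_Y^{\circ 2},\,a_Y^{\circ 3},\dots)$. The resulting twisted complex $\Lambda^*(\mathbb Z^b)[T,T^{-1}]\otimes\Gamma_0\otimes\Gamma^{\mathfrak s}$ with differential $d^1+\bar d_\cup$ is, by direct comparison with the formulas (\ref{loccoeff1})--(\ref{loccoeff2}) and (\ref{difffinal}), precisely the extended cup complex $C_*^\infty(Y,\mathfrak s;\Gamma_0)$. A brief Poincaré duality step of the form $\HMb_*(Y,\mathfrak s)\cong\HMb^*(-Y,\mathfrak s)$ with $\cup^3_{-Y}=-\cup^3_Y$, exactly as at the end of the proof of Theorem \ref{thmmain}, handles any remaining sign/convention discrepancy between the wedge-product twisted differential arising from our framework and the cap-product form used in the definition of $\overline{HC}_*^\infty$.

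I expect the main obstacle to be the first step. The machinery of Sections 1--5 was set up precisely so that once one is working with a twisted (co)chain complex of the form $C^{\text{tw}}_*(\mathbb T;\xi_{KM},\Gamma)$ on a simplicial torus, everything proceeds on autopilot by naturality of characteristic classes and the Hirsch-algebra transfer theorem. What requires care is only the \emph{construction} of the chain-level identification: one has to verify that the Cartan-Serre-style arguments of \cite[Chapter 34]{KM} extend without substantive modification to coefficients in a local system of $\mathbb Z[T,T^{-1}]$-modules, that the monodromy $T^{\varphi_{\mathfrak s}}$ produced by the balanced perturbation $c_b$ is faithfully reproduced by $\Gamma^{\mathfrak s}$, and that the twisted differential continues to split as $d^1+L_3 T$. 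Once this bookkeeping is settled, steps two and three are essentially formal consequences of what has already been proved.
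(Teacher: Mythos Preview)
Your proposal is correct and follows essentially the same route as the paper: identify $\HMb_*$ with twisted simplicial homology on $\mathbb T$ with coefficients in $\Gamma_0\otimes\Gamma^{\mathfrak s}$ (the content of Proposition \ref{varphiKM}), argue that the higher characteristic classes $F_n(\xi_{KM})$ vanish by naturality, and then invoke Theorem \ref{toruslocthm} to land in the extended cup complex.

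One small correction: in the non-torsion case the classifying map for the Dirac family lands in $U(2)$, not $SU(2)$---the nontrivial spectral flow is precisely what is detected by the determinant $U(2)\to S^1$. Your vanishing argument for $F_n$ with $n>1$ still goes through, since $H^*(U(2))\cong\Lambda(x_1,x_3)$ has no cohomology in degrees $\geq 5$; this is exactly how the paper phrases it.
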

\begin{remark}\label{Unontors}
In the non-torsion case, while we cannot determine the $\mathbb{Z}[T,T^{-1}]$-module structure, we can still say that the isomorphism sends $U$ to $T^{-1}$ up to higher filtration.
\end{remark}
\begin{proof}
\begin{comment}
From the point of view of coupled Morse theory, the main difference is that we consider families $(M,L)$ where $L$ is now classified up to homotopy by a map $\zeta:M\rightarrow U(2)$. In particular, $L$ might have non-trivial \textit{spectral flow} around loops in $M$, corresponding to the pullback of the generator $H^1(U(2);\mathbb{Z})=\mathbb{Z}$ via $\zeta$.
\par
In our case of interest, we will consider $L$ to be a family of Dirac operators $\{D_{B}\}$ parametrized by the torus $M=\mathbb{T}$ (with $F_{B^{t}}$ now determined by the balanced perturbation); the (complex) spectral flow is given by the map $\varphi_{\mathfrak{s}}$ on $H_1(\mathbb{T})=H^1(Y)$. Using this, Kronheimer and Mrowka provide a coupled Morse theoretic model for $\HMb_*(Y,\mathfrak{s},c_b)$ \cite[Section $34.4$]{KM}, and their comparison to the simplicial model in the torsion case readily generalizes to give us the following model in the non-torsion case as well. 
\end{comment}
Referring to the discussion preceding Proposition \ref{varphiKM}, consider the local system $\Gamma = \Gamma_0 \otimes \Gamma^{\mathfrak{s}}$; recall that it has fiber $\Gamma_0[T,T^{-1}]$ and monodromy around the loop $a$ given by multiplication by $T^{\varphi_{\mathfrak s}(a)}$. Then, for a sufficiently fine simplicial structure structure on $\mathbb{T}$, there is a $3$-cocycle $\xi_3$ whose cohomology class represents the triple cup product of $Y$ so that simplicial chains with local coefficients $C_*^{\Delta}(\mathbb T; \Gamma)$ with twisted differential
\begin{equation*}
\sigma\mapsto d_{\Gamma}\sigma+(\xi_3\cap \sigma) T
\end{equation*}
have homology groups isomorphic to $\HMb_*(Y,\mathfrak{s},c_b;\Gamma_0)$. In the non-torsion case this isomorphism is only shown to hold as $\mathbb{Z}$-modules in \cite{KM}.

Finally, $\xi_3$ is pulled back from a $3$-cocycle on $U(2)$; as the latter has vanishing cohomology in degrees $\geq 5$, functoriality implies once more that all characteristic classes $F_n(\xi_\bullet)$ with $n>1$ vanish while $F_1(\xi_\bullet) = [\xi_3] = \cup_Y^3$. The proof then follows as in the previous case, using the computation with local coefficients in Theorem \ref{toruslocthm}.
\end{proof}

\medskip
\section{Some open questions}\label{problems}
This article answers a specific question in the $\HMb_*$ story. The theory is much richer and many parts of this story remain open; we collect some interesting questions below.
\\
\par
First, there is the combinatorial algebra question of comparing cup homology to extended cup homology already mentioned in the Introduction.

\begin{quest}\label{cupequalsext}
Let $a \in \Lambda^3(\mathbb Z^b)$ be a degree 3 class; we may define its cup homology and extended cup homology modules as before. Is there an isomorphism of $\mathbb Z[T, T^{-1}]$-modules  between the associated cup homology $HC^\infty$ and its extended version $\overline{HC}^\infty$?
\end{quest}

Referring to the discussion at the beginning of Section \ref{hmbformula}, and in particular Proposition \ref{varphiKM}, we know that $\HMb_*(Y;\mathfrak{s})$ is also a module over $\Lambda^{*}(H_1(Y;\mathbb{Z})/\textup{tors})$; from the coupled Morse homology picture, via the isomorphism of Theorem \ref{thmmain} this coincides with the natural action by contraction on the right on $\overline{HC}^\infty$ only up to lower filtration terms. It is then natural to ask the following.

\begin{quest}
Can one identify the $\Lambda^{*}(H_1(Y;\mathbb{Z})/\textup{tors})$-action induced on $\overline{HC}^\infty$ by the isomorphism in Theorem \ref{thmmain}? If the isomorphism in Question \ref{cupequalsext} holds, can one describe the corresponding action on $HC^\infty$?
\end{quest}

One may also ask the above question when there is the additional structure of a local system, and possibly a local system of $\mathbb Z[T, T^{-1}]$-modules, as in the case of $\HMb_*(Y, \mathfrak s, c_b)$ when $c_1(\mathfrak s)$ is non-torsion. In the latter case, it is also unclear how to describe the $\mathbb{Z}[U^{-1},U]$-module structure (see also Remark \ref{Unontors}), so we ask the following.

\begin{quest}
For a non-torsion spin$^c$ structure, can one describe the $\mathbb{Z}[U^{-1},U]$-action on $\overline{HC}^\infty$ induced the isomorphism in Theorem \ref{thm:localsys}?
\end{quest}

Notice that while Question \ref{cupequalsext} is purely combinatorial, Questions 2 and 3 require a better understanding of both the algebraic picture and the description of the invariant in terms of couple Morse homology; their answer is not clear even in the algebraically simpler setting of twisted de Rham cohomology.
\par
To conclude, on the side of coupled Morse homology itself, it would be interesting to understand the coupled Morse homology of families that do not factor through $U(2)$. If one had the pipe-dream goal of completely computing the functor $\HMb_*(Y, \mathfrak s)$ on the cobordism category in some explicit algebraic fashion, this may be necessary, as it is unlikely there is a \emph{coherent} way to homotope the classifying maps $D_B: \mathbb T(Y, \mathfrak s) \to U(\infty)$ to $U(2)$, in a way compatible with cobordisms. A starting point for this is the following.

\begin{quest}Is there a (simplicial) twisting sequence $(\zeta_3, \dots, \zeta_{2n-1}, 0, \dots)$ on $SU(n)$ so that the following property holds? Whenever $Q$ is a compact manifold and $L: Q \to SU(n)$ is a simplicial map classifying a family of self-adjoint Fredholm operators, we have an isomorphism $$CMH_*(Q, L) \cong H^{\textup{tw}}_*(Q; L^* \zeta_\bullet).$$
\end{quest}

If the answer to this question is positive, then one should also develop the corresponding version with spectral flow (maps to $U(n)$). One would then need to answer the analogue of Question 3 in this more general setting to get a full understanding of the aforementioned functoriality problem. The case $n=3$ is already very interesting as $SU(3)$ is not the product $S^3\times S^5$ even though they have the same cohomology ring (there is a non-trivial $\mathrm{Sq}^2$ operation relating the generators in degree $3$ and $5$).

\vspace{2cm}
\bibliographystyle{alpha}
\bibliography{main}

@book {KM,
    AUTHOR = {Kronheimer, Peter and Mrowka, Tomasz},
     TITLE = {Monopoles and three-manifolds},
    SERIES = {New Mathematical Monographs},
    VOLUME = {10},
 PUBLISHER = {Cambridge University Press, Cambridge},
      YEAR = {2007},
     PAGES = {xii+796},
      ISBN = {978-0-521-88022-0},
   MRCLASS = {57R57 (53C27 57N10 57R58)},
  MRNUMBER = {2388043},
MRREVIEWER = {Vicente Mu\~{n}oz},
       DOI = {10.1017/CBO9780511543111},
       URL = {https://doi-org.ezproxy.cul.columbia.edu/10.1017/CBO9780511543111},
}

@article {Sul,
    AUTHOR = {Sullivan, Dennis},
     TITLE = {On the intersection ring of compact three manifolds},
   JOURNAL = {Topology},
  FJOURNAL = {Topology. An International Journal of Mathematics},
    VOLUME = {14},
      YEAR = {1975},
    NUMBER = {3},
     PAGES = {275--277},
      ISSN = {0040-9383},
   MRCLASS = {57A10 (32C40)},
  MRNUMBER = {383415},
MRREVIEWER = {M. F. Atiyah},
       DOI = {10.1016/0040-9383(75)90009-9},
       URL = {https://doi-org.ezproxy.cul.columbia.edu/10.1016/0040-9383(75)90009-9},
}

@article {Hir,
    AUTHOR = {Hirsch, Guy},
     TITLE = {Quelques propri\'{e}t\'{e}s des produits de {S}teenrod},
   JOURNAL = {C. R. Acad. Sci. Paris},
  FJOURNAL = {Comptes Rendus Hebdomadaires des S\'{e}ances de l'Acad\'{e}mie des
              Sciences},
    VOLUME = {241},
      YEAR = {1955},
     PAGES = {923--925},
      ISSN = {0001-4036},
   MRCLASS = {55.0X},
  MRNUMBER = {73182},
MRREVIEWER = {W. S. Massey},
}

@article {EMc,
    AUTHOR = {Eilenberg, Samuel and MacLane, Saunders},
     TITLE = {Acyclic models},
   JOURNAL = {Amer. J. Math.},
  FJOURNAL = {American Journal of Mathematics},
    VOLUME = {75},
      YEAR = {1953},
     PAGES = {189--199},
      ISSN = {0002-9327},
   MRCLASS = {56.0X},
  MRNUMBER = {52766},
MRREVIEWER = {H. Cartan},
       DOI = {10.2307/2372628},
       URL = {https://doi-org.ezproxy.cul.columbia.edu/10.2307/2372628},
}

@book {Mas,
    AUTHOR = {Massey, William S.},
     TITLE = {Singular homology theory},
    SERIES = {Graduate Texts in Mathematics},
    VOLUME = {70},
 PUBLISHER = {Springer-Verlag, New York-Berlin},
      YEAR = {1980},
     PAGES = {xii+265},
      ISBN = {0-387-90456-5},
   MRCLASS = {55-01},
  MRNUMBER = {569059},
MRREVIEWER = {J. F. Adams},
}

@article {KS,
    AUTHOR = {Kadeishvili, Tornike and Saneblidze, Samson},
     TITLE = {The twisted {C}artesian model for the double path fibration},
   JOURNAL = {Georgian Math. J.},
  FJOURNAL = {Georgian Mathematical Journal},
    VOLUME = {22},
      YEAR = {2015},
    NUMBER = {4},
     PAGES = {489--508},
      ISSN = {1072-947X},
   MRCLASS = {55P35 (52B05 55R05 55U05)},
  MRNUMBER = {3430899},
MRREVIEWER = {Jie Wu},
       DOI = {10.1515/gmj-2015-0040},
       URL = {https://doi-org.ezproxy.cul.columbia.edu/10.1515/gmj-2015-0040},
}

@article {MM1,
    AUTHOR = {Medina-Mardones, Anibal M.},
     TITLE = {A finitely presented {$E_\infty$}-prop {I}: algebraic context},
   JOURNAL = {High. Struct.},
  FJOURNAL = {Higher Structures},
    VOLUME = {4},
      YEAR = {2020},
    NUMBER = {2},
     PAGES = {1--21},
   MRCLASS = {55P48 (18M85 18N50)},
  MRNUMBER = {4133162},
MRREVIEWER = {Ricardo Campos},
}

@article {Ser,
    AUTHOR = {Serre, Jean-Pierre},
     TITLE = {Homologie singuli\`ere des espaces fibr\'{e}s. {A}pplications},
   JOURNAL = {Ann. of Math. (2)},
  FJOURNAL = {Annals of Mathematics. Second Series},
    VOLUME = {54},
      YEAR = {1951},
     PAGES = {425--505},
      ISSN = {0003-486X},
   MRCLASS = {56.0X},
  MRNUMBER = {45386},
MRREVIEWER = {W. S. Massey},
       DOI = {10.2307/1969485},
       URL = {https://doi-org.ezproxy.cul.columbia.edu/10.2307/1969485},
}

@article {CPPZ,
    AUTHOR = {Cristofaro-Gardiner, Dan and Pomerleano, Daniel and Prasad, Rohil and Zhang, Boyu},
     TITLE = {A note on the existence of {U}-cyclic elements in periodic {F}loer homology},
   JOURNAL = {preprint},
      YEAR = {2022},
}

@article {Tau,
    AUTHOR = {Taubes, Clifford Henry},
     TITLE = {The {S}eiberg-{W}itten equations and the {W}einstein
              conjecture},
   JOURNAL = {Geom. Topol.},
  FJOURNAL = {Geometry \& Topology},
    VOLUME = {11},
      YEAR = {2007},
     PAGES = {2117--2202},
      ISSN = {1465-3060},
   MRCLASS = {57R17 (53D35 57R57 58J30)},
  MRNUMBER = {2350473},
MRREVIEWER = {Hans U. Boden},
       DOI = {10.2140/gt.2007.11.2117},
       URL = {https://doi-org.ezproxy.cul.columbia.edu/10.2140/gt.2007.11.2117},
}

@article {Mar,
    AUTHOR = {Mark, Thomas E.},
     TITLE = {Triple products and cohomological invariants for closed
              3-manifolds},
   JOURNAL = {Michigan Math. J.},
  FJOURNAL = {Michigan Mathematical Journal},
    VOLUME = {56},
      YEAR = {2008},
    NUMBER = {2},
     PAGES = {265--281},
      ISSN = {0026-2285},
   MRCLASS = {57M27},
  MRNUMBER = {2492394},
MRREVIEWER = {Paolo Lisca},
       DOI = {10.1307/mmj/1224783513},
       URL = {https://doi-org.ezproxy.cul.columbia.edu/10.1307/mmj/1224783513},
}

@article {OSz,
    AUTHOR = {Ozsv\'{a}th, Peter and Szab\'{o}, Zolt\'{a}n},
     TITLE = {On the {F}loer homology of plumbed three-manifolds},
   JOURNAL = {Geom. Topol.},
  FJOURNAL = {Geometry and Topology},
    VOLUME = {7},
      YEAR = {2003},
     PAGES = {185--224},
      ISSN = {1465-3060},
   MRCLASS = {57R58 (53D40 57M27)},
  MRNUMBER = {1988284},
MRREVIEWER = {Jacob Andrew Rasmussen},
       DOI = {10.2140/gt.2003.7.185},
       URL = {https://doi-org.ezproxy.cul.columbia.edu/10.2140/gt.2003.7.185},
}

@book {Lid,
    AUTHOR = {Lidman, Tye},
     TITLE = {Triple {C}up {P}roducts in {H}eegaard {F}loer {H}omology},
      NOTE = {Thesis (Ph.D.)--University of California, Los Angeles},
 PUBLISHER = {ProQuest LLC, Ann Arbor, MI},
      YEAR = {2012},
     PAGES = {106},
      ISBN = {978-1267-31503-8},
   MRCLASS = {Thesis},
  MRNUMBER = {3034622},
       URL =
              {http://gateway.proquest.com.ezproxy.cul.columbia.edu/openurl?url_ver=Z39.88-2004&rft_val_fmt=info:ofi/fmt:kev:mtx:dissertation&res_dat=xri:pqm&rft_dat=xri:pqdiss:3506181},
}

@incollection {AS,
    AUTHOR = {Atiyah, Michael and Segal, Graeme},
     TITLE = {Twisted {$K$}-theory and cohomology},
 BOOKTITLE = {Inspired by {S}. {S}. {C}hern},
    SERIES = {Nankai Tracts Math.},
    VOLUME = {11},
     PAGES = {5--43},
 PUBLISHER = {World Sci. Publ., Hackensack, NJ},
      YEAR = {2006},
   MRCLASS = {55N15 (19L99 55R40 55S25 55S30 55T25)},
  MRNUMBER = {2307274},
MRREVIEWER = {Francis Clarke},
       DOI = {10.1142/9789812772688\_0002},
       URL = {https://doi-org.ezproxy.cul.columbia.edu/10.1142/9789812772688_0002},
}

@article {Kra,
    AUTHOR = {Kraines, David},
     TITLE = {Massey higher products},
   JOURNAL = {Trans. Amer. Math. Soc.},
  FJOURNAL = {Transactions of the American Mathematical Society},
    VOLUME = {124},
      YEAR = {1966},
     PAGES = {431--449},
      ISSN = {0002-9947},
   MRCLASS = {55.34},
  MRNUMBER = {202136},
MRREVIEWER = {W. D. Barcus},
       DOI = {10.2307/1994385},
       URL = {https://doi-org.ezproxy.cul.columbia.edu/10.2307/1994385},
}

@book {Hat,
    AUTHOR = {Hatcher, Allen},
     TITLE = {Algebraic topology},
 PUBLISHER = {Cambridge University Press, Cambridge},
      YEAR = {2002},
     PAGES = {xii+544},
      ISBN = {0-521-79160-X; 0-521-79540-0},
   MRCLASS = {55-01 (55-00)},
  MRNUMBER = {1867354},
MRREVIEWER = {Donald W. Kahn},
}

@book {McC,
    AUTHOR = {McCleary, John},
     TITLE = {A user's guide to spectral sequences},
    SERIES = {Cambridge Studies in Advanced Mathematics},
    VOLUME = {58},
   EDITION = {Second},
 PUBLISHER = {Cambridge University Press, Cambridge},
      YEAR = {2001},
     PAGES = {xvi+561},
      ISBN = {0-521-56759-9},
   MRCLASS = {55Txx (18G40)},
  MRNUMBER = {1793722},
MRREVIEWER = {Frank Neumann},
}

@article {EM,
    AUTHOR = {Eilenberg, Samuel and Moore, John C.},
     TITLE = {Homology and fibrations. {I}. {C}oalgebras, cotensor product
              and its derived functors},
   JOURNAL = {Comment. Math. Helv.},
  FJOURNAL = {Commentarii Mathematici Helvetici},
    VOLUME = {40},
      YEAR = {1966},
     PAGES = {199--236},
      ISSN = {0010-2571},
   MRCLASS = {55.50},
  MRNUMBER = {203730},
MRREVIEWER = {A. Heller},
       DOI = {10.1007/BF02564371},
       URL = {https://doi-org.ezproxy.cul.columbia.edu/10.1007/BF02564371},
}

@article {KLT,
    AUTHOR = {Kutluhan, \c{C}a\u{g}atay and Lee, Yi-Jen and Taubes, Clifford Henry},
     TITLE = {{$\rm HF{=}HM$}, {I}: {H}eegaard {F}loer homology and
              {S}eiberg-{W}itten {F}loer homology},
   JOURNAL = {Geom. Topol.},
  FJOURNAL = {Geometry \& Topology},
    VOLUME = {24},
      YEAR = {2020},
    NUMBER = {6},
     PAGES = {2829--2854},
      ISSN = {1465-3060},
   MRCLASS = {53C07 (57R57 57R58)},
  MRNUMBER = {4194305},
       DOI = {10.2140/gt.2020.24.2829},
       URL = {https://doi-org.ezproxy.cul.columbia.edu/10.2140/gt.2020.24.2829},
}

@book {DK,
    AUTHOR = {Davis, James F. and Kirk, Paul},
     TITLE = {Lecture notes in algebraic topology},
    SERIES = {Graduate Studies in Mathematics},
    VOLUME = {35},
 PUBLISHER = {American Mathematical Society, Providence, RI},
      YEAR = {2001},
     PAGES = {xvi+367},
      ISBN = {0-8218-2160-1},
   MRCLASS = {55-01 (57-01)},
  MRNUMBER = {1841974},
MRREVIEWER = {Don Shimamoto},
       DOI = {10.1090/gsm/035},
       URL = {https://doi-org.ezproxy.cul.columbia.edu/10.1090/gsm/035},
}

@article {CGH,
    AUTHOR = {Colin, Vincent and Ghiggini, Paolo and Honda, Ko},
     TITLE = {Equivalence of {H}eegaard {F}loer homology and embedded
              contact homology via open book decompositions},
   JOURNAL = {Proc. Natl. Acad. Sci. USA},
  FJOURNAL = {Proceedings of the National Academy of Sciences of the United
              States of America},
    VOLUME = {108},
      YEAR = {2011},
    NUMBER = {20},
     PAGES = {8100--8105},
      ISSN = {0027-8424},
   MRCLASS = {53D42 (57R17 57R58)},
  MRNUMBER = {2806645},
MRREVIEWER = {\c{C}a\u{g}atay Kutluhan},
       DOI = {10.1073/pnas.1018734108},
       URL = {https://doi-org.ezproxy.cul.columbia.edu/10.1073/pnas.1018734108},
}

@article {Brown,
    AUTHOR = {Brown, Jr., Edgar H.},
     TITLE = {Twisted tensor products. {I}},
   JOURNAL = {Ann. of Math. (2)},
  FJOURNAL = {Annals of Mathematics. Second Series},
    VOLUME = {69},
      YEAR = {1959},
     PAGES = {223--246},
      ISSN = {0003-486X},
   MRCLASS = {55.00},
  MRNUMBER = {105687},
MRREVIEWER = {E. Fadell},
       DOI = {10.2307/1970101},
       URL = {https://doi-org.ezproxy.cul.columbia.edu/10.2307/1970101},
}

@book {MS,
    AUTHOR = {May, J. P. and Sigurdsson, J.},
     TITLE = {Parametrized homotopy theory},
    SERIES = {Mathematical Surveys and Monographs},
    VOLUME = {132},
 PUBLISHER = {American Mathematical Society, Providence, RI},
      YEAR = {2006},
     PAGES = {x+441},
      ISBN = {978-0-8218-3922-5; 0-8218-3922-5},
   MRCLASS = {55P42 (19L99 55N20 55N22 55P91)},
  MRNUMBER = {2271789},
MRREVIEWER = {A. A. Ranicki},
       DOI = {10.1090/surv/132},
       URL = {https://doi-org.ezproxy.cul.columbia.edu/10.1090/surv/132},
}

@book {MT,
    AUTHOR = {Mosher, Robert E. and Tangora, Martin C.},
     TITLE = {Cohomology operations and applications in homotopy theory},
 PUBLISHER = {Harper \& Row, Publishers, New York-London},
      YEAR = {1968},
     PAGES = {x+214},
   MRCLASS = {55.40},
  MRNUMBER = {0226634},
MRREVIEWER = {Larry Smith},
}

@ARTICLE{K-MM,
       author = {Kaufmann, Ralph M. and Medina-Mardones, Anibal M.},
        title = "{A combinatorial ${E_\infty}$-algebra structure on cubical cochains and the Cartan-Serre map}",
      journal = {arXiv e-prints},
     keywords = {Mathematics - Algebraic Topology, 55N45, 18M70, 18M85},
         year = 2022,
        month = mar,
          eid = {arXiv:2107.00669v3},
        pages = {arXiv:2107.00669v3},
archivePrefix = {arXiv},
       eprint = {2107.00669v3},
 primaryClass = {math.AT},
       adsurl = {https://ui.adsabs.harvard.edu/abs/2021arXiv210700669K},
      adsnote = {Provided by the SAO/NASA Astrophysics Data System}
}

@ARTICLE{BM,
       author = {{Brumfiel}, Greg and {Morgan}, John},
        title = "{The Pontrjagin Dual of 3-Dimensional Spin Bordism}",
      journal = {arXiv e-prints},
     keywords = {Mathematics - Algebraic Topology, 55N22, 57R90},
         year = 2018,
        month = apr,
          eid = {arXiv:1612.02860v2},
        pages = {arXiv:1612.02860v2},
archivePrefix = {arXiv},
       eprint = {1612.02860v2},
 primaryClass = {math.AT},
       adsurl = {https://ui.adsabs.harvard.edu/abs/2016arXiv161202860B},
      adsnote = {Provided by the SAO/NASA Astrophysics Data System},
}

@article {Ste,
    AUTHOR = {Steenrod, N. E.},
     TITLE = {Products of cocycles and extensions of mappings},
   JOURNAL = {Ann. of Math. (2)},
  FJOURNAL = {Annals of Mathematics. Second Series},
    VOLUME = {48},
      YEAR = {1947},
     PAGES = {290--320},
      ISSN = {0003-486X},
   MRCLASS = {56.0X},
  MRNUMBER = {22071},
MRREVIEWER = {B. Eckmann},
       DOI = {10.2307/1969172},
       URL = {https://doi-org.ezproxy.cul.columbia.edu/10.2307/1969172},
}

@MISC{PTam,
    TITLE = {Sign in definition of Cap product},
    AUTHOR = {Pedro Tamaroff},
    HOWPUBLISHED = {Mathematics Stack Exchange},
    NOTE = {https://math.stackexchange.com/q/4133171 (version: 2021-08-31)},
    EPRINT = {https://math.stackexchange.com/q/4133171},
    URL = {https://math.stackexchange.com/q/4133171}
}

@article {Saneblidze,
    AUTHOR = {Saneblidze, Samson},
     TITLE = {Filtered {H}irsch algebras},
   JOURNAL = {Trans. A. Razmadze Math. Inst.},
  FJOURNAL = {Transactions of A. Razmadze Mathematical Institute},
    VOLUME = {170},
      YEAR = {2016},
    NUMBER = {1},
     PAGES = {114--136},
      ISSN = {2346-8092},
   MRCLASS = {55S20 (16E40 16E45 55P35 55S30)},
  MRNUMBER = {3538993},
MRREVIEWER = {Jie Wu},
       DOI = {10.1016/j.trmi.2016.03.001},
       URL = {https://doi.org/10.1016/j.trmi.2016.03.001},
}

@incollection {Voronov,
    AUTHOR = {Voronov, Alexander A.},
     TITLE = {Homotopy {G}erstenhaber algebras},
 BOOKTITLE = {Conf\'{e}rence {M}osh\'{e} {F}lato 1999, {V}ol. {II} ({D}ijon)},
    SERIES = {Math. Phys. Stud.},
    VOLUME = {22},
     PAGES = {307--331},
 PUBLISHER = {Kluwer Acad. Publ., Dordrecht},
      YEAR = {2000},
   MRCLASS = {55P48 (16E40 17A30 18D50 55R80)},
  MRNUMBER = {1805923},
}

@article {Baues,
    AUTHOR = {Baues, H. J.},
     TITLE = {The double bar and cobar constructions},
   JOURNAL = {Compositio Math.},
  FJOURNAL = {Compositio Mathematica},
    VOLUME = {43},
      YEAR = {1981},
    NUMBER = {3},
     PAGES = {331--341},
      ISSN = {0010-437X},
   MRCLASS = {55P35 (57T30)},
  MRNUMBER = {632433},
MRREVIEWER = {R. M. Vogt},
       URL = {http://www.numdam.org/item?id=CM_1981__43_3_331_0},
}

@misc{TLSigns,
  author = {Tyler Lawson},
  title = {{In which I try to get the signs right for once}},
  NOTE = {{http://www-users.math.umn.edu/~tlawson/papers/signs.pdf, Accessed: 2021-10-31}
}}

@article {KrizMay,
    AUTHOR = {K\v{r}\'{\i}\v{z}, Igor and May, J. P.},
     TITLE = {Operads, algebras, modules and motives},
   JOURNAL = {Ast\'{e}risque},
  FJOURNAL = {Ast\'{e}risque},
    NUMBER = {233},
      YEAR = {1995},
     PAGES = {iv+145pp},
      ISSN = {0303-1179},
   MRCLASS = {18G99 (14F99 19E99 55U99)},
  MRNUMBER = {1361938},
MRREVIEWER = {J. P. C. Greenlees},
}
\end{document}